\numberwithin{equation}{section}
\newcommand{\subjclass}[2][1991]{%
  \let\@oldtitle\@title%
  \gdef\@title{\@oldtitle\footnotetext{#1 \emph{Mathematics subject classification.} #2}}%
}
\newcommand{\keywords}[1]{%
  \let\@@oldtitle\@title%
  \gdef\@title{\@@oldtitle\footnotetext{\emph{Key words and phrases.} #1.}}%
}
\newcommand{\nchi}{{\raise.3ex\hbox{\(\chi\)}}}
\newcommand{\N}{\mathbb{N}}
\newcommand{\R}{\mathbb{R}}
\newcommand{\W}{{\rm W}}
\newcommand{\sfd}{{\sf d}}
\renewcommand{\d}{{\mathrm d}}
\newcommand{\e}{{\rm e}}
\newcommand{\X}{{\rm X}}
\newcommand{\Y}{{\rm Y}}
\newcommand{\Z}{{\rm Z}}
\newcommand{\mm}{\mathfrak{m}}
\newcommand{\1}{\mathbbm 1}
\newcommand{\Mod}{{\rm Mod}}
\newcommand{\LIP}{{\rm LIP}}
\newcommand{\Lip}{{\rm Lip}}
\newcommand{\lip}{{\rm lip}}
\newcommand{\ppi}{{\mbox{\boldmath\(\pi\)}}}
\newcommand{\sppi}{{\mbox{\scriptsize\boldmath\(\pi\)}}}
\newcommand{\limi}{\varliminf}
\newcommand{\lims}{\varlimsup}
\renewcommand{\div}{{\rm div}}
\newtheorem{theorem}{Theorem}[section]
\newtheorem{corollary}[theorem]{Corollary}
\newtheorem{lemma}[theorem]{Lemma}
\newtheorem{proposition}[theorem]{Proposition}
\newtheorem{definition}[theorem]{Definition}
\newtheorem{remark}[theorem]{Remark}
\title{Metric Sobolev spaces II: \\ dual energies and divergence measures}
\keywords{Metric measure space, Sobolev space, cotangent module, tangent module, derivation, divergence, gradient, infinitesimal Hilbertianity, Laplacian, condenser capacity}
\subjclass[2020]{Primary: 49J52, 46E35; Secondary: 53C23, 46N10, 46E15, 31C15, 28A12}
\author{Luigi Ambrosio \thanks{Scuola Normale Superiore, Piazza dei Cavalieri 7, 56126 Pisa.
\textit{Email:} {\sf luigi.ambrosio@sns.it}}
\and Toni Ikonen \thanks{Department of Mathematics, University of Fribourg, Chemin du Musée 23, 1700 Fribourg, Switzerland. \textit{Email:} {\sf toni.ikonen@unifr.ch}}
\and Danka Lu\v{c}i\'{c} \thanks{University of Jyvaskyla, Department of Mathematics and Statistics,
P.O.\ Box 35 (MaD) FI-40014, Finland. \textit{Email:} {\sf danka.d.lucic@jyu.fi}}
\and Enrico Pasqualetto \thanks{University of Jyvaskyla, Department of Mathematics and Statistics,
P.O.\ Box 35 (MaD) FI-40014, Finland. \textit{Email:} {\sf enrico.e.pasqualetto@jyu.fi}}}
\begin{document}
\date{\today}
\maketitle

\begin{abstract}
This is the second of two works concerning the Sobolev calculus on metric measure spaces and its applications. In this work, we focus on several approaches
to vector calculus in the non-smooth setting of complete and separable metric spaces equipped with a boundedly-finite Borel measure. More precisely, we study
different notions of (co)vector fields and derivations appearing in the literature, as well as their mutual relation. We also carry forward a thorough investigation
of gradients, divergence measures, and Laplacian measures, together with their applications in potential analysis (for example, regarding the condenser capacity)
and in the study of duality properties of Sobolev spaces. Most of the results are obtained
for the full range of exponents \(p\in[1,\infty)\) and without finiteness assumption on the measure.
\end{abstract}
\bigskip

\tableofcontents
\section{Introduction}
\subsection{General overview}
This paper is the follow-up to our previous manuscript \cite{Amb:Iko:Luc:Pas:24,Amb:Iko:Luc:Pas:correction}, where we provided a systematic overview of the Sobolev space theory in the metric measure space setting,
whose developments in the last 20 years are based -- among others -- on \cite{Ch:99,Sha:00,Amb:Gig:Sav:14,Amb:Gig:Sav:13,DiMaPhD:14}; see also the monographs \cite{HKST:15,Bj:Bj:11}.
In our first paper \cite{Amb:Iko:Luc:Pas:24}, we completed the proof of equivalence of several definitions of Sobolev space appearing in the above-mentioned literature, in the setting of arbitrary
metric measure spaces (without doubling or Poincar\'e assumptions involved) and for the whole range of exponents \(p\in[1,\infty)\), and we focussed on the study of fine properties
of Sobolev functions. Building on \cite{Amb:Iko:Luc:Pas:24}, in the present paper we focus on the vector calculus instead, in the same setting and covering -- in most of the
results -- the full range of exponents \(p\in[1,\infty)\). More specifically, we study the following topics:
\begin{itemize}
\item [1)] The notions of {\bf cotangent module} and the related concept of Sobolev differential, as well as {\bf Lipschitz/Sobolev derivations} and their relation to the {\bf tangent module};
the latter allows for the notion of {\bf gradient} of a Sobolev function.  
\item [2)] The duality relation between \textbf{plans} (with barycenter), derivations, and \textbf{metric \(1\)-currents}.
\item [3)] The concept of {\bf divergence measure}, arising in the study of duals of the energy functionals that naturally appear in the Sobolev space theory,
as well as several related applications.
\item [4)] A new notion of {\bf \(p\)-Laplacian} and its properties, together with some applications (e.g.\ to the study of the {\bf condenser capacity}).
\item [5)] In the case \(p>1\), characterisations of the {\bf dual} and of the {\bf predual} of the Sobolev space, and new characterizations of the {\bf reflexivity} of the Sobolev space.
\end{itemize}
We next mention some of the main sources for our investigation in this manuscript. At the end of the paper (Section \ref{sec:bib_notes}), we provide a more detailed bibliographical information.
The contents related to item 1) are based on the theories developed in \cite{Gig:15, Gig:18} and \cite{DiMar:14,DiMaPhD:14}, and on some refinements contained in \cite{Amb:Iko:Luc:Pas:24}.
For the concepts appearing in 2), we rely on the machinery developed in \cite{Sav:22} (and on the relation between plans with barycenters and derivations studied in \cite{Amb:Iko:Luc:Pas:24}),
while \cite{AK:00,Pao:Ste:12,Pao:Ste:13} are our main sources concerning metric currents. The results in 3) provide new characterizations of dual energies studied in \cite{Amb:Sav:21,Sav:22}.
The notion of \(p\)-Laplacian in 4) is novel -- it provides new insights into various problems in metric Potential Theory (e.g.\ on the condenser capacity); see also \cite{Bj:Bj:11}.
The contents of 5) complement the study of dual and predual spaces initiated in \cite{Amb:Sav:21}, as well as the reflexivity questions addressed in \cite{Amb:Col:DiMa:15,Gig:18,Gig:15}. 
In the remaining part of the introduction, we give a more detailed description of the contents of this paper.
\subsection{Contents of this work}
\subsubsection*{Metric Sobolev functions and their quasicontinuous representatives}
Fix a metric measure space \((\X,\sfd,\mm)\) and an exponent \(p\in[1,\infty)\). Our first object of study is the
\[
\textbf{Sobolev space}\quad W^{1,p}(\X),
\]
which is the set of all \(\mm\)-a.e.\ equivalence classes of functions in the \textbf{Newtonian space} \(\bar{N}^{1,p}(\X)\).
The latter consists of those \(p\)-integrable \(\mm\)-measurable functions \(f\colon\X\to\R\) having a \(p\)-integrable
\textbf{weak upper gradient} (in the sense of Definition \ref{def:wug}). A useful variant is the so-called \textbf{Dirichlet
space} \(D^{1,p}_{\mm}(\X)\), which we define as the set of \(\mm\)-a.e.\ equivalence classes of \(\mm\)-measurable functions
(with no integrability assumptions) having a \(p\)-integrable weak upper gradient. See Definition \ref{def:Sobolev} for
the details. We remind that several equivalent definitions of metric Sobolev spaces are studied in the literature; for a proof of such equivalence,
we refer to \cite{Amb:Iko:Luc:Pas:24} and the references therein.
In this paper, we opted for the definition in terms of the \textbf{\(p\)-modulus} \({\rm Mod}_p\) (see Definition
\ref{def:modulus}), which is the appropriate one for introducing the Newtonian space \(\bar{N}^{1,p}(\X)\). The
approaches via approximation with Lipschitz functions and via Lipschitz derivations are presented in Theorems
\ref{thm:density_nrg_Lip} and \ref{thm:Sob_via_der}, respectively. The notion via plans is not used explicitly here.
\medskip

Every Newtonion/Sobolev/Dirichlet function \(f\) on \(\X\) is canonically associated with a
\[
\textbf{minimal weak upper gradient}\quad|Df|\in L^p(\mm)^+.
\]
Then \(\|f\|_{\bar{N}^{1,p}(\X)}\coloneqq\big(\int|f|^p+|Df|^p\,\d\mm\big)^{1/p}\) defines a seminorm on \(\bar{N}^{1,p}(\X)\).
Similarly, \(W^{1,p}(\X)\) becomes a Banach space if endowed with the quotient norm \(\|\cdot\|_{W^{1,p}(\X)}\)
induced by \(\|\cdot\|_{\bar{N}^{1,p}(\X)}\). As in the classical Euclidean setting, a fundamental object associated
to \(\bar{N}^{1,p}(\X)\) is the
\[
\textbf{Sobolev capacity}\quad{\rm Cap}_p.
\]
We remind that \({\rm Cap}_p\) is an outer measure on \(\X\) satisfying \(\mm\ll{\rm Cap}_p\), cf.\ with Definition \ref{def:cap_p}.
See Remarks \ref{rmk:equiv_Cap} and \ref{eq:equiv_Cap_bis} for two equivalent definitions of \({\rm Cap}_p\).
Shortly said, whilst a generic \(L^p(\mm)\)-function is defined only up to \(\mm\)-negligible sets, Sobolev functions
are defined \({\rm Cap}_p\)-almost everywhere. More precisely, if two Newtonian functions \(f,g\in \bar{N}^{1,p}(\X)\) agree
\(\mm\)-a.e., then they agree \({\rm Cap}_p\)-a.e.\ (see Proposition \ref{prop:mm_implies_Cap}). Furthermore, it has
been recently proved in \cite{EB:PC:23} (cf.\ with Theorem \ref{thm:N_are_qc}) that each Newtonian function is
\textbf{quasicontinuous} (in the sense of Definition \ref{def:qc_function}). By building on top of this fact, as well as
of the machinery and results developed in \cite{Deb:Gig:Pas:21}, we prove in Theorem \ref{thm:qc_repr} that there is a
well-defined, continuous \textbf{quasicontinuous-representative map}
\[
{\sf qcr}\colon W^{1,p}(\X)\to\mathcal{QC}(\X),
\]
where \(\mathcal{QC}(\X)\) denotes the set of \({\rm Cap}_p\)-a.e.\ equivalence classes of quasicontinuous functions
(see Definition \ref{def:QC(X)}) equipped with a distance \(\sfd_{\rm qu}\) associated with the \textbf{local quasiuniform
convergence} (Definition \ref{def:qu_conv}). The map \({\sf qcr}\) also enjoys a useful continuity property with
respect to the `convergence in energy' in \(W^{1,p}(\X)\), see Proposition \ref{prop:good_repres_Sob}.
\subsubsection*{Cotangent and tangent modules}
This work focuses on the differential structures induced by metric Sobolev spaces. Despite the lack of a linear and/or smooth structure,
it is still possible to develop an effective vector calculus over an arbitrary metric measure space, thus providing meaningful notions
of `measurable \(1\)-forms' and `measurable vector fields'. Several different approaches have been recently studied in the literature:
in this paper, we deepen our understanding of them and of their mutual relations.
\medskip

Given a metric measure space \((\X,\sfd,\mm)\) and an exponent \(p\in[1,\infty)\), a notion of
\[
\textbf{cotangent module}\quad L^p(T^*\X)
\]
has been introduced in \cite{Gig:18} (see also \cite{Gig:17,Gig:Pas:19}). The elements of \(L^p(T^*\X)\) can be thought of as `\(p\)-integrable
\(1\)-forms' on \(\X\). The cotangent module \(L^p(T^*\X)\) is canonically associated with a
\[
\textbf{differential}\quad \d\colon W^{1,p}(\X)\to L^p(T^*\X),
\]
see Definition \ref{def:cotg_mod}. Technically speaking, the cotangent module is a Banach space endowed with some additional structure:
its elements \(\omega\) can be multiplied by bounded measurable functions (i.e.\ \(L^p(T^*\X)\) is a module over \(L^\infty(\mm)\)) and are
associated with a `pointwise norm' \(|\omega|\in L^p(\mm)^+\). This kind of structure, called an \textbf{\(L^p(\mm)\)-Banach \(L^\infty(\mm)\)-module}
(Definition \ref{def:Lp-Ban_Linfty_mod}) was introduced and thoroughly studied in \cite{Gig:18} with the aim of developing a vector calculus
on metric measure spaces. The differential \(\d\colon W^{1,p}(\X)\to L^p(T^*\X)\), which is characterised by the identity \(|\d f|=|Df|\)
for every \(f\in W^{1,p}(\X)\), satisfies the expected calculus rules (see Proposition \ref{prop:clos_diff} and Theorem \ref{thm:calculus_rules_diff}).
Notice that -- a priori -- a `cotangent bundle' \(T^*\X\) underlying the cotangent module is not given; nevertheless, a posteriori \(L^p\)-Banach \(L^\infty\)-modules
can be represented as section spaces of suitable measurable Banach bundles in several ways (see \cite{Lu:Pa:19,DM:Lu:Pas:2021,Lu:Pas:Voj:24} and the
references therein). These fiberwise descriptions are crucial in addressing some theoretical issues concerning duals and/or pullbacks of \(L^p\)-Banach
\(L^\infty\)-modules that are relevant in analysis on metric measure spaces \cite{Gig:Luc:Pas:22}, but they will not play a role in this paper.
\medskip

A notion of `\(q\)-integrable vector field' (where \(q\) denotes the conjugate exponent of \(p\)) is then obtained by duality with the cotangent module.
Namely, in accordance with \cite{Gig:18}, we define the
\[
\textbf{tangent module}\quad L^q(T\X)
\]
as the dual of \(L^p(T^*\X)\) in the sense of Banach modules, see Definition \ref{def:tg_mod}. Another approach for defining vector fields is in terms
of the concept of \textbf{derivation}. We focus on two notions:
\begin{itemize}
\item Following \cite[Definition 4.4]{Amb:Iko:Luc:Pas:24}, we introduce in Definition \ref{def:Lip_tg_mod} the \textbf{Lipschitz tangent module} \(L^q_\Lip(T\X)\)
as the space generated (in the sense of \(L^q\)-Banach \(L^\infty\)-modules) by the family \({\rm Der}^q_{\mathfrak M}(\X)\) of all `\(q\)-integrable
derivations having measure-valued divergence' (in the sense of \cite{DiMar:14,DiMaPhD:14}). The suffix `Lip' here indicates that the elements of \(L^q_{\Lip}(T\X)\)
are determined by their action on Lipschitz functions. We refer to \cite[Section 4]{Amb:Iko:Luc:Pas:24} for a detailed account of the theory of Lipschitz derivations
and of their role in metric measure geometry.
\item The \textbf{Sobolev tangent module} \(L^q_{\rm Sob}(T\X)\) is the space of `Sobolev \(q\)-derivations' on \(\X\), which are linear operators
\(\delta\colon W^{1,p}(\X)\to L^1(\mm)\) satisfying \(|\delta(f)|\leq g|Df|\) for every \(f\in W^{1,p}(\X)\), for some \(g\in L^q(\mm)^+\). The pointwise \(\mm\)-minimal
such function \(g\) is the so-called pointwise norm \(|\delta|\in L^q(\mm)^+\) of \(\delta\). See Definition \ref{def:Sob_der}, which we slightly adapted from \cite{Gig:18}.
\end{itemize}
A great deal of effort is devoted to studying the compatibility among these spaces. We prove that
\[
L^q_\Lip(T\X)\hookrightarrow L^q_{\rm Sob}(T\X)\cong L^q(T\X).
\]
Namely, the Sobolev tangent module \(L^q_{\rm Sob}(T\X)\) can be canonically identified with the tangent module \(L^q(T\X)\) (Theorem \ref{thm:Lq(TX)=LqSob(TX)}),
while the Lipschitz tangent module \(L^q_{\rm Lip}(T\X)\) is a subspace of the Sobolev tangent module (Theorem \ref{thm:Lip_and_Sob_tg_mod}). The former result is
essentially taken from \cite{Gig:18}, while the latter result is completely new. We also prove (in Theorem \ref{thm:predual_W1p}) that \(L^q_\Lip(T\X)=L^q_{\rm Sob}(T\X)\)
if and only if \(W^{1,p}(\X)\) is reflexive. In particular, since there are metric measure spaces for which $W^{1,p}(X)$ is not reflexive, not even for $p=2$
(by \cite[Proposition 44]{Amb:Col:DiMa:15}),
there are metric measure spaces with \(L^q_\Lip(T\X)\neq L^q_{\rm Sob}(T\X)\).
\medskip

Along the way, we also obtain in Theorem \ref{thm:plans_vs_der_vs_curr} a novel identification result between \textbf{dynamic plans with \(L^q\)-barycenter} (Definition \ref{def:barycenter})
and \(q\)-integrable Lipschitz derivations whose divergence is a finite measure. As an auxiliary step of independent interest, we show that this class of Lipschitz derivations can be
identified with a suitable class of \textbf{normal metric \(1\)-currents} (in the sense of \cite{Amb:Kir:00}), and the proof of such identification relies on the \textbf{superposition principle}
for normal metric \(1\)-currents \cite{Pao:Ste:12,Pao:Ste:13}. In order to drop the integrability assumption on the Lipschitz derivations under consideration, it would be interesting to
understand their relation also with the notion of metric current studied in \cite{La:We:11}; we leave it for a future investigation.
\subsubsection*{Potential theory in metric measure spaces}
We also provide several applications of the differential calculus we discussed above.
\medskip

First, we study the concept of \textbf{divergence measure}, i.e.\ the divergence \(\boldsymbol\div(b)\) of a Lipschitz derivation \(b\in{\rm Der}^q_{\mathfrak M}(\X)\).
We point out that -- for the sake of applications -- we deal with a rather vast class of measures, namely with (possibly 
unbounded) \textbf{boundedly-finite signed Borel measures}
$\mu$ in the sense of Definition \ref{def:bf_sgn_Borel_meas}. The \textbf{dual dynamic cost} of a divergence measure \(\mu\) is
\[
{\sf D}_q(\mu)\coloneqq\inf\Big\{\|b\|_{{\rm Der}^q(\X)}\;\Big|\;b\in{\rm Der}^q_{\mathfrak M}(\X),\,\boldsymbol\div(b)=\mu\Big\}.
\]
We prove in Theorem \ref{thm:F_p=D_q} that
\[
{\sf D}_q(\mu)={\sf F}_p(\mu),
\]
where \({\sf F}_p(\mu)\) denotes the \textbf{dual \(p\)-energy}, which is the conjugate of the \textbf{pre-Cheeger energy functional} \(\mathcal E_{p,\lip}\)
defined in \eqref{eq:pre-Cheeger}, cf.\ Remark \ref{rmk:comments_def_F_p} i). As a byproduct of the identity \({\sf D}_q={\sf F}_p\), we obtain that
\[
\mu\ll{\rm Cap}_p\quad\text{ for every divergence measure }\mu,
\]
see Proposition \ref{prop:mu_ll_Cap}. In turn, this implies (Theorem \ref{thm:L_mu}) that each divergence measure \(\mu\) induces an element \(L_\mu\colon W^{1,p}(\X)\to\R\)
in the dual of \(W^{1,p}(\X)\). Whenever $f \in W^{1,p}(\X) \cap L^{\infty}(\mm)$ is boundedly-supported, \(L_\mu\) acts as
\[
L_\mu(f)=\int{\sf qcr}(f)\,\d\mu\quad\text{ for every }f\in W^{1,p}(\X)\cap L^\infty(\mm),
\]
cf.\ Remark \ref{rmk:L_mu_for_mu_finite}. The inclusion \(L^q_\Lip(T\X)\subseteq L^q_{\rm Sob}(T\X)\) is obtained in Theorem \ref{thm:Lip_and_Sob_tg_mod} by using \(L_\mu\).

In light of the aforementioned connection between Lipschitz derivations with divergence
and dynamic plans with \(L^q\)-barycenter, we also show (under the additional assumption that the measures \(\mm\) and \(\mu\) are finite) that
\[
{\sf D}_q(\mu)={\sf B}_q(\mu),
\]
where \({\sf B}_q(\mu)\) stands for the dual dynamic cost induced by dynamic plans with \(L^q\)-barycenter \eqref{eq:def_B_q_mu}; see Proposition \ref{prop:B_q=D_q}.
This sort of result cannot hold for arbitrary (infinite) reference measures \(\mm\), as we show in Remark \ref{rmk:m_inf_B_q_diff}.
\medskip

Another concept we will encounter is that of a \textbf{gradient}. As it was originally proved in \cite{Gig:18}, each Dirichlet function \(f\in D^{1,p}_{\mm}(\X)\) is associated
with a (non-empty) collection
\[
{\sf Grad}_q(f)\subseteq L^q(T\X)
\]
of \(q\)-gradients, see Definition \ref{def:gradient}. However, gradients are not necessarily unique. If \(p,q\in(1,\infty)\), metric measure spaces where \(q\)-gradients
are unique are said to be \textbf{\(q\)-infinitesimally strictly convex}, see Definition \ref{def:inf_strict_conv} and Corollary \ref{cor:equiv_inf_strict_conv}; this class of 
spaces was introduced in \cite{Gig:15}. In the case where \(p=q=2\), a distinguished subclass is that of \textbf{infinitesimally Hilbertian} metric measure spaces, also introduced
in \cite{Gig:15}. This condition -- which amounts to asking for the Hilbertianity of the Sobolev space \(W^{1,2}(\X)\) (see Definition \ref{def:inf_Hilb}) --
had a fundamental role in the development of the theory of spaces with synthetic Ricci curvature bounds, the so-called \textbf{\(\sf RCD\) spaces} \cite{AmbICM,Gig:23}.
\medskip

Furthermore, we propose and study a notion of \textbf{Laplacian measure}, see Definition \ref{def:laplacian}. We say that a boundedly-finite signed Borel measure \(\mu\) on \(\X\)
is a \(p\)-Laplacian of a function \(f\in D^{1,p}_{\mm}(\X)\), with \(p\in[1,\infty)\), provided it holds that
\[
-\int\psi\,\d\mu\leq\int_{\{|Df|>0\}}\frac{|D(f+\psi)|^p-|Df|^p}{p}\,\d\mm\quad\text{ for every }\psi\in\LIP_{bs}(\X).
\]
When \(p \in (1,\infty)\), the above condition is equivalent to requiring that \(-\int\psi\,\d\mu\leq\mathcal E_p(f+\psi)-\mathcal E_p(f)\) for every \(f\in\LIP_{bs}(\X)\), where \(\mathcal E_p\)
denotes the \textbf{Cheeger \(p\)-energy functional} (Definition \ref{rmk:equiv_Lapl_p>1}), but this is no longer true when \(p=1\); cf.\ with Remark \ref{rmk:equiv_Lapl_p>1}. We denote by
\[
\boldsymbol\Delta_p(f)\quad\text{the set of all }p\text{-Laplacians of }f\in D^{1,p}_{\mm}(\X).
\]
In Theorem \ref{thm:Lapl_are_div} and Lemma \ref{lem:grad_impl_Lapl}, we prove that every \(p\)-Laplacian of a given Dirichlet function \(f\) is the divergence of some gradient of \(f\),
and vice versa. Namely,
\[
\boldsymbol\Delta_p(f)=\big\{\boldsymbol\div(v)\;\big|\;v\in{\sf Grad}_q(f)\big\}\quad\text{ for every }f\in D^{1,p}_{\mm}(\X).
\]
In particular, in the setting of \(q\)-infinitesimally strictly convex spaces, \(p\)-Laplacians are unique, in the sense that every Dirichlet function has at most one \(p\)-Laplacian
(Corollary \ref{cor:str_convex_Delta_implies_div}).  In the forthcoming work \cite{Amb:Iko:Luc:Pas:3},
we will address the question about the `surjectivity of the \(p\)-Laplacian' and show that under additional assumptions on the space (satisfied, for instance, by the class of \(p\)-PI spaces),
it holds that \emph{every} divergence measure is the \(p\)-Laplacian of some Sobolev function.
\medskip

As an application of our study of divergence measures, gradients, and Laplacians, we obtain a new result regarding \textbf{condenser capacities}.
Given two disjoint sets \(E,F\subseteq\X\), we denote by
\[
{\rm Cap}_p(E,F)\coloneqq\inf\bigg\{p\,\mathcal E_p(f)\;\bigg|\;f\in \bar{N}^{1,p}(\X),\,0\leq f\leq 1,\,f=0\text{ on }E,\,f=1\text{ on }F\bigg\}
\]
the \(p\)-condenser capacity of \((E,F)\), cf.\ with Definition \ref{def:condens_cap}. Assuming \(p \in (1,\infty)\), we recall the following folklore fact: if \(E\) and \(F\) are Souslin sets, \({\rm Cap}_p(E,F)\) coincides with the \(p\)-modulus of the family \(\Gamma(E,F)\) of curves joining \(E\) and \(F\) (Proposition \ref{prop:Mod_inf_E_p}). As an application of the $p$-Laplacian above, we prove that whenever the measure \(\mm\) is finite and ${\rm Cap}_p( E, F ) \in (0,\infty)$, the minimisation problem defining \({\rm Cap}_p(E,F)\) admits a minimum \(f\) which has a finite \(p\)-Laplacian \(\mu\in\boldsymbol\Delta_p(f)\) whose positive and negative parts are concentrated on \(E\) and \(F\), respectively (Theorem \ref{thm:condens_cap}). 
The study of such a capacitary potential plays a prominent role in the recent uniformization theorem for metric surfaces \cite{Raj:17}; in the duality of capacities results in the plane \cite{AB:50} and on metric surfaces \cite{Raj:17,RR:19,Es:Iko:Raj}; and their higher-dimensional analogs \cite{Zie:67,Loh:21,EB:PC:22:reciprocal}.
\subsubsection*{Duality and reflexivity of Sobolev spaces}
Leveraging the results discussed above and classical tools in convex analysis, we study duals and preduals of
metric Sobolev spaces. Given a metric measure space \((\X,\sfd,\mm)\) and \(p\in(1,\infty)\), we identify a subspace \(W^{-1,q}_{pd}(\X)\) of the dual
\(W^{-1,q}(\X)\) of \(W^{1,p}(\X)\) that is isomorphic to some \textbf{predual} of \(W^{1,p}(\X)\) (see Definition \ref{def:pd_Sob} and Theorem \ref{thm:char_pred_Sob}),
thus extending previous results of \cite{Amb:Sav:21}. Furthermore, in Theorem \ref{thm:der_and_W-1q-0} we clarify the relation between the Sobolev/Lipschitz
tangent module and the dual/predual of the Sobolev space (we omit to state the result here, as it is rather technical and it requires some additional terminology).
As a consequence, the space of \(q\)-integrable derivations with \(L^q\)-divergence is dense in the Lipschitz tangent module (see Corollary \ref{cor:density_vf_with_div}).
\medskip

Finally, we obtain a new result (i.e.\ Theorem \ref{thm:predual_W1p}) on the \textbf{reflexivity} of the Sobolev space:
we prove that the reflexivity of \(W^{1,p}(\X)\) is equivalent to
the identification \(L^q_\Lip(T\X)\cong L^q_{\rm Sob}(T\X)\), as well as to 
the density of \(q\)-integrable derivations with \(L^q\)-divergence in \(L^q(T\X)\).
\paragraph{Acknowledgements.} 
The authors wish to thank Tero Kilpel\"{a}inen for several helpful discussions on the contents of this paper.
L.A.\ and E.P.\ have been supported by the MIUR-PRIN 202244A7YL project “Gradient Flows and Non-Smooth
Geometric Structures with Applications to Optimization and Machine Learning”. E.P.\ has been supported also by the Research Council of Finland, grant number 362898. 
T.I.\ has been supported by the Academy of Finland, project numbers 308659 and 332671, by the Vilho, Yrj\"{o} and Kalle V\"{a}is\"{a}l\"{a} Foundation, and by the Swiss National Science Foundation grant 212867. D.L.\ has been supported by the Research Council of Finland, grant number 362689.

\section{Preliminaries}
Let us first fix some notation and terminology that will be used throughout the whole paper.
\medskip

By \(\mathscr L^n\) we mean the \textbf{Lebesgue measure} on \(\R^n\).
Given a set \(\X\) and two functions \(f,g\colon\X\to\R\), we define \(f\vee g\) and \(f\wedge g\) as the \textbf{maximum} and \textbf{minimum}
between \(f\) and \(g\), respectively. Namely,
\[
(f\vee g)(x)\coloneqq\max\{f(x),g(x)\},\quad(f\wedge g)(x)\coloneqq\min\{f(x),g(x)\}\quad\text{ for every }x\in\X.
\]
The \textbf{positive part} and the \textbf{negative part} of \(f\) are given by \(f^+\coloneqq f\vee 0\) and \(f^-\coloneqq-f\wedge 0\), respectively.
The \textbf{characteristic function} \(\1_E\colon\X\to\{0,1\}\) of a set \(E\subseteq\X\) is defined as
\[
\1_E(x)\coloneqq\left\{\begin{array}{ll}
1\\
0
\end{array}\quad\begin{array}{ll}
\text{ if }x\in E,\\
\text{ if }x\in\X\setminus E.
\end{array}\right.
\]
Given any exponent \(p\in[1,\infty]\), we always tacitly denote by \(q\in[1,\infty]\) its \textbf{conjugate exponent}, and vice versa.
By \(\ell^p\) we mean the Banach space of \(p\)-summable real-valued sequences, and by \(c_0\) the closed subspace of \(\ell^\infty\)
consisting of those sequences that vanish at infinity. Given a normed space \(\mathbb V\), we denote by \(\mathbb V'\) its \textbf{(continuous) dual}, which is
a Banach space. The duality pairing between \(\mathbb V'\) and \(\mathbb V\) will be occasionally denoted by \(\mathbb V'\times\mathbb V\ni(\omega,v)\mapsto\langle\omega,v\rangle\).
Given a vector space \(V\) and a subset \(S\) of \(V\), we denote by \({\rm conv}(S)\subseteq V\) the \textbf{convex hull} of \(S\) in \(V\), i.e.
\[
{\rm conv}(S)\coloneqq\bigg\{\sum_{i=1}^n\lambda_i v_i\;\bigg|\;n\in\N,\,(\lambda_i)_{i=1}^n\subseteq[0,1],\,\sum_{i=1}^n\lambda_i=1,\,(v_i)_{i=1}^n\subseteq S\bigg\}.
\]
A \textbf{convex set} $C \subseteq V$ is a set that coincides with ${\rm conv}(C)$. A \textbf{convex cone} $C \subseteq V$ is a convex set such that $\lambda C \subseteq C$
for every $\lambda>0$.
If \(C\) is a convex cone of \(V\) and \(f\colon C\to\R\) is given, then we say that \(f\) is \textbf{additive} if it satisfies \(f(v+w)=f(v)+f(w)\) for every \(v,w\in C\),
while we say that \(f\) is \textbf{positively \(1\)-homogeneous} if \(f(\lambda v)=\lambda f(v)\) for every \(\lambda\in\R\) with \(\lambda\geq 0\) and \(v\in C\).
\subsection{Metric measure spaces}
In this work, the main objects of study are \emph{metric measure spaces}. We adopt this definition:
\begin{definition}[Metric measure space]
By a \textbf{metric measure space} we mean a triple \((\X,\sfd,\mm)\),
\[\begin{split}
(\X,\sfd)&\quad\text{ is a complete and separable metric space,}\\
\mm\geq 0&\quad\text{ is a boundedly-finite Borel measure on }\X,
\end{split}\]
where by \textbf{boundedly-finite} we mean that \(\mm(B)<+\infty\) whenever \(B\subseteq\X\) is a bounded Borel set.
\end{definition}

In the rest of this section, we will collect a number of useful definitions and results in the theory of metric/measure spaces.
The presentation is essentially taken from \cite[Section 2]{Amb:Iko:Luc:Pas:24}.
\subsubsection*{Metric geometry}
Let \((\X,\sfd_\X)\) and \((\Y,\sfd_\Y)\) be metric spaces. We denote by \(C(\X;\Y)\) the space of \emph{continuous maps} from \(\X\)
to \(\Y\). The subspace \(C_b(\X;\Y)\) of \(C(\X;\Y)\), which is defined as the set of all bounded elements of \(C(\X;\Y)\), is a metric
space if endowed with the \emph{supremum distance}
\[
\sfd_{C_b(\X;\Y)}(f,g)\coloneqq\sup_{x\in\X}\sfd_\Y(f(x),g(x))\quad\text{ for every }f,g\in C_b(\X;\Y).
\]
If \(\Y\) is complete, then \(C_b(\X;\Y)\) is complete. If \(\X\) is compact and \(\Y\) is separable, then the space \(C_b(\X;\Y)=C(\X;\Y)\) is separable.
When \(\Y=\R\) together with the Euclidean distance, we adopt the shorthand notations \(C(\X)\coloneqq C(\X;\R)\) and \(C_b(\X)\coloneqq C_b(\X;\R)\).
Moreover, we define various spaces of \emph{Lipschitz maps} as follows:
\[\begin{split}
\LIP(\X;\Y)&\coloneqq\big\{f\in C(\X;\Y)\;\big|\;f\text{ is Lipschitz}\big\},\\
\LIP(\X)&\coloneqq\LIP(\X;\R),\\
\LIP_b(\X)&\coloneqq\LIP(\X)\cap C_b(\X),\\
\LIP_{bs}(\X)&\coloneqq\big\{f\in\LIP_b(\X)\;\big|\;f\text{ has bounded support}\big\}.
\end{split}\]
Given any map \(f\in\LIP(\X;\Y)\), we denote by \(\Lip(f)\in[0,\infty)\) its \emph{(global) Lipschitz constant}
\begin{equation}\label{eq:global_lip}
\Lip(f)\coloneqq\sup\bigg\{\frac{\sfd_\Y(f(x),f(y))}{\sfd_\X(x,y)}\;\bigg|\;x,y\in\X\text{ with }x\neq y\bigg\}.
\end{equation}
Furthermore, the \textbf{slope} \(\lip(f)\colon\X\to[0,\infty)\) and the \textbf{asymptotic slope} \(\lip_a(f)\colon\X\to[0,\infty)\) are
\begin{equation}\label{eq:local_asym_lip}\begin{split}
\lip(f)(x)&\coloneqq\limsup_{y\to x}\frac{\sfd_\Y(f(x),f(y))}{\sfd_\X(x,y)},\\
\lip_a(f)(x)&\coloneqq\limsup_{y,z\to x}\frac{\sfd_\Y(f(y),f(z))}{\sfd_\X(y,z)}=\inf_{r>0}\Lip(f|_{B_r(x)})\\
\end{split}\end{equation}
if \(x\in\X\) is an accumulation point of $\X \setminus \{x\}$, and \(\lip(f)(x)=\lip_a(f)(x)\coloneqq 0\) otherwise. It is easy to check that
\begin{equation}\label{eq:convex_ineq_slope}
\lip_a(fg)\leq|f|\lip_a(g)+|g|\lip_a(f)\quad\text{ for every }f,g\in\LIP_b(\X).
\end{equation}
Given any exponent \(p\in[1,\infty)\), we define the \textbf{pre-Cheeger \(p\)-energy functional} \(\mathcal E_{p,\lip}\) as
\begin{equation}\label{eq:pre-Cheeger}
\mathcal E_{p,\lip}(f)\coloneqq\frac{1}{p}\int\lip_a(f)^p\,\d\mm\quad\text{ for every }f\in\LIP_{bs}(\X).
\end{equation}
\subsubsection*{Non-negative measures}
Let \((\X,\Sigma,\mm)\) be a measure space. Then we define the spaces \(\mathcal L^0_{\rm ext}(\mm)\) and \(\mathcal L^0(\mm)\) as
\[\begin{split}
\mathcal L^0_{\rm ext}(\mm)&\coloneqq\big\{f\colon\X\to[-\infty,+\infty]\;\big|\;f\text{ is $\Sigma$-measurable}\big\},\\
\mathcal L^0(\mm)&\coloneqq\big\{f\in\mathcal L^0_{\rm ext}(\mm)\;\big|\;f(x)\in\R\text{ for every }x\in\X\big\}.
\end{split}\]
In fact, \(\mathcal L^0_{\rm ext}(\mm)\) and \(\mathcal L^0(\mm)\) depend exclusively on the \(\sigma\)-algebra \(\Sigma\) (and not
on \(\mm\)), but we prefer to keep a shorter and unified notation for all the Lebesgue-type spaces. Most often
$\Sigma$ will be tacitly defined (in most cases, the Borel $\sigma$-algebra of the metric space $\X$). The space \(\mathcal L^0(\mm)\) is a commutative
algebra with respect to the natural pointwise operations. Given any \(p\in[1,\infty)\), we define
\[\begin{split}
\mathcal L^p_{\rm ext}(\mm)&\coloneqq\bigg\{f\in\mathcal L^0_{\rm ext}(\mm)\;\bigg|\;\|f\|_{\mathcal L^p(\mm)}\coloneqq\bigg(\int|f|^p\,\d\mm\bigg)^{1/p}<+\infty\bigg\},\\
\mathcal L^p(\mm)&\coloneqq\mathcal L^p_{\rm ext}(\mm)\cap\mathcal L^0(\mm),\\
\mathcal L^\infty(\mm)&\coloneqq\bigg\{f\in\mathcal L^0(\mm)\;\bigg|\;\sup_\X|f|<+\infty\bigg\}.
\end{split}\]
Then \(\mathcal L^p(\mm)\) is a vector subspace of \(\mathcal L^0(\mm)\), while \(\mathcal L^\infty(\mm)\) is a subalgebra of \(\mathcal L^0(\mm)\)
(and depends only on \(\Sigma\), not on \(\mm\)). Moreover, \((\mathcal L^p(\mm),\|\cdot\|_{\mathcal L^p(\mm)})\) is a seminormed space,
while \(\mathcal L^\infty(\mm)\) is a Banach space with respect to the norm \(f\mapsto\sup_\X|f|\). Denoting by \(\sim_\mm\) the equivalence relation
on \(\mathcal L^0_{\rm ext}(\mm)\) that identifies two functions if they agree \(\mm\)-almost everywhere, we define the spaces \(L^0_{\rm ext}(\mm)\)
and \(L^0(\mm)\) as
\begin{equation}\label{eq:ext_measurable}
L^0_{\rm ext}(\mm)\coloneqq\mathcal L^0_{\rm ext}(\mm)/\sim_\mm,\qquad L^0(\mm)\coloneqq\mathcal L^0(\mm)/\sim_\mm.
\end{equation}
We will denote by
\[
f^\mm\in L^0_{\rm ext}(\mm)\quad\text{ the equivalence class of }f\in\mathcal L^0_{\rm ext}(\mm)\text{ with respect to }\sim_\mm.
\]
For example, we will often consider characteristic functions of $\Sigma$-measurable sets, namely
\[
\1_E^\mm\in L^0(\mm)\quad\text{ for any }E\in\Sigma.
\]
Moreover, given any subspace \(S\) of \(\mathcal L^0(\mm)\), we denote
\[
S^\mm\coloneqq\big\{f^\mm\;\big|\;f\in S\big\}\subseteq L^0(\mm).
\]

For any exponent \(p\in[1,\infty]\), we consider the \textbf{\(p\)-Lebesgue space} \(L^p(\mm)\), which is given by
\begin{equation}\label{eq:Lebesgue_space}
L^p(\mm)\coloneqq\mathcal L^p(\mm)/\sim_\mm=\mathcal L^p(\mm)^\mm.
\end{equation}
The vector space \(L^p(\mm)\) inherits a complete norm \(\|\cdot\|_{L^p(\mm)}\) from \((\mathcal L^p(\mm),\|\cdot\|_{\mathcal L^p(\mm)})\).
\medskip

We also recall that \(\mathcal L^0_{\rm ext}(\mm)\) is a lattice (i.e.\ a partially ordered set admitting all finite suprema and infima)
with respect to the pointwise order, i.e.\ \(f,g\in\mathcal L^0_{\rm ext}(\mm)\)
satisfy \(f\leq g\) if and only if \(f(x)\leq g(x)\) for every \(x\in\X\). Similarly, \(L^0_{\rm ext}(\mm)\) is a lattice with respect to the \(\mm\)-a.e.\ pointwise
order, i.e.\ \(f,g\in L^0_{\rm ext}(\mm)\) satisfy \(f\leq g\) if and only if \(f(x)\leq g(x)\) for \(\mm\)-a.e.\ \(x\in\X\). Moreover, for any exponent
\(p\in\{0\}\cup[1,\infty]\) we have that \(\mathcal L^p(\mm)\) and \(L^p(\mm)\) are sublattices of \(\mathcal L^0(\mm)\) and
\(L^0(\mm)\), respectively, as well as Riesz spaces.
We recall that a Riesz space is a vector space \(V\) equipped with a lattice structure \(\leq\) that satisfies the following compatibility conditions:
\[\begin{split}
u+w\leq v+w&\quad\text{ for every }u,v,w\in V\text{ with }u\leq v,\\
\lambda u\geq 0&\quad\text{ for every }\lambda\in(0,\infty)\text{ and }u\in V\text{ with }u\geq 0.
\end{split}\]

\begin{remark}{\rm
In the case where \((\X,\Sigma,\mm)\) is a \(\sigma\)-finite measure space, the lattice \(L^0_{\rm ext}(\mm)\)
-- thus also \(L^p(\mm)\) for any \(p\in\{0\}\cup[1,\infty]\) -- is \emph{Dedekind complete}. This means that every given
family \(\{f_i\}_{i\in I}\subseteq L^0_{\rm ext}(\mm)\) has a (unique) supremum \(\bigvee_{i\in I}f_i\in L^0_{\rm ext}(\mm)\),
characterised as follows:
\begin{itemize}
\item \(f_j\leq\bigvee_{i\in I}f_i\) for every \(j\in I\),
\item if \(g\in L^0_{\rm ext}(\mm)\) satisfies \(f_j\leq g\) for every \(j\in I\), then \(\bigvee_{i\in I}f_i\leq g\).
\end{itemize}
Similarly, the family \(\{f_i\}_{i\in I}\subseteq L^0_{\rm ext}(\mm)\) has a (unique) infimum
\(\bigwedge_{i\in I}f_i\in L^0_{\rm ext}(\mm)\). See e.g.\ \cite{Fre:12}.
}\end{remark}
Given any Riesz space \(V\), we denote by \(V^+\coloneqq\{v\in V\,:\,v\geq 0\}\) its \emph{positive cone}. For example,
\[
\mathcal L^p(\mm)^+\coloneqq\big\{f\in\mathcal L^p(\mm)\;\big|\;f\geq 0\big\},
\qquad L^p(\mm)^+\coloneqq\big\{f\in L^p(\mm)\;\big|\;f\geq 0\big\}.
\]
We also denote \(\mathcal L^p_{\rm ext}(\mm)^+\coloneqq\big\{f\in\mathcal L^p_{\rm ext}(\mm)\;\big|\;f\geq 0\big\}\).
\begin{remark}\label{rmk:L0_bar_mm_ident}{\rm
We denote by \(\Sigma_\mm\) the \(\sigma\)-algebra of all \(\mm\)-measurable subsets of \(\X\) (i.e.\ the completion
of the \(\sigma\)-algebra \(\Sigma\) with respect to \(\mm\)) and by \(\bar\mm\colon\Sigma_\mm\to[0,\infty]\) the
completion of the measure \(\mm\).
Observe that \(\mathcal L^0_{\rm ext}(\mm)\) is contained in \(\mathcal L^0_{\rm ext}(\bar\mm)\),
but the two spaces might differ. However, \(L^0_{\rm ext}(\bar\mm)\) can be canonically identified with \(L^0_{\rm ext}(\mm)\)
by means of the map
\begin{equation}\label{eq:def_i_m}
{\rm i}_\mm(f^\mm)\coloneqq f^{\bar\mm}\in L^0_{\rm ext}(\bar\mm)\quad\text{ for every }f\in\mathcal L^0_{\rm ext}(\mm)
\end{equation}
which defines a bijection \({\rm i}_\mm\colon L^0_{\rm ext}(\mm)\to L^0_{\rm ext}(\bar\mm)\). Also, \({\rm i}_\mm(L^p(\mm))=L^p(\bar\mm)\)
for any \(p\in\{0\}\cup[1,\infty]\).
}\end{remark}
Given any measure space \((\X,\Sigma,\mm)\), the elements of \(\mathcal L^0_{\rm ext}(\bar\mm)\) are called the \textbf{\(\mm\)-measurable}
functions on \(\X\). We also define \(\pi_\mm\colon\mathcal L^0_{\rm ext}(\bar\mm)\to L^0_{\rm ext}(\mm)\) as the inverse of the
operator \({\rm i}_\mm\) in \eqref{eq:def_i_m}, namely
\begin{equation}\label{eq:def_pi_m}
\pi_\mm(f)\coloneqq{\rm i}_\mm^{-1}(f^{\bar\mm})\in L^0_{\rm ext}(\mm)\quad\text{ for every }f\in\mathcal L^0_{\rm ext}(\bar\mm),
\end{equation}
so that $\pi_\mm$ maps $\mathcal L^0(\bar\mm)$ to $L^0(\mm)$.
\medskip

In the setting of metric measure spaces, we shall also consider local Lebesgue spaces. Given a metric measure space \((\X,\sfd,\mm)\) and an exponent \(p\in[1,\infty]\),
we define
\[
\mathcal L^p_{loc}(\X,\sfd,\bar\mm)\coloneqq\Big\{f\in\mathcal L^0(\bar\mm)\;\Big|\;\text{for any }x\in\X\text{ there is }r_x>0\text{ such that }\1_{B_{r_x}(x)}f\in\mathcal L^p(\bar\mm)\Big\}.
\]
We then define the space \(L^p_{loc}(\X,\sfd,\mm)\) as
\[
L^p_{loc}(\X,\sfd,\mm)\coloneqq\big\{\pi_\mm(f)\;\big|\;f\in\mathcal L^p_{loc}(\X,\sfd,\bar\mm)\big\}\subseteq L^0(\mm).
\]
We declare that a sequence \((f_n)_n\subseteq L^p_{loc}(\X,\sfd,\mm)\) \textbf{converges in \(L^p_{loc}(\X,\sfd,\mm)\)} to the function \(f_\infty\in L^p_{loc}(\X,\sfd,\mm)\)
if for any \(x\in\X\) there exists a radius \(r_x>0\) such that \(\1_{B_{r_x}(x)}^\mm f_n\in L^p(\mm)\) for every \(n\in\N\cup\{\infty\}\) and
\(\|\1_{B_{r_x}(x)}^\mm f_n-\1_{B_{r_x}(x)}^\mm f_\infty\|_{L^p(\mm)}\to 0\) as \(n\to\infty\).
\subsubsection*{Signed measures}
Besides non-negative measures, we shall also consider signed Borel measures. Given any complete and separable
metric space \((\X,\sfd)\), we denote by \(\mathcal M(\X)\) the vector space of all \textbf{finite signed Borel measures}
on \(\X\) and by \(\mathcal M_+(\X)\) its positive cone, consisting of (non-negative) finite Borel measures
on \(\X\). Given any \(\mu\in\mathcal M(\X)\), we denote by \(\mu^+,\mu^-\in\mathcal M_+(\X)\) its positive and
negative parts, respectively, and by \(|\mu|\coloneqq\mu^+ +\mu^-\in\mathcal M_+(\X)\) its \textbf{total variation measure}.
\medskip

More generally, we will consider boundedly-finite signed Borel measures. With \(\mathscr B(\X)\) we denote the Borel
\(\sigma\)-algebra of a metric space \((\X,\sfd)\), while \(\mathscr B_b(\X)\) stands for the \(\delta\)-ring of all
bounded Borel subsets of \(\X\). Following \cite[Definition 2.5]{Amb:Iko:Luc:Pas:24}, we then give the ensuing definition:
\begin{definition}[Boundedly-finite signed Borel measure]\label{def:bf_sgn_Borel_meas}
Let \((\X,\sfd)\) be a complete and separable metric space. Then we say that a function \(\mu\colon\mathscr B_b(\X)\to\R\)
is a \textbf{boundedly-finite signed Borel measure} on \(\X\) if \(\mu_B\in\mathcal M(\X)\) holds for every \(B\in\mathscr B_b(\X)\),
where we define
\[
\mu_B(E)\coloneqq\mu(E\cap B)\quad\text{ for every }E\in\mathscr B(\X).
\]
We denote by \(\mathfrak M(\X)\) the space of all boundedly-finite signed Borel measures on \(\X\).
\end{definition}

The \textbf{positive part} \(\mu^+\) of \(\mu\in\mathfrak M(\X)\) is the unique Borel measure \(\mu^+\geq 0\) on \(\X\) such that
\[
\mu^+(E)=\mu^+_B(E)\quad\text{ for every }B,E\in\mathscr B_b(\X)\text{ with }E\subseteq B.
\]
The \textbf{negative part} of \(\mu\) is \(\mu^-\coloneqq(-\mu)^+\). The \textbf{total variation measure} of \(\mu\) is then defined as
\[
|\mu|\coloneqq\mu^+ +\mu^-.
\]
Given any \(f\in\mathcal L^1(|\mu|)\) or \(f\in L^1(|\mu|)\), we can define its \textbf{Lebesgue integral} \(\int f\,\d\mu\in\R\) as
\[
\int f\,\d\mu\coloneqq\int f\,\d\mu^+ -\int f\,\d\mu^-.
\]
Notice that \(f\in\mathcal L^1(|\mu|)\) for every bounded Borel function \(f\colon\X\to\R\) having bounded support.
\medskip

If \(\mu\in\mathfrak M(\X)\) is given and \(\nu\) is an outer measure on \(\X\), then we say that \(\mu\) is
\textbf{absolutely continuous} with respect to \(\nu\), and we write \(\mu\ll\nu\), if it holds that
\[
|\mu|(N)=0\quad\text{ for every }N\in\mathscr B(\X)\text{ such that }\nu(N)=0.
\]
\subsubsection*{Modulus and plans}
Let \((\X,\sfd)\) be a complete and separable metric space. By a \textbf{curve} in \(\X\) we mean a continuous map
\(\gamma\colon[a,b]\to\X\), where \(a,b\in\R\) satisfy \(a<b\). The collection of all curves in \(\X\) is denoted by
\[
\mathscr C(\X)\coloneqq\bigcup_{\substack{a,b\in\R:\\a<b}}C([a,b];\X).
\]
Given any \(\gamma\in\mathscr C(\X)\), we denote by \([a_\gamma,b_\gamma]\) its interval of definition.
The value of \(\gamma\) at \(t\in[a_\gamma,b_\gamma]\) will be denoted by \(\gamma(t)\) or \(\gamma_t\).
For any \(t\in[0,1]\), the \textbf{evaluation map} \(\e_t\colon C([0,1];\X)\to\X\) is
\begin{equation}\label{eq:def_e_t}
\e_t(\gamma)\coloneqq\gamma_t\quad\text{ for every }\gamma\in C([0,1];\X).
\end{equation}
Notice that \(\e_t\) is \(1\)-Lipschitz. The \textbf{length} \(\ell(\gamma)\in[0,\infty]\) of a curve
\(\gamma\in\mathscr C(\X)\) is defined as
\[
\ell(\gamma)\coloneqq\sup\bigg\{\sum_{i=1}^n\sfd(\gamma_{t_i},\gamma_{t_{i-1}})\;\bigg|\;
n\in\N,\,a_\gamma=t_0<t_1<\ldots<t_n=b_\gamma\bigg\}.
\]
We say that \(\gamma\) is \textbf{rectifiable} if \(\ell(\gamma)<+\infty\). For any \(a,b\in\R\) with \(a<b\),
we denote by \(R([a,b];\X)\) the set of all rectifiable curves \(\gamma\colon[a,b]\to\X\). Since
\(C([a,b];\X)\ni\gamma\mapsto\ell(\gamma)\) is lower semicontinuous, we have that \(R([a,b];\X)\) is an \(F_\sigma\) subset
(thus, a Borel subset) of \(C([a,b];\X)\). Moreover, we denote
\[
\mathscr R(\X)\coloneqq\bigcup_{\substack{a,b\in\R:\\a<b}}R([a,b];\X)\subseteq\mathscr C(\X).
\]
Each \(\gamma\in\mathscr R(\X)\) has a unique \textbf{constant-speed reparametrisation}
\(\gamma^{\sf cs}\in\LIP([0,1];\X)\), thus
\[
\ell(\gamma^{\sf cs}|_{[s,t]})=\ell(\gamma)(t-s)\quad\text{ for every }s,t\in[0,1]\text{ with }s<t.
\]
Given a Borel function \(\rho\colon\X\to[0,\infty]\) and \(\gamma\in\mathscr C(\X)\), we define the
\textbf{path integral} of \(\rho\) over \(\gamma\) as
\begin{equation}\label{eq:def_path_int}
\int_\gamma\rho\,\d s\coloneqq\ell(\gamma)\int_0^1\rho(\gamma^{\sf cs}(t))\,\d t\in[0,\infty],
\end{equation}
where we adopt the convention that \(\int_\gamma\rho\,\d s\coloneqq+\infty\) if \(\gamma\notin\mathscr R(\X)\).
We also have that
\begin{equation}\label{eq:meas_path_int}
C([0,1];\X)\ni\gamma\mapsto\int_\gamma\rho\,\d s\in[0,\infty]\quad\text{ is Borel measurable},
\end{equation}
as it is proved e.g.\ in \cite[Corollary 2.23]{Amb:Iko:Luc:Pas:24}. In the next definition, $\mathcal L^p_{\rm ext}(\mm)^+$
is canonically referred to the Borel $\sigma$-algebra of $\X$.
\begin{definition}[Modulus]\label{def:modulus}
Let \((\X,\sfd,\mm)\) be a metric measure space and \(p\in[1,\infty)\). Let \(\Gamma\subseteq\mathscr C(\X)\) be given.
Then we define the \textbf{\(p\)-modulus} \({\rm Mod}_p(\Gamma)\in[0,\infty]\) of \(\Gamma\) as
\[
{\rm Mod}_p(\Gamma)\coloneqq\inf\bigg\{\int\rho^p\,\d\mm\;\bigg|\;\rho\in\mathcal L^p_{\rm ext}(\mm)^+
\text{ such that }\int_\gamma\rho\,\d s\geq 1\text{ for every }\gamma\in\Gamma\bigg\}.
\]
\end{definition}

Let us collect some properties of the \(p\)-modulus:
\begin{itemize}
\item \({\rm Mod}_p\) is an outer measure on \(\mathscr C(\X)\).
\item \({\rm Mod}_p(\Gamma)=0\) for every \(\Gamma\subseteq\mathscr C(\X)\setminus\mathscr R(\X)\).
\item \({\rm Mod}_p(\Gamma)=+\infty\) if \(\Gamma\subseteq\mathscr C(\X)\) contains some constant curve.
\end{itemize}
\begin{definition}[Plan]
Let \((\X,\sfd,\mm)\) be a metric measure space. Then by a \textbf{plan} on \(\X\)
we mean a measure \(\ppi\in\mathcal M_+(C([0,1];\X))\) that is concentrated on \(R([0,1];\X)\).
\end{definition}
\begin{definition}[Barycenter of a plan]\label{def:barycenter}
Let \((\X,\sfd,\mm)\) be a metric measure space and \(q\in(1,\infty]\). Let \(\ppi\) be a plan on \(\X\).
Then we say that \(\ppi\) has \textbf{barycenter} in \(L^q(\mm)\) if there exists \({\rm Bar}(\ppi)\in L^q(\mm)^+\)
such that
\[
\int\!\!\!\int_\gamma\rho\,\d s\,\d\ppi(\gamma)=\int\rho\,{\rm Bar}(\ppi)\,\d\mm\quad\text{ for every }\rho\in\LIP_{bs}(\X)^+.
\]
The barycenter \({\rm Bar}(\ppi)\) is uniquely determined thanks to the strong density of \(\LIP_{bs}(\X)^\mm\) in \(L^p(\mm)\).
In addition, we say that \(\ppi\) is a \textbf{dynamic plan} if \({\rm Bar}(\ppi)\in L^1(\mm)\).
We denote by \(\mathcal B_q(\X)\) the positive cone of \(\mathcal M_+(C([0,1];\X))\) consisting of all dynamic plans on \(\X\) with barycenter in \(L^q(\mm)\).
\end{definition}
The terminology `dynamic plan' comes from the optimal transport theory and has appeared firstly in \cite{Lott-Villani}, where we recall that plans concentrated on geodesics have finite cost (when the cost function in the optimal
transport problem is precisely the distance function). Indeed, having \({\rm Bar}(\ppi)\in L^1(\mm)\) implies 
$$
\int\sfd(x,y)\,d(\e_0,\e_1)_\#\gamma(x,y)\leq\int\ell(\gamma)\,\d\ppi(\gamma)=\|{\rm Bar}(\ppi)\|_{L^1(\mm)}<+\infty.
$$
\medskip

Given a plan \(\ppi\) with barycenter in \(L^q(\mm)\) and a \(\ppi\)-measurable set \(\Gamma\subseteq C([0,1];\X)\), we have
\[
\ppi(\Gamma)\leq\|{\rm Bar}(\ppi)\|_{L^q(\mm)}{\rm Mod}_p(\Gamma)^{1/p},
\]
see e.g.\ \cite[Remark 3.12 1)]{Amb:Iko:Luc:Pas:24}. In particular, it holds that
\begin{equation}\label{eq:plan_ll_Mod}
\ppi\ll{\rm Mod}_p.
\end{equation}
\subsection{\texorpdfstring{\(L^p\)}{Lp}-Banach \texorpdfstring{\(L^\infty\)}{Linfty}-modules}
In this section, we briefly present the theory of \(L^p\)-Banach \(L^\infty\)-modules, which was introduced by Gigli in \cite{Gig:18,Gig:17} and
provides a theoretical framework for defining `measurable tensor fields'.
\begin{definition}[\(L^p(\mm)\)-Banach \(L^\infty(\mm)\)-module]\label{def:Lp-Ban_Linfty_mod}
Let \((\X,\Sigma,\mm)\) be a \(\sigma\)-finite measure space and \(p\in[1,\infty]\). Then a module \(\mathscr M\) over
\(L^\infty(\mm)\) is said to be an \textbf{\(L^p(\mm)\)-Banach \(L^\infty(\mm)\)-module} if it is endowed with a functional
\(|\cdot|\colon\mathscr M\to L^p(\mm)^+\), called a \textbf{pointwise norm} on \(\mathscr M\), such that:
\begin{itemize}
\item[\(\rm i)\)] {\sc Pointwise norm axioms.} It holds that
\[\begin{split}
|v|\neq 0&\quad\text{ for every }v\in\mathscr M\text{ with }v\neq 0,\\
|v+w|\leq|v|+|w|&\quad\text{ for every }v,w\in\mathscr M,\\
|fv|=|f||v|&\quad\text{ for every }f\in L^\infty(\mm)\text{ and }v\in\mathscr M.
\end{split}\]
\item[\(\rm ii)\)] {\sc Completeness.} The norm \(\|v\|_{\mathscr M}\coloneqq\||v|\|_{L^p(\mm)}\) on \(\mathscr M\) is complete.
\item[\(\rm iii)\)] {\sc Glueing property.} Given a countable partition \((E_n)_n\subseteq\Sigma\) of the space \(\X\) and a sequence
\((v_n)_n\subseteq\mathscr M\) satisfying \(\big(\|\1_{E_n}^\mm v_n\|_{\mathscr M}\big)_n\in\ell^p\), there exists \(v\in\mathscr M\),
denoted \(\sum_{n\in\N}\1_{E_n}^\mm v_n\), such that
\[
\1_{E_n}^\mm v=\1_{E_n}^\mm v_n\quad\text{ for every }n\in\N.
\]
We denote by \({\rm Adm}(\mathscr M)\) the set of all `admissible' sequences \(((E_n,v_n))_n\) as above.
\end{itemize}
\end{definition}

An example of \(L^p(\mm)\)-Banach \(L^\infty(\mm)\)-module is the space \(L^p(\mm)\) itself.
\begin{remark}\label{rmk:comments_Ban_mod}{\rm
Some comments on Definition \ref{def:Lp-Ban_Linfty_mod} are in order:
\begin{itemize}
\item[\(\rm i)\)] Definition \ref{def:Lp-Ban_Linfty_mod} is fully equivalent to the notion of \emph{\(L^p(\mm)\)-normed \(L^\infty(\mm)\)-module} introduced in
\cite[Definitions 1.2.1 and 1.2.10]{Gig:18}. We prefer to use the term `\(L^p(\mm)\)-Banach \(L^\infty(\mm)\)-module' to underline that we assume completeness,
consistently with previous works (e.g.\ \cite{Pas:24,Pas:24-2}).
\item[\(\rm ii)\)] For any \(L^p(\mm)\)-Banach \(L^\infty(\mm)\)-module \((\mathscr M,|\cdot|)\), we have that \((\mathscr M,\|\cdot\|_{\mathscr M})\) is a Banach space.
\item[\(\rm iii)\)] The pointwise norm axioms imply the validity of the \emph{locality property}: if \((E_n)_n\subseteq\Sigma\) is a countable partition of
\(\X\) and \(v,w\in\mathscr M\) satisfy \(\1_{E_n}^\mm v=\1_{E_n}^\mm w\) for every \(n\in\N\), then \(v=w\). In particular, the `glued element' \(\sum_{n\in\N}\1_{E_n}^\mm v_n\) appearing
in Definition \ref{def:Lp-Ban_Linfty_mod} iii) is uniquely determined, and thus the notation is not ambiguous.
\item[\(\rm iv)\)] In the case \(p\in[1,\infty)\), asking for the validity of the glueing property is redundant, since it follows automatically from the other requirements
in Definition \ref{def:Lp-Ban_Linfty_mod}: indeed, we have that the series \(\sum_{n\in\N}\1_{E_n}^\mm v_n\) converges in \(\mathscr M\) and that
\[
\bigg\|\sum_{n=1}^N\1_{E_n}^\mm v_n-\sum_{n\in\N}\1_{E_n}^\mm v_n\bigg\|_{\mathscr M}\to 0\quad\text{ as }N\to\infty,
\]
thanks to the dominated convergence theorem.
\item[\(\rm v)\)] The situation is completely different when \(p=\infty\). First of all, in this case the glueing property does not follow from the other requirements
in Definition \ref{def:Lp-Ban_Linfty_mod}: for example, considering the measure \(\mm\coloneqq\sum_{n\in\N}\delta_n\) on \(\N\), we have that the Banach space \(c_0\)
is a module over \(L^\infty(\mm)\cong\ell^\infty\) equipped with the natural pointwise norm, but the glueing property fails (just consider the canonical elements
\((e^k)_k\) of \(c_0\), where \(e^k_k\coloneqq 1\) and \(e^k_n\coloneqq 0\) if \(n\neq k\)). Moreover, \(\sum_{n\in\N}\1_{E_n}^\mm v_n\) is a formal expression, in the
sense that -- in general -- it is not the limit of partial sums: for example, consider the glued element \(\sum_{k\in\mathbb Z}\1_{[k,k+1)}^{\mathscr L^1}=\1_\R^{\mathscr L^1}\)
of \(L^\infty(\mathscr L^1)\).
\end{itemize}
}\end{remark}

Let us recall some other concepts and constructions related to \(L^p(\mm)\)-Banach \(L^\infty(\mm)\)-modules.
A \textbf{homomorphism of \(L^p(\mm)\)-Banach \(L^\infty(\mm)\)-modules} is an operator \(T\colon\mathscr M\to\mathscr N\) that is both \(L^\infty(\mm)\)-linear and continuous.
Equivalently, we can require that \(T\colon\mathscr M\to\mathscr N\) is linear and there exists \(g\in L^\infty(\mm)^+\) such that \(|T(v)|\leq g|v|\) for every \(v\in\mathscr M\).
Moreover, we say that \(T\colon\mathscr M\to\mathscr N\) is an \textbf{isomorphism of \(L^p(\mm)\)-Banach \(L^\infty(\mm)\)-modules} if it is a bijective homomorphism such that
\(|T(v)|=|v|\) for every \(v\in\mathscr M\). In this case, we say that \(\mathscr M\) and \(\mathscr N\) are \textbf{isomorphic}.
\medskip

Following \cite[Definition 1.2.6]{Gig:18}, we give the ensuing definition of \emph{dual} \(L^q\)-Banach \(L^\infty\)-module:
\begin{definition}[Dual of an \(L^p(\mm)\)-Banach \(L^\infty(\mm)\)-module]
Let \((\X,\Sigma,\mm)\) be a \(\sigma\)-finite measure space and \(p\in[1,\infty]\). Let \(\mathscr M\) be
an \(L^p(\mm)\)-Banach \(L^\infty(\mm)\)-module. Then we define the \textbf{dual}
\(\mathscr M^*\) of \(\mathscr M\) as the space of all \(L^\infty(\mm)\)-linear and continuous maps from \(\mathscr M\) to \(L^1(\mm)\).
\end{definition}

The space \(\mathscr M^*\) is an \(L^q(\mm)\)-Banach \(L^\infty(\mm)\)-module if endowed with the pointwise operations
\[\begin{split}
(\omega+\eta)(v)&\coloneqq\omega(v)+\eta(v)\quad\text{ for every }\omega,\eta\in\mathscr M^*\text{ and }v\in\mathscr M,\\
(f\omega)(v)&\coloneqq f\,\omega(v)\quad\text{ for every }f\in L^\infty(\mm),\,\omega\in\mathscr M^*,\text{ and }v\in\mathscr M
\end{split}\]
and with the pointwise norm \(|\cdot|\colon\mathscr M^*\to L^q(\mm)^+\) defined as
\[
|\omega|\coloneqq\bigvee\big\{\omega(v)\;\big|\;v\in\mathscr M,\,|v|\leq 1\big\}
=\bigvee_{v\in\mathscr M}\1_{\{|v|>0\}}^\mm\frac{\omega(v)}{|v|}\quad\text{ for every }\omega\in\mathscr M^*.
\]
The fact that \(|\omega|\in L^q(\mm)^+\) follows from \cite[Proposition 1.2.12 v)]{Gig:18}.
\medskip

Bearing in mind that \(L^p(\mm)\)-Banach \(L^\infty(\mm)\)-modules are in particular Banach spaces (item ii) of Remark \ref{rmk:comments_Ban_mod}),
we have both the dual \(\mathscr M^*\) as a Banach module and the dual \(\mathscr M'\) as a Banach space. In the case \(p\in[1,\infty)\), these
two objects can be canonically identified in the following way:
\begin{proposition}\label{prop:map_Int}
Let \((\X,\Sigma,\mm)\) be a \(\sigma\)-finite measure space and \(p\in[1,\infty)\). Let \(\mathscr M\) be an \(L^p(\mm)\)-Banach \(L^\infty(\mm)\)-module.
Let us define the operator \(\text{\sc Int}_{\mathscr M}\colon\mathscr M^*\to\mathscr M'\) as
\[
\text{\sc Int}_{\mathscr M}(\omega)(v)\coloneqq\int\omega(v)\,\d\mm\quad\text{ for every }\omega\in\mathscr M^*\text{ and }v\in\mathscr M.
\]
Then \(\text{\sc Int}_\mathscr M\) is an isomorphism of Banach spaces.
\end{proposition}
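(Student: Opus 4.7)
The plan is to prove that $\text{\sc Int}_{\mathscr M}$ is a surjective linear isometry by constructing its inverse via the Radon--Nikodym theorem. The hypothesis $p<\infty$ will be used essentially at several convergence steps; the $p=\infty$ case is genuinely different (cf.\ Remark~\ref{rmk:comments_Ban_mod} v)).

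The easy direction $\|\text{\sc Int}_{\mathscr M}(\omega)\|_{\mathscr M'}\le\|\omega\|_{\mathscr M^*}$ is immediate from the pointwise estimate $|\omega(v)|\le|\omega||v|$ and Hölder's inequality. For the inverse, given $L\in\mathscr M'$ and $v\in\mathscr M$, I would set $\mu_v(E)\coloneqq L(\1_E^\mm v)$ for $E\in\Sigma$. Since $\|\1_E^\mm v\|_{\mathscr M}=\|\1_E^\mm|v|\|_{L^p(\mm)}$ and $p<\infty$, dominated convergence applied to disjoint unions yields the $\sigma$-additivity of $\mu_v$; the signed measure is finite, with $|\mu_v|(\X)\le\|L\|_{\mathscr M'}\|v\|_{\mathscr M}$, and absolutely continuous with respect to $\mm$ (because $\mm(E)=0$ forces $\1_E^\mm v=0$ in $\mathscr M$). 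By Radon--Nikodym (using $\sigma$-finiteness of $\mm$) there is a unique $\omega(v)\in L^1(\mm)$ with $\int_E\omega(v)\,\d\mm=L(\1_E^\mm v)$. Linearity $v\mapsto\omega(v)$ follows from uniqueness of the R--N derivative and linearity of $L$. The identity $\omega(\1_E^\mm v)=\1_E^\mm\omega(v)$ holds by construction, extends by linearity to simple $f\in L^\infty(\mm)$, and then to general $f\in L^\infty(\mm)$ via uniform approximation combined with dominated convergence in $\mathscr M$ and in $L^1(\mm)$ (again exploiting $p<\infty$). Continuity of $\omega$ into $L^1(\mm)$ follows from a sign trick: taking $\sigma\coloneqq\1_{\{\omega(v)>0\}}^\mm-\1_{\{\omega(v)\le 0\}}^\mm\in L^\infty(\mm)$ we have $|\sigma v|=|v|$, so by the already-established $L^\infty(\mm)$-linearity of $\omega$ on simple functions,
\[
\int|\omega(v)|\,\d\mm=\int\sigma\,\omega(v)\,\d\mm=L(\sigma v)\le\|L\|_{\mathscr M'}\|v\|_{\mathscr M}.
\]
Thus $\omega\in\mathscr M^*$, and testing the defining identity with $E=\X$ gives $\text{\sc Int}_{\mathscr M}(\omega)=L$, which secures surjectivity.

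The main obstacle is the matching bound $\|\omega\|_{\mathscr M^*}\le\|L\|_{\mathscr M'}$, which yields both the isometry and (with it) injectivity. I would first observe that the family $\{\omega(v):v\in\mathscr M,\,|v|\le 1\}$ is upward directed: for $v_1,v_2$ in the family, setting $E\coloneqq\{\omega(v_1)\ge\omega(v_2)\}$ and glueing $v\coloneqq\1_E^\mm v_1+\1_{\X\setminus E}^\mm v_2$ gives a new element of the family with $\omega(v)=\omega(v_1)\vee\omega(v_2)$. By Dedekind completeness of $L^0(\mm)$ and $\sigma$-finiteness of $\mm$, one can extract a sequence $(v_n)_n$ with $|v_n|\le 1$ and $\omega(v_n)\nearrow|\omega|$ $\mm$-a.e. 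For any simple $s\in L^\infty(\mm)^+$ supported on a set of finite $\mm$-measure, $sv_n\in\mathscr M$ satisfies $\|sv_n\|_{\mathscr M}\le\|s\|_{L^p(\mm)}$, hence
\[
\int s\,\omega(v_n)\,\d\mm=L(sv_n)\le\|L\|_{\mathscr M'}\|s\|_{L^p(\mm)}.
\]
Monotone convergence as $n\to\infty$, followed by a further monotone approximation of arbitrary $h\in L^p(\mm)^+$ from below by such simple $s$ (possible since $p<\infty$ and $\mm$ is $\sigma$-finite), produces $\int h|\omega|\,\d\mm\le\|L\|_{\mathscr M'}\|h\|_{L^p(\mm)}$ for every $h\in L^p(\mm)^+$. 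The standard duality characterisation of the $L^q$-norm, namely $\||\omega|\|_{L^q(\mm)}=\sup\{\int h|\omega|\,\d\mm:h\in L^p(\mm)^+,\,\|h\|_{L^p(\mm)}\le 1\}$, then delivers $\||\omega|\|_{L^q(\mm)}\le\|L\|_{\mathscr M'}$. The delicate point is tracking the finiteness assumption $p<\infty$ through each convergence step, as both the $\sigma$-additivity of $\mu_v$ and the density of simple functions of finite-measure support in $L^p(\mm)$ fail at $p=\infty$.
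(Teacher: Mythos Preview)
Your proof is correct and complete. Note, however, that the paper does not supply its own proof of this proposition: it simply cites \cite[Proposition 1.2.13]{Gig:18}. Your Radon--Nikodym construction (defining $\mu_v(E)=L(\1_E^\mm v)$, extracting the density, and then recovering the $L^q$-bound on $|\omega|$ via the upward-directed family argument) is exactly the standard route and is essentially the argument given in the cited reference, so there is nothing to compare.
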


The above result is taken from \cite[Proposition 1.2.13]{Gig:18}. The corresponding statement for \(p=\infty\) is false.
For example, the dual \(L^\infty(\mm)^*\) of \(L^\infty(\mm)\) as a Banach module is isomorphic to \(L^1(\mm)\), while the
dual \(L^\infty(\mm)'\) as a Banach space is (typically) not isomorphic to \(L^1(\mm)\).
\begin{remark}\label{rmk:conseq_HB}{\rm
The classical Hahn--Banach theorem has a number of useful consequences even in the setting of \(L^p(\mm)\)-Banach \(L^\infty(\mm)\)-modules, as it
was observed in \cite{Gig:18} in several instances. Below we recall one of such consequences, which we will need later in this work (to get \eqref{eq:grad_exist}).

\emph{Let \((\X,\Sigma,\mm)\) be a \(\sigma\)-finite measure space and \(p\in[1,\infty)\). Let \(\mathscr M\) be an \(L^p(\mm)\)-Banach
\(L^\infty(\mm)\)-module. Let \(v\in\mathscr M\) be given. Then there exists an element \(\omega_v\in\mathscr M^*\) such that
\[
\omega_v(v)=|v|^p,\qquad|\omega_v|=|v|^{p/q},
\]
where we adopt the convention that \(|v|^{p/q}\coloneqq\1_{\{|v|>0\}}^\mm\) if \(q=\infty\).
}

This property appears implicitly in many results of \cite{Gig:18}, as in \cite[Proposition 1.2.15]{Gig:18}.
Moreover, it is a particular case of \cite[Corollary 3.31]{Luc:Pas:23}, where a larger class of modules is studied.
}\end{remark}

Following \cite[Definition 1.2.20]{Gig:18}, we say that an \(L^2(\mm)\)-Banach \(L^\infty(\mm)\)-module \(\mathscr H\) is a \textbf{Hilbert module} if
it is a Hilbert space when considered as a Banach space, or equivalently if
\[
|v+w|^2+|v-w|^2=2|v|^2+2|w|^2\quad\text{ for every }v,w\in\mathscr H.
\]
The above formula is referred to as the \textbf{pointwise parallelogram rule}.
\subsubsection*{Modules generated by pointwise sublinear maps}
Later on, we will need the existence result for \(L^p\)-Banach \(L^\infty\)-modules that we are going to present.
Such result is a particular case of \cite[Theorem 3.19]{Luc:Pas:23}, which encompasses several constructions that
were originally performed in the works \cite{Gig:18,Gig:17}. We first introduce some auxiliary terminology.
\begin{itemize}
\item Let \((\X,\Sigma,\mm)\) be a \(\sigma\)-finite measure space and \(\mathscr M\) an \(L^p(\mm)\)-Banach
\(L^\infty(\mm)\)-module, for some \(p\in[1,\infty]\). Let \(\mathcal V\) be a vector subspace of \(\mathscr M\).
Then we say that \(\mathcal V\) \textbf{generates \(\mathscr M\) (in the sense of \(L^p(\mm)\)-Banach \(L^\infty(\mm)\)-modules)} provided
\[
{\rm cl}_{\mathscr M}\bigg(\bigg\{\sum_{n\in\N}\1_{E_n}^\mm v_n\;\bigg|\;((E_n,v_n))_n\in{\rm Adm}(\mathscr M)
\text{ with }(v_n)_n\subseteq\mathcal V\bigg\}\bigg)=\mathscr M.
\]
More generally, we say that a set \(\mathcal S\subseteq\mathscr M\) generates \(\mathscr M\) if the linear span of
\(\mathcal S\) generates \(\mathscr M\).
\item Let \(V\) be a vector space. Then we say that a map \(\psi\colon V\to L^p(\mm)^+\) is \textbf{even} if
\[
\psi(-v)=\psi(v)\quad\text{ for every }v\in V.
\]
\item Let \(V\) be a vector space. Then we say that \(\psi\colon V\to L^p(\mm)^+\) is \textbf{pointwise sublinear} if
\[\begin{split}
\psi(\lambda v)=\lambda\,\psi(v)&\quad\text{ for every }v\in V\text{ and }\lambda\in\R\text{ with }\lambda>0,\\
\psi(v+w)\leq\psi(v)+\psi(w)&\quad\text{ for every }v,w\in V.
\end{split}\]
\end{itemize}
\begin{theorem}[Module generated by a pointwise sublinear map]\label{thm:module_gen_sublin_map}
Let \((\X,\Sigma,\mm)\) be a \(\sigma\)-finite measure space and \(p\in[1,\infty]\). Let \(V\) be a vector space. Let \(\psi\colon V\to L^p(\mm)^+\) be an even, pointwise sublinear map. Then there exists a couple \((\mathscr M_{\langle\psi\rangle},T_{\langle\psi\rangle})\) having the following properties:
\begin{itemize}
\item[\(\rm i)\)] \(\mathscr M_{\langle\psi\rangle}\) is an \(L^p(\mm)\)-Banach \(L^\infty(\mm)\)-module and \(T_{\langle\psi\rangle}\colon V\to\mathscr M_{\langle\psi\rangle}\)
is a linear operator.
\item[\(\rm ii)\)] \(|T_{\langle\psi\rangle}(v)|=\psi(v)\) for every \(v\in V\).
\item[\(\rm iii)\)] \(T_{\langle\psi\rangle}(V)\) generates \(\mathscr M_{\langle\psi\rangle}\) in the sense of \(L^p(\mm)\)-Banach \(L^\infty(\mm)\)-modules.
\end{itemize}
Moreover, \((\mathscr M_{\langle\psi\rangle},T_{\langle\psi\rangle})\) is unique up to a unique isomorphism, in the following sense: given any couple
\((\mathscr M,T)\) having the same properties, there exists a unique isomorphism of \(L^p(\mm)\)-Banach \(L^\infty(\mm)\)-modules \(\Phi\colon\mathscr M_{\langle\psi\rangle}\to\mathscr M\)
such that the diagram
\[\begin{tikzcd}
V \arrow[r,"T_{\langle\psi\rangle}"] \arrow[dr,swap,"T"] & \mathscr M_{\langle\psi\rangle} \arrow[d,"\Phi"] \\
& \mathscr M
\end{tikzcd}\]
commutes. We say that \(\mathscr M_{\langle\psi\rangle}\) and \(T_{\langle\psi\rangle}\) are \textbf{generated by} (or \textbf{induced by}) the map \(\psi\).
\end{theorem}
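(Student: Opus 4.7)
The plan is to construct $\mathscr{M}_{\langle\psi\rangle}$ explicitly as a completion of simple $V$-valued functions modulo the null space of a pointwise seminorm, and then derive uniqueness from a universal-property argument. Let $\mathcal{S}$ denote the vector space of formal finite sums $w=\sum_{i=1}^{n}\1_{E_i}^{\mm}\,v_i$, where $(E_i)_{i=1}^{n}\subseteq\Sigma$ is a measurable partition of $\X$ and $v_i\in V$, with two sums identified when they agree on a common refinement; $\mathcal{S}$ is then an $L^{\infty}(\mm)$-module in the obvious way (simple $L^\infty$ functions act by refinement and multiplication of the coefficients). Define a pointwise seminorm and a seminorm by
\[
|w|_{\mathcal{S}}\coloneqq\sum_{i=1}^{n}\1_{E_i}^{\mm}\,\psi(v_i)\in L^{p}(\mm)^{+},\qquad \|w\|\coloneqq\bigl\||w|_{\mathcal{S}}\bigr\|_{L^{p}(\mm)}.
\]

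The first and most delicate step is to check that $|\cdot|_{\mathcal{S}}$ is well-defined, subadditive, and $L^{\infty}(\mm)$-compatible; this is where all three axioms on $\psi$ enter. Well-definedness under refinement and compatibility with the $L^{\infty}$-action rely on positive $1$-homogeneity (which forces $\psi(0)=0$ and, together with evenness, yields $\psi(\lambda v)=|\lambda|\psi(v)$ for every $\lambda\in\R$), while the triangle inequality $|w+w'|_{\mathcal{S}}\leq|w|_{\mathcal{S}}+|w'|_{\mathcal{S}}$ follows from pointwise subadditivity of $\psi$ after passing to a common refinement. Setting $\mathcal{N}\coloneqq\{w\in\mathcal{S}:|w|_{\mathcal{S}}=0\}$, an $L^{\infty}$-submodule, I define $\mathscr{M}_{\langle\psi\rangle}$ as the Banach-space completion of $\mathcal{S}/\mathcal{N}$ under $\|\cdot\|$ when $p\in[1,\infty)$; the pointwise norm and the $L^{\infty}(\mm)$-action extend by continuity, and the glueing property is then automatic by Remark \ref{rmk:comments_Ban_mod} iv). For $p=\infty$ instead, one has to close $\mathcal{S}/\mathcal{N}$ formally under the glueing operation prescribed in Definition \ref{def:Lp-Ban_Linfty_mod} iii), using locality to make glued elements unique and prescribing $|\sum_{n}\1_{E_n}^{\mm}w_n|\coloneqq\sum_{n}\1_{E_n}^{\mm}|w_n|$. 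Setting $T_{\langle\psi\rangle}(v)\coloneqq[\1_{\X}^{\mm}v]$, properties (i)--(iii) are immediate: linearity follows from the definition of $\mathcal{S}$, $|T_{\langle\psi\rangle}(v)|=\psi(v)$ holds by the definition of $|\cdot|_{\mathcal{S}}$, and generation holds because every element of $\mathcal{S}$ is a finite $L^{\infty}$-combination of elements of $T_{\langle\psi\rangle}(V)$ and $\mathcal{S}/\mathcal{N}$ is dense (resp.\ glue-dense) in $\mathscr{M}_{\langle\psi\rangle}$.

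For uniqueness, given another pair $(\mathscr{M},T)$ satisfying (i)--(iii), define $\Phi$ on simple combinations by
\[
\Phi\Bigl(\sum_{i=1}^{n}\1_{E_i}^{\mm}\,T_{\langle\psi\rangle}(v_i)\Bigr)\coloneqq\sum_{i=1}^{n}\1_{E_i}^{\mm}\,T(v_i).
\]
Since $|T(v_i)|=\psi(v_i)=|T_{\langle\psi\rangle}(v_i)|$ and both modules satisfy the pointwise norm axioms, $|\Phi(w)|=|w|$ on simple elements, so $\Phi$ is a well-defined $L^{\infty}(\mm)$-linear isometry; extend it by continuity when $p<\infty$ and by glueing when $p=\infty$. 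Surjectivity of $\Phi$ follows from (iii) applied to $\mathscr{M}$, the identity $\Phi\circ T_{\langle\psi\rangle}=T$ is built into the definition, and uniqueness of $\Phi$ is then forced by (iii) for $\mathscr{M}_{\langle\psi\rangle}$. The main obstacle throughout is the first step: showing that $|w|_{\mathcal{S}}$ is independent of the representation of $w$ and that $|\cdot|_{\mathcal{S}}$ is pointwise subadditive, which is a bookkeeping argument over common refinements that leverages evenness and positive homogeneity to handle sign changes and scaling of the $v_i$. The $p=\infty$ glued closure, while conceptually easy, also requires some care to verify the pointwise-norm axioms on infinite glued elements.
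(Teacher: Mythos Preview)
Your construction is correct and is precisely the standard approach: form the pre-module of simple $V$-valued functions, endow it with the pointwise seminorm inherited from $\psi$, quotient by its kernel, complete (and close under glueing when $p=\infty$), then derive uniqueness from the universal property. The paper does not give its own proof of this statement but cites \cite[Theorem 3.19]{Luc:Pas:23}, where the same construction is carried out in greater generality; your sketch matches that argument.
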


We also extract the following property from the more general statement \cite[Proposition 3.20]{Luc:Pas:23},
which is a universal property that characterises \((\mathscr M_{\langle\psi\rangle},T_{\langle\psi\rangle})\) in a categorical sense.
\begin{proposition}\label{prop:univ_prop_mod_gen}
Let \((\X,\Sigma,\mm)\) be a \(\sigma\)-finite measure space and \(p\in[1,\infty]\). Let \(V\) be a vector space
and \(\psi\colon V\to L^p(\mm)^+\) an even, pointwise sublinear map. Assume that \(S\colon V\to L^1(\mm)\) is a
linear map having the following property: there exists \(g\in L^q(\mm)^+\) such that \(|S(v)|\leq g\,\psi(v)\) for all
\(v\in V\). Then there exists a unique \(L^\infty(\mm)\)-linear continuous map \(\hat S\colon\mathscr M_{\langle\psi\rangle}\to L^1(\mm)\)
such that
\[\begin{tikzcd}
V \arrow[r,"T_{\langle\psi\rangle}"] \arrow[dr,swap,"S"] & \mathscr M_{\langle\psi\rangle} \arrow[d,"\hat S"] \\
& L^1(\mm)
\end{tikzcd}\]
is a commutative diagram. Moreover, it holds that \(|\hat S(v)|\leq g|v|\) for every \(v\in V\).
\end{proposition}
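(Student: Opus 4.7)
The strategy is to build $\hat S$ by successive extensions: first from the image $T_{\langle\psi\rangle}(V)$, then to $L^\infty(\mm)$-linear combinations, then to glued elements of admissible sequences, and finally to all of $\mathscr M_{\langle\psi\rangle}$ by continuity. Throughout, the pointwise estimate $|\hat S(u)|\leq g|u|$ will propagate through each step and provide both continuity and the desired $L^\infty(\mm)$-linearity. Uniqueness will follow automatically, since any candidate operator is forced to agree with our $\hat S$ on $T_{\langle\psi\rangle}(V)$, on its $L^\infty(\mm)$-linear span, and hence on the generated module.

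First I would set $\hat S(T_{\langle\psi\rangle}(v)):=S(v)$. This is well-defined: if $T_{\langle\psi\rangle}(v)=T_{\langle\psi\rangle}(v')$, property ii) of Theorem \ref{thm:module_gen_sublin_map} gives $\psi(v-v')=|T_{\langle\psi\rangle}(v-v')|=0$, and the hypothesis then yields $|S(v-v')|\leq g\,\psi(v-v')=0$. Next I would extend to arbitrary finite $L^\infty(\mm)$-linear combinations. Approximating coefficients by simple functions and using $\R$-linearity of $T_{\langle\psi\rangle}$, any such element can be written in the canonical form $u=\sum_{k=1}^n \1_{E_k}^\mm T_{\langle\psi\rangle}(v_k)$ for some measurable partition $(E_k)$; set $\hat S(u):=\sum_{k=1}^n\1_{E_k}^\mm S(v_k)$. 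Independence of the chosen representation reduces, by taking a common refinement of partitions, to the zeroth step together with the locality property (Remark \ref{rmk:comments_Ban_mod} iii)). On such elements $|u|=\sum_k\1_{E_k}^\mm\psi(v_k)$, so the inequality $|\hat S(u)|\leq g|u|$ is immediate from the hypothesis on $S$.

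Third, I would extend to glued elements. Given an admissible sequence $((E_n,T_{\langle\psi\rangle}(w_n)))_n$ with sum $u=\sum_{n\in\N}\1_{E_n}^\mm T_{\langle\psi\rangle}(w_n)$, define $\hat S(u):=\sum_{n\in\N}\1_{E_n}^\mm S(w_n)$, where the series converges in $L^1(\mm)$. For $p<\infty$, Remark \ref{rmk:comments_Ban_mod} iv) gives convergence of the partial sums defining $u$ in $\mathscr M_{\langle\psi\rangle}$, and the pointwise bound $\bigl|\sum_{n=M}^N\1_{E_n}^\mm S(w_n)\bigr|\leq g\cdot\bigl|\sum_{n=M}^N\1_{E_n}^\mm T_{\langle\psi\rangle}(w_n)\bigr|$ combined with H\"older's inequality between $L^q(\mm)$ and $L^p(\mm)$ shows that the target partial sums are $L^1$-Cauchy. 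For $p=\infty$, glued elements need not be norm limits of partial sums, so I would argue $L^1$-convergence directly: admissibility provides $C:=\sup_n\|\1_{E_n}^\mm\psi(w_n)\|_{L^\infty}<\infty$, and since $(E_n)$ is a partition and $g\in L^1(\mm)$, the estimate $\sum_n\|\1_{E_n}^\mm S(w_n)\|_{L^1(\mm)}\leq C\sum_n\int_{E_n}g\,\d\mm=C\|g\|_{L^1(\mm)}$ gives absolute convergence. The pointwise estimate $|\hat S(u)|\leq g|u|$ survives this limit by locality and the fact that $|u|=\sum_n\1_{E_n}^\mm\psi(w_n)$.

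Finally, by property iii) of Theorem \ref{thm:module_gen_sublin_map} the glued elements built from $T_{\langle\psi\rangle}(V)$ are dense in $\mathscr M_{\langle\psi\rangle}$, and the norm bound $\|\hat S(u)\|_{L^1(\mm)}\leq\|g\|_{L^q(\mm)}\|u\|_{\mathscr M_{\langle\psi\rangle}}$ (obtained via H\"older, with the obvious modification when $p=\infty$) yields continuity. Hence $\hat S$ extends uniquely by continuity to all of $\mathscr M_{\langle\psi\rangle}$; the pointwise inequality and $L^\infty(\mm)$-linearity transfer to the extension by a routine approximation using locality and continuity of multiplication by bounded functions. The main obstacle will be handling $p=\infty$: there convergence of partial sums of glued elements is unavailable, so the $L^1$-convergence of $\sum_n\1_{E_n}^\mm S(w_n)$ must be argued from scratch using the integrability of $g$ rather than inherited from convergence on the $\mathscr M_{\langle\psi\rangle}$ side.
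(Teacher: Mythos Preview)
The paper does not actually prove this proposition; it simply attributes it to \cite[Proposition 3.20]{Luc:Pas:23}. So there is no in-paper proof to compare against, and your direct construction is a reasonable way to supply one.

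Your argument is essentially correct, and you have correctly identified the only genuinely delicate point: when \(p=\infty\), glued elements are not norm limits of their partial sums, so the \(L^1\)-convergence of \(\sum_n \1_{E_n}^\mm S(w_n)\) must come from the integrability of \(g\in L^1(\mm)\) together with the partition property, exactly as you describe. Well-definedness on glued elements via common refinements, the pointwise bound \(|\hat S(u)|\leq g|u|\), and the extension by density all go through as you outline.

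One minor presentational point: your Step~2 is both redundant and slightly misleading. Finite partitions are a special case of the countable glueings in Step~3, so Step~2 adds nothing. More importantly, the phrase ``approximating coefficients by simple functions \ldots\ any such element can be written in the canonical form'' is not quite right: approximating an \(L^\infty\) coefficient by simple functions gives a \emph{nearby} element, not the same one. General \(L^\infty\)-coefficients are handled only at the final density step, where you approximate \(h\in L^\infty(\mm)\) uniformly by simple functions and use continuity of both multiplication and \(\hat S\). It would be cleaner to go directly from Step~1 to Step~3, and then verify \(L^\infty\)-linearity of the continuous extension at the end (first for simple multipliers on glued elements by refining partitions, then for arbitrary multipliers and arbitrary module elements by two successive approximations).
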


Another consequence of Theorem \ref{thm:module_gen_sublin_map} is the following extension result (cf.\ \cite[Corollary 3.21]{Luc:Pas:23}).
\begin{corollary}\label{cor:conseq_univ_prop}
Let \((\X,\Sigma,\mm)\) be a \(\sigma\)-finite measure space and \(p\in[1,\infty]\). Let \(V\) be a vector space. Let \(\psi\colon V\to L^p(\mm)^+\) be an even, pointwise sublinear map.
Let \(\mathscr N\) be an \(L^p(\mm)\)-Banach \(L^\infty(\mm)\)-module. Assume that \(T\colon V\to\mathscr N\) is a
linear map having the following property: there exists \(g\in L^\infty(\mm)^+\) such that \(|T(v)|\leq g\,\psi(v)\) for all
\(v\in V\). Then there exists a unique homomorphism of \(L^p(\mm)\)-Banach \(L^\infty(\mm)\)-modules \(\hat T\colon\mathscr M_{\langle\psi\rangle}\to\mathscr N\) such that
\[\begin{tikzcd}
V \arrow[r,"T_{\langle\psi\rangle}"] \arrow[dr,swap,"T"] & \mathscr M_{\langle\psi\rangle} \arrow[d,"\hat T"] \\
& \mathscr N
\end{tikzcd}\]
is a commutative diagram. Moreover, it holds that \(|\hat T(v)|\leq g|v|\) for every \(v\in V\).
\end{corollary}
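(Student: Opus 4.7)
The natural strategy is to define \(\hat T\) first on the \(L^\infty(\mm)\)-linear span of \(T_{\langle\psi\rangle}(V)\), then extend by continuity (and, when \(p=\infty\), by glueing). Given a generic simple element
\[
\xi=\sum_{i=1}^n\1_{E_i}^\mm T_{\langle\psi\rangle}(v_i)\in\mathscr M_{\langle\psi\rangle},
\]
where \((E_i)_{i=1}^n\subseteq\Sigma\) is a finite measurable partition of \(\X\) and \(v_i\in V\), I set
\[
\hat T(\xi):=\sum_{i=1}^n\1_{E_i}^\mm T(v_i)\in\mathscr N.
\]
The core computation, combining Theorem \ref{thm:module_gen_sublin_map}(ii) (i.e.\ \(|T_{\langle\psi\rangle}(v)|=\psi(v)\)), the pointwise norm axioms on \(\mathscr N\), and the hypothesis \(|T(v)|\leq g\,\psi(v)\), reads
\[
|\hat T(\xi)|=\sum_{i=1}^n\1_{E_i}^\mm|T(v_i)|\leq g\sum_{i=1}^n\1_{E_i}^\mm\psi(v_i)=g\,|\xi|.
\]
This single estimate delivers simultaneously: well-definedness of \(\hat T\) on the span (if two simple representations of the same \(\xi\) differ, refining to a common partition and applying the estimate to the difference gives \(\hat T=0\)), \(\R\)-linearity, \(L^\infty(\mm)\)-linearity with respect to simple multipliers, and the announced pointwise bound on the subspace of simple elements.

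To extend \(\hat T\) to all of \(\mathscr M_{\langle\psi\rangle}\), the estimate above yields \(\|\hat T(\xi)\|_{\mathscr N}\leq\|g\|_{L^\infty(\mm)}\|\xi\|_{\mathscr M_{\langle\psi\rangle}}\). For \(p\in[1,\infty)\), the simple elements are norm-dense in \(\mathscr M_{\langle\psi\rangle}\) (by Remark \ref{rmk:comments_Ban_mod}(iv), since the glueing property is automatic from completeness and dominated convergence), so \(\hat T\) extends by uniform continuity to a bounded linear map \(\mathscr M_{\langle\psi\rangle}\to\mathscr N\); the pointwise bound, \(L^\infty(\mm)\)-linearity, and the commutativity of the triangle all pass to the limit. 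For \(p=\infty\), norm-density fails, and I would additionally invoke the glueing axiom in \(\mathscr N\): given an admissible sequence \(((E_n,\xi_n))_n\in\Adm(\mathscr M_{\langle\psi\rangle})\) with \(\xi_n\) simple, set \(\hat T\bigl(\sum_{n\in\N}\1_{E_n}^\mm\xi_n\bigr):=\sum_{n\in\N}\1_{E_n}^\mm\hat T(\xi_n)\), whose admissibility in \(\mathscr N\) is inherited from that in \(\mathscr M_{\langle\psi\rangle}\) via the pointwise bound \(\|\1_{E_n}^\mm\hat T(\xi_n)\|_{\mathscr N}\leq\|g\|_{L^\infty(\mm)}\|\1_{E_n}^\mm\xi_n\|_{\mathscr M_{\langle\psi\rangle}}\).

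Uniqueness is immediate: any two homomorphisms of \(L^p(\mm)\)-Banach \(L^\infty(\mm)\)-modules making the triangle commute agree on \(T_{\langle\psi\rangle}(V)\), hence on its \(L^\infty(\mm)\)-linear span by \(L^\infty(\mm)\)-linearity, and then on the entire module by the generation property from Theorem \ref{thm:module_gen_sublin_map}(iii) together with continuity and compatibility with glueings. The main obstacle I anticipate is the \(p=\infty\) case, where one must track the pointwise bound carefully through the glueing step to ensure that the resulting \(\hat T\) is still an honest \(L^\infty(\mm)\)-homomorphism into \(\mathscr N\); for \(p<\infty\) the whole argument reduces to a routine Banach-space extension by density. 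As a sanity check, the universal property in Proposition \ref{prop:univ_prop_mod_gen} is recovered as the special case \(\mathscr N=L^1(\mm)\) (with \(g\) replaced by an \(L^q(\mm)^+\) function), so the present corollary genuinely generalises that statement from scalar- to module-valued targets.
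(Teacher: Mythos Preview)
Your argument is correct and follows the standard construction: define \(\hat T\) on simple combinations, establish the pointwise bound \(|\hat T(\xi)|\leq g|\xi|\) there, and extend by density (plus glueing when \(p=\infty\)). The paper itself does not supply a proof of this corollary but simply cites \cite[Corollary 3.21]{Luc:Pas:23}; the argument you have written is essentially the one that underlies that reference, so there is no meaningful divergence to discuss.

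Two minor remarks. First, in the \(p=\infty\) case you still need a closure step after the glueing step (the generating set yields a dense, not exhausting, family of glued elements); you gesture at this with ``additionally'' but it would be cleaner to say explicitly that glueing followed by norm-continuity covers the whole module. Second, your closing sanity check slightly overstates the relationship to Proposition~\ref{prop:univ_prop_mod_gen}: that proposition allows \(g\in L^q(\mm)^+\) rather than \(L^\infty(\mm)^+\), so it is a parallel statement with a different integrability hypothesis rather than a literal specialisation of the present corollary.
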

We will use Theorem \ref{thm:module_gen_sublin_map} for instance to construct the cotangent module of a metric measure space, which 
provides us with a notion of `space of \(p\)-integrable \(1\)-forms' that underlies the Sobolev space; see Definition \ref{def:cotg_mod}.
Proposition \ref{prop:univ_prop_mod_gen} and Corollary \ref{cor:conseq_univ_prop} will be then used to investigate the properties of the cotangent module.
\subsection{Choquet integral}\label{ss:Choquet_int}
We recall Choquet's theory of integration with respect to an outer measure. We refer to \cite{Den:10} for a
thorough presentation of this topic. We also mention that the same notion of integral is considered in
\cite{Amb:Til:04}, under the name `Archimedean integral', borrowing a terminology from De Giorgi.
The following presentation is taken from \cite{Deb:Gig:Pas:21}.
\medskip

Let \(\mu\) be an outer measure on a set \(\X\neq\varnothing\). Let \(f\colon\X\to[0,\infty]\) be a
given function. Then we define the \textbf{Choquet integral} of \(f\) with respect to \(\mu\) on a set
\(E\subseteq\X\) via \emph{Cavalieri's formula}, as
\[
\int_E f\,\d\mu\coloneqq\int_0^{+\infty}\mu(E\cap\{f>t\})\,\d t\in[0,\infty]
\]
and when \(E=\X\) we use the shorthand notation \(\int f\,\d\mu\coloneqq\int_\X f\,\d\mu\). Observe that
the above definition is well-posed because the function \([0,\infty)\ni t\mapsto\mu(E\cap\{f>t\})\in[0,\infty]\)
is non-increasing, and thus Lebesgue measurable. Next, we collect a few basic properties of outer measures and of
the Choquet integral, for whose proof we refer to \cite[Proposition 2.1]{Deb:Gig:Pas:21}. Let \(f,g\colon\X\to[0,\infty]\) be given functions.
\begin{itemize}
\item If \(f\leq g\), then \(\int f\,\d\mu\leq\int g\,\d\mu\).
\item {\sc Positive \(1\)-homogeneity.} \(\int\lambda f\,\d\mu=\lambda\int f\,\d\mu\) for every \(\lambda>0\).
\item \(\int f\,\d\mu=0\) if and only \(\mu(\{f>0\})=0\).
\item If \(\mu(\{f\neq g\})=0\), then \(\int f\,\d\mu=\int g\,\d\mu\).
However, it is worth pointing out that it can happen that \(f\leq g\) and \(\int f\,\d\mu=\int g\,\d\mu\),
but \(\mu(\{f<g\})>0\); cf.\ with \cite[Example 2.2]{Deb:Gig:Pas:21}.
\item {\sc Chebyshev's inequality.} It holds that \(\mu(\{f\geq\lambda\})\leq\lambda^{-1}\int_{\{f\geq\lambda\}}f\,\d\mu\)
for every \(\lambda>0\).
\item {\sc Monotone convergence theorem.} If \((f_n)_n\) is a sequence of functions \(f_n\colon\X\to[0,\infty]\) such that
\(f_n(x)\nearrow f(x)\) for \(\mu\)-a.e.\ \(x\in\X\), then \(\int f_n\,\d\mu\to\int f\,\d\mu\). On the other hand, the
analogue of the dominated convergence theorem might fail, cf.\ with \cite[Remark 2.3]{Deb:Gig:Pas:21}.
\item {\sc Borel--Cantelli lemma.} If \((E_n)_n\) is a sequence of subsets of \(\X\) with \(\sum_{n\in\N}\mu(E_n)<+\infty\),
then it holds that \(\mu\big(\bigcap_{n\in\N}\bigcup_{k\geq n}E_k\big)=0\).
\end{itemize}
An outer measure \(\mu\) on \(\X\) is said to be \textbf{submodular} provided it satisfies
\[
\mu(E\cup F)+\mu(E\cap F)\leq\mu(E)+\mu(F)\quad\text{ for every }E,F\subseteq\X.
\]
For our purposes, the relevance of the notion of submodularity is due to the fact that it captures exactly
the situations in which the Choquet integral is subadditive (and thus sublinear):
\begin{theorem}[Subadditivity theorem]\label{thm:subadd_Choquet_int}
Let \(\mu\) be an outer measure on a set \(\X\neq\varnothing\). Then \(\mu\) is submodular if and only if
the Choquet integral induced by \(\mu\) is subadditive, i.e.
\[
\int (f+g)\,\d\mu\leq\int f\,\d\mu+\int g\,\d\mu\quad\text{ for every }f,g\colon\X\to[0,\infty].
\]
\end{theorem}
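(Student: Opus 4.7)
For the direction asserting that subadditivity of the Choquet integral implies submodularity, the plan is simply to test the hypothesis on pairs of characteristic functions. Given arbitrary $E, F \subseteq \X$, Cavalieri's formula yields $\int \1_E \, \d\mu = \mu(E)$ and $\int \1_F \, \d\mu = \mu(F)$; moreover, $\1_E + \1_F$ takes values in $\{0,1,2\}$ with $\{\1_E + \1_F > t\} = E \cup F$ for $t \in [0,1)$ and $= E \cap F$ for $t \in [1,2)$, so $\int (\1_E + \1_F) \, \d\mu = \mu(E \cup F) + \mu(E \cap F)$. The assumed subadditivity inequality then yields the submodular inequality at once.

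For the substantive direction, assume $\mu$ submodular. My plan is to reduce general non-negative functions first to simple ones via monotone convergence, and then to a purely combinatorial inequality about finitely many sets. For the reduction step I would introduce the dyadic approximations $f_n \coloneqq n^{-1}(\lfloor n f \rfloor \wedge n^2)$ and $g_n$ analogously: these are simple, increase pointwise to $f$ and $g$, and satisfy $f_n + g_n \nearrow f + g$, so the monotone convergence theorem listed among the basic properties of the Choquet integral reduces the claim to the case where $f = c \sum_{i=1}^N \1_{A_i}$ and $g = c \sum_{j=1}^M \1_{B_j}$ for some $c > 0$ and nested chains $A_1 \supseteq \cdots \supseteq A_N$ and $B_1 \supseteq \cdots \supseteq B_M$, obtained as the super-level sets of $f$ and $g$. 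For such simple functions, Cavalieri turns the desired inequality into
\[
\sum_k \mu(E_k) \leq \sum_{i=1}^N \mu(A_i) + \sum_{j=1}^M \mu(B_j),
\]
where $(E_k)_k$ is the decreasing sequence of super-level sets of the incidence count $\sum_{C \in \mathcal F} \1_C$ of the finite family $\mathcal F \coloneqq (A_1, \ldots, A_N, B_1, \ldots, B_M)$.

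To prove this combinatorial inequality, my plan is a symmetrization argument: as long as $\mathcal F$ contains two incomparable members $C, C'$, I replace the pair by $C \cup C', C \cap C'$. This preserves the pointwise incidence count $\sum_{C \in \mathcal F} \1_C$ (hence leaves $(E_k)_k$ unchanged), while the binary submodular inequality guarantees that $\sum_{C \in \mathcal F} \mu(C)$ does not increase. Upon termination, $\mathcal F$ is totally ordered by inclusion, and any chain with this prescribed incidence function coincides (as a multiset) with $(E_k)_k$, so the final family satisfies the inequality with equality; pushing back through the symmetrization steps gives the claim for the original $\mathcal F$. The hard part will be proving termination: I would work inside the finite Boolean algebra generated by the original sets $A_i, B_j$ and, denoting by $n(C)$ the number of atoms contained in $C$, use the monovariant $\Phi(\mathcal F) \coloneqq \sum_{C \in \mathcal F} n(C)^2$, which strictly increases at each symmetrization step thanks to the elementary identity $n(C \cup C')^2 + n(C \cap C')^2 - n(C)^2 - n(C')^2 = 2 \, n(C \setminus C') \, n(C' \setminus C) > 0$ valid for incomparable $C, C'$, and which takes only finitely many values.
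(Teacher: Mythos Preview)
Your argument is correct, and in fact supplies more than the paper does: the paper does not prove this theorem but simply refers the reader to external sources (Debin--Gigli--Pasqualetto and Denneberg's monograph). Your proof via reduction to simple functions and symmetrization in a finite Boolean algebra is essentially the classical argument.

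One small slip to fix: the approximation \(f_n = n^{-1}(\lfloor nf \rfloor \wedge n^2)\) is \emph{not} monotone in \(n\) as written (for instance \(f \equiv 1/2\) gives \(f_2 = 1/2 > 1/3 = f_3\)), so the monotone convergence theorem in the paper's list of properties does not apply directly. To match your own word ``dyadic'' and restore monotonicity, use \(f_n = 2^{-n}(\lfloor 2^n f \rfloor \wedge 4^n)\), which is increasing because \(\lfloor 2x \rfloor \geq 2\lfloor x \rfloor\). With that correction, the reduction to simple functions with common step size, the combinatorial reformulation, the symmetrization step (which preserves the incidence function while not increasing \(\sum_C \mu(C)\) by submodularity), and the termination argument via the strictly increasing bounded monovariant \(\sum_{C \in \mathcal F} n(C)^2\) are all sound.
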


For a proof of the above result, we refer to \cite[Theorem 2.5]{Deb:Gig:Pas:21} or \cite[Chapter 6]{Den:10}.
\subsubsection*{The space \texorpdfstring{\(L^0(\mu)\)}{L0(mu)}}
Fix a metric measure space \((\X,\sfd,\mm)\) and a submodular outer measure \(\mu\) on \(\X\) such that
\(\mm\ll\mu\). Assume that \(\mu\) is \textbf{boundedly-finite}, i.e.\ \(\mu(B)<+\infty\) for every
bounded subset \(B\) of \(\X\). The outer measure \(\mu\) induces an equivalence relation on \(\mathcal L^0(\bar\mm)\):
given any \(f,g\in\mathcal L^0(\bar\mm)\), we declare that \(f\sim_\mu g\) if and only if \(f=g\) holds \(\mu\)-a.e.\ on \(\X\),
and the definition is well posed because $\bar\mm\ll\mu$ as well. We then define the space \(L^0(\mu)\) as
\[
L^0(\mu)\coloneqq\mathcal L^0(\bar\mm)/\sim_\mu.
\]
We denote by \(\pi_\mu\colon\mathcal L^0(\bar\mm)\to L^0(\mu)\) the canonical projection operator.
Notice that when \(\mu\) is the outer measure \(\mm^*\) induced by \(\mm\) via the Carath\'{e}odory's construction,
we have that \(L^0(\mm^*)=L^0(\bar\mm)\) and \(\pi_{\mm^*}\) coincides with (the restriction to \(\mathcal L^0(\bar\mm)\) of)
the map \(\pi_{\bar\mm}\) defined in \eqref{eq:def_pi_m}.
\medskip

We now define a distance on \(L^0(\mu)\). Fix an increasing sequence \((U_k)_k\) of bounded open subsets
of \(\X\) having the following property: if \(B\subseteq\X\) is bounded, then \(B\subseteq U_{k(B)}\) for
some \(k(B)\in\N\). In particular, we have that \(\bigcup_k U_k=\X\). We equip the space \(L^0(\mu)\)
with the following distance:
\begin{equation}\label{eq:d_L0}
\sfd_{L^0(\mu)}(f,g)\coloneqq\sum_{k\in\N}\frac{1}{2^k(\mu(U_k)\vee 1)}\int_{U_k}|f-g|\wedge 1\,\d\mu
\quad\text{ for every }f,g\in L^0(\mu).
\end{equation}
Notice that the above definition is well-posed, since it does not depend on the specific choice of the representatives
of \(f\) and \(g\) (which is the reason why we have written \(\int_{U_k}|f-g|\wedge 1\,\d\mu\), even though \(|f-g|\wedge 1\)
is an equivalence class rather than a function). Moreover, the fact that \(\sfd_{L^0(\mu)}\) satisfies the triangle
inequality -- and thus it is a distance -- follows from the subadditivity of the integral, which is guaranteed by
Theorem \ref{thm:subadd_Choquet_int}. Next, we collect some more properties of \(L^0(\mu)\):
\begin{itemize}
\item \(L^0(\mu)\) is a commutative algebra with respect to the natural pointwise operations.
\item Given that \(\mm\ll\mu\), we have a natural projection operator
\begin{equation}\label{eq:Pr_mu}
{\rm Pr}_{\mu,\mm}\colon L^0(\mu)\to L^0(\mm),
\end{equation}
which sends the \(\sim_\mu\)-equivalence class of an \(\mm\)-measurable function \(f\colon\X\to\R\) to \(\pi_\mm(f)\).
\item \((L^0(\mu),\sfd_{L^0(\mu)})\) is a complete metric space. Moreover, \(L^0(\mu)\) is a topological vector space
with respect to the topology induced by \(\sfd_{L^0(\mu)}\).
\item The distance \(\sfd_{L^0(\mu)}\) metrises the `convergence in \(\mu\)-measure on bounded sets'. Namely, given any
\((f_n)_n\subseteq L^0(\mu)\) and \(f\in L^0(\mu)\), it holds that \(\sfd_{L^0(\mu)}(f_n,f)\to 0\) if and only if
\[
\lim_{n\to\infty}\mu\big(B\cap\{|f_n-f|>\varepsilon\}\big)=0\quad\text{ for every }B\subseteq\X\text{ bounded and }\varepsilon>0.
\]
In particular, the choice of \((U_k)_k\) affects the distance \(\sfd_{L^0(\mu)}\), but not the induced topology.
\item If \((f_n)_n\subseteq L^0(\mu)\) and \(f\in L^0(\mu)\) satisfy \(\sfd_{L^0(\mu)}(f_n,f)\to 0\) as \(n\to\infty\),
then we can extract a subsequence \((n_j)_j\) such that \(f_{n_j}(x)\to f(x)\) as \(j\to\infty\) for \(\mu\)-a.e.\ \(x\in\X\).
The converse implication might fail: there are examples of \(\mu\)-a.e.\ converging sequences that do not converge with respect
to \(\sfd_{L^0(\mu)}\), cf.\ with \cite[Remark 2.14]{Deb:Gig:Pas:21}.
\end{itemize}
Proofs of the above properties can be extracted from \cite{Deb:Gig:Pas:21}. Therein, only a specific submodular outer measure (i.e.\ the
Sobolev \(2\)-capacity \(\rm Cap\)) is considered, but one can easily check that the arguments rely only on the fact that \(\rm Cap\) is
submodular and boundedly finite.
\medskip
Let us also define the space \(L^\infty(\mu)\subseteq L^0(\mu)\) of all \textbf{\(\mu\)-essentially bounded} functions on \(\X\) as
\[
L^\infty(\mu)\coloneqq\bigg\{f\in L^0(\mu)\;\bigg|
\;\|f\|_{L^\infty(\mu)}\coloneqq\inf_{\bar f\in\pi_\mu^{-1}(f)}\sup_\X|\bar f|<+\infty\bigg\}.
\]
One can readily check that \((L^\infty(\mu),\|\cdot\|_{L^\infty(\mu)})\) is a Banach space.
\medskip
We will be concerned with two classes of submodular, boundedly-finite outer measures:
\begin{itemize}
\item
In the case where \(\mu=\mm^*\), the distance \(\sfd_{L^0(\mm^*)}\) metrises the `pointwise
\(\bar\mm\)-a.e.\ convergence up to a subsequence', see e.g.\ \cite[Proposition 1.1.21]{Gig:Pas:20}.
\item The \emph{Sobolev \(p\)-capacity} \({\rm Cap}_p\), for any \(p\in[1,\infty)\), on a metric measure space; see Section \ref{ss:capacity}.
\end{itemize}
\subsection{Metric \texorpdfstring{\(1\)}{1}-currents}
Let us recall a few basic concepts in the theory of \emph{metric currents}, which were introduced in \cite{Amb:Kir:00}.
\begin{definition}[Metric \(1\)-current]
Let \((\X,\sfd)\) be a complete, separable metric space. Then we say that a bilinear functional
\(T\colon\LIP_b(\X)\times\LIP(\X)\to\R\) is a \textbf{\(1\)-current} on \(\X\) if the following hold:
\begin{itemize}
\item[\(\rm i)\)] If \((g_n)_{n\in\N\cup\{\infty\}}\subseteq\LIP(\X)\) satisfy \(\sup_{n\in\N}\Lip(g_n)<+\infty\)
and \(g_n(x)\to g_\infty(x)\) for every \(x\in\X\), then \(T(f,g_n)\to T(f,g_\infty)\) for every \(f\in\LIP_b(\X)\).
\item[\(\rm ii)\)] If \((f,g)\in\LIP_b(\X)\times\LIP(\X)\) and \(g\) is constant on a neighbourhood of
\(\{f\neq 0\}\), then it holds that \(T(f,g)=0\).
\item[\(\rm iii)\)] There exists a measure \(\mu\in\mathcal M_+(\X)\) such that
\begin{equation}\label{eq:mass_current}
|T(f,g)|\leq\Lip(g)\int|f|\,\d\mu\quad\text{ for every }(f,g)\in\LIP_b(\X)\times\LIP(\X).
\end{equation}
\end{itemize}
The minimal measure \(\mu\in\mathcal M_+(\X)\) satisfying \eqref{eq:mass_current} is denoted by \(\|T\|\) and is
called the \textbf{mass measure} of \(T\). We denote by \({\bf M}_1(\X)\) the set of all \(1\)-currents on \(\X\).
\end{definition}
\begin{remark}[Local currents] \label{rem_local_currents} {\rm The theory of metric currents can be adapted to the case when the mass measure
$\|T\|$ is boundedly finite, along the lines of \cite{La:We:11} (see also \cite{Lang}). It suffices to consider duality with $\LIP_{bs}(\X)\times\LIP_b(\X)$, with obvious changes in the other definitions. We restricted ourselves to currents with finite mass to focus on the main ideas.
}
\end{remark}

The space \({\bf M}_1(\X)\) is a Banach space if endowed with the pointwise vector space operations and with the norm
\(T\mapsto\|T\|(\X)\). The \textbf{boundary} \(\partial T\colon\LIP_b(\X)\to\R\) of \(T\in{\bf M}_1(\X)\) is defined as
\[
\partial T(f)\coloneqq T(\1_\X,f)\quad\text{ for every }f\in\LIP_b(\X).
\]
If there exists \(\nu\in\mathcal M(\X)\) such that \(\partial T(f)=\int f\,\d\nu\) for every \(f\in\LIP_b(\X)\),
then we say that \(T\) is a \textbf{normal \(1\)-current}. The measure \(\nu\) is uniquely determined and, with an
abuse of notation, we will denote it by \(\partial T\). We define \({\bf N}_1(\X)\) as the set of normal
\(1\)-currents on \(\X\). The space \({\bf N}_1(\X)\) is a vector subspace of \({\bf M}_1(\X)\) and it is a
Banach space if endowed with the norm \(T\mapsto\|T\|(\X)+|\partial T|(\X)\).
\medskip

Every plan \(\ppi\) on \(\X\) induces a normal \(1\)-current \([\![\ppi]\!]\in{\bf N}_1(\X)\), which is defined as follows:
\begin{equation}\label{eq:current_induced_by_plan}
[\![\ppi]\!](f,g)\coloneqq\int\!\!\!\int_0^1 f(\gamma^{\sf cs}(t))\frac{\d}{\d t}g(\gamma^{\sf cs}(t))\,\d t\,\d\ppi(\gamma)\quad\text{ for every }(f,g)\in\LIP_b(\X)\times\LIP(\X).
\end{equation}
It can be readily checked that \(\partial[\![\ppi]\!]=(\e_1)_\#\ppi-(\e_0)_\#\ppi\) and that
\begin{equation}\label{eq:ineq_curr_ind_plan}
\int f\,\d\|[\![\ppi]\!]\|\leq\int\!\!\!\int_\gamma f\,\d s\,\d\ppi(\gamma)\quad\text{ for every }f\in\LIP_b(\X)^+.
\end{equation}
Remarkably, the converse holds as well, namely \emph{every} normal \(1\)-current \(T\) can be written as \([\![\ppi]\!]\) for some plan \(\ppi\).
In addition, the plan \(\ppi\) can be chosen to be optimal, in the sense that the inequality in \eqref{eq:ineq_curr_ind_plan} is saturated. This is the
content of Paolini--Stepanov's version for metric currents \cite{Pao:Ste:12,Pao:Ste:13} of Smirnov's superposition principle \cite{Smi:93}, which we
report in the following result (that is a reformulation of \cite[Corollary 3.3]{Pao:Ste:12} taken from \cite[Theorem 4.9]{DiMar:Gig:Pas:Sou:20}).
\begin{theorem}[Superposition principle for normal \(1\)-currents]\label{thm:Paolini-Stepanov}
Let \((\X,\sfd)\) be a complete, separable metric space. Let \(T\in{\bf N}_1(\X)\) be given.
Then there exists a plan \(\ppi\) on \(\X\) that is concentrated on non-constant Lipschitz curves
having constant speed and satisfying
\[
T=[\![\ppi]\!],\qquad\int f\,\d\|T\|=\int\!\!\!\int_\gamma f\,\d s\,\d\ppi(\gamma)\quad\text{ for every }f\in\LIP_b(\X)^+.
\]
If \(T\) is acyclic, then the plan \(\ppi\) can be chosen so that \((\partial T)^+=(\e_1)_\#\ppi\) and \((\partial T)^-=(\e_0)_\#\ppi\).
\end{theorem}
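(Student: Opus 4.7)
The plan is to reduce to a Banach-space setting via an isometric embedding and then invoke the Smirnov-type superposition principle there. First, fix an isometric embedding $\iota\colon\X\to V$ of $\X$ into a separable Banach space $V$ (for instance, the Kuratowski embedding into $\ell^\infty$ built from a countable dense subset of $\X$). Since every Lipschitz function on $\X$ extends to a Lipschitz function on $V$ via McShane--Whitney with the same Lipschitz constant, the pushforward $\tilde T\coloneqq\iota_\# T$, defined on $\LIP_b(V)\times\LIP(V)$ by precomposition with $\iota$, is a well-defined normal $1$-current on $V$ with $\|\tilde T\|=\iota_\#\|T\|$ and $\partial\tilde T=\iota_\#\partial T$. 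In particular $\|\tilde T\|$ is concentrated on the closed set $\iota(\X)$.

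Second, invoke the Smirnov-type superposition principle in the Banach-space setting, which is the core content of \cite{Pao:Ste:12,Pao:Ste:13}: this produces a plan $\tilde\ppi$ on $V$, concentrated on non-constant Lipschitz curves of constant speed, such that
\[
\tilde T=[\![\tilde\ppi]\!],\qquad\int f\,\d\|\tilde T\|=\int\!\!\!\int_\gamma f\,\d s\,\d\tilde\ppi(\gamma)\qquad\text{for every }f\in\LIP_b(V)^+.
\]
One route to this principle identifies $\tilde T$ with a $V$-valued vector measure of total variation $\|\tilde T\|$ and distributional divergence $-\partial\tilde T$, then adapts Smirnov's original Euclidean decomposition (slicing by affine hyperplanes and inductive cone-removal) to the Banach-space target.

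Third, exploit the mass equality above together with the concentration of $\|\tilde T\|$ on $\iota(\X)$: if a $\tilde\ppi$-positive set of curves exited $\iota(\X)$ on a set of positive length, then approximating $\1_{V\setminus\iota(\X)}$ from below by elements of $\LIP_b(V)^+$ would force the right-hand side to exceed the left-hand side, a contradiction. Hence $\tilde\ppi$-a.e.\ $\gamma$ is entirely contained in $\iota(\X)$ and descends via $\iota^{-1}$ to the desired plan $\ppi$ on $\X$. For the acyclic refinement, apply the stronger form of \cite{Pao:Ste:12,Pao:Ste:13}, which produces a plan concentrated on injective arcs whose endpoints realize the Jordan decomposition of $\partial T$; acyclicity of $T$ precisely rules out closed loops in the decomposition and yields $(\partial T)^+=(\e_1)_\#\ppi$ and $(\partial T)^-=(\e_0)_\#\ppi$.

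The main obstacle lies entirely in step two, i.e.\ the Banach-space superposition principle itself: Smirnov's Euclidean construction relies crucially on coarea and slicing arguments that do not have immediate analogues in infinite dimensions, and the acyclic strengthening requires a further elimination of oriented cycles from the decomposition. By contrast, the Kuratowski reduction in step one and the pull-back and endpoint identification in step three are comparatively routine once the decomposition in $V$ is in hand.
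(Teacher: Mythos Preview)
The paper does not prove this theorem at all: it is stated as a quotation from the literature, attributed to \cite[Corollary 3.3]{Pao:Ste:12} (in the reformulation of \cite[Theorem 4.9]{DiMar:Gig:Pas:Sou:20}), with no argument given. So there is no ``paper's own proof'' to compare against.

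That said, your proposal is not really a proof either. Your step two explicitly invokes ``the Smirnov-type superposition principle in the Banach-space setting, which is the core content of \cite{Pao:Ste:12,Pao:Ste:13}'' --- but that is precisely the theorem you are trying to establish. Paolini--Stepanov's result is already stated and proved for complete metric spaces (indeed for metric currents in the Ambrosio--Kirchheim sense), not merely for Banach spaces, so the Kuratowski detour in steps one and three buys nothing: you are reducing the metric statement to itself via an embedding, then citing the same external reference the paper cites. If your intent was to \emph{sketch} the Paolini--Stepanov argument, you have not done so: as you yourself note, the Euclidean Smirnov proof relies on coarea and slicing that have no direct infinite-dimensional analogue, and you offer no replacement mechanism. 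The actual Paolini--Stepanov proof proceeds quite differently (via a cyclic/acyclic decomposition of the current and a careful construction of the measure on curves using the metric-current structure directly), and none of that appears in your outline.

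A minor technical point: the Kuratowski embedding lands in $\ell^\infty$, which is not separable, so your phrase ``separable Banach space $V$'' is inaccurate as written; you would need to pass to the closed linear span of the image.
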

\begin{remark}\label{rem:superposition:localcurrents}{\rm
A superposition principle for local currents was recently proved by the first named author and coauthors in \cite{Amb:Renzi:Vitillaro:2026}.
}    
\end{remark}
\subsection{Metric Sobolev spaces}\label{ss:metric_Sob_spaces}
\begin{definition}[Weak upper gradient]\label{def:wug}
Let \((\X,\sfd,\mm)\) be a metric measure space and \(p\in[1,\infty)\). Let $f \colon \X \to [-\infty,\infty]$ be $\mm$-measurable.
Then we say that \(\rho\in\mathcal L^p_{\rm ext}(\mm)^+\) is a \textbf{weak \(p\)-upper gradient} of \(f\) if
\[
\big|f(\gamma(b_\gamma))-f(\gamma(a_\gamma))\big|\leq\int_\gamma\rho\,\d s\quad\text{ for }{\rm Mod}_p\text{-a.e.\ nonconstant }\gamma\in\mathscr R(\X).
\]
We denote by \({\rm WUG}_p(f)\) the set of all weak \(p\)-upper gradients of \(f\).
\end{definition}

Following \cite[Definition 5.22]{Amb:Iko:Luc:Pas:24}, we define
\[
\bar{D}^{1,p}( \X ) \coloneqq
\big\{f\colon \X\to [-\infty, \infty]\;\mm\text{-measurable}\,\big|\, {\rm WUG}_p(f) \neq \emptyset \big\}.
\]
We recall basic properties of the collection ${\rm WUG}_p(f)$ proved e.g.\ in \cite[Section 6]{Amb:Iko:Luc:Pas:24}. If the set \({\rm WUG}_p(f)\) is not empty, then it is a closed convex sublattice of \(\mathcal L^p_{\rm ext}(\mm)^+\),
thus it admits an \(\mm\)-a.e.\ unique element \(\rho_f\) of minimal \(\mathcal L^p(\mm)\)-seminorm, which is also
minimal in the \(\mm\)-a.e.\ sense. Moreover, if $f \in \bar{D}^{1,p}( \X )$ and $g \colon \X \to \R$ is defined as
\[
g(x)\coloneqq\left\{\begin{array}{ll}
f(x)\\
0
\end{array}\quad\begin{array}{ll}
\text{ if }|f(x)|<+\infty,\\
\text{ if }|f(x)|=+\infty,
\end{array}\right.
\]
then ${\rm WUG}_p(g) = {\rm WUG}_p( f )$ and ${\rm WUG}_p( f - g ) \neq \varnothing$. Furthermore, if $\rho \in {\rm WUG}_p( f )$ and $\rho' \in \mathcal L^p_{\rm ext}(\mm)^+$ such that $\rho = \rho'$ $\mm$-almost everywhere, then $\rho' \in {\rm WUG}_p( f )$; this is because the class of curves that have positive length in an $\mm$-negligible set is $\Mod_p$-negligible; see e.g.\ \cite[Lemma 6.8]{Amb:Iko:Luc:Pas:24}. We can therefore unambiguously write \(|Df|\) whenever \(f\in L^0(\mm)\) has a representative $\bar f \in \bar{D}^{1,p}( \X )$. We then define the \textbf{minimal weak \(p\)-upper gradient} of \(f\) as
\[
|Df|\coloneqq\rho_f^\mm\in L^p(\mm)^+.
\]
\begin{definition}[Newtonian, Sobolev, and Dirichlet spaces]\label{def:Sobolev}
Let \((\X,\sfd,\mm)\) be a metric measure space and \(p\in[1,\infty)\). Then:
\begin{itemize}
\item[\(\rm i)\)] We define the \textbf{Newtonian space} \(\bar{N}^{1,p}(\X)\) as
\[
    \bar{N}^{1,p}(\X)\coloneqq\big\{f\in\mathcal L^p(\bar\mm)\;\big|\;{\rm WUG}_p(f)\neq\varnothing\big\}.
\]
We endow \(\bar{N}^{1,p}(\X)\) with the seminorm \(\|f\|_{\bar{N}^{1,p}(\X)}\coloneqq\big(\|f\|_{\mathcal L^p(\bar\mm)}^p+\||Df|\|_{L^p(\mm)}^p\big)^{1/p}\).
\item[\(\rm ii)\)] We define the \textbf{Sobolev space} \(W^{1,p}(\X)\) as
\[
W^{1,p}(\X)\coloneqq\big\{\pi_\mm(f)\;\big|\;f\in \bar{N}^{1,p}(\X)\big\}\subseteq L^p(\mm).
\]
We endow \(W^{1,p}(\X)\) with the complete norm \(\|f\|_{W^{1,p}(\X)}\coloneqq\big(\|f\|_{L^p(\mm)}^p+\||Df|\|_{L^p(\mm)}^p\big)^{1/p}\).
\item[\(\rm iii)\)] We define the \textbf{Dirichlet space} \(D^{1,p}_{\mm}(\X)\) as
\[
D^{1,p}_{\mm}(\X)\coloneqq\big\{\pi_\mm(f)\;\big|\;f\in\mathcal L^0(\bar\mm),\,{\rm WUG}_p(f)\neq\varnothing\big\}\subseteq L^0(\mm).
\]
We endow $\bar{D}^{1,p}(\X)$ with the seminorm $\|f\|_{D^{1,p}(\X)} \coloneqq \| |Df| \|_{ L^{p}(\mm) }$ and $D^{1,p}_{\mm}(\X)$ with the induced seminorm $\|f\|_{D^{1,p}(\X)} \coloneqq \| |Df| \|_{ L^{p}(\mm) }$.
\end{itemize}
\end{definition}
\begin{remark}\label{remark:aboutDirichletspaces}{\rm
The definition of the Dirichlet space here differs from the one in \cite[Definition 5.22]{Amb:Iko:Luc:Pas:24}. Indeed, therein we defined the Dirichlet space, denoting it by $D^{1,p}( \X )$, as the quotient space of $\bar{D}^{1,p}( \X )$ under the seminorm $\| f \|_{ D^{1,p}( \X ) } = \| |Df| \|_{ L^{p}( \mm ) }$ in $\bar{D}^{1,p}(\X)$. That is, $f_1, f_2 \in \bar{D}^{1,p}( \X )$ satisfies $f_1 \sim f_2$ if and only if $\| f_2 - f_1 \|_{ D^{1,p}( \X ) } = 0$. Thus, \(\|\cdot\|_{D^{1,p}(\X)}\) is a norm on \(D^{1,p}(\X)\).

It follows from \cite[Theorem 6.1 ii)]{Amb:Iko:Luc:Pas:24} that every element in $D^{1,p}( \X )$ has a representative $f\in\mathcal L^0(\bar\mm) \cap \bar{D}^{1,p}(\X)$. Combining this with \cite[Theorem 6.1 iii)]{Amb:Iko:Luc:Pas:24}, we see that $D^{1,p}( \X )$ is a vector space over $\mathbb{R}$ and that the canonical map $\tau\colon D^{1,p}_\mm( \X ) \to D^{1,p}( \X )$ is a linear and surjective map.

Because of the aforementioned properties of the canonical map $\tau\colon D^{1,p}_\mm( \X ) \to D^{1,p}( \X )$ and as $D^{1,p}_{\mm}( \X ) \subseteq L^{0}( \mm )$ by construction, we formulate most of the results regarding the Dirichlet space in terms of $D^{1,p}_{\mm}(\X)$. We emphasize the difference between $D^{1,p}_{\mm}(\X)$ and $D^{1,p}( \X )$ when appropriate.}
\end{remark}

Notice that \(\bar{N}^{1,p}(\X)\), \(W^{1,p}(\X)\), and \(D^{1,p}_{\mm}(\X)\) are vector subspaces of \(\mathcal L^p(\bar\mm)\),
\(L^p(\mm)\), and \(L^0(\mm)\), respectively. It also holds that \(\LIP_{bs}(\X)\subseteq \bar{N}^{1,p}(\X)\) and \(|Df|\leq\lip(f)^\mm\) for every \(f\in\LIP_{bs}(\X)\).
\begin{definition}[Cheeger energy]\label{def:Cheeger_energy}
Let \((\X,\sfd,\mm)\) be a metric measure space and \(p\in[1,\infty)\). Then we define the \textbf{Cheeger \(p\)-energy functional}
\(\mathcal E_p\colon L^0(\mm)\to[0,\infty]\) on \(\X\) as
\[
\mathcal E_p(f)\coloneqq\frac{1}{p}\int|Df|^p\,\d\mm\quad\text{ for every }f\in D^{1,p}_{\mm}(\X)
\]
and \(\mathcal E_p(f)\coloneqq+\infty\) for every \(f\in L^0(\mm)\setminus D^{1,p}_{\mm}(\X)\).
\end{definition}
\begin{proposition}[Closure properties of minimal weak upper gradients]\label{prop:clos_wug}
Let \((\X,\sfd,\mm)\) be a metric measure space and \(p\in[1,\infty)\). Assume that \((f_n)_n\subseteq W^{1,p}(\X)\) and \(f,G\in L^p(\mm)\) satisfy
\(f_n\rightharpoonup f\) and \(|Df_n|\rightharpoonup G\) weakly in \(L^p(\mm)\). Then it holds that \(f\in W^{1,p}(\X)\) and \(|Df|\leq G\).
\end{proposition}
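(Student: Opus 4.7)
The strategy is to reduce weak convergence to strong convergence via Mazur's lemma, and then to transfer the weak upper gradient property along curves to the limit via a Fuglede-type argument combined with Ascoli--Arzel\`a on each curve.

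First I would apply Mazur's lemma to the sequence $\bigl((f_n, |Df_n|)\bigr)_n$, which converges weakly to $(f,G)$ in the product Banach space $L^p(\mm) \times L^p(\mm)$. This produces convex combinations
\[
\tilde f_n = \sum_{k=n}^{N_n} \alpha^n_k f_k, \qquad \tilde G_n = \sum_{k=n}^{N_n} \alpha^n_k |Df_k|, \qquad \alpha^n_k \geq 0, \ \sum_k \alpha^n_k = 1,
\]
such that $\tilde f_n \to f$ and $\tilde G_n \to G$ strongly in $L^p(\mm)$. After fixing for each $k$ a representative $\bar f_k \in \bar{N}^{1,p}(\X)$ of $f_k$ together with a weak $p$-upper gradient $\bar\rho_k \in \mathcal L^p_{\rm ext}(\bar\mm)^+$ of $\bar f_k$ representing $|Df_k|$, the pointwise convex combinations $\tilde{\bar f}_n$ and $\tilde{\bar\rho}_n$ represent $\tilde f_n$ and $\tilde G_n$. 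A triangle-inequality argument along curves outside the countable (hence $\Modp$-null) union of the exceptional families for the $\bar f_k$'s shows that $\tilde{\bar\rho}_n$ is a weak $p$-upper gradient of $\tilde{\bar f}_n$:
\[
\bigl|\tilde{\bar f}_n(\gamma(b_\gamma)) - \tilde{\bar f}_n(\gamma(a_\gamma))\bigr| \leq \sum_k \alpha^n_k \int_\gamma \bar\rho_k \,\d s = \int_\gamma \tilde{\bar\rho}_n \,\d s.
\]

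Passing to a further subsequence with rapidly decaying $L^p$ errors, a Fuglede-type argument (Chebyshev plus Borel--Cantelli applied to $\Modp$) yields, for $\Modp$-a.e.\ $\gamma \in \mathscr R(\X)$,
\[
\int_\gamma |\tilde{\bar\rho}_n - \bar G|\,\d s \to 0 \quad\text{and}\quad \int_\gamma |\tilde{\bar f}_n - \bar f|\,\d s \to 0,
\]
where $\bar G, \bar f$ are fixed representatives of $G$ and $f$. For such a $\gamma$, the weak upper gradient inequality together with the $L^1$-convergence of $\tilde{\bar\rho}_n \circ \gamma^{\sf cs}$ makes the family $(\tilde{\bar f}_n \circ \gamma^{\sf cs})_n$ equi-absolutely continuous on $[0,1]$ (the modulus being controlled by $\int_{\gamma^{\sf cs}|_E} \tilde{\bar\rho}_n\,\d s$, uniformly small on small $E$); the bounded convergence of their averages makes them uniformly bounded. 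Ascoli--Arzel\`a then extracts a further subsequence converging uniformly to a continuous $\hat f_\gamma \in C([0,1])$ agreeing with $\bar f \circ \gamma^{\sf cs}$ $\mathcal L^1$-a.e. Passing to the limit in the weak upper gradient inequality produces $\bigl|\hat f_\gamma(1) - \hat f_\gamma(0)\bigr| \leq \int_\gamma \bar G\,\d s$.

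The final step is to patch the $\hat f_\gamma$ into a single $\bar\mm$-representative of $f$ lying in $\bar{N}^{1,p}(\X)$ and admitting $\bar G$ as a weak $p$-upper gradient; this requires modifying $\bar f$ only on an $\bar\mm$-null set, and the justification is standard Newtonian theory (essentially the closure of $\bar{N}^{1,p}(\X)$ under joint strong $L^p$-convergence of functions and upper gradients, cf.\ \cite{Amb:Iko:Luc:Pas:24}). Granted this, $f \in W^{1,p}(\X)$ and $|Df| \leq G$ follows by minimality of the minimal weak upper gradient. The main obstacle is precisely this patching step: reconciling $\bar\mm$-a.e.\ pointwise convergence of the $\tilde{\bar f}_n$ with pointwise convergence at the endpoints of $\Modp$-a.e.\ curve is where the full Newtonian machinery is invoked, since a careless choice of representative of $f$ would be consistent with $\bar\mm$-a.e.\ limits but would invalidate the curve-endpoint inequality.
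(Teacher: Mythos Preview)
The paper does not give its own proof of this proposition; it simply cites \cite[Lemma 7.5]{Amb:Iko:Luc:Pas:24}. Your sketch is essentially the standard argument one finds in such references (and in \cite{HKST:15}): Mazur's lemma to upgrade weak to strong convergence, then a Fuglede-type passage to the limit along $\Modp$-a.e.\ curve, and finally the construction of a good Newtonian representative of $f$.

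Two minor remarks on your write-up. First, the Ascoli--Arzel\`a step is correct but slightly roundabout: in the standard treatment one avoids a curve-dependent subsequence by noting that the $1$-dimensional limit is uniquely determined (any uniform subsequential limit agrees $\mathcal L^1$-a.e.\ with $\bar f\circ\gamma^{\sf cs}$ and is continuous), so the full sequence $\tilde{\bar f}_n\circ\gamma^{\sf cs}$ already converges uniformly. This removes the apparent dependence of the subsequence on $\gamma$ and makes the patching cleaner. Second, you are right that the patching step is where the genuine content sits; the usual way to handle it is to define $\tilde f(x)\coloneqq\lim_n\tilde{\bar f}_n(x)$ where the limit exists and $0$ elsewhere, observe that the exceptional set is $\bar\mm$-null and is avoided by $\Modp$-a.e.\ curve (by the previous step), and conclude directly that $\bar G\in{\rm WUG}_p(\tilde f)$. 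Your deferral to the Newtonian closure result in \cite{Amb:Iko:Luc:Pas:24} is entirely appropriate here---it is exactly what the paper itself does for the whole statement.
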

For a proof of the above, see for instance \cite[Lemma 7.5]{Amb:Iko:Luc:Pas:24}. Furthermore, the following calculus rules hold:
\begin{theorem}[Calculus rules for minimal weak upper gradients]\label{thm:calculus_rules_mwug}
Let \((\X,\sfd,\mm)\) be a metric measure space and \(p\in[1,\infty)\). Then the following properties are verified:
\begin{itemize}
\item[\(\rm i)\)] {\sc Locality}. Let \(f,g\in D^{1,p}_{\mm}(\X)\) be given. Then
\[
|Df|=|Dg|\quad\text{ holds }\mm\text{-a.e.\ on }\{f=g\}.
\]
\item[\(\rm ii)\)] {\sc Chain rule.} Let \(f\in D^{1,p}_{\mm}(\X)\) be given. Let \(N\subseteq\R\) be a Borel set with \(\mathscr L^1(N)=0\). Then
\[
|Df|=0\quad\text{ holds }\mm\text{-a.e.\ on }f^{-1}(N).
\]
Moreover, given any function \(\varphi\in\LIP(\R)\), it holds that \(\varphi\circ f\in D^{1,p}_{\mm}(\X)\) and
\[
|D(\varphi\circ f)|=\lip(\varphi)\circ f\,|Df|.
\]
\item[\(\rm iii)\)] {\sc Leibniz rule.} Let \(f,g\in D^{1,p}_{\mm}(\X)\cap L^\infty(\mm)\) be given. Then \(fg\in D^{1,p}_{\mm}(\X)\cap L^\infty(\mm)\) and
\[
|D(fg)|\leq|f||Dg|+|g||Df|.
\]
\item[\(\rm iv)\)]
{\sc Lattice property.} 
Let \(f,g\in D^{1,p}_{\mm}(\X)\) be given. Then \(f\vee g,f\wedge g\in D^{1,p}_{\mm}(\X)\) and
\[
|D(f\vee g)|=\1_{\{f\geq g\}}^\mm|Df|+\1_{\{f<g\}}^\mm|Dg|,\qquad|D(f\wedge g)|=\1_{\{f\leq g\}}^\mm|Df|+\1_{\{f>g\}}^\mm|Dg|.
\]
In particular, it holds that \(|f|\in D^{1,p}_{\mm}(\X)\) and \(|D|f||=|Df|\).
\end{itemize}
\end{theorem}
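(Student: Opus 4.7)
The plan is to prove the four items by exploiting the curve-level characterisation of minimal weak upper gradients, and to carry them out in the order (iv), (ii), (i), (iii), since the last three rely on arguments of a similar flavor and (i) is cleanest as a corollary of (ii). The underlying technical tool throughout is that for $\Mod_p$-a.e.\ $\gamma\in\mathscr R(\X)$ and every $\rho\in{\rm WUG}_p(f)$ with a chosen Borel representative of $f$, the composition $f\circ\gamma^{\sf cs}$ is absolutely continuous on $[0,1]$ with
\[
|(f\circ\gamma^{\sf cs})'(t)|\leq\rho(\gamma^{\sf cs}(t))\,\ell(\gamma)\quad\text{for }\mathscr L^1\text{-a.e.\ }t,
\]
in particular it enjoys the Lusin $N$-property. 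This AC-along-curves description, whose ingredients are already present in \cite{Amb:Iko:Luc:Pas:24}, is what lets us translate pointwise formulas in $\R$ into identities for minimal weak upper gradients.

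For \textbf{(iv)} I would first observe that, by the elementary pointwise inequality $|(f\vee g)(x)-(f\vee g)(y)|\leq|f(x)-f(y)|\vee|g(x)-g(y)|$ evaluated at the endpoints of curves, the function $\rho_f\vee\rho_g$ is a weak $p$-upper gradient of both $f\vee g$ and $f\wedge g$, so $f\vee g,f\wedge g\in D^{1,p}_\mm(\X)$. To obtain the pointwise formula I would show that $\1^\mm_{\{f\geq g\}}|Df|+\1^\mm_{\{f<g\}}|Dg|$ lies in ${\rm WUG}_p(f\vee g)$: for $\Mod_p$-a.e.\ $\gamma$ both $f\circ\gamma^{\sf cs}$ and $g\circ\gamma^{\sf cs}$ are AC, hence so is their maximum, and its a.e.\ derivative coincides with $(f\circ\gamma^{\sf cs})'$ on $\{f\circ\gamma^{\sf cs}>g\circ\gamma^{\sf cs}\}$, with $(g\circ\gamma^{\sf cs})'$ on $\{f\circ\gamma^{\sf cs}<g\circ\gamma^{\sf cs}\}$, and with their common value on the coincidence set. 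Integrating in $t$ gives the wug inequality. The opposite pointwise bound follows from minimality applied to the trivial decomposition $|D(f\vee g)|\leq\1_{\{f\geq g\}}^\mm|D(f\vee g)|+\1_{\{f<g\}}^\mm|D(f\vee g)|$ together with the locality of wugs on the sets $\{f\geq g\}$ and $\{f<g\}$ (where $f\vee g$ equals $f$ or $g$), closing the equality; the $f\wedge g$ case is symmetric, and $|f|=f\vee 0-f\wedge 0$ then yields $|D|f||=|Df|$.

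For \textbf{(ii)} I would prove the first assertion by showing that $\1_{\X\setminus f^{-1}(N)}^\mm|Df|$ is a weak $p$-upper gradient of $f$, whence by minimality $|Df|\leq\1_{\X\setminus f^{-1}(N)}^\mm|Df|$ $\mm$-a.e., i.e.\ $|Df|=0$ on $f^{-1}(N)$. For $\Mod_p$-a.e.\ $\gamma$ the function $u\coloneqq f\circ\gamma^{\sf cs}$ is AC with $|u'(t)|\leq|Df|(\gamma^{\sf cs}(t))\ell(\gamma)$, and the Lusin $N$-property gives $u'=0$ $\mathscr L^1$-a.e.\ on $u^{-1}(N)$; therefore
\[
|f(\gamma_{b_\gamma})-f(\gamma_{a_\gamma})|\leq\int_0^1\1_{\X\setminus f^{-1}(N)}(\gamma^{\sf cs}(t))\,|Df|(\gamma^{\sf cs}(t))\,\ell(\gamma)\,\d t=\int_\gamma\1_{\X\setminus f^{-1}(N)}|Df|\,\d s.
\]
The chain-rule formula then reduces to showing $\leq$ and $\geq$: the inequality $|D(\varphi\circ f)|\leq(\lip\varphi\circ f)|Df|$ is a pointwise calculation along curves after applying the first part to the Borel null set $N_\varphi\subseteq\R$ where $\varphi$ fails to be differentiable with derivative $\lip\varphi$; the reverse inequality follows by applying the same bound to $\varphi^{-1}$ on each maximal interval of monotonicity (reducing to the nontrivial case).

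For \textbf{(i)} I would apply (ii) to the function $f-g\in D^{1,p}_\mm(\X)$ (which belongs to this space since $|Df|+|Dg|\in{\rm WUG}_p(f-g)$ by the triangle inequality) with the null set $N=\{0\}\subseteq\R$, obtaining $|D(f-g)|=0$ $\mm$-a.e.\ on $\{f=g\}$. Writing $f=(f-g)+g$ and again using the triangle inequality for weak upper gradients gives $|Df|\leq|D(f-g)|+|Dg|\leq|Dg|$ $\mm$-a.e.\ on $\{f=g\}$, and the symmetric argument gives the reverse inequality. Finally, for \textbf{(iii)}, for $\Mod_p$-a.e.\ $\gamma$ both $f\circ\gamma^{\sf cs}$ and $g\circ\gamma^{\sf cs}$ are AC and bounded, so their product is AC with derivative satisfying the pointwise Leibniz bound, and integrating in $t$ shows $|f||Dg|+|g||Df|\in{\rm WUG}_p(fg)$, giving the inequality and the stability $fg\in D^{1,p}_\mm(\X)\cap L^\infty(\mm)$. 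The main obstacle is the rigorous setup of the AC-along-curves machinery: wugs are only defined $\mm$-a.e., and a priori the path integrals depend on pointwise representatives, so care is needed to ensure that a representative of $|Df|$ (and of $f$) can be chosen to make all the above $\Mod_p$-a.e.\ statements meaningful; this is precisely what is codified in the results of \cite{Amb:Iko:Luc:Pas:24} that we are invoking.
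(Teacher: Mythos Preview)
Your overall strategy of working along curves is sound and is in the spirit of the reference \cite{Amb:Iko:Luc:Pas:24} that the paper invokes for items (i)--(iii). However, there are two genuine gaps in the logical structure.

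First, in your proof of (iv) the reverse inequality relies on ``locality of wugs on the sets $\{f\geq g\}$ and $\{f<g\}$'', which is precisely item (i), proved only later in your ordering. This is circular as stated. The fix is simply to reorder: once you have the first part of (ii) (which you argue correctly), item (i) follows exactly as you describe, and then (iv) becomes available. In fact the paper's own argument for (iv) bypasses any curve-level computation: writing $f\vee g=((f-g)\vee 0)+g$ and applying the chain rule (ii) with $\varphi(t)=t\vee 0$ gives membership in $D^{1,p}_\mm(\X)$, after which locality (i) on $\{f\geq g\}$ and $\{f<g\}$ yields the pointwise formula immediately.

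Second, your sketch of the reverse inequality in the chain rule --- applying the upper bound to $\varphi^{-1}$ on maximal intervals of monotonicity --- does not work for a general $\varphi\in\LIP(\R)$. A Lipschitz function need not be monotone on any neighbourhood of a point where $\varphi'\neq 0$ (take $\varphi(t)=t+ct^2\sin(1/t)$ near $0$), so ``maximal intervals of monotonicity'' need not cover $\{\varphi'\neq 0\}$; and even on a monotonicity interval the inverse need not be Lipschitz if $\varphi'$ is not bounded away from zero. Moreover, after localising to an interval $I$ and applying the upper bound to a Lipschitz extension of $(\varphi|_I)^{-1}$, you obtain information about $|D(\psi\circ\varphi\circ f)|$, and identifying this with $|Df|$ on $f^{-1}(I)$ is again an appeal to locality (i). The standard route, once (i) is in hand, is to approximate $\varphi$ by dyadic piecewise affine interpolants $\varphi_n$: on each affine piece the chain rule is a triviality, locality glues the pieces, and then one passes to the limit using $\varphi_n'\to\varphi'$ $\mathscr L^1$-a.e.\ together with the first part of (ii) and dominated convergence. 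This is exactly the argument the paper writes out for the differential in Theorem~\ref{thm:calculus_rules_diff}~(ii).
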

\begin{proof}
For a proof of i), ii), and iii), we refer to \cite[Theorem 6.1]{Amb:Iko:Luc:Pas:24}. Let us now show that iv) holds. Fix \(f,g\in D^{1,p}_{\mm}(\X)\).
Define \(\varphi\in\LIP(\R)\) as \(\varphi(t)\coloneqq t\vee 0\) for every \(t\in\R\). The chain rule ensures that \((f-g)\vee 0=\varphi\circ(f-g)\in D^{1,p}_{\mm}(\X)\),
thus \(f\vee g=\varphi\circ(f-g)+g\in D^{1,p}_{\mm}(\X)\) as well. The locality property then guarantees that \(|D(f\vee g)|=|Df|\) \(\mm\)-a.e.\ on \(\{f\geq g\}\)
and \(|D(f\vee g)|=|Dg|\) \(\mm\)-a.e.\ on \(\{f<g\}\). Finally, to deduce that \(f\wedge g,|f|\in D^{1,p}_{\mm}(\X)\) and the formulas for their minimal weak upper
gradients, just observe that \(f\wedge g=-(-f)\vee(-g)\) and \(|f|=f\vee(-f)\).
\end{proof}
\begin{theorem}[Sobolev space via relaxation]\label{thm:Sob_via_rel}
Let \((\X,\sfd,\mm)\) be a metric measure space and \(p\in[1,\infty)\). Let \(f\in L^p(\mm)\) be given.
Then it holds that \(f\in W^{1,p}(\X)\) if and only if there exist a sequence \((f_n)_n\subseteq\LIP_{bs}(\X)\)
and a function \(G\in L^p(\mm)^+\) such that
\begin{equation}\label{eq:def_Sob_rel}
f_n^\mm\rightharpoonup f,\quad\lip_a(f_n)^\mm\rightharpoonup G\quad\text{ weakly in }L^p(\mm).
\end{equation}
Moreover, for any function \(f\in W^{1,p}(\X)\) we have that
\begin{equation}\label{eq:def_Sob_rel2}
|Df|=\bigwedge\big\{G\in L^p(\mm)^+\;\big|\;\eqref{eq:def_Sob_rel}\text{ holds}\big\}.
\end{equation}
\end{theorem}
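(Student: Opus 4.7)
The plan is to establish each implication separately and then deduce the pointwise formula \eqref{eq:def_Sob_rel2} as a corollary of both directions. For the ``if'' implication, I would start from the observation that when \(f_n\in\LIP_{bs}(\X)\), the asymptotic slope \(\lip_a(f_n)\) is a (weak) \(p\)-upper gradient of \(f_n\), so that \(|Df_n|\leq\lip_a(f_n)^\mm\) \(\mm\)-a.e. The weak convergence \(\lip_a(f_n)^\mm\rightharpoonup G\) in \(L^p(\mm)\) makes the sequence bounded in \(L^p\); for \(p>1\) reflexivity yields weak compactness of \((|Df_n|)_n\), while for \(p=1\) the weak convergence of \(\lip_a(f_n)^\mm\) forces equi-integrability via the Dunford--Pettis criterion, and the domination \(0\leq|Df_n|\leq\lip_a(f_n)^\mm\) transfers it to \((|Df_n|)_n\). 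Extract a subsequence \((n_k)\) with \(|Df_{n_k}|\rightharpoonup H\) weakly in \(L^p(\mm)\); testing the inequality \(|Df_{n_k}|\leq\lip_a(f_{n_k})^\mm\) against arbitrary non-negative \(\varphi\in L^q(\mm)\) and passing to the limit gives \(H\leq G\) \(\mm\)-a.e. Proposition \ref{prop:clos_wug}, applied along \((n_k)\) to \(f_{n_k}^\mm\rightharpoonup f\) and \(|Df_{n_k}|\rightharpoonup H\), concludes that \(f\in W^{1,p}(\X)\) with \(|Df|\leq H\leq G\).

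For the ``only if'' direction, I would invoke the density-in-energy result for Lipschitz functions (Theorem \ref{thm:density_nrg_Lip}) to produce a sequence \((f_n)\subseteq\LIP_{bs}(\X)\) with \(f_n^\mm\to f\) strongly in \(L^p(\mm)\) and \(\int\lip_a(f_n)^p\,\d\mm\to\int|Df|^p\,\d\mm\); strong convergence entails weak convergence. The bounded sequence \((\lip_a(f_n)^\mm)_n\) then admits a weak cluster point \(G\in L^p(\mm)^+\) (again using Dunford--Pettis in the case \(p=1\), where equi-integrability follows from the convergence of the \(L^1\)-norms together with an application of Vitali along the lines of the argument above). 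This \(G\) satisfies \eqref{eq:def_Sob_rel}; by the ``if'' direction, \(|Df|\leq G\) \(\mm\)-a.e., while lower semicontinuity of the norm under weak convergence combined with the norm convergence gives \(\|G\|_{L^p(\mm)}\leq\||Df|\|_{L^p(\mm)}\). Since \(G\geq|Df|\geq 0\), the pointwise \(\mm\)-a.e.\ equality \(G=|Df|\) follows, exhibiting \(G=|Df|\) as an admissible choice.

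Finally, the pointwise formula \eqref{eq:def_Sob_rel2} is immediate: the ``if'' direction shows that every admissible \(G\) satisfies \(|Df|\leq G\) \(\mm\)-a.e., while the ``only if'' direction realises \(|Df|\) itself as such a \(G\); thus \(|Df|\) is simultaneously a lower bound and a member of the class, and hence agrees with its essential infimum in \(L^p(\mm)^+\). I expect the most delicate point to be the \(p=1\) regime, since there weak compactness is not automatic from boundedness: one must exploit the equi-integrability transferred from the assumed weak convergence of \(\lip_a(f_n)^\mm\) in order to extract a weak limit for \(|Df_n|\) (and, in the converse direction, for \(\lip_a(f_n)^\mm\) itself) before the monotonicity argument for \(H\leq G\) can be run.
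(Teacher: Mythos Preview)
Your argument is correct, and in fact more detailed than what the paper offers: the paper's proof is a one-line citation to \cite[Theorem 7.1]{Amb:Iko:Luc:Pas:24}, so there is no in-paper argument to compare against. Your ``if'' direction is clean and matches the standard closure reasoning via Proposition~\ref{prop:clos_wug}, including the careful handling of \(p=1\) through Dunford--Pettis and domination.

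One simplification is available in the ``only if'' direction. Theorem~\ref{thm:density_nrg_Lip} already yields \(\lip_a(f_n)^\mm\to|Df|\) \emph{strongly} in \(L^p(\mm)\), not merely convergence of norms; hence \(G=|Df|\) is obtained directly as a strong (and therefore weak) limit, and the extraction of a weak cluster point together with the subsequent sandwich \(\|G\|_{L^p}\leq\||Df|\|_{L^p}\) and \(G\geq|Df|\) is superfluous. Note also that Theorem~\ref{thm:density_nrg_Lip} appears \emph{after} the present statement in the paper, but since both results are proved there by citation to the companion paper (and the cited density result precedes the cited relaxation result in the companion's numbering), no genuine circularity arises.
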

\begin{proof}
The claim follows from \cite[Theorem 7.1]{Amb:Iko:Luc:Pas:24}.
\end{proof}

The minimal weak upper gradient \(|Df|\) is a competitor for the right-hand side of \eqref{eq:def_Sob_rel2},
meaning that there exists \((f_n)_n\subseteq\LIP_{bs}(\X)\) satisfying \(f_n^\mm\rightharpoonup f\) and
\(\lip_a(f_n)^\mm\rightharpoonup|Df|\) weakly in \(L^p(\mm)\). In fact, a stronger approximation result holds, as we will
recall next. First we introduce some notation: we define the `convergence-in-energy' distance \(\sfd_{\rm en}\) on \(W^{1,p}(\X)\) as
\[
\sfd_{\rm en}(f,g)\coloneqq\Big(\|f-g\|_{L^p(\mm)}^p+\big\||Df|-|Dg|\big\|_{L^p(\mm)}^p\Big)^{1/p}\quad\text{ for every }f,g\in W^{1,p}(\X).
\]
\begin{theorem}[Density in energy of Lipschitz functions]\label{thm:density_nrg_Lip}
Let \((\X,\sfd,\mm)\) be a metric measure space and \(p\in[1,\infty)\). Then \((W^{1,p}(\X),\sfd_{\rm en})\) is a separable metric space.
Moreover, given any function \(f\in W^{1,p}(\X)\), there exists a sequence \((f_n)_n\subseteq\LIP_{bs}(\X)\) such that
\[
\sfd_{\rm en}(f_n^\mm,f)\to 0
\]
and \(\lip_a(f_n)^\mm\to|Df|\) strongly in \(L^p(\mm)\). In particular, \(\LIP_{bs}(\X)^\mm\) is dense in \((W^{1,p}(\X),\sfd_{\rm en})\).
\end{theorem}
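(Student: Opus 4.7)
The plan splits the statement into two independent assertions.

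\textbf{Separability.} The map $\iota\colon(W^{1,p}(\X),\sfd_{\rm en})\to L^p(\mm)\times L^p(\mm)$ defined by $\iota(f)\coloneqq(f,|Df|)$ is an isometric embedding when the target carries the $\ell^p$-product metric. Since $(\X,\sfd)$ is separable and $\mm$ is boundedly finite, $L^p(\mm)$ is separable, and so is $L^p(\mm)\times L^p(\mm)$; separability is hereditary in metric spaces, so $(W^{1,p}(\X),\sfd_{\rm en})$ is separable.

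\textbf{Energy approximation.} Fix $f\in W^{1,p}(\X)$. The paragraph following Theorem~\ref{thm:Sob_via_rel} already supplies $(g_n)\subseteq\LIP_{bs}(\X)$ with $g_n^\mm\rightharpoonup f$ and $\lip_a(g_n)^\mm\rightharpoonup|Df|$ weakly in $L^p(\mm)$. The idea is to upgrade this weak convergence to strong convergence via suitable convex combinations. Applying Mazur's lemma in $L^p(\mm)\times L^p(\mm)$ to $(g_n^\mm,\lip_a(g_n)^\mm)_n$, I obtain convex coefficients $c_{n,k}\ge 0$ ($k\ge n$, summing to $1$) such that
\[
\tilde f_n\coloneqq\sum_{k\ge n}c_{n,k}g_k\in\LIP_{bs}(\X),\qquad h_n\coloneqq\sum_{k\ge n}c_{n,k}\lip_a(g_k)^\mm
\]
satisfy $\tilde f_n^\mm\to f$ and $h_n\to|Df|$ strongly in $L^p(\mm)$. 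Positive $1$-homogeneity and subadditivity of $\lip_a$, both immediate from \eqref{eq:local_asym_lip}, yield the pointwise bound $\lip_a(\tilde f_n)\le\sum_{k\ge n}c_{n,k}\lip_a(g_k)$, whence
\[
0\le\lip_a(\tilde f_n)^\mm\le h_n\quad\text{in }L^p(\mm),
\]
so $\limsup_n\|\lip_a(\tilde f_n)^\mm\|_{L^p(\mm)}\le\||Df|\|_{L^p(\mm)}$.

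Next I would identify every weak subsequential limit $G^\star\in L^p(\mm)^+$ of $\lip_a(\tilde f_n)^\mm$ with $|Df|$: on one side, $\lip_a(\tilde f_n)^\mm\le h_n$ together with $h_n\to|Df|$ strongly implies $G^\star\le|Df|$ $\mm$-a.e.; on the other side, $\tilde f_n^\mm\to f$ strongly (hence weakly), so Theorem~\ref{thm:Sob_via_rel} combined with \eqref{eq:def_Sob_rel2} forces $G^\star\ge|Df|$ $\mm$-a.e. Therefore $G^\star=|Df|$, the full sequence $\lip_a(\tilde f_n)^\mm$ weakly converges to $|Df|$, and weak lower semicontinuity of the $L^p(\mm)$-norm together with the previous upper bound gives $\|\lip_a(\tilde f_n)^\mm\|_{L^p(\mm)}\to\||Df|\|_{L^p(\mm)}$.

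Finally I would upgrade weak plus norm convergence to strong convergence. For $p\in(1,\infty)$ this is immediate from uniform convexity of $L^p(\mm)$. For $p=1$ I would instead use the identity $|a-b|=2(a-b)^+-(a-b)$ valid for $a,b\ge 0$: integrated with $a=\lip_a(\tilde f_n)^\mm$ and $b=|Df|$, this gives
\[
\int\bigl|\lip_a(\tilde f_n)^\mm-|Df|\bigr|\,\d\mm=2\int(\lip_a(\tilde f_n)^\mm-|Df|)^+\,\d\mm-\Bigl(\int\lip_a(\tilde f_n)^\mm\,\d\mm-\||Df|\|_{L^1(\mm)}\Bigr),
\]
where the first right-hand term is bounded by $2\|h_n-|Df|\|_{L^1(\mm)}$ via $(\lip_a(\tilde f_n)^\mm-|Df|)^+\le(h_n-|Df|)^+$ and the second vanishes by the norm convergence already proved. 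Setting $f_n\coloneqq\tilde f_n$ yields the claimed approximation, and density of $\LIP_{bs}(\X)^\mm$ in $(W^{1,p}(\X),\sfd_{\rm en})$ follows. The only delicate point is this $p=1$ case, where neither reflexivity nor uniform convexity of $L^1(\mm)$ are available, and one must lean on the dominating sequence $h_n$ to transfer strong convergence.
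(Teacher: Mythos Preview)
Your separability argument is exactly the paper's. For the approximation you take a somewhat different route: the paper invokes \cite[Theorem 6.16]{Amb:Iko:Luc:Pas:24}, which already delivers a sequence with \(\tilde f_n^\mm\to f\) and \(\lip_a(\tilde f_n)^\mm\to|Df|\) \emph{strongly}, and then appeals to \cite[Lemma 6.15]{Amb:Iko:Luc:Pas:24} (after a Mazur step) to obtain both \(|Df_n|\to|Df|\) and \(\lip_a(f_n)^\mm\to|Df|\). You instead start from the weak approximation coming out of Theorem~\ref{thm:Sob_via_rel} and upgrade by hand via Mazur and a sandwich, which is more self-contained.

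There is, however, a gap. You verify \(\tilde f_n^\mm\to f\) and \(\lip_a(\tilde f_n)^\mm\to|Df|\) strongly, but the conclusion \(\sfd_{\rm en}(\tilde f_n^\mm,f)\to 0\) also requires \(|D\tilde f_n|\to|Df|\) strongly in \(L^p(\mm)\), and you never address this. It is easy to fix by rerunning your sandwich argument with \(|D\tilde f_n|\) in place of \(\lip_a(\tilde f_n)^\mm\): one has \(|D\tilde f_n|\le\lip_a(\tilde f_n)^\mm\le h_n\), and Proposition~\ref{prop:clos_wug} (rather than Theorem~\ref{thm:Sob_via_rel}) supplies the lower bound \(|Df|\le G'\) for any weak limit point \(G'\) of \(|D\tilde f_n|\). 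A second, smaller point: in the \(p=1\) case, to speak of weak subsequential limits at all you need weak relative compactness in \(L^1(\mm)\); this follows from the domination \(0\le\lip_a(\tilde f_n)^\mm\le h_n\) and Dunford--Pettis (the \(h_n\) converge strongly, hence are equi-integrable), but it should be said.
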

\begin{proof}
The map \(W^{1,p}(\X)\ni f\mapsto(f,|Df|)\in L^p(\mm)\times L^p(\mm)\) is an isometry if we endow its domain
with \(\sfd_{\rm en}\) and its codomain with the \(p\)-product norm \(\|(f,G)\|_p\coloneqq
\big(\|f\|_{L^p(\mm)}^p+\|G\|_{L^p(\mm)}^p\big)^{1/p}\). It follows that \((W^{1,p}(\X),\sfd_{\rm en})\) is a separable metric space.
Moreover, for any \(f\in W^{1,p}(\X)\) we know e.g.\ from \cite[Theorem 6.16]{Amb:Iko:Luc:Pas:24} that there exists a sequence \((\tilde f_n)_n\subseteq\LIP_{bs}(\X)\)
such that \(\tilde f_n^\mm\to f\) and \(\lip_a(\tilde f_n)^\mm\to|Df|\) strongly in \(L^p(\mm)\). By Mazur's lemma, we can find \(f_n\in{\rm conv}(\{\tilde f_i\,:\,i\geq n\})\)
and \(g_n\in L^p(\mm)^+\) with \(g_n\geq\lip_a(f_n)^\mm\) such that \(f_n^\mm\to f\) and \(g_n\to|Df|\) strongly in \(L^p(\mm)\). Thanks to \cite[Lemma 6.15]{Amb:Iko:Luc:Pas:24},
we conclude that \(|Df_n|\to|Df|\) and \(\lip_a(f_n)^\mm\to|Df|\) strongly in \(L^p(\mm)\).
\end{proof}

It is not known whether Lipschitz functions are always strongly dense in the Sobolev space. However, the
following result -- recently proved by Eriksson-Bique and Poggi-Corradini \cite[Theorem 1.6]{EB:PC:23} --
says that \emph{continuous} Sobolev functions are always strongly dense in the Sobolev space.
\begin{theorem}[Strong density of continuous Sobolev functions]\label{thm:cont_Sob_dense}
Let \((\X,\sfd,\mm)\) be a metric measure space and \(p\in[1,\infty)\). Then \(W^{1,p}(\X)\cap C(\X)^\mm\) is a strongly dense subspace of \(W^{1,p}(\X)\).
\end{theorem}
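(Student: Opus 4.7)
The statement is a non-trivial result of Eriksson-Bique and Poggi-Corradini (\cite{EB:PC:23}), and my plan is to leverage the quasicontinuity of Newtonian representatives (Theorem \ref{thm:N_are_qc}) together with the calculus rules for minimal weak upper gradients (Theorem \ref{thm:calculus_rules_mwug}). The strategy is of Luzin type: localise the discontinuities of a representative of $f$ to an open set of arbitrarily small $p$-capacity, then modify the function on this set to restore continuity while keeping the $W^{1,p}(\X)$-norm perturbation small.

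First I would reduce to the case of bounded $f$. The truncations $f_N \coloneqq (-N) \vee f \wedge N$ lie in $W^{1,p}(\X) \cap L^\infty(\mm)$, and the chain and lattice rules (Theorem \ref{thm:calculus_rules_mwug} ii) and iv)) give $|Df_N| \leq |Df|$ with equality on $\{|f| < N\}$, so $f_N \to f$ in $W^{1,p}(\X)$ by dominated convergence. Assuming $f \in L^\infty(\mm)$, pick a bounded Newtonian representative $\bar f$ and fix $\varepsilon > 0$. By quasicontinuity of $\bar f$, choose an open set $U \subseteq \X$ with ${\rm Cap}_p(U) < \varepsilon$ such that $\bar f|_{\X \setminus U}$ is continuous, together with a capacitary potential $\phi \in W^{1,p}(\X)$ for $U$ satisfying $0 \leq \phi \leq 1$, $\phi \equiv 1$ on $U$, and $\|\phi\|_{W^{1,p}(\X)}^p \leq 2\,{\rm Cap}_p(U)$. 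A natural ansatz for the continuous approximation is
\[
h \coloneqq (1-\phi)\bar f + \phi g,
\]
where $g\colon \X \to \R$ is a bounded continuous extension of $\bar f|_{\X \setminus U}$. Since $\phi \equiv 1$ on $U$ and $\bar f = g$ on $\X \setminus U$, the function $h$ coincides with $g$ everywhere, hence is continuous.

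The main obstacle is that a mere Tietze extension $g$ need not belong to $W^{1,p}(\X)$, so one cannot directly invoke the Leibniz rule to place $h$ in $W^{1,p}(\X)$. The key technical step, following \cite{EB:PC:23}, is to build the continuous piece intrinsically as a Sobolev perturbation of $\bar f$: rather than invoking an abstract extension theorem, one constructs the values of $h$ on $U$ out of $\bar f$ itself and Lipschitz cutoffs on a Whitney-type decomposition of $U$, using the continuous boundary trace $\bar f|_{\partial U}$. Once $h$ is built as a bona fide element of $W^{1,p}(\X) \cap C(\X)^\mm$, the Leibniz and locality rules of Theorem \ref{thm:calculus_rules_mwug}, applied on the support $\{\phi > 0\} \subseteq \overline U$ of $h - \bar f$, yield the estimate
\[
\int |D(h-\bar f)|^p\,\d\mm \;\leq\; C\bigl(\|\bar f\|_\infty^p\,{\rm Cap}_p(U) + \sigma_{\bar f}(\{\phi>0\})\bigr),
\]
where $\sigma_{\bar f}(A) \coloneqq \int_A |Df|^p\,\d\mm$. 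The first term is at most $C\|\bar f\|_\infty^p\varepsilon$ by the choice of $\phi$; the second vanishes as ${\rm Cap}_p(U) \to 0$ because $\sigma_{\bar f}$ is a finite measure absolutely continuous with respect to $\mm$, hence to ${\rm Cap}_p$ via $\mm \ll {\rm Cap}_p$. Combined with $\|h - \bar f\|_{L^p(\mm)}^p \leq (2\|\bar f\|_\infty)^p \mm(U) \to 0$, this gives $\|h - f\|_{W^{1,p}(\X)} \to 0$ as $\varepsilon \to 0$, completing the proof.
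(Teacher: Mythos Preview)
The paper does not supply a proof of this theorem; it merely records the statement and attributes it to \cite[Theorem~1.6]{EB:PC:23}.

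Your sketch has a circularity problem. You take the quasicontinuity of Newtonian representatives (Theorem~\ref{thm:N_are_qc}) as input, but in \cite{EB:PC:23} the logic runs the other way: the density of continuous Sobolev functions is the hard, directly-proved result (via a discrete-convolution argument, not a Luzin-type modification), and quasicontinuity is obtained from it a posteriori---continuous functions are trivially quasicontinuous, and quasicontinuity passes to $W^{1,p}$-limits along subsequences. In the generality of this paper (no doubling, no Poincar\'e), quasicontinuity was not available prior to \cite{EB:PC:23}, so you cannot invoke it as an independent ingredient.

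Even setting the circularity aside, the outline does not close the main gap. You correctly note that a Tietze extension $g$ need not lie in $W^{1,p}(\X)$, and then say that ``the key technical step, following \cite{EB:PC:23}, is to build the continuous piece intrinsically'' via a Whitney-type construction on $U$. But that construction \emph{is} the substantive content of \cite{EB:PC:23}; deferring it leaves you with a strategy rather than a proof. Two smaller points: the assertion ``$\{\phi>0\}\subseteq\overline U$'' is false for a generic capacitary potential (what is actually true is that $h-\bar f=g-\bar f$ vanishes on $\X\setminus U$ because $g=\bar f$ there, so its support lies in $\overline U$ independently of $\phi$); and making $\int_{\overline U}|Dh|^p\,\d\mm$ small requires exactly the quantitative control on the Whitney-type extension that you have not carried out.
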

The following technical result is concerned with cut-off and truncation of Dirichlet functions:
\begin{lemma}\label{lem:trunc_cut-off}
Let \((\X,\sfd,\mm)\) be a metric measure space and \(p\in[1,\infty)\). Fix \(\bar x\in\X\).
For any \(k\in\N\), define \(\eta_k\coloneqq\big(1-\sfd(\cdot,B_k(\bar x))\big)\vee 0\in\LIP_{bs}(\X)\)
and \(\psi_k(t)\coloneqq(t\wedge k)\vee(-k)\) for every \(t\in\R\). Let
\[
\mathcal T_k f\coloneqq\eta_k^\mm(\psi_k\circ f)\quad\text{ for every }f\in D^{1,p}_{\mm}(\X).
\]
Then \(\mathcal T_k f\) belongs to \(W^{1,p}(\X)\cap L^\infty(\mm)\) and has bounded support. Moreover, it holds that
\begin{equation}\label{eq:trunc_cut-off_estimates}
|\mathcal T_k f-f|\leq\1_{\{|f|>k\}\cup(\X\setminus B_k(\bar x))}^\mm|f|,\qquad
|D(\mathcal T_k f-f)|\leq\1_{\{|f|>k\}\cup(\X\setminus B_k(\bar x))}^\mm(|f|+|Df|).
\end{equation}
In particular, for every \(f\in W^{1,p}(\X)\) it holds that \(\|\mathcal T_k f-f\|_{W^{1,p}(\X)}\to 0\) as \(k\to\infty\).
\end{lemma}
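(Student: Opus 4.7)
The plan is to first verify that $\mathcal{T}_k f \in W^{1,p}(\X) \cap L^\infty(\mm)$ with bounded support using the calculus rules of Theorem~\ref{thm:calculus_rules_mwug}, then to establish the two pointwise estimates by a case split keyed to whether $|f| \le k$ or $|f| > k$ (with locality handling the `good' region $A_k^c \coloneqq B_k(\bar x) \cap \{|f| \le k\}$), and finally to deduce the norm convergence by dominated convergence. For the first step: $\psi_k \in \LIP(\R)$ satisfies $\lip(\psi_k)(t) \le \1_{\{|t| \le k\}}(t)$, so the chain rule (Theorem~\ref{thm:calculus_rules_mwug}\,ii) gives $\psi_k \circ f \in D^{1,p}_{\mm}(\X) \cap L^\infty(\mm)$ with $|D(\psi_k \circ f)| \le \1_{\{|f| \le k\}}^\mm |Df|$. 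Since $\eta_k \in \LIP_{bs}(\X)$ with $0 \le \eta_k \le 1$ and support in $\overline{B_{k+1}(\bar x)}$, the Leibniz rule (Theorem~\ref{thm:calculus_rules_mwug}\,iii) places $\mathcal{T}_k f$ in $D^{1,p}_\mm(\X) \cap L^\infty(\mm)$; combined with the bounded support and the bounded finiteness of $\mm$, one has $\mathcal{T}_k f \in L^p(\mm)$, hence $\mathcal{T}_k f \in W^{1,p}(\X)$.

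Writing $A_k \coloneqq \{|f| > k\} \cup (\X \setminus B_k(\bar x))$: on $A_k^c$, $\eta_k \equiv 1$ (as $B_k(\bar x) \subseteq \overline{B_k(\bar x)}$, where $\sfd(\cdot, B_k(\bar x)) = 0$) and $\psi_k(f) = f$, so $\mathcal{T}_k f = f$; elsewhere, the oddness of $\psi_k$ together with $|\psi_k(t)| \le |t|$ and $0 \le \eta_k \le 1$ give the pointwise identity $|\mathcal{T}_k f - f| = |f| - \eta_k \psi_k(|f|) \le |f|$, yielding the first estimate. For the gradient estimate, locality (Theorem~\ref{thm:calculus_rules_mwug}\,i) forces $|D(\mathcal{T}_k f - f)| = 0$ on $A_k^c$. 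On $\{|f| \le k\}$, I would rewrite $\mathcal{T}_k f - f = (\eta_k - 1)(\psi_k \circ f)$ (valid since $\psi_k(f) = f$ there) and apply Leibniz with $|\eta_k - 1| \le 1$, $|D\eta_k| \le \lip_a(\eta_k)^\mm \le 1$, $|\psi_k \circ f| = |f|$, and $|D(\psi_k \circ f)| = |Df|$ (by locality), to obtain $|D(\mathcal{T}_k f - f)| \le |Df| + |f|$ on $\{|f| \le k\}$. On $\{|f| > k\}$, the chain rule kills the leading Leibniz contribution to $|D\mathcal{T}_k f|$, namely $|D(\psi_k \circ f)| = 0$ $\mm$-a.e., so $|D\mathcal{T}_k f| \le |\psi_k(f)| |D\eta_k| \le k \cdot 1 \le |f|$, and the triangle inequality yields $|D(\mathcal{T}_k f - f)| \le |f| + |Df|$.

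Finally, for $f \in W^{1,p}(\X)$ one has $|f|^p + |Df|^p \in L^1(\mm)$, so $|f| < \infty$ $\mm$-a.e.; combined with $\bigcup_{k \in \N} B_k(\bar x) = \X$, this gives $\1_{A_k} \to 0$ $\mm$-a.e. Dominated convergence applied to the estimates \eqref{eq:trunc_cut-off_estimates} then forces $\|\mathcal{T}_k f - f\|_{W^{1,p}(\X)} \to 0$. The main subtlety is obtaining the coefficient $1$ (rather than $2$) in the gradient bound: the naive estimate $|D(\mathcal{T}_k f - f)| \le |D\mathcal{T}_k f| + |Df|$ combined with Leibniz on $\mathcal{T}_k f$ always produces an extra $|Df|$, and avoiding this requires the local rewriting as $(\eta_k - 1)(\psi_k \circ f)$ on $\{|f| \le k\}$ (so that Leibniz acts on the \emph{difference} itself) together with the observation that on $\{|f| > k\}$ the chain rule annihilates $|D(\psi_k \circ f)|$, reducing $|D\mathcal{T}_k f|$ to a multiple of $|D\eta_k| \le 1$ that is absorbed by $|f|$.
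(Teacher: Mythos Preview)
Your proof is correct and follows essentially the same approach as the paper's own proof, which is very terse (it merely says the estimates ``can be easily obtained by applying the Leibniz rule and the chain rule''). Your case split on $\{|f|\le k\}$ versus $\{|f|>k\}$, the rewriting $\mathcal T_k f - f = (\eta_k-1)(\psi_k\circ f)$ on $\{|f|\le k\}$ via locality, and the use of $|D(\psi_k\circ f)|=0$ on $\{|f|>k\}$ to absorb the cutoff term into $|f|$ are exactly the details the paper suppresses, and your care to obtain the sharp coefficient $1$ (rather than $2$) in the gradient bound is well placed.
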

\begin{proof}
Notice that \(|\mathcal T_k f|\leq k\) and \({\rm spt}(\mathcal T_k f)\subseteq\bar B_{k+1}(\bar x)\). Since
\(\psi_k\circ f\in D^{1,p}_{\mm}(\X)\cap L^\infty(\mm)\) (by the chain rule) and \(\eta_k\in\LIP_{bs}(\X)\), we have that \(\mathcal T_k f\in W^{1,p}(\X)\)
(by the Leibniz rule). Also, the \(\mm\)-a.e.\ inequalities in \eqref{eq:trunc_cut-off_estimates} can be easily obtained by
applying the Leibniz rule and the chain rule.

Now fix \(f\in W^{1,p}(\X)\) and denote \(E_k\coloneqq\{|f|>k\}\cap(\X\setminus B_k(\bar x))\).
Since \(\1_{E_k}^\mm(|f|+|Df|)\to 0\) in \(L^p(\mm)\) by the dominated convergence theorem,
\eqref{eq:trunc_cut-off_estimates} gives that \(\mathcal T_k f\to f\) in \(W^{1,p}(\X)\).
\end{proof}

We conclude this section by introducing local versions of the Newtonian and Sobolev spaces:
\begin{definition}[Local Newtonian and Sobolev spaces]\label{def:loc_Sobolev}
Let \((\X,\sfd,\mm)\) be a metric measure space and \(p\in[1,\infty)\). Then:
\begin{itemize}
\item[\(\rm i)\)] The \textbf{local Newtonian space} \(\bar{N}^{1,p}_{loc}(\X)\) is the set of all \(f\in\mathcal L^p_{loc}(\X,\sfd,\bar\mm)\)
such that for any \(x\in\X\) there is \(\eta^x\in\LIP_{bs}(\X;[0,1])\) with \(\eta^x=1\) on a neighbourhood of \(x\) and \(\eta^x f\in \bar{N}^{1,p}(\X)\).
\item[\(\rm ii)\)] We define the \textbf{local Sobolev space} \(W^{1,p}_{loc}(\X)\) as
\[
W^{1,p}_{loc}(\X)\coloneqq\big\{\pi_\mm(f)\;\big|\;f\in \bar{N}^{1,p}_{loc}(\X)\big\}\subseteq L^p_{loc}(\X,\sfd,\mm).
\]
\end{itemize}
\end{definition}
\begin{lemma}\label{lem:cut-off_loc_Sob}
Let \((\X,\sfd,\mm)\) be a metric measure space and \(p\in[1,\infty)\). Let \(f\in \bar{N}^{1,p}_{loc}(\X)\) be given.
Then there exists a non-decreasing sequence of cut-off functions \((\eta_n)_n\subseteq\LIP_{bs}(\X;[0,1])\) such that
\((\eta_n f)_n\subseteq \bar{N}^{1,p}(\X)\) and \(\X=\bigcup_{n\in\N}U_n\), where \(U_n\) denotes the interior of \(\{\eta_n=1\}\).
\end{lemma}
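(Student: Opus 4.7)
The plan is to exploit the separability of $\X$ to reduce to a countable family of the cut-offs provided by Definition \ref{def:loc_Sobolev}, and then to build $\eta_n$ as the running maximum, using lattice properties to keep $\eta_n f \in \bar{N}^{1,p}(\X)$.

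First I would invoke the definition: for every $x \in \X$, fix $\eta^x \in \LIP_{bs}(\X;[0,1])$ with $\eta^x f \in \bar{N}^{1,p}(\X)$ and an open neighbourhood $V_x$ of $x$ on which $\eta^x \equiv 1$. Since $(\X,\sfd)$ is a separable metric space, it is Lindel\"of, so the open cover $\{V_x\}_{x \in \X}$ admits a countable subcover $\{V_{x_n}\}_{n \in \N}$. Setting $\phi_n \coloneqq \eta^{x_n}$, we obtain a sequence $(\phi_n)_n \subseteq \LIP_{bs}(\X;[0,1])$ such that $\phi_n f \in \bar{N}^{1,p}(\X)$ and $\phi_n \equiv 1$ on the open set $V_{x_n}$, with $\bigcup_n V_{x_n} = \X$.

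Define
\[
\eta_n \coloneqq \phi_1 \vee \phi_2 \vee \cdots \vee \phi_n.
\]
A finite maximum of $[0,1]$-valued Lipschitz functions is again $[0,1]$-valued and Lipschitz, and its support lies in $\bigcup_{i=1}^n \operatorname{spt}(\phi_i)$, which is bounded; hence $\eta_n \in \LIP_{bs}(\X;[0,1])$. The sequence is non-decreasing by construction. Since $\eta_n \equiv 1$ on each $V_{x_i}$ for $i \leq n$, the open set $V_{x_1} \cup \cdots \cup V_{x_n}$ is contained in $U_n = \operatorname{int}\{\eta_n = 1\}$, and therefore $\bigcup_n U_n \supseteq \bigcup_n V_{x_n} = \X$.

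The main step is to verify that $\eta_n f \in \bar{N}^{1,p}(\X)$. I would split $f = f^+ - f^-$. Because $\phi_i \geq 0$, one checks pointwise that $\phi_i f^+ = (\phi_i f) \vee 0$ and $\phi_i f^- = -\bigl((\phi_i f) \wedge 0\bigr)$, so the lattice property of weak upper gradients (Theorem \ref{thm:calculus_rules_mwug} iv), transferred to representatives in $\bar{N}^{1,p}(\X) \subseteq \mathcal L^p(\bar\mm)$ via the formula $|D(f \vee g)| = \1_{\{f \geq g\}}|Df| + \1_{\{f < g\}}|Dg|$) yields $\phi_i f^\pm \in \bar{N}^{1,p}(\X)$. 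Using the elementary identity $\eta_n g = \bigvee_{i=1}^n \phi_i g$ valid for $g \geq 0$, applied to $g = f^+$ and $g = f^-$, and iterating the lattice closure of $\bar{N}^{1,p}(\X)$, we obtain $\eta_n f^\pm \in \bar{N}^{1,p}(\X)$, whence $\eta_n f = \eta_n f^+ - \eta_n f^- \in \bar{N}^{1,p}(\X)$ since $\bar{N}^{1,p}(\X)$ is a vector space.

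The only delicate point is the last one: transferring the lattice property, which is stated for $\mm$-equivalence classes in $D^{1,p}_{\mm}(\X)$, to the function-level space $\bar{N}^{1,p}(\X)$. This is routine because the representative of $\pi_\mm(f \vee g)$ is $f \vee g$, and an explicit weak $p$-upper gradient for $f \vee g$ is $\1_{\{f \geq g\}} \rho_f + \1_{\{f < g\}} \rho_g$ whenever $\rho_f, \rho_g$ are weak $p$-upper gradients of $f$ and $g$, respectively; combined with $f \vee g \in \mathcal L^p(\bar\mm)$, this places $f \vee g$ in $\bar{N}^{1,p}(\X)$ without passing to quotients.
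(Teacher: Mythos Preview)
Your proof is correct and follows essentially the same approach as the paper: extract a countable subfamily of the cut-offs from the definition via separability, take running maxima, and use the lattice property to show $\eta_n f \in \bar N^{1,p}(\X)$ via the identity $\eta_n f = \bigvee_i(\phi_i f)^+ - \bigvee_i(\phi_i f)^-$ (which you reach through the equivalent route $\eta_n f^\pm = \bigvee_i \phi_i f^\pm = \bigvee_i(\phi_i f)^\pm$). Your final paragraph, addressing why the lattice closure --- stated in Theorem \ref{thm:calculus_rules_mwug} for equivalence classes --- lifts to the function-level space $\bar N^{1,p}(\X)$, is a point the paper's proof leaves implicit.
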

\begin{proof}
Letting \(\{\eta^x:x\in\X\}\) be as in Definition \ref{def:loc_Sobolev} i), we know (since \((\X,\sfd)\) is separable)
that for some sequence \((x_n)_n\subseteq\X\) it holds that \(\X=\bigcup_{n\in\N}\tilde U_n\), where \(\tilde U_n\)
denotes the interior of $\left\{ \tilde\eta_n = 1 \right\}$ for \(\tilde\eta_n\coloneqq\eta^{x_n}\). Define \(\eta_n\coloneqq\tilde\eta_1\vee\cdots\vee\tilde\eta_n\in\LIP_{bs}(\X;[0,1])\) for every \(n\in\N\).
Notice that \((\eta_n)_n\) is a non-decreasing sequence of functions and the interior \(U_n\) of \(\{\eta_n=1\}\)
contains \(\tilde U_1\cup\cdots\cup\tilde U_n\), thus \(\X=\bigcup_{n\in\N}U_n\). Finally, observe that
\(\eta_n f=(\tilde\eta_1 f)^+\vee\cdots\vee(\tilde\eta_n f)^+-(\tilde\eta_1 f)^-\vee\cdots\vee(\tilde\eta_n f)^-\in \bar{N}^{1,p}(\X)\)
for every \(n\in\N\) by the lattice property of minimal weak upper gradients (i.e.\ Theorem \ref{thm:calculus_rules_mwug} iv)).
\end{proof}
Given any \(f\in \bar{N}^{1,p}_{loc}(\X)\), there exists a unique function \(G_f\in L^p_{loc}(\X,\sfd,\mm)\) satisfying
\[
G_f=|D(\eta f)|\quad\mm\text{-a.e.\ on }\{\eta=1\},\text{ for every }\eta\in\LIP_{bs}(\X;[0,1])\text{ with }\eta f\in \bar{N}^{1,p}(\X).
\]
The well-posedness and the uniqueness of the above definition are ensured by the locality property of minimal weak upper gradients,
i.e.\ Theorem \ref{thm:calculus_rules_mwug} i), while the existence follows from the existence of \((\eta_n)_n\) as in
Lemma \ref{lem:cut-off_loc_Sob}.
The sets \(\bar{N}^{1,p}_{loc}(\X)\) and \(W^{1,p}_{loc}(\X)\) are vector subspaces of \(\mathcal L^p_{loc}(\X,\sfd,\bar\mm)\) and \(L^p_{loc}(\X,\sfd,\mm)\),
respectively. Note that \(\bar{N}^{1,p}(\X),\LIP(\X)\subseteq \bar{N}^{1,p}_{loc}(\X)\), and \(W^{1,p}(\X)\subseteq W^{1,p}_{loc}(\X)\).
\begin{remark}\label{rmk:mwug_loc_Sob_unamb}{\rm
We claim that if \(f\in \bar{N}^{1,p}_{loc}(\X)\) satisfies \(G_f\in L^p(\mm)\), then $f \in \bar{D}^{1,p}( \X )$ and $G_f = |Df|$. To prove it, fix a sequence \((\eta_n)_n\) as in Lemma \ref{lem:cut-off_loc_Sob}.
Having fixed an \(\mm\)-a.e.\ minimal weak \(p\)-upper gradient \(\rho_n\) of \(\eta_n f\) for each \(n\in\N\),
we can find a set \(\mathcal N\subseteq\mathscr R(\X)\) such that \({\rm Mod}_p(\mathcal N)=0\) and
\begin{equation}\label{eq:mwug_loc_Sob_unamb}
\big|(\eta_n f)(\gamma(b_\gamma))-(\eta_n f)(\gamma(a_\gamma))\big|\leq\int_\gamma\rho_n\,\d s
\quad\text{ for every }n\in\N\text{ and }\gamma\in\mathscr R(\X)\setminus\mathcal N.
\end{equation}
Now define the function \(\rho\in\mathcal L^0_{\rm ext}(\mm)\) as
\(\rho\coloneqq\sum_{n=1}^\infty\1_{U_n\setminus U_{n-1}}\bigvee_{j\geq n}\rho_j\),
where \(U_0\coloneqq\varnothing\) and \(U_n\) denotes the interior of \(\{\eta_n=1\}\) for any \(n\geq 1\).
Given any curve \(\gamma\in\mathscr R(\X)\setminus\mathcal N\), we have that \(\gamma([a_\gamma,b_\gamma])\) is compact,
so that \(\gamma([a_\gamma,b_\gamma])\subseteq U_{n_\gamma}\) for some \(n_\gamma\in\N\) and accordingly
\eqref{eq:mwug_loc_Sob_unamb} implies that \(\big|f(\gamma(b_\gamma))-f(\gamma(a_\gamma))\big|\leq\int_\gamma\rho_{n_\gamma}\,\d s\leq\int_\gamma\rho\,\d s\).
Given that \(G_f=\rho\) \(\mm\)-a.e.\ on \(\X\) by the locality of minimal weak upper gradients, one can deduce that
\({\rm WUG}_p(f)\neq\varnothing\). Since \(G_f=|D(\eta_n f)|=|Df|\) on \(U_n\) for every \(n\in\N\) (again by locality),
we also have that \(|Df|=G_f\), thus proving the claim.
}\end{remark}
In view of Remark \ref{rmk:mwug_loc_Sob_unamb}, setting
\[
|Df|\coloneqq G_f\quad\text{ for every }f\in \bar{N}^{1,p}_{loc}(\X)
\]
will not cause any ambiguity of notation.
\medskip

Dirichlet spaces and the local Sobolev space $W^{1,p}_{loc}( \X )$ are connected through truncations. More precisely, we have the following.
\begin{lemma}
Let \((\X,\sfd,\mm)\) be a metric measure space and \(p\in[1,\infty)\). If $f \in L^{0}( \mm )$, then $f \in D^{1,p}_{\mm}( \X )$ if and only if the truncations $f_k = \psi_k \circ f$, where \(\psi_k(t)\coloneqq(t\wedge k)\vee(-k)\) for every \(t\in\R\), satisfy $f_k \in W^{1,p}_{loc}( \X )$ for every \(k\in \N\), and there exists $G \in L^{p}( \mm )$ such that $|Df_k| \leq G$ for every $k \in \mathbb{N}$.
\end{lemma}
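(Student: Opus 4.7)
The plan is to prove the two directions separately, with the forward implication being a direct application of the chain rule and Leibniz rule, while the reverse implication requires a careful patching-and-passage-to-limit argument via Lemma \ref{lem:cut-off_loc_Sob}.

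For the forward direction, if $f \in D^{1,p}_\mm(\X)$ with minimal weak upper gradient $|Df|$, then applying the chain rule of Theorem \ref{thm:calculus_rules_mwug} ii) with the $1$-Lipschitz function $\psi_k$ yields $f_k \in D^{1,p}_\mm(\X) \cap L^\infty(\mm)$ and $|Df_k| \leq |Df|$ $\mm$-a.e. Since $|f_k| \leq k$ and $\mm$ is boundedly finite, $f_k \in L^p_{loc}(\X,\sfd,\mm)$. For any $x \in \X$, pick $\eta^x \in \LIP_{bs}(\X;[0,1])$ with $\eta^x = 1$ on a neighbourhood of $x$; the Leibniz rule gives $\eta^x f_k \in W^{1,p}(\X) \cap L^\infty(\mm)$ with bounded support, and thus has a Newtonian representative. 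Patching these using the cover $(\eta_n)_n$ from Lemma \ref{lem:cut-off_loc_Sob} produces a representative of $f_k$ in $\bar{N}^{1,p}_{loc}(\X)$, so $f_k \in W^{1,p}_{loc}(\X)$; taking $G \coloneqq |Df|$ concludes this direction.

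For the reverse direction, choose for each $k$ a representative $\bar f_k^* \in \bar{N}^{1,p}_{loc}(\X)$ of $f_k$, and invoke Lemma \ref{lem:cut-off_loc_Sob} to obtain $(\eta_n)_n \subseteq \LIP_{bs}(\X;[0,1])$ with $\X = \bigcup_n U_n$, where $U_n$ is the interior of $\{\eta_n = 1\}$. Arguing exactly as in Remark \ref{rmk:mwug_loc_Sob_unamb} (compactness of $\gamma([a_\gamma,b_\gamma])$ forces containment in some $U_n$, and on $U_n$ we have $|Df_k| = |D(\eta_n \bar f_k^*)|$), for $\Mod_p$-a.e.\ nonconstant $\gamma \in \mathscr R(\X)$ we obtain
\[
\bigl|\bar f_k^*(\gamma(b_\gamma)) - \bar f_k^*(\gamma(a_\gamma))\bigr| \leq \int_\gamma |Df_k|\,\d s \leq \int_\gamma G\,\d s.
\]
The exceptional set depends on $k$, but the countable union of $\Mod_p$-null sets is $\Mod_p$-null, so the estimate holds simultaneously for every $k$ on $\Mod_p$-a.e.\ $\gamma$.

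The main obstacle, and final step, is passing to the limit $k \to \infty$ at curve endpoints and reconstructing a Dirichlet representative of $f$. Because both $\psi_k \circ \bar f_{k+1}^*$ and $\bar f_k^*$ lie in $\bar{N}^{1,p}_{loc}(\X)$ and represent $f_k$, Proposition \ref{prop:mm_implies_Cap} (applied locally to $\eta_n \bar f_k^*$) gives that they coincide $\mathrm{Cap}_p$-a.e.; by modification on $\mathrm{Cap}_p$-null sets one can arrange $\bar f_k^* = \psi_k \circ \bar f_{k+1}^*$ pointwise for every $k$. Setting $\bar f(x) \coloneqq \lim_{k\to\infty} \bar f_k^*(x) \in \overline\R$, this consistency forces the sequence to be eventually constant whenever $|\bar f(x)| < \infty$; moreover $\bar f$ is $\mm$-a.e.\ finite (as $\pi_\mm(\bar f_k^*) = \psi_k \circ f \to f$ in $L^0(\mm)$) and represents $f$. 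Since $\mathrm{Cap}_p$-null sets are avoided by $\Mod_p$-a.e.\ curves, for $\Mod_p$-a.e.\ $\gamma$ the endpoints $\gamma(a_\gamma), \gamma(b_\gamma)$ lie in $\{|\bar f|<\infty\}$ and $\bar f_k^*(\gamma(\cdot)) \to \bar f(\gamma(\cdot))$ there, so passing to the limit yields $|\bar f(\gamma(b_\gamma)) - \bar f(\gamma(a_\gamma))| \leq \int_\gamma G\,\d s$. Hence $G \in {\rm WUG}_p(\bar f)$ and $f \in D^{1,p}_\mm(\X)$. The delicate point is precisely this capacity-theoretic reconciliation of representatives, which relies on the compatibility between $\mathrm{Cap}_p$-a.e.\ equality of Newtonian functions and $\Mod_p$-a.e.\ endpoint behaviour of curves.
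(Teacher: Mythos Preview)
Your forward direction is correct, if somewhat more elaborate than needed (the paper just cites Lemma~\ref{lem:trunc_cut-off}). The reverse direction, however, has a genuine gap at the final step.

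You assert that ``since $\mathrm{Cap}_p$-null sets are avoided by $\Mod_p$-a.e.\ curves, for $\Mod_p$-a.e.\ $\gamma$ the endpoints lie in $\{|\bar f|<\infty\}$''. This presupposes that $\{|\bar f|=\infty\}$ is $\mathrm{Cap}_p$-null, but you have only established that it is $\mm$-null (from $\pi_\mm(\bar f_k^*)\to f$ in $L^0(\mm)$). The $\mathrm{Cap}_p$-null sets you actually controlled are the sets where the unmodified representatives disagreed, not the infinity set of the limit. Without this, nothing prevents both endpoints from landing in $\{|\bar f|=\infty\}$: e.g.\ if $\bar f(\gamma(a_\gamma))=\bar f(\gamma(b_\gamma))=+\infty$, then $\bar f_k^*$ equals $k$ at both endpoints and the endpoint estimate is vacuously satisfied for every $k$.

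The paper bypasses capacity entirely in this step. It takes arbitrary Newtonian representatives $\bar f_k$ (no consistency imposed), notes that the $\mm$-null set where $\bar f_k\not\to\bar f$ has zero length along $\Mod_p$-a.e.\ curve, so $\bar f_k(\gamma_t)\to\bar f(\gamma_t)$ for $\mathcal L^1$-a.e.\ $t\in(0,1)$. Combined with the uniform subcurve bound $\sup_k|\bar f_k(\gamma_t)-\bar f_k(\gamma_s)|\le\int_{\gamma|_{[s,t]}}G_\infty\,\d s$, a standard $\varepsilon/3$ argument (choose $t_0$ close to $t$ with convergence at $t_0$) forces $(\bar f_k(\gamma_t))_k$ to be Cauchy at \emph{every} $t\in[0,1]$, in particular at the endpoints. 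Your approach can be repaired along exactly these lines: since $\{|\bar f|=\infty\}$ is $\mm$-null, for $\Mod_p$-a.e.\ $\gamma$ there exists some $t_0$ with $|\bar f(\gamma(t_0))|<\infty$; then your consistency plus the subcurve bound (which you need to state for all $s<t$, not just the endpoints) propagates boundedness---hence eventual constancy---to every $t$. But as written, the capacity appeal is circular: showing $\{|\bar f|=\infty\}$ is $p$-exceptional is essentially what you are trying to prove.
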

\begin{proof}
The `only if'-direction follows from \Cref{lem:trunc_cut-off}. The `if'-direction is a consequence of the following argument, similar to \Cref{rmk:mwug_loc_Sob_unamb}. We let $\bar{f}$ be a representative of $f$. For every \(k\in \N\) let $\bar{f}_k \in \bar{N}^{1,p}_{loc}( \X )$ be a representative of $f_k$ that has a Borel representative $G_k$ of $G$ as a $p$-weak upper gradient. Then $G_\infty = \bigvee_{ m \in \mathbb{N} } G_m \in {\rm WUG}_p( \bar{f}_k )$ for every $k \in \mathbb{N}$. Moreover, for $\Mod_p$-almost every nonconstant constant speed $\gamma \colon [0,1] \to \X$, we have that $( \bar{f}_k \circ \gamma )_{ k \in \mathbb{N} }$ is absolutely continuous and, for every $0 \leq s < t \leq 1$, we have
\begin{equation*}
    \sup_k | \bar{f}_k( \gamma_t ) - \bar{f}_k( \gamma_s ) | \leq \int_{ \gamma|_{ [s,t] } } G_\infty \,\d s < \infty,
\end{equation*}
and that $( \bar{f}_k( \gamma_t ) )_{ k \in \mathbb{N} }$ converges to $\bar{f}( \gamma_t )$ for $\mathcal{L}^{1}$-almost every $t \in (0,1)$. These facts imply that $( \bar{f}_k( \gamma_t ) )_{ k \in \mathbb{N} }$ is Cauchy in $\mathbb{R}$ for every $t \in [0,1]$ and if we define $\bar{f}_\infty$ to be equal to $\lim_{ k \rightarrow \infty } f_k(x)$ whenever the sequence $( \bar{f}_k( x ) )_{ k \in \mathbb{N} }$ is Cauchy and equal to $\bar{f}$ otherwise, we find that $\bar{f}_\infty$ is a representative of $f$ with $G_\infty \in {\rm WUG}_p( \bar{f}_\infty )$. The claim follows.
\end{proof}
\section{Quasicontinuity of Sobolev functions}
This section is entirely devoted to various (equivalent) notions of Sobolev \(p\)-capacity \({\rm Cap}_p\) and to the quasicontinuity properties of metric Sobolev functions. 
\subsection{Sobolev capacities}\label{ss:capacity}
\begin{definition}[Sobolev \(p\)-capacity]\label{def:cap_p}
Let \((\X,\sfd,\mm)\) be a metric measure space and \(p\in[1,\infty)\). Let \(E\subseteq\X\) be a given set.
Then we define the quantity \({\rm Cap}_p(E)\in[0,\infty]\) as
\[
{\rm Cap}_p(E)\coloneqq\inf\Big\{\|f\|_{\bar{N}^{1,p}(\X)}^p\;\Big|\;f\in \bar{N}^{1,p}(\X)\text{ such that }f\geq 1\text{ on }E\Big\}.
\]
We say that \({\rm Cap}_p(E)\) is the \textbf{Sobolev \(p\)-capacity}, or just the \textbf{\(p\)-capacity}, of the set \(E\).
\end{definition}

We collect a few properties of the Sobolev \(p\)-capacity:
\begin{itemize}
\item[\(\rm i)\)] \({\rm Cap}_p\) is a submodular, boundedly-finite outer measure on \(\X\); see e.g.\ \cite{HKST:15,Bj:Bj:11}.
\item[\(\rm ii)\)] Given any \(E\subseteq\X\), it holds that
\[
{\rm Cap}_p(E)\coloneqq\inf\Big\{\|f\|_{\bar{N}^{1,p}(\X)}^p\;\Big|\;f\in \bar{N}^{1,p}(\X)\text{ with }0\leq f\leq 1\text{, and }f=1\text{ on }E\Big\}.
\]
\item[\(\rm iii)\)] It holds that \(\mm(E)\leq{\rm Cap}_p(E)\) for every \(\mm\)-measurable set \(E\subseteq\X\), thus in particular
\begin{equation}\label{eq:mm_ll_Cap}
\mm\ll{\rm Cap}_p.
\end{equation}
\item[\(\rm iv)\)] As proved in \cite[Theorem 1.8]{EB:PC:23}, for any set \(E\subseteq\X\) we have that
\begin{equation}\label{eq:Cap_outer_regular}
{\rm Cap}_p(E)=\inf\big\{{\rm Cap}_p(U)\;\big|\;U\subseteq\X\text{ open and }E\subseteq U\big\}.
\end{equation}
\end{itemize}
\begin{remark}\label{rmk:equiv_Cap}{\rm
In the literature (see e.g.\ \cite{Hei:Kil:Mar:12,Kin:Mar:00}), the `neighbourhood capacity' is often considered:
\[
\overline{\rm Cap}_p(E)\coloneqq\inf\Big\{\|f\|_{\bar{N}^{1,p}(\X)}^p\;\Big|\;f\in \bar{N}^{1,p}(\X)\text{ such that }
f\geq 1\text{ on some neighbourhood of }E\Big\}.
\]
As it was pointed out in \cite[Remark 1.10]{EB:PC:23}, it follows from \eqref{eq:Cap_outer_regular} that
\begin{equation}\label{eq:equiv_Cap}
{\rm Cap}_p(E)=\overline{\rm Cap}_p(E)\quad\text{ for every }E\subseteq\X.
\end{equation}
To prove it, just observe that \({\rm Cap}_p(U)=\overline{\rm Cap}_p(U)\) for every open set \(U\subseteq\X\).
}\end{remark}
\begin{proposition}\label{prop:mm_implies_Cap}
Let \((\X,\sfd,\mm)\) be a metric measure space and \(p\in[1,\infty)\). Let \(U\subseteq\X\) be open. Assume that
\(f,g\in \bar{N}^{1,p}_{loc}(\X)\) satisfy \(\bar\mm(U\cap\{f\neq g\})=0\). Then it holds \({\rm Cap}_p(U\cap\{f\neq g\})=0\).
\end{proposition}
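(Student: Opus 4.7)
\medskip

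\noindent\emph{Proof proposal.} Write $N \coloneqq U \cap \{f \neq g\}$ and $h \coloneqq f - g$. Since $f,g \in \bar N^{1,p}_{loc}(\X)$, by refining the local neighbourhoods in the definition of $\bar N^{1,p}_{loc}(\X)$ around each point (so that the same Lipschitz bump makes both $\eta f$ and $\eta g$ elements of $\bar N^{1,p}(\X)$), one has $h \in \bar N^{1,p}_{loc}(\X)$. I invoke Lemma~\ref{lem:cut-off_loc_Sob} to obtain $(\eta_n)_n \subseteq \LIP_{bs}(\X;[0,1])$ with $\eta_n h \in \bar N^{1,p}(\X)$ for every $n$ and $\X = \bigcup_n U_n$, where $U_n$ is the interior of $\{\eta_n = 1\}$. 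Since $\mathrm{Cap}_p$ is a countably subadditive outer measure, it suffices to show $\mathrm{Cap}_p(N \cap U_n) = 0$ for each $n$. Setting $V_n \coloneqq U \cap U_n$ (open) and $H_n \coloneqq \eta_n h \in \bar N^{1,p}(\X)$, and using that $\eta_n \equiv 1$ on $U_n$, one has $N \cap U_n = V_n \cap \{H_n \neq 0\}$ and $\bar\mm(V_n \cap \{H_n \neq 0\}) \leq \bar\mm(N) = 0$.

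Fix $n$ and abbreviate $V \coloneqq V_n$, $H \coloneqq H_n$. By the locality of the minimal weak upper gradient (Theorem~\ref{thm:calculus_rules_mwug} i)), the fact that $H = 0$ $\bar\mm$-a.e.\ on $V$ forces $|DH| = |D0| = 0$ $\bar\mm$-a.e.\ on $V$. For each $k \in \N$, the chain rule yields $\phi_k \coloneqq (k|H|) \wedge 1 \in \bar N^{1,p}(\X)$ with $|D\phi_k| \leq k|DH|$, so both $\phi_k$ and $|D\phi_k|$ vanish $\bar\mm$-a.e.\ on $V$; moreover $\phi_k \equiv 1$ pointwise on $\{|H| \geq 1/k\}$.

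To localize to $V$, fix $x_0 \in \X$ and, for each $m \in \N$, set $V_m^\star \coloneqq \{x \in \X : \sfd(x, \X \setminus V) \geq 2/m\} \cap \bar B_m(x_0)$; then $V_m^\star \subseteq V$ and $V = \bigcup_m V_m^\star$. Using the truncated distance functions $\bigl(m\,\sfd(\cdot,\X\setminus V) - 1\bigr)^+ \wedge 1$ and $\bigl(m+2 - \sfd(\cdot,x_0)\bigr)^+ \wedge 1$, one builds $\xi_m \in \LIP_{bs}(\X;[0,1])$ with $\xi_m \equiv 1$ on $V_m^\star$ and $\mathrm{spt}(\xi_m) \subseteq V$. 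Then $u_{m,k} \coloneqq \xi_m \phi_k$ lies in $\bar N^{1,p}(\X)$ (Leibniz rule, Theorem~\ref{thm:calculus_rules_mwug} iii)), satisfies $u_{m,k} \equiv 1$ on $V_m^\star \cap \{|H| \geq 1/k\}$, and vanishes $\bar\mm$-a.e.\ on $\X$. By the Leibniz inequality $|Du_{m,k}| \leq \xi_m|D\phi_k| + \phi_k|D\xi_m|$, the first summand vanishes $\bar\mm$-a.e.\ on $V$ (since $|D\phi_k| = 0$ there) and on $\X \setminus V$ (since $\xi_m = 0$), while the second vanishes $\bar\mm$-a.e.\ on $V$ (since $\phi_k = 0$ there) and on $\X \setminus V$ (by locality applied to $\xi_m$ and the zero function). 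Hence $\|u_{m,k}\|_{\bar N^{1,p}(\X)} = 0$, and $u_{m,k}$ is an admissible competitor in the definition of $\mathrm{Cap}_p(V_m^\star \cap \{|H| \geq 1/k\})$, which therefore vanishes.

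Finally, $V \cap \{H \neq 0\} = \bigcup_{m,k \in \N} V_m^\star \cap \{|H| \geq 1/k\}$, so countable subadditivity of $\mathrm{Cap}_p$ gives $\mathrm{Cap}_p(V \cap \{H \neq 0\}) = 0$, and summing again over $n$ yields $\mathrm{Cap}_p(N) = 0$. The main obstacle is the local-to-global reduction: producing a single sequence $(\eta_n)_n$ for which both $\eta_n f$ and $\eta_n g$ (hence $\eta_n h$) are globally Newtonian, which forces one to choose the local cutoffs from the definition of $\bar N^{1,p}_{loc}(\X)$ with small enough support to be compatible for $f$ and $g$ simultaneously. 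A secondary technical point is that the open set $V$ need not be $\sigma$-compact, which is bypassed by the explicit exhaustion $V = \bigcup_m V_m^\star$ and the corresponding Lipschitz cutoffs $\xi_m$ supported in $V$.
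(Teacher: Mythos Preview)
Your proof is correct but follows a different route from the paper's. The paper's argument leans on the known special case $U=\X$, $f,g\in\bar{N}^{1,p}(\X)$ (cited from \cite[Proposition 6.12]{Amb:Iko:Luc:Pas:24}) and reduces the general statement to it by choosing Lipschitz cutoffs $(\eta_n)_n$ supported \emph{inside} $U$ with $\bigcup_n\{\eta_n=1\}=U$ and $h_n\coloneqq\eta_n|f-g|\in\bar{N}^{1,p}(\X)$; then $\bar\mm(\{h_n\neq 0\})=0$ globally, so the cited result gives ${\rm Cap}_p(\{h_n\neq 0\})=0$ at once. You instead give a fully self-contained argument: after reducing to a global Newtonian function $H$ via Lemma~\ref{lem:cut-off_loc_Sob}, you build explicit zero-energy competitors $u_{m,k}=\xi_m\phi_k$ by combining truncations $(k|H|)\wedge 1$ with a second layer of cutoffs $\xi_m$ supported in $V$. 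Your approach is longer but avoids the external black box and makes the mechanism (locality forces $|DH|=0$ on $V$, whence the competitor has zero energy) completely transparent; the paper's is shorter but relies on that citation. As a minor simplification, note that $h=f-g\in\bar{N}^{1,p}_{loc}(\X)$ follows immediately from the fact that $\bar{N}^{1,p}_{loc}(\X)$ is a vector space (stated just before Remark~\ref{rmk:mwug_loc_Sob_unamb}), so your refinement-of-neighbourhoods discussion is unnecessary.
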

\begin{proof}
For \(f,g\in \bar{N}^{1,p}(\X)\) and \(U=\X\), see e.g.\ \cite[Proposition 6.12]{Amb:Iko:Luc:Pas:24}.
For the general case, one can argue as follows. Take a sequence \((\eta_n)_n\) of boundedly-supported Lipschitz
functions \(\eta_n\colon\X\to[0,1]\) such that \(\{\eta_n>0\}\subseteq U\) and \(\eta_n(f-g)\in \bar{N}^{1,p}(\X)\)
for all \(n\in\N\), and \(\bigcup_{n\in\N}\{\eta_n=1\}=U\). Given that \(h_n\coloneqq\eta_n|f-g|\in \bar{N}^{1,p}(\X)\)
(by the lattice property of minimal weak upper gradients) and \(\bar\mm(\{h_n\neq 0\})=0\), we deduce that
\({\rm Cap}_p(\{h_n\neq 0\})=0\) for every \(n\in\N\). We conclude that
\[
{\rm Cap}_p(U\cap\{f\neq g\})\leq\sum_{n=1}^\infty{\rm Cap}_p\big(\{\eta_n=1\}\cap\{h_n\neq 0\}\big)=0,
\]
thus proving the statement.
\end{proof}
\begin{remark}\label{rmk:N1p_Cap-ae_inv}{\rm
If \(f\in \bar{N}^{1,p}_{loc}(\X)\) and \(\tilde f\in\mathcal L^p_{loc}(\X,\sfd,\bar\mm)\) satisfy \(f=\tilde f\)
in the \({\rm Cap}_p\)-a.e.\ sense, then \(\tilde f\in \bar{N}^{1,p}_{loc}(\X)\). If also
\(f\in \bar{N}^{1,p}(\X)\), then \(\tilde f\in \bar{N}^{1,p}(\X)\). See e.g.\ the proof of \cite[Proposition 7.2.8]{HKST:15}.}
\end{remark}
\begin{remark}\label{rmk:equiv_Cap_bis}{\rm
In the work \cite{Deb:Gig:Pas:21}, the following notion of capacity is considered (for \(p=2\)):
\begin{equation}\label{eq:othercap}
\overline{\rm Cap}_p^\mm(E)\coloneqq\inf\Big\{\|f\|_{W^{1,p}(\X)}^p\;\Big|\;f\in W^{1,p}(\X),\text{ }f\geq 1\text{ \(\mm\)-a.e.\ on open set $U \supset E$}\Big\}.
\end{equation}
Since we will generalize several results from \cite{Deb:Gig:Pas:21}, it is convenient to show that
\begin{equation}\label{eq:equiv_Cap_bis}
\overline{\rm Cap}_p^\mm(E)={\rm Cap}_p(E)\quad\text{ for every }E\subseteq\X.
\end{equation}
First of all, notice that in \eqref{eq:othercap} the replacement of $W^{1,p}(\X)$ with $\bar{N}^{1,p}(\X)$ leaves the infimum unchanged,
hence \(\overline{\rm Cap}_p(E)\geq\overline{\rm Cap}_p^\mm(E)\), and then \({\rm Cap}_p(E)\geq\overline{\rm Cap}_p^\mm(E)\) by \eqref{eq:equiv_Cap}.
On the other hand, fix any function \(f\in \bar{N}^{1,p}(\X)\) such that \(f\geq 1\) holds \(\bar\mm\)-a.e.\ on some open set \(U\subseteq\X\) containing \(E\). This means that
\(\bar\mm(U\cap\{\tilde f\neq 1\})=0\), where we denote \(\tilde f\coloneqq f\wedge 1\in \bar{N}^{1,p}(\X)\), so that \({\rm Cap}_p(U\cap\{\tilde f\neq 1\})=0\) by Proposition \ref{prop:mm_implies_Cap}.
In particular, \(N\coloneqq E\cap\{\tilde f\neq 1\}\) satisfies \({\rm Cap}_p(N)=0\). Since \(f\) is a competitor for \({\rm Cap}_p(E\setminus N)\), we get
\(\|f\|_{W^{1,p}(\X)}^p\geq{\rm Cap}_p(E\setminus N)={\rm Cap}_p(E)\), whence it follows that \(\overline{\rm Cap}_p^\mm(E)\geq{\rm Cap}_p(E)\) by the arbitrariness of \(f\).
All in all, \eqref{eq:equiv_Cap_bis} is proved.
}\end{remark}
In Section \ref{ss:divergence_measures} we will use the following result, whose proof is inspired by \cite[Lemma 3.3]{Bre:Gig:24}:
\begin{lemma}\label{lem:Cap_with_Lip}
Let \((\X,\sfd,\mm)\) be a metric measure space and \(p\in[1,\infty)\). Let \(K\subseteq\X\) be compact. Then it holds that
\begin{equation}\label{eq:Cap_on_compact_sets}
{\rm Cap}_p(K)=\inf\bigg\{\int|f|^p+\lip_a(f)^p\,\d\mm\;\bigg|\;f\in\LIP_{bs}(\X;[0,1])\text{ such that }\,f=1\text{ on }K\bigg\}.
\end{equation}
\end{lemma}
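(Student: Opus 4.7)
The easy direction ${\rm Cap}_p(K)\le$ (right-hand side of \eqref{eq:Cap_on_compact_sets}) is immediate: any $f\in\LIP_{bs}(\X;[0,1])$ with $f=1$ on $K$ is a competitor in Definition~\ref{def:cap_p}, and since $\lip_a(f)^\mm$ is a weak $p$-upper gradient of any Lipschitz function one has $|Df|\le\lip_a(f)^\mm$, so $\|f\|_{\bar{N}^{1,p}(\X)}^p\le\int|f|^p+\lip_a(f)^p\,\d\mm$.

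For the reverse inequality, fix $\eps>0$ and, using $\overline{\rm Cap}_p={\rm Cap}_p$ from \eqref{eq:equiv_Cap}, pick $\bar f\in\bar{N}^{1,p}(\X)$ with $\bar f\ge 1$ on some open $V\supseteq K$ and $\|\bar f\|_{\bar{N}^{1,p}(\X)}^p\le{\rm Cap}_p(K)+\eps$. Replacing $\bar f$ by $(\bar f\wedge 1)\vee 0$ via the chain rule (Theorem~\ref{thm:calculus_rules_mwug}), one may assume $0\le\bar f\le 1$ and $\bar f\equiv 1$ on $V$, whence $|D\bar f|=0$ $\mm$-a.e.\ on $V$. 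Theorem~\ref{thm:density_nrg_Lip}, followed by a truncation to $[0,1]$, yields $(f_n)\subseteq\LIP_{bs}(\X;[0,1])$ with $f_n\to\bar f$ and $\lip_a(f_n)\to|D\bar f|$ strongly in $L^p(\mm)$.

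The core construction uses a \emph{fixed} Lipschitz cutoff. Since $K$ is compact and $V$ open, $\delta\coloneqq\sfd(K,\X\setminus V)>0$, so one may choose $\eta\in\LIP_{bs}(\X;[0,1])$ with $\eta\equiv 1$ on an open neighborhood $W\supseteq K$ and $\mathrm{supp}(\eta)\subseteq V$. Set
\[
h_n\coloneqq 1-(1-\eta)(1-f_n)=f_n+\eta\,(1-f_n)\in\LIP_{bs}(\X;[0,1]),
\]
which, being a pointwise convex combination of $f_n$ and $1$ with weights $1-\eta$ and $\eta$, takes values in $[0,1]$ and equals $1$ on $W\supseteq K$. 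Outside $V$ one has $h_n=f_n$; on $V$, using $\bar f\equiv 1$, $|h_n-\bar f|=(1-\eta)|f_n-\bar f|\le|f_n-\bar f|$; hence $h_n\to\bar f$ strongly in $L^p(\mm)$. Inequality \eqref{eq:convex_ineq_slope} gives
\[
\lip_a(h_n)\le(1-\eta)\,\lip_a(f_n)+(1-f_n)\,\lip_a(\eta).
\]

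The step I expect to be the main obstacle is the control of the second summand $(1-f_n)\lip_a(\eta)$: the factor $\lip_a(\eta)$ is generically of order $\Lip(\eta)$, and allowing $\eta$ to depend on $n$ with support shrinking to $K$ would introduce an uncontrolled term of order $r_n^{-p}\mm(\bar B_{r_n}(K))$. The resolution is to keep $\eta$ fixed and exploit that $\lip_a(\eta)$ is supported in $V$, where $\bar f\equiv 1$: this forces $1-f_n=\bar f-f_n\to 0$ in $L^p(V,\mm)$ and hence $\|(1-f_n)\lip_a(\eta)\|_{L^p(\mm)}\to 0$. The first summand is dominated by $\lip_a(f_n)$, and since $|D\bar f|\equiv 0$ on $V\supseteq\mathrm{supp}(\eta)$ one has $(1-\eta)|D\bar f|=|D\bar f|$, so $(1-\eta)\lip_a(f_n)\to|D\bar f|$ in $L^p(\mm)$. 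Taking $\limsup_n$ and combining,
\[
\limsup_n\int\bigl(|h_n|^p+\lip_a(h_n)^p\bigr)\,\d\mm\le\int\bigl(|\bar f|^p+|D\bar f|^p\bigr)\,\d\mm\le{\rm Cap}_p(K)+\eps,
\]
and letting $\eps\to 0$ concludes.
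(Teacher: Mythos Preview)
Your proof is correct and follows essentially the same route as the paper's: both pick a Newtonian competitor equal to $1$ on an open $V\supseteq K$, take Lipschitz approximants from Theorem~\ref{thm:density_nrg_Lip}, fix a cutoff $\eta\in\LIP_{bs}(\X;[0,1])$ with $\eta=1$ near $K$ and $\mathrm{supp}(\eta)\subseteq V$, and form the convex combination $\eta+(1-\eta)f_n=f_n+\eta(1-f_n)$. The only cosmetic difference is that you truncate the approximants $f_n$ to $[0,1]$ first (so that $h_n$ is automatically $[0,1]$-valued), whereas the paper truncates the final combination; your assertion that truncation preserves the strong convergence $\lip_a(f_n)\to|D\bar f|$ is justified by the same lemma (\cite[Lemma~6.15]{Amb:Iko:Luc:Pas:24}) invoked inside the proof of Theorem~\ref{thm:density_nrg_Lip}, and in any case only the one-sided bound $\limsup_n\|\lip_a(f_n)\|_{L^p}\le\||D\bar f|\|_{L^p}$ is needed.
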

\begin{proof}
The inequality 
\(\leq\) in \eqref{eq:Cap_on_compact_sets} follows from \(\LIP_{bs}(\X)\subseteq \bar{N}^{1,p}(\X)\) and
\(|Df|\leq\lip_a(f)^\mm\) for every \(f\in\LIP_{bs}(\X)\). To show the opposite inequality, we argue as follows. Fix any
\(\varepsilon>0\) and \(f\in \bar{N}^{1,p}(\X)\) such that \(\|f\|_{\bar{N}^{1,p}(\X)}\leq{\rm Cap}_p(K)+\varepsilon\) and
\(f=1\) on an open neighbourhood \(U\) of \(K\); here, we applied \eqref{eq:equiv_Cap}.
Take \((f_n)_n\subseteq\LIP_{bs}(\X)\) such that \(\sfd_{\rm en}(f_n^\mm,f)\to 0\) and \(\lip_a(f_n)^\mm\to|Df|\)
strongly in \(L^p(\mm)\). Now fix \(\eta\in\LIP_{bs}(\X;[0,1])\) with \(\eta=1\) on \(K\) and \({\rm spt}(\eta)\subseteq U\).
We then define
\[
g_n\coloneqq\big((\eta+(1-\eta)f_n)\vee 0\big)\wedge 1\in\LIP_{bs}(\X;[0,1])\quad\text{ for every }n\in \N.
\]
Since \(g_n\leq\1_{\X\setminus K}|f_n|+\1_K=\1_{\X\setminus K}|f_n|+\1_K|f|\) for every \(n\in\N\), we can estimate
\[
\lims_{n\to\infty}\int g_n^p\,\d\mm\leq\lims_{n\to\infty}\int_{\X\setminus K}|f_n|^p\,\d\mm
+\lims_{n\to\infty}\int_K|f|^p\,\d\mm =\int|f|^p\,\d\mm.
\]
Moreover, \eqref{eq:convex_ineq_slope} gives \(\lip_a(g_n)\leq(1-\eta)\lip_a(f_n)+\lip_a(\eta)|f_n-1|
\leq\lip_a(f_n)+\1_U|f_n-f|\). Given that \(\lip_a(f_n)^\mm\to|Df|\) in \(L^p(\mm)\) and \(\1_U|f_n-f|\to 0\) in \(\mathcal L^p(\mm)\),
we deduce that \(\lims_n\int\lip_a(g_n)^p\,\d\mm\leq\||Df|\|_{L^p(\mm)}\). All in all,
\(\lims_n\int g_n^p+\lip_a(g_n)^p\,\d\mm\leq\|f\|_{\bar{N}^{1,p}(\X)}\leq{\rm Cap}_p(K)+\varepsilon\).
Given that \(\varepsilon>0\) is arbitrary, also the inequality \(\geq\) in \eqref{eq:Cap_on_compact_sets} is proved.
Therefore, the proof is complete.
\end{proof}
Since \({\rm Cap}_p\) is submodular and boundedly finite, we can consider the associated space \(L^0({\rm Cap}_p)\) as in Section \ref{ss:Choquet_int}.
Moreover, given that \(\mm\ll{\rm Cap}_p\) by \eqref{eq:mm_ll_Cap}, we have that the projection operator
\begin{equation}\label{eq:Pr_Cap}
{\rm Pr}_\mm\coloneqq{\rm Pr}_{{\rm Cap}_p,\mm}\colon L^0({\rm Cap}_p)\to L^0(\mm)
\end{equation}
is well-posed, where the map \({\rm Pr}_{{\rm Cap}_p,\mm}\) is defined as in \eqref{eq:Pr_mu}.
\subsection{Quasicontinuity and quasiuniform convergence}
\begin{definition}[Quasicontinuity]\label{def:qc_function}
Let \((\X,\sfd,\mm)\) be a metric measure space and \(p\in[1,\infty)\). Then a function \(f\colon\X\to\R\) is said to be \textbf{\(p\)-quasicontinuous} if for every
\(\varepsilon>0\) there exists a set \(E\subseteq\X\) such that \({\rm Cap}_p(E)<\varepsilon\) and \(f|_{\X\setminus E}\colon\X\setminus E\to\R\) is continuous.
\end{definition}

By virtue of Remark \ref{rmk:equiv_Cap}, a given function \(f\colon\X\to\R\) is \(p\)-quasicontinuous
if and only if for every \(\varepsilon>0\) there exists an \emph{open} set \(U_\varepsilon\subseteq\X\)
such that \({\rm Cap}_p(U_\varepsilon)<\varepsilon\) and \(f|_{\X\setminus U_\varepsilon}\) is continuous.
It follows that \(N\coloneqq\bigcap_{n\in\N}U_{1/n}\) is a \(G_\delta\) set with \({\rm Cap}_p(N)=0\) (thus \(\mm(N)=0\)
by \eqref{eq:mm_ll_Cap}) and the function \(f\) is Borel when restricted to the set \(\X\setminus N\). In particular,
each \emph{\(p\)-quasicontinuous function is \(\mm\)-measurable} and \({\rm Cap}_p\)-a.e.\ equivalent to a Borel function.
\begin{definition}[The space \(\mathcal{QC}(\X)\)]\label{def:QC(X)}
Let \((\X,\sfd,\mm)\) be a metric measure space and \(p\in[1,\infty)\). Then we define the subalgebra \(\mathcal{QC}(\X)\) of \(L^0({\rm Cap}_p)\) as
\[
\mathcal{QC}(\X)\coloneqq\big\{\pi_{{\rm Cap}_p}(f)\;\big|\;f\in\mathcal L^0(\bar\mm)\text{ is \(p\)-quasicontinuous}\big\}.
\]
\end{definition}
The next result is taken from \cite[Theorem 1.6]{EB:PC:23}:
\begin{theorem}[Local Sobolev functions are quasicontinuous]\label{thm:N_are_qc}
Let \((\X,\sfd,\mm)\) be a metric measure space and \(p\in[1,\infty)\). Then every function \(f\in \bar{N}^{1,p}_{loc}(\X)\) is \(p\)-quasicontinuous.
\end{theorem}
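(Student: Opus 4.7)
The plan is to reduce the local statement to its known global counterpart for $f\in \bar{N}^{1,p}(\X)$, namely the case of \cite[Theorem 1.6]{EB:PC:23}, via the cut-off construction of Lemma \ref{lem:cut-off_loc_Sob}.

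First, given $f\in \bar{N}^{1,p}_{loc}(\X)$, I would invoke Lemma \ref{lem:cut-off_loc_Sob} to produce a non-decreasing sequence $(\eta_n)_n\subseteq\LIP_{bs}(\X;[0,1])$ with $\eta_n f\in \bar{N}^{1,p}(\X)$ for every $n\in\N$ and $\X=\bigcup_{n\in\N}U_n$, where $U_n$ denotes the interior of $\{\eta_n=1\}$. Fix $\varepsilon>0$. Applying the global version of quasicontinuity to each $\eta_n f$ and using Remark \ref{rmk:equiv_Cap} to pass to open exceptional sets, there exist open sets $V_n\subseteq\X$ with ${\rm Cap}_p(V_n)<\varepsilon/2^n$ such that $(\eta_n f)|_{\X\setminus V_n}$ is continuous. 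Setting $V\coloneqq\bigcup_{n\in\N}V_n$, the subadditivity of the outer measure ${\rm Cap}_p$ yields ${\rm Cap}_p(V)<\varepsilon$.

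To verify that $f|_{\X\setminus V}$ is continuous, pick any $x\in\X\setminus V$ and some $n$ with $x\in U_n$; since $U_n$ is an open neighbourhood of $x$ in $\X$ on which $\eta_n f=f$ pointwise, the function $f$ agrees with the continuous map $(\eta_n f)|_{\X\setminus V_n}$ on the set $U_n\setminus V$. The latter is relatively open in $\X\setminus V$ and contains $x$, so $f|_{\X\setminus V}$ is locally continuous at $x$, hence continuous on $\X\setminus V$, establishing the $p$-quasicontinuity of $f$.

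The only delicate ingredient is that the exceptional sets $V_n$ must be chosen open, which is ensured by the outer regularity identity \eqref{eq:Cap_outer_regular} and Remark \ref{rmk:equiv_Cap}: were the $V_n$ only Borel, the patches $U_n\setminus V$ might fail to be relatively open in $\X\setminus V$ and the local-to-global continuity argument would break. Beyond this, the remaining pieces -- the geometric-series bound on ${\rm Cap}_p(V)$, the tautological identity $f=\eta_n f$ on $U_n$, and the input $\eta_n f\in\bar{N}^{1,p}(\X)$ from Lemma \ref{lem:cut-off_loc_Sob} -- combine routinely with the global result of \cite{EB:PC:23} to complete the argument.
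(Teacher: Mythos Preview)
Your proof is correct and follows essentially the same approach as the paper: both reduce to the global case of \cite[Theorem 1.6]{EB:PC:23} via the cut-offs from Lemma \ref{lem:cut-off_loc_Sob}, then patch continuity locally on the open sets $U_n$. One minor point: your insistence on open exceptional sets is unnecessary, since $U_n\setminus V = U_n\cap(\X\setminus V)$ is automatically relatively open in $\X\setminus V$ whenever $U_n$ is open in $\X$, regardless of whether $V$ is open; the paper accordingly works with arbitrary sets $E_n$ and takes $E=\bigcup_n(E_n\cap U_n)$.
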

\begin{proof}
The statement is proved in \cite[Theorem 1.6]{EB:PC:23} for \(f\in \bar{N}^{1,p}(\X)\). Let us now briefly discuss how to deduce it for an arbitrary function \(f\in \bar{N}^{1,p}_{loc}(\X)\).
Let \((\eta_n)_n\), \((U_n)_n\) be as in Lemma \ref{lem:cut-off_loc_Sob}. Fix any \(\varepsilon>0\). Since each function
\(\eta_n f\) is \(p\)-quasicontinuous, we can find a set \(E_n\subseteq\X\) such that \({\rm Cap}_p(E_n)\leq\varepsilon/2^n\)
and \((\eta_n f)|_{\X\setminus E_n}\) is continuous. Let us then define \(E\coloneqq\bigcup_{n\in\N}E_n\cap U_n\). Notice that
\({\rm Cap}_p(E)\leq\sum_{n=1}^\infty{\rm Cap}_p(E_n)\leq\varepsilon\). Moreover, we claim that \(f|_{\X\setminus E}\)
is continuous. To prove it, fix any \(x\in\X\setminus E\). Having chosen \(n\in\N\) so that \(x\in U_n\), we have that
\(f|_{\X\setminus E}=(\eta_n f)|_{\X\setminus E}\) on \(U_n\) and that \(\X\setminus E\subseteq\X\setminus E_n\), so that
\(f|_{\X\setminus E}\) is continuous at \(x\). Therefore, \(f\) is \(p\)-quasicontinuous.
\end{proof}
\subsubsection*{Quasiuniform convergence}
A concept that is strictly related to quasicontinuity is that of \emph{(local) quasiuniform convergence}:
\begin{definition}[Local quasiuniform convergence]\label{def:qu_conv}
Let \((\X,\sfd,\mm)\) be a metric measure space and \(p\in[1,\infty)\). Let \(f_n\colon\X\to\R\), with \(n\in\N\cup\{\infty\}\), be given functions.
Then we say that \(f_n\) \textbf{locally \(p\)-quasiuniformly converges} to \(f_\infty\) if for every \(B\subseteq\X\) bounded and \(\varepsilon>0\)
there exists a set \(E\subseteq\X\) with \({\rm Cap}_p(E)<\varepsilon\) such that \(f_n\to f_\infty\) uniformly on \(B\setminus E\).
\end{definition}

In order to metrize, to some extent, quasiuniform convergence we introduce the following distance on the space \(L^0({\rm Cap}_p)\) (the sets \(U_k\) are chosen as in the definition of the distance \({\sf d}_{L^0(\mu)}\) in \eqref{eq:d_L0}):
\[
\sfd_{\rm qu}(f,g)\coloneqq\inf_{E\subseteq\X}\sum_{k\in\N}\bigg(\frac{{\rm Cap}_p(E\cap U_k)}{2^k({\rm Cap}_p(U_k)\vee 1)}+
\frac{\|\pi_{{\rm Cap}_p}(\1_{U_k\setminus E})(f-g)\|_{L^\infty({\rm Cap}_p)}\wedge 1}{2^k}\bigg)
\]
for every \(f,g\in L^0({\rm Cap}_p)\).
Next, we collect many results about quasicontinuous functions and the distance \(\sfd_{{\rm qu}}\), which -- taking Remark \ref{rmk:equiv_Cap_bis} into account -- can be
proved by repeating verbatim the proofs in \cite[Section 2.4]{Deb:Gig:Pas:21}.
\begin{proposition}
Let \((\X,\sfd,\mm)\) be a metric measure space and \(p\in[1,\infty)\). The following hold:
\begin{itemize}
\item[\(\rm i)\)] Let \((f_n)_n\subseteq L^0({\rm Cap}_p)\) and \(f\in L^0({\rm Cap}_p)\) be given. If \(f_n\to f\) locally \(p\)-quasiuniformly,
then \(\sfd_{\rm qu}(f_n,f)\to 0\). Conversely, if \(\sfd_{\rm qu}(f_n,f)\to 0\), then there exists a subsequence \((n_i)_i\) such that
\(f_{n_i}\to f\) locally \(p\)-quasiuniformly.
\item[\(\rm ii)\)] \((\mathcal QC(\X),\sfd_{\rm qu})\) is a complete metric space.
\item[\(\rm iii)\)] It holds that
\[
\sfd_{L^0({\rm Cap}_p)}(f,g)\leq\sfd_{\rm qu}(f,g)\leq 2\sqrt{\sfd_{L^0({\rm Cap}_p)}(f,g)}\quad\text{ for every }f,g\in\mathcal{QC}(\X).
\]
\item[\(\rm iv)\)] \(\mathcal QC(\X)\) coincides with the closure of \(\pi_{{\rm Cap}_p}(C(\X))\) in \(\big(L^0({\rm Cap}_p),\sfd_{L^0({\rm Cap}_p)}\big)\).
\end{itemize}
\end{proposition}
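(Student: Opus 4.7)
The plan is to prove the four items in the order (iii)-left, (i), (iii)-right, (ii), (iv), so that each item becomes a tool for the next. The key ingredients are Chebyshev's inequality for the Choquet integral (valid since \({\rm Cap}_p\) is submodular and boundedly finite), Borel--Cantelli for \({\rm Cap}_p\), Remark~\ref{rmk:equiv_Cap} to swap arbitrary sets for open ones, and the Tietze/McShane extension theorem to produce continuous extensions from closed subsets of \(\X\).

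For the left inequality in (iii), given any competitor \(E\subseteq\X\) in the defining infimum of \(\sfd_{\rm qu}(f,g)\), I would split \(\int_{U_k}|f-g|\wedge 1\,\d{\rm Cap}_p\) as the contribution over \(E\cap U_k\), bounded by \({\rm Cap}_p(E\cap U_k)\), plus the contribution over \(U_k\setminus E\), bounded by \({\rm Cap}_p(U_k)\cdot\bigl(\|\pi_{{\rm Cap}_p}(\1_{U_k\setminus E})(f-g)\|_{L^\infty({\rm Cap}_p)}\wedge 1\bigr)\); dividing by \(2^k({\rm Cap}_p(U_k)\vee 1)\), summing, then taking the infimum over \(E\) gives the bound. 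The forward direction of (i) reduces to producing, for each prescribed \(\varepsilon>0\), a single set \(E\) controlling the tail: take \(N\) with \(\sum_{k>N}2^{-k+1}<\varepsilon/2\) and apply local quasiuniform convergence to \(B=U_N\) with threshold \(\varepsilon/2\). For its converse, extract a subsequence \((n_i)\) with \(\sfd_{\rm qu}(f_{n_i},f)\leq 4^{-i}\) together with competitor sets \(E_i\); Borel--Cantelli gives \({\rm Cap}_p\bigl(U_k\cap\bigcup_{i\geq m}E_i\bigr)\to 0\) as \(m\to\infty\) and forces uniform convergence of \((f_{n_i})\) to \(f\) on the complement.

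For the right inequality in (iii), fix quasicontinuous representatives \(\bar f,\bar g\), set \(\delta\coloneqq\sfd_{L^0({\rm Cap}_p)}(f,g)\) and \(t\coloneqq\sqrt\delta\), and use \(E\coloneqq\{|\bar f-\bar g|>t\}\) as competitor in the infimum defining \(\sfd_{\rm qu}\); quasicontinuity, combined with Remark~\ref{rmk:equiv_Cap}, allows this set to be taken open up to a \({\rm Cap}_p\)-null modification. Chebyshev on the Choquet integral yields \({\rm Cap}_p(U_k\cap E)\leq t^{-1}\int_{U_k}|\bar f-\bar g|\wedge 1\,\d{\rm Cap}_p\), whence the capacity terms sum to at most \(t^{-1}\delta=t\) while the \(L^\infty\) terms are each at most \(t\), for a total of \(2\sqrt\delta\). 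Item (ii) then follows: a Cauchy sequence \((f_n)\) in \(\sfd_{\rm qu}\) has a subsequence converging locally quasiuniformly to some \(f_\infty\) by (i), and combining the quasicontinuity exceptional sets of the \(f_{n_i}\) with the small-capacity sets witnessing uniform convergence yields -- via Borel--Cantelli -- a single small-capacity set outside which all \(f_{n_i}\) are continuous and \(f_{n_i}\to f_\infty\) uniformly, so \(f_\infty\) is quasicontinuous; the standard fact that a Cauchy sequence with a convergent subsequence converges completes the proof.

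For (iv), the inclusion \(\pi_{{\rm Cap}_p}(C(\X))\subseteq\mathcal{QC}(\X)\) is immediate. For density, given a quasicontinuous representative \(\bar f\) and \(\varepsilon>0\), first truncate \(\bar f\) at a high level (whose complement has small capacity by Chebyshev applied to any \(\bar{N}^{1,p}\)-like competitor) to make it bounded, then select a closed set \(F_\varepsilon\) with \({\rm Cap}_p(\X\setminus F_\varepsilon)<\varepsilon\) on which \(\bar f\) is continuous, and extend \(\bar f|_{F_\varepsilon}\) to \(g_\varepsilon\in C_b(\X)\) via Tietze/McShane; then \(\{g_\varepsilon\neq\bar f\}\subseteq\X\setminus F_\varepsilon\) has capacity \(<\varepsilon\), giving \(\sfd_{L^0({\rm Cap}_p)}(g_\varepsilon,f)\to 0\). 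Conversely, \(\sfd_{L^0({\rm Cap}_p)}\)-closedness of \(\mathcal{QC}(\X)\) is formal from (ii) and (iii): if \((f_n)\subseteq\mathcal{QC}(\X)\) and \(\sfd_{L^0({\rm Cap}_p)}(f_n,f)\to 0\), then by the right inequality in (iii) the sequence is \(\sfd_{\rm qu}\)-Cauchy, hence converges in \(\sfd_{\rm qu}\) to some \(\tilde f\in\mathcal{QC}(\X)\) by (ii), and the left inequality in (iii) forces \(f=\tilde f\) in \(L^0({\rm Cap}_p)\). The main obstacle throughout is the right inequality in (iii): specifically, ensuring that the Chebyshev-produced level set \(\{|\bar f-\bar g|>t\}\) is a genuine competitor in the infimum defining \(\sfd_{\rm qu}\), which requires carefully using quasicontinuous representatives together with the outer-regularity of \({\rm Cap}_p\) recorded in Remark~\ref{rmk:equiv_Cap}.
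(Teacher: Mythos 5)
The paper does not record its own proof of this proposition: it states that the claims "can be proved by repeating verbatim the proofs in \cite[Section 2.4]{Deb:Gig:Pas:21}", after invoking the identification \({\rm Cap}_p=\overline{\rm Cap}_p^\mm\) from Remark~\ref{rmk:equiv_Cap_bis}. Your reconstruction follows exactly that script -- subadditivity of the Choquet integral for the left inequality in (iii), Chebyshev for the right one, Borel--Cantelli for the subsequential half of (i) and for completeness, Tietze extension off a closed set of nearly full capacity for (iv) -- so there is essentially nothing to contrast at the level of strategy. Three local points, however, need correcting.

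First, in (ii) the statement "a Cauchy sequence \((f_n)\) in \(\sfd_{\rm qu}\) has a subsequence converging locally quasiuniformly to some \(f_\infty\) by (i)" misuses (i): that item presupposes a candidate limit \(f\) and only extracts a subsequence. The clean order is: by (iii)-left the sequence is \(\sfd_{L^0({\rm Cap}_p)}\)-Cauchy; since \((L^0({\rm Cap}_p),\sfd_{L^0({\rm Cap}_p)})\) is complete, a limit \(f_\infty\in L^0({\rm Cap}_p)\) exists; and only then do you run the Borel--Cantelli construction on a fast subsequence with \(\sfd_{\rm qu}(f_{n_i},f_{n_{i+1}})\le 4^{-i}\) to show both that convergence holds in \(\sfd_{\rm qu}\) and that \(f_\infty\) is quasicontinuous.

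Second, in (iv) the truncation step is both unnecessary and incorrectly justified. A generic element of \(\mathcal{QC}(\X)\) has no \(\bar{N}^{1,p}\) representative, so "Chebyshev applied to any \(\bar{N}^{1,p}\)-like competitor" is vacuous here; the capacity estimate on the superlevel set \(\{|\bar f|>M\}\) that you appeal to requires a Newtonian norm that is simply not available. But no truncation is needed: Definition~\ref{def:QC(X)} involves \(C(\X)\), not \(C_b(\X)\); the Tietze extension theorem applies to arbitrary (unbounded) real-valued continuous functions on a closed subset of a metric space; and the cap \(\wedge\,1\) inside \(\sfd_{L^0({\rm Cap}_p)}\) makes the size of \(g_\varepsilon-\bar f\) on the small-capacity exceptional set irrelevant. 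Drop the truncation, and cite Tietze rather than McShane (the data \(\bar f|_{F_\varepsilon}\) is merely continuous, not Lipschitz).

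Third, the "main obstacle" you single out at the end -- whether \(\{|\bar f-\bar g|>t\}\) is a genuine competitor in the infimum defining \(\sfd_{\rm qu}\) -- is not an obstacle at all: that infimum ranges over all subsets \(E\subseteq\X\), with no topological or measurability restriction, so any choice of representatives \(\bar f,\bar g\) yields an admissible \(E\). Neither quasicontinuity of \(f,g\) nor the outer regularity of \({\rm Cap}_p\) from Remark~\ref{rmk:equiv_Cap} enters the right inequality of (iii).
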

\begin{proposition}[Uniqueness of the quasicontinuous representative]\label{prop:uniq_qcr}
Let \((\X,\sfd,\mm)\) be a metric measure space and \(p\in[1,\infty)\). Let \({\rm Pr}_\mm\colon L^0({\rm Cap}_p)\to L^0(\mm)\) 
be as in \eqref{eq:Pr_Cap}. Then it holds that
\[
{\rm Pr}_\mm|_{\mathcal{QC}(\X)}\colon\mathcal{QC}(\X)\to L^0(\mm)\quad\text{ is injective.}
\]
\end{proposition}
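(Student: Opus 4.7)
The plan is to reduce injectivity to the implication: if $h\in\mathcal L^0(\bar\mm)$ is $p$-quasicontinuous and $h=0$ $\mm$-a.e., then ${\rm Cap}_p(\{h\neq 0\})=0$. Linearity of ${\rm Pr}_\mm$ and the fact that $\mathcal{QC}(\X)$ is a vector subspace of $L^0({\rm Cap}_p)$ make this reduction immediate, so I work with a fixed such $h$ and aim to show its ${\rm Cap}_p$-class is zero.

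Fix $\varepsilon>0$. For each $n\in\N$ use quasicontinuity to choose an open $U_n\subseteq\X$ with ${\rm Cap}_p(U_n)<\varepsilon\cdot 2^{-n}$ and $h|_{\X\setminus U_n}$ continuous. Set $V_n\coloneqq\bigcup_{m\geq n}U_m$ and $F_n\coloneqq\X\setminus V_n$. Subadditivity of ${\rm Cap}_p$ gives ${\rm Cap}_p(V_n)<\varepsilon\cdot 2^{-(n-1)}$ and ${\rm Cap}_p(\bar U)=0$ for $\bar U\coloneqq\bigcap_n V_n$, while $h|_{F_n}$ is continuous because $F_n\subseteq\X\setminus U_n$. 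Observe that $\X\setminus\bar U=\bigcup_n F_n$.

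Next, the continuity of $h|_{F_n}$ combined with $h=0$ $\mm$-a.e.\ forces $h\equiv 0$ on $S_n\coloneqq\{x\in F_n\,:\,\mm(F_n\cap B_r(x))>0\text{ for all }r>0\}$: if $h(x)\neq 0$ for such an $x$, by continuity $|h|\geq|h(x)|/2$ on $F_n\cap B_r(x)$ for some $r>0$, contradicting $\mm(\{h\neq 0\})=0$ together with $\mm(F_n\cap B_r(x))>0$. Hence $\{h\neq 0\}\cap F_n\subseteq R_n\coloneqq F_n\setminus S_n$, and every $x\in R_n$ admits some $r_x>0$ with $\mm(F_n\cap B_{r_x}(x))=0$. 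By separability (Lindel\"of) I extract a countable subcover $W_n\coloneqq\bigcup_i B_{r_i}(x_i)\supseteq R_n$ whose members satisfy $\mm(F_n\cap B_{r_i}(x_i))=0$, so that $\mm(W_n\cap F_n)=0$.

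The main obstacle is bounding ${\rm Cap}_p(W_n)$: the set $W_n$ need not lie inside $V_n$, and an $\mm$-null set can carry large capacity in general. I resolve this by invoking the equivalence ${\rm Cap}_p=\overline{\rm Cap}_p^\mm$ established in Remark~\ref{rmk:equiv_Cap_bis}. For $\delta>0$, pick $f\in \bar N^{1,p}(\X)$ with $f\geq 1$ on $V_n$ and $\|f\|_{\bar N^{1,p}(\X)}^p<{\rm Cap}_p(V_n)+\delta$. Since the open set $W_n\cup V_n$ contains $W_n$ and differs from $V_n$ only on the $\mm$-null set $W_n\cap F_n$, the class $\pi_\mm(f)\in W^{1,p}(\X)$ satisfies $\pi_\mm(f)\geq 1$ $\mm$-a.e.\ on $W_n\cup V_n$, making it an admissible competitor in the $\overline{\rm Cap}_p^\mm$-minimization for $W_n$; letting $\delta\to 0$ yields ${\rm Cap}_p(W_n)\leq{\rm Cap}_p(V_n)<\varepsilon\cdot 2^{-(n-1)}$. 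Finally, from $\{h\neq 0\}\subseteq\bar U\cup\bigcup_n W_n$, subadditivity, and ${\rm Cap}_p(\bar U)=0$, I conclude ${\rm Cap}_p(\{h\neq 0\})\leq 2\varepsilon$; sending $\varepsilon\to 0$ completes the proof.
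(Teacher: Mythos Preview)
Your proof is correct and follows the same approach the paper defers to (the argument in \cite{Deb:Gig:Pas:21}, invoked here through Remark~\ref{rmk:equiv_Cap_bis}): the crux is that the portion of $\{h\neq 0\}$ lying in the closed ``good'' set can be covered by an open set differing from the bad open set only on an $\mm$-null piece, whence ${\rm Cap}_p=\overline{\rm Cap}_p^\mm$ transfers the capacity bound. The standard presentation is a bit leaner---one takes a single open $U$ and notes directly that $\{h\neq 0\}\setminus U$ is relatively open in $\X\setminus U$ (so of the form $V\setminus U$ with $\mm(V\setminus U)=0$), which avoids the sequence $(U_n)_n$ and your $S_n/R_n$ decomposition---but the logic is identical.
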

In other words, if two \(p\)-quasicontinuous functions agree \(\bar\mm\)-a.e., then they agree \({\rm Cap}_p\)-a.e..
\begin{lemma}\label{lem:cont_d_qu}
Let \((\X,\sfd,\mm)\) be a metric measure space and \(p\in[1,\infty)\). Then it holds that
\begin{equation}\label{eq:ineq_d_qu}
\sfd_{\rm qu}(f,g)\leq c_p\|\bar f-\bar g\|_{\bar{N}^{1,p}(\X)}^{p/(p+1)}\quad\text{ for every }\bar f,\bar g\in \bar{N}^{1,p}(\X),
\end{equation}
where \(f\coloneqq\pi_{{\rm Cap}_p}(\bar f)\), \(g\coloneqq\pi_{{\rm Cap}_p}(\bar g)\), and the constant \(c_p>0\) is given by \(c_p\coloneqq p^{1/(p+1)}+p^{-p/(p+1)}\).
\end{lemma}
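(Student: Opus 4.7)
The plan is to use a standard sublevel‑set capacity estimate and then optimise in the threshold. Set \(\bar h\coloneqq\bar f-\bar g\in\bar{N}^{1,p}(\X)\) and, for each \(\lambda>0\), let \(E_\lambda\coloneqq\{|\bar h|>\lambda\}\). The lattice property (Theorem~\ref{thm:calculus_rules_mwug}\,iv)) ensures that \(|\bar h|\in\bar{N}^{1,p}(\X)\) with the same \(\bar{N}^{1,p}\)-seminorm as \(\bar h\), so the function \(v\coloneqq|\bar h|/\lambda\) lies in \(\bar{N}^{1,p}(\X)\) and satisfies \(v>1\) pointwise on \(E_\lambda\). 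Testing the very definition of the Sobolev capacity against \(v\) yields the key bound
\[
{\rm Cap}_p(E_\lambda)\;\leq\;\|v\|_{\bar{N}^{1,p}(\X)}^{p}\;=\;\lambda^{-p}\,\|\bar h\|_{\bar{N}^{1,p}(\X)}^{p}.
\]

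Next, I would plug \(E=E_\lambda\) into the infimum defining \(\sfd_{\rm qu}(f,g)\) and estimate the two halves of each summand separately. For the capacity half, use \({\rm Cap}_p(E_\lambda\cap U_k)\leq{\rm Cap}_p(E_\lambda)\) and \({\rm Cap}_p(U_k)\vee 1\geq 1\); for the \(L^\infty\) half, note that \(|\bar h|\leq\lambda\) pointwise on \(\X\setminus E_\lambda\), hence
\[
\big\|\pi_{{\rm Cap}_p}(\1_{U_k\setminus E_\lambda})(f-g)\big\|_{L^\infty({\rm Cap}_p)}\;\leq\;\lambda.
\]
Summing against \(2^{-k}\) (with the convention \(\sum_{k\in\N}2^{-k}\leq 1\)) gives the uniform estimate
\[
\sfd_{\rm qu}(f,g)\;\leq\;{\rm Cap}_p(E_\lambda)+\lambda\;\leq\;\lambda^{-p}\,\|\bar h\|_{\bar{N}^{1,p}(\X)}^{p}+\lambda\qquad\text{for every }\lambda>0.
\]

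Finally, I would minimise the right‑hand side in \(\lambda\). Writing \(M\coloneqq\|\bar h\|_{\bar{N}^{1,p}(\X)}^{p}\), the function \(\lambda\mapsto\lambda+M\lambda^{-p}\) attains its minimum at \(\lambda_\ast=(pM)^{1/(p+1)}\), with value
\[
\big(p^{1/(p+1)}+p^{-p/(p+1)}\big)\,M^{1/(p+1)}\;=\;c_p\,\|\bar h\|_{\bar{N}^{1,p}(\X)}^{p/(p+1)},
\]
which is precisely the claimed inequality \eqref{eq:ineq_d_qu}. The only point I expect to require care is ensuring that \(E_\lambda\) is admissible in the infimum defining \(\sfd_{\rm qu}\): this is transparent once a quasicontinuous (in particular Borel) representative of \(\bar h\) is selected via Theorem~\ref{thm:N_are_qc}, and modifying \(\bar h\) on a \({\rm Cap}_p\)-null set leaves both sides of \eqref{eq:ineq_d_qu} unaffected.
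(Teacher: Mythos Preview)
Your proof is correct and follows essentially the same approach as the paper's: estimate \({\rm Cap}_p(E_\lambda)\leq\lambda^{-p}\|\bar f-\bar g\|_{\bar N^{1,p}(\X)}^p\) via the competitor \(|\bar h|/\lambda\), bound \(\sfd_{\rm qu}(f,g)\leq\lambda+\lambda^{-p}\|\bar h\|_{\bar N^{1,p}(\X)}^p\), and then minimise in \(\lambda\). The only cosmetic difference is that the paper introduces an auxiliary \(a>\|\bar h\|_{\bar N^{1,p}(\X)}\) before optimising and then lets \(a\searrow\|\bar h\|_{\bar N^{1,p}(\X)}\), whereas you optimise directly; note also that the infimum defining \(\sfd_{\rm qu}\) runs over \emph{all} subsets \(E\subseteq\X\), so your admissibility concern does not arise.
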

\begin{proof}
Define \(E_\lambda\coloneqq\{|\bar f-\bar g|\geq\lambda\}\) for every \(\lambda>0\). Since \(\lambda^{-1}|\bar f-\bar g|\in \bar{N}^{1,p}(\X)\) and \(\lambda^{-1}|\bar f-\bar g|\geq 1\) on \(E_\lambda\),
we have that \({\rm Cap}_p(E_\lambda)\leq\lambda^{-p}\|\bar f-\bar g\|_{\bar{N}^{1,p}(\X)}^p\). Fix any \(a>\|\bar f-\bar g\|_{\bar{N}^{1,p}(\X)}\). Then
\begin{equation}\label{eq:ineq_d_qu_aux}
\sfd_{\rm qu}(f,g)\leq\lambda+\frac{\|\bar f-\bar g\|_{\bar{N}^{1,p}(\X)}^p}{\lambda^p}<\lambda+\frac{a^p}{\lambda^p}.
\end{equation}
The minimum of the function \((0,\infty)\ni\lambda\mapsto\lambda+\lambda^{-p}a^p\) is attained at \(\lambda_a\coloneqq(pa^p)^{p-1}\).
Plugging \(\lambda_a\) into \eqref{eq:ineq_d_qu_aux}, we get \(\sfd_{\rm qu}(f,g)\leq c_p a^{p/(p+1)}\). Letting \(a\searrow\|\bar f-\bar g\|_{\bar{N}^{1,p}(\X)}\), we thus obtain \eqref{eq:ineq_d_qu}.
\end{proof}
\begin{theorem}[Quasicontinuous-representative map]\label{thm:qc_repr}
Let \((\X,\sfd,\mm)\) be a metric measure space and \(p\in[1,\infty)\). Then there exists a unique map \({\sf qcr}\colon W^{1,p}_{loc}(\X)\to\mathcal{QC}(\X)\), which we call the
\textbf{\(p\)-quasicontinuous-representative map}, such that the diagram
\[\begin{tikzcd}
W^{1,p}_{loc}(\X) \arrow[r,hook] \arrow[d,swap,"{\sf qcr}"] & L^0(\mm) \\
\mathcal{QC}(\X) \arrow[ur,swap,"{\rm Pr}_\mm"] &
\end{tikzcd}\]
commutes, where \(W^{1,p}_{loc}(\X)\hookrightarrow L^0(\mm)\) denotes the inclusion map. The map \({\sf qcr}\colon W^{1,p}_{loc}(\X)\to\mathcal{QC}(\X)\) is a linear operator.
Moreover, \({\sf qcr}|_{W^{1,p}(\X)}\colon(W^{1,p}(\X),\|\cdot\|_{W^{1,p}(\X)})\to(\mathcal{QC}(\X),\sfd_{\rm qu})\) is continuous.
\end{theorem}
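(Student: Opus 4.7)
The plan is to define ${\sf qcr}$ representative-by-representative. Given $f \in W^{1,p}_{loc}(\X)$, pick any representative $\bar f \in \bar{N}^{1,p}_{loc}(\X)$; such an $\bar f$ exists by the very definition of $W^{1,p}_{loc}(\X)$. By Theorem \ref{thm:N_are_qc}, every such $\bar f$ is $p$-quasicontinuous, so I set ${\sf qcr}(f) \coloneqq \pi_{{\rm Cap}_p}(\bar f) \in \mathcal{QC}(\X)$. The commutativity ${\rm Pr}_\mm \circ {\sf qcr} = \text{inclusion}$ is immediate from the construction, since ${\rm Pr}_\mm\bigl(\pi_{{\rm Cap}_p}(\bar f)\bigr) = \pi_\mm(\bar f) = f$ by the definition of ${\rm Pr}_\mm = {\rm Pr}_{{\rm Cap}_p,\mm}$ in \eqref{eq:Pr_Cap}.

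Both well-posedness of this assignment and uniqueness of the map ${\sf qcr}$ reduce to a single application of Proposition \ref{prop:uniq_qcr}. Indeed, if $\bar f_1,\bar f_2 \in \bar{N}^{1,p}_{loc}(\X)$ are two representatives of the same $f \in W^{1,p}_{loc}(\X)$, then they agree $\bar\mm$-a.e.\ and are both $p$-quasicontinuous, so the injectivity of ${\rm Pr}_\mm|_{\mathcal{QC}(\X)}$ forces $\pi_{{\rm Cap}_p}(\bar f_1) = \pi_{{\rm Cap}_p}(\bar f_2)$. The same injectivity immediately yields uniqueness: any other map ${\sf qcr}'$ making the diagram commute would send $f$ to an element of $\mathcal{QC}(\X)$ with the same ${\rm Pr}_\mm$-image as ${\sf qcr}(f)$, hence equal to it. Linearity then follows because $\bar{N}^{1,p}_{loc}(\X)$ is a vector space, so $\lambda\bar f + \mu\bar g$ is a representative of $\lambda f + \mu g$ whenever $\bar f,\bar g$ represent $f,g$, and the map $\pi_{{\rm Cap}_p}$ is itself linear.

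For continuity of ${\sf qcr}|_{W^{1,p}(\X)}$, I would fix $f,g \in W^{1,p}(\X)$ together with representatives $\bar f, \bar g \in \bar{N}^{1,p}(\X)$. The seminorm on $\bar{N}^{1,p}(\X)$ depends only on the $\bar\mm$-equivalence class — indeed, the $\mathcal L^p(\bar\mm)$-seminorm factors through $\pi_\mm$, and the minimal weak upper gradient $|D(\bar f - \bar g)|$ is by construction an element of $L^p(\mm)$ — so $\|\bar f - \bar g\|_{\bar{N}^{1,p}(\X)} = \|f-g\|_{W^{1,p}(\X)}$. Lemma \ref{lem:cont_d_qu} then yields
\[
\sfd_{\rm qu}\bigl({\sf qcr}(f),{\sf qcr}(g)\bigr) \leq c_p \,\|f-g\|_{W^{1,p}(\X)}^{p/(p+1)},
\]
so ${\sf qcr}|_{W^{1,p}(\X)}$ is in fact H\"older continuous of exponent $p/(p+1)$.

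There is no real obstacle beyond the preceding results: all substantive content — quasicontinuity of local Newtonian functions (Theorem \ref{thm:N_are_qc}), uniqueness of the quasicontinuous representative modulo ${\rm Cap}_p$-null sets (Proposition \ref{prop:uniq_qcr}), and the capacitary estimate (Lemma \ref{lem:cont_d_qu}) — is already in place, and the statement amounts to a clean assembly of them. The only small point to keep track of is that one must actually pick the representative in $\bar{N}^{1,p}_{loc}(\X)$ (not merely in $\mathcal L^p_{loc}$), but this is precisely how $W^{1,p}_{loc}(\X)$ is defined.
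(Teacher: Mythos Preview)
Your proof is correct and follows essentially the same approach as the paper's: define ${\sf qcr}(f)$ via any Newtonian representative, invoke Theorem \ref{thm:N_are_qc} for quasicontinuity, and use Lemma \ref{lem:cont_d_qu} for continuity. The only cosmetic difference is that the paper cites Proposition \ref{prop:mm_implies_Cap} for well-posedness and Proposition \ref{prop:uniq_qcr} only for uniqueness, whereas you economically use Proposition \ref{prop:uniq_qcr} for both---which is perfectly fine since the representatives are already known to be quasicontinuous.
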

\begin{proof}
Given any \(f\in W^{1,p}_{loc}(\X)\), pick \(\bar f\in \bar{N}^{1,p}_{loc}(\X)\) so that \(\pi_\mm(\bar f)=f\) and define \({\sf qcr}(f)\coloneqq\pi_{{\rm Cap}_p}(\bar f)\).
Proposition \ref{prop:mm_implies_Cap} guarantees that the definition of \({\sf qcr}(f)\) is well-posed, in the sense that it does not depend on the specific choice
of \(\bar f\), while Theorem \ref{thm:N_are_qc} implies that \({\sf qcr}(f)\in\mathcal{QC}(\X)\). Notice also that \({\rm Pr}_\mm\circ{\sf qcr}\) coincides with the inclusion
map \(W^{1,p}_{loc}(\X)\hookrightarrow L^0(\mm)\), and that the uniqueness of \({\sf qcr}\) follows from Proposition \ref{prop:uniq_qcr}. The linearity of
\({\sf qcr}\) is clear by construction. Finally, Lemma \ref{lem:cont_d_qu} ensures that \({\sf qcr}|_{W^{1,p}(\X)}\) is a continuous map between
\((W^{1,p}(\X),\|\cdot\|_{W^{1,p}(\X)})\) and \((\mathcal{QC}(\X),\sfd_{\rm qu})\).
\end{proof}

The quasicontinuous-representative map \(\sf qcr\) satisfies also the following continuity property:
\begin{proposition}[Pointwise continuity in energy of \(\sf qcr\)]\label{prop:good_repres_Sob}
Let \((\X,\sfd,\mm)\) be a metric measure space and \(p\in[1,\infty)\). Assume that \((f_n)_{n\in\N\cup\{\infty\}}\subseteq W^{1,p}(\X)\)
and \((g_n)_{n\in\N\cup\{\infty\}}\subseteq L^p(\mm)^+\) are given sequences with \(\lim_n\|f_n-f_\infty\|_{L^p(\mm)}=\lim_n\|g_n-g_\infty\|_{L^p(\mm)}=0\)
and \(|Df_n|\leq g_n\) for every \(n\in\N\). Then we can extract a subsequence \((n_k)_k\) such that
\[
{\sf qcr}(f_\infty)(x)=\lim_{k\to\infty}{\sf qcr}(f_{n_k})(x)\quad\text{ for }{\rm Cap}_p\text{-a.e.\ }x\in\X.
\]
\end{proposition}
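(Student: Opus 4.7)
The plan is to reduce the conclusion to showing that, for every $\varepsilon>0$,
\[
\mathrm{Cap}_p(\{|\bar f_n-\bar f_\infty|\geq\varepsilon\})\to 0\quad\text{as }n\to\infty,
\]
where $\bar f_n,\bar f_\infty\in\bar N^{1,p}(\X)$ are Newtonian representatives of $f_n,f_\infty$. From this, a standard Borel--Cantelli argument for the outer measure $\mathrm{Cap}_p$ (recalled in Section \ref{ss:Choquet_int}) yields the subsequence: choose $n_k$ so that $\mathrm{Cap}_p(\{|\bar f_{n_k}-\bar f_\infty|\geq 2^{-k}\})\leq 2^{-k}$; summability of $2^{-k}$ then implies that the set $N:=\bigcap_{n}\bigcup_{k\geq n}\{|\bar f_{n_k}-\bar f_\infty|\geq 2^{-k}\}$ is $\mathrm{Cap}_p$-negligible, and for any $x\notin N$ one has $|\bar f_{n_k}(x)-\bar f_\infty(x)|<2^{-k}$ eventually, so $\bar f_{n_k}(x)\to\bar f_\infty(x)$, i.e.\ ${\sf qcr}(f_{n_k})(x)\to{\sf qcr}(f_\infty)(x)$.

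To obtain the capacity bound, I would first upgrade the hypotheses: by Proposition \ref{prop:clos_wug} applied along a weakly convergent subsequence of the $L^p$-bounded family $(|Df_n|)_n$, I get $|Df_\infty|\leq g_\infty$. Setting $h_n:=\bar f_n-\bar f_\infty$, this gives $|Dh_n|\leq g_n+g_\infty$, and since $g_n+g_\infty\to 2g_\infty$ strongly in $L^p(\mm)$, the family $(|Dh_n|^p)_n$ is uniformly integrable in $L^1(\mm)$; in particular $\int_{A_n}|Dh_n|^p\,\d\mm\to 0$ whenever $\mm(A_n)\to 0$.

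The key step is then a Lipschitz truncation: let $\phi_\varepsilon\in\LIP(\R;[0,1])$ vanish on $[-\varepsilon/2,\varepsilon/2]$, equal $1$ outside $[-\varepsilon,\varepsilon]$, and be affine on the intermediate intervals, so that $\lip(\phi_\varepsilon)\leq(2/\varepsilon)\1_{\{\varepsilon/2<|t|<\varepsilon\}}$. By the chain rule (Theorem \ref{thm:calculus_rules_mwug}), $\phi_\varepsilon\circ h_n\in\bar N^{1,p}(\X)$ with
$|D(\phi_\varepsilon\circ h_n)|\leq(2/\varepsilon)\1_{\{|h_n|>\varepsilon/2\}}^\mm|Dh_n|$.
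Since $\phi_\varepsilon\circ h_n\geq 1$ on $\{|h_n|\geq\varepsilon\}$, the definition of $\mathrm{Cap}_p$ gives
\[
\mathrm{Cap}_p(\{|h_n|\geq\varepsilon\})\leq\|\phi_\varepsilon\circ h_n\|_{\bar N^{1,p}(\X)}^p\leq\mm(\{|h_n|>\varepsilon/2\})+\Bigl(\tfrac{2}{\varepsilon}\Bigr)^p\int_{\{|h_n|>\varepsilon/2\}}|Dh_n|^p\,\d\mm.
\]
The first term tends to $0$ by Chebyshev's inequality and $\|h_n\|_{L^p(\mm)}\to 0$; the second tends to $0$ because the sets $\{|h_n|>\varepsilon/2\}$ have vanishing $\mm$-measure and $(|Dh_n|^p)_n$ is uniformly integrable.

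The main obstacle is controlling the gradient contribution, since $|Dh_n|$ itself does not vanish (it is only $L^p$-bounded, of order $g_\infty$). The resolution is precisely the truncation: $\phi_\varepsilon$ forces the gradient of $\phi_\varepsilon\circ h_n$ onto the thin set $\{\varepsilon/2<|h_n|<\varepsilon\}$, whose $\mm$-measure shrinks to zero; uniform $L^p$-integrability of $(|Dh_n|)_n$ --- inherited from the strong $L^p$-convergence of $g_n$ --- then makes the localized gradient integral vanish. Everything else is bookkeeping involving closure, chain rule, Chebyshev, and the Borel--Cantelli lemma for outer measures.
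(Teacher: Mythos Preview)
Your proof is correct. The paper itself does not give a self-contained argument here; it simply invokes \cite[Proposition 6.13]{Amb:Iko:Luc:Pas:24}. Your route---Lipschitz truncation $\phi_\varepsilon\circ h_n$ as a competitor for $\mathrm{Cap}_p(\{|h_n|\geq\varepsilon\})$, uniform integrability of $(|Dh_n|^p)_n$ from the strong $L^p$-convergence of the dominating sequence $(g_n)_n$, and then Borel--Cantelli for the outer measure $\mathrm{Cap}_p$---is exactly the standard mechanism behind such results and is almost certainly what the cited proposition does as well.

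Two minor remarks. First, in the step ``by Proposition \ref{prop:clos_wug} applied along a weakly convergent subsequence'': when $p=1$ you should note that the subsequence exists by Dunford--Pettis, since $|Df_n|\leq g_n$ with $g_n\to g_\infty$ strongly in $L^1(\mm)$ gives uniform integrability of $(|Df_n|)_n$. Second, you don't actually need the conclusion $|Df_\infty|\leq g_\infty$: the cruder bound $|Dh_n|\leq g_n+|Df_\infty|$ already converges strongly in $L^p(\mm)$ to $g_\infty+|Df_\infty|$, which is enough for the uniform integrability of $(|Dh_n|^p)_n$. But your version is cleaner and the extra step is cheap.
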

\begin{proof}
The claim follows from \cite[Proposition 6.13]{Amb:Iko:Luc:Pas:24}.
\end{proof}
We also have the following local version of the above result:
\begin{corollary}\label{cor:good_repres_Sob_loc}
Let \((\X,\sfd,\mm)\) be a metric measure space and \(p\in[1,\infty)\). Let \((f_n)_n\subseteq W^{1,p}_{loc}(\X)\)
and \((g_n)_n\subseteq L^p_{loc}(\mm)^+\) be given sequences such that:
\begin{itemize}
\item[\(\rm i)\)] \(|Df_n|\leq g_n\) for every \(n\in\N\).
\item[\(\rm ii)\)] For any \(x\in\X\), there is a function \(\eta_x\in\LIP_{bs}(\X)\) such that \(\eta_x=1\) on a neighbourhood of \(x\) and
\((\eta_x^\mm f_n)_{n\in\N}\subseteq W^{1,p}(\X)\).
\item[\(\rm iii)\)] There exist \(f_\infty,g_\infty\in L^p_{loc}(\mm)\) such that \(f_n\to f_\infty\) and \(g_n\to g_\infty\) in the \(L^p_{loc}(\mm)\) sense.
\end{itemize}
Then \(f_\infty\in W^{1,p}_{loc}(\X)\) and \(|Df_\infty|\leq g_\infty\). Moreover, we can extract a subsequence \((n_k)_k\) such that
\begin{equation}\label{eq:good_repres_Sob_loc_claim}
{\sf qcr}(f_\infty)(x)=\lim_{k\to\infty}{\sf qcr}(f_{n_k})(x)\quad\text{ for }{\rm Cap}_p\text{-a.e.\ }x\in\X.
\end{equation}
\end{corollary}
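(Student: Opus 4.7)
The plan is to reduce the statement to the global Proposition \ref{prop:good_repres_Sob} applied to cut-off versions of $(f_n)$, and then patch the conclusions together via a diagonal extraction. By hypothesis ii) together with the Lindel\"of property of the separable space $\X$, select a countable family $(\eta_j)_j\subseteq\LIP_{bs}(\X;[0,1])$ with $(\eta_j^\mm f_n)_{n\in\N}\subseteq W^{1,p}(\X)$ for every $j$, such that the open sets $U_j\coloneqq{\rm int}\{\eta_j=1\}$ cover $\X$. The Leibniz and chain rules of Theorem \ref{thm:calculus_rules_mwug} then give
\[
|D(\eta_j^\mm f_n)|\leq\eta_j^\mm|Df_n|+\lip_a(\eta_j)|f_n|^\mm\leq G_n^{(j)}\coloneqq\eta_j^\mm g_n+\lip_a(\eta_j)|f_n|^\mm,\qquad n\in\N.
\]

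Because $\eta_j$ has bounded support, hypothesis iii) upgrades to genuine $L^p(\mm)$-convergences $\eta_j^\mm f_n\to\eta_j^\mm f_\infty$ and $G_n^{(j)}\to G_\infty^{(j)}\coloneqq\eta_j^\mm g_\infty+\lip_a(\eta_j)|f_\infty|^\mm$. Proposition \ref{prop:clos_wug}, applied to these (strongly, hence weakly, convergent) sequences, yields $\eta_j^\mm f_\infty\in W^{1,p}(\X)$ with $|D(\eta_j^\mm f_\infty)|\leq G_\infty^{(j)}$. Since $\X=\bigcup_j U_j$, this already shows $f_\infty\in W^{1,p}_{loc}(\X)$. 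The bound $|Df_\infty|\leq g_\infty$ follows from the locality of minimal weak upper gradients (Theorem \ref{thm:calculus_rules_mwug} i)): on $U_j$ one has $\eta_j\equiv 1$ and $\lip_a(\eta_j)\equiv 0$, so locality gives $|Df_\infty|=|D(\eta_j^\mm f_\infty)|\leq g_\infty$ $\mm$-a.e.\ on $U_j$, and countable union yields the bound on $\X$.

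For each fixed $j$, the global Proposition \ref{prop:good_repres_Sob} applied to $(\eta_j^\mm f_n)_n$ and $(G_n^{(j)})_n$ delivers a subsequence $(n^{(j)}_k)_k$ along which ${\sf qcr}(\eta_j^\mm f_{n^{(j)}_k})\to{\sf qcr}(\eta_j^\mm f_\infty)$ ${\rm Cap}_p$-a.e.\ on $\X$. A standard diagonal extraction—choose $(n^{(j+1)}_k)_k\subseteq(n^{(j)}_k)_k$ recursively and set $n_k\coloneqq n^{(k)}_k$—produces a single subsequence $(n_k)_k$ that works for all $j$ simultaneously. To transfer to the unmodified sequence, note that $\eta_j^\mm f_n=f_n$ $\bar\mm$-a.e.\ on $U_j$ for every $n\in\N\cup\{\infty\}$, so Proposition \ref{prop:mm_implies_Cap} upgrades this to ${\sf qcr}(\eta_j^\mm f_n)={\sf qcr}(f_n)$ ${\rm Cap}_p$-a.e.\ on $U_j$. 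Combining these observations with the countable subadditivity of ${\rm Cap}_p$ and $\X=\bigcup_j U_j$ yields \eqref{eq:good_repres_Sob_loc_claim}. The only point going beyond bookkeeping is the promotion of $L^p_{loc}$-convergence to $L^p(\mm)$-convergence against a bounded-support cut-off, which is handled by a Lindel\"of-type covering argument on ${\rm spt}(\eta_j)$ combined with the fact that $\mm$ is boundedly finite; everything else follows from tools already collected in the preliminaries.
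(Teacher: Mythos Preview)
Your proof follows essentially the same route as the paper's, and the overall structure is correct. There is, however, a genuine gap in the step where you ``promote'' $L^p_{loc}$-convergence to $L^p(\mm)$-convergence against a bounded-support cut-off. You justify this via a Lindel\"of covering of ${\rm spt}(\eta_j)$ together with bounded finiteness of $\mm$, but that is not enough: take $\X=\N$ with the discrete metric and $\mm=\sum_k 2^{-k}\delta_k$, and set $f_n=2^{n/p}\1_{\{n\}}$. Then $f_n\to 0$ in $L^p_{loc}$ (each singleton is an open ball), yet $\|f_n\|_{L^p(\mm)}=1$ for all $n$; so with $\eta\equiv 1\in\LIP_{bs}(\X)$ one has $\eta^\mm f_n\not\to 0$ in $L^p(\mm)$, even though all the hypotheses of the Corollary are met (with $g_n\equiv 0$) and hypothesis ii) allows $\eta\equiv 1$ as a legitimate cut-off.

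The paper avoids this by constructing the $\eta_j$'s using hypotheses ii) \emph{and} iii) simultaneously: for each $x$, one shrinks the cut-off from ii) so that its support sits inside both $\mathrm{int}\{\eta_x=1\}$ and a ball $B_{r_x}(x)$ on which the $L^p_{loc}$-convergence from iii) is witnessed. The shrunk cut-off $\tilde\eta_x$ still satisfies $(\tilde\eta_x^\mm f_n)_n\subseteq W^{1,p}(\X)$ via Leibniz (since $\tilde\eta_x^\mm f_n=\tilde\eta_x^\mm(\eta_x^\mm f_n)$), and now $L^p$-convergence on $\{\tilde\eta_x>0\}$ is immediate. Your argument is easily repaired in exactly this way; once the cut-offs are chosen like this, the rest of your proof is correct and matches the paper's.
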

\begin{proof}
By ii), iii), and the separability of \((\X,\sfd)\), we can find a sequence \((\eta_j)_j\subseteq\LIP_{bs}(\X;[0,1])\) satisfying
\(\lim_n\|\1_{B_j}^\mm f_n-\1_{B_j}^\mm f_\infty\|_{L^p(\mm)}=\lim_n\|\1_{B_j}^\mm g_n-\1_{B_j}^\mm g_\infty\|_{L^p(\mm)}=0\) for every \(j\in\N\),
\((\eta_j^\mm f_n)_{n,j\in\N}\subseteq W^{1,p}(\X)\), and \(\X=\bigcup_{j\in\N}U_j\), where we set \(B_j\coloneqq\{\eta_j>0\}\), while \(U_j\) denotes
the interior of \(\{\eta_j=1\}\). In particular, we have \(\lim_n\|\eta_j^\mm f_n-\eta_j^\mm f_\infty\|_{L^p(\mm)}=0\) for every \(j\in\N\).
Now fix \(j\in\N\). We can estimate
\[
|D(\eta_j^\mm f_n)|\leq\eta_j^\mm|Df_n|+|f_n||D\eta_j^\mm|\leq\1_{B_j}^\mm g_n+\Lip(\eta_j)\1_{B_j\setminus U_j}^\mm|f_n|\quad\text{ for every }n\in\N.
\]
Given that \(\1_{B_j}^\mm g_n\to\1_{B_j}^\mm g_\infty\) and \(\1_{B_j}^\mm|f_n|\to\1_{B_j}^\mm|f_\infty|\) strongly in \(L^p(\mm)\) as \(n\to\infty\),
we obtain that (up to a non-relabelled subsequence) \(|D(\eta_j^\mm f_n)|\rightharpoonup G_j\) weakly in \(L^p(\mm)\) as \(n\to\infty\), for some
\(G_j\in L^p(\mm)\) satisfying \(G_j\leq\1_{B_j}^\mm g_\infty+\Lip(\eta_j)\1_{B_j\setminus U_j}^\mm|f_\infty|\). Recalling Proposition \ref{prop:clos_wug},
we deduce that \(\eta_j^\mm f_\infty\in W^{1,p}(\X)\) and \(|D(\eta_j^\mm f_\infty)|\leq G_j\), thus in particular \(f_\infty\in W^{1,p}_{loc}(\X)\).
Moreover, we have that \(\1_{U_j}^\mm|D f_\infty|\leq\1_{U_j}^\mm G_j\leq\1_{U_j}^\mm g_\infty\) for every \(j\in\N\), whence it follows that \(|Df_\infty|\leq g_\infty\).
Finally, by Proposition \ref{prop:good_repres_Sob} and a diagonalisation argument, we can extract a subsequence \((n_k)_k\) such that
\[
\lim_{k\to\infty}{\sf qcr}(\eta_j^\mm f_{n_k})(x)={\sf qcr}(\eta_j^\mm f_\infty)(x)\quad\text{ for every }j\in\N
\text{ and }{\rm Cap}_p\text{-a.e.\ }x\in U_j.
\]
Since \({\sf qcr}(\eta_j^\mm f_n)={\sf qcr}(f_n)\) holds \({\rm Cap}_p\)-a.e.\ on \(U_j\) by Proposition
\ref{prop:mm_implies_Cap}, we obtain \eqref{eq:good_repres_Sob_loc_claim}.
\end{proof}
\section{Derivations and tangent modules}
\subsection{Tangent and cotangent modules}\label{ss:tg_cotg_mod}
The concept of \emph{cotangent module}, which was introduced by Gigli in \cite[Definition 2.2.1]{Gig:18} (see also \cite[Theorem/Definition 2.8]{Gig:17}),
provides a notion of `space of measurable \(1\)-forms' over an arbitrary metric measure space. Let us now give a seemingly different (but equivalent) definition
of cotangent module, whose consistency with Gigli's one will be discussed in Remark \ref{rmk:consist_cotg_mod}.
\begin{definition}[Cotangent module]\label{def:cotg_mod}
Let \((\X,\sfd,\mm)\) be a metric measure space and \(p\in[1,\infty)\). Let us denote by \(\psi_p\colon D^{1,p}_{\mm}(\X)\to L^p(\mm)^+\) the 
even, pointwise sublinear map given by
\[
\psi_p(f)\coloneqq|Df|\quad\text{ for every }f\in D^{1,p}_{\mm}(\X).
\]
Then we define the \textbf{\(p\)-cotangent module} \(L^p(T^*\X)\) and \textbf{differential} \(\d\colon D^{1,p}_{\mm}(\X)\to L^p(T^*\X)\) as
\[
(L^p(T^*\X),\d)\coloneqq(\mathscr M_{\langle\psi_p\rangle},T_{\langle\psi_p\rangle}),
\]
where the couple \((\mathscr M_{\langle\psi_p\rangle},T_{\langle\psi_p\rangle})\) is the one given by Theorem \ref{thm:module_gen_sublin_map}.
\end{definition}
\begin{remark}\label{remark:Dirichletspace:differential}{\rm
For the reader's convenience, we elaborate on the properties that characterise the couple \((L^p(T^*\X),\d)\). The space \(L^p(T^*\X)\) is an \(L^p(\mm)\)-Banach \(L^\infty(\mm)\)-module
and the differential \(\d\colon D^{1,p}_{\mm}(\X)\to L^p(T^*\X)\) is a linear map satisfying \(|\d f|=|Df|\) for every \(f\in D^{1,p}_{\mm}(\X)\). Moreover, we have that
\[
\big\{\d f\;\big|\,f\in D^{1,p}_{\mm}(\X)\big\}\quad\text{ generates }L^p(T^*\X).
\]It follows from these properties that there exists a unique differential $\underline{\d} \colon D^{1,p}( \X ) \to L^{p}( T^{*}\X )$ for which the diagram
\begin{equation*}\begin{tikzcd}
D^{1,p}_{\mm}(\X) \arrow[d,swap,"\tau"] \arrow[r,"\d"] & L^{p}( T^{*}\X ) \\
D^{1,p}(\X) \arrow[ur,swap,"\underline{\d}"] &
\end{tikzcd}\end{equation*}
is commutative, where the map $\tau\colon D^{1,p}_{\mm}( \X ) \to D^{1,p}( \X )$ is the canonical one (cf.\ \Cref{remark:aboutDirichletspaces}). The key observation is that given $f_1, f_2 \in D^{1,p}_{\mm}(\X)$, then $\d f_2 = \d f_1$ if and only if $| D( f_2 - f_1 ) | = 0$ in $L^{p}( \mm )$. That is, if and only if $f_1, f_2 \in D^{1,p}_{\mm}(\X)$ define the same equivalence class in $D^{1,p}( \X )$.
}\end{remark}

In the following two results, we collect the main properties of the differential:
\begin{proposition}[Closure properties of the differential]\label{prop:clos_diff}
Let \((\X,\sfd,\mm)\) be a metric measure space and \(p\in[1,\infty)\). Let \((f_n)_n\subseteq W^{1,p}(\X)\) and \(f\in L^p(\mm)\) be such that
\(f_n\rightharpoonup f\) weakly in \(L^p(\mm)\) and \(\d f_n\rightharpoonup\omega\) weakly in \(L^p(T^*\X)\), for some \(\omega\in L^p(T^*\X)\).
Then it holds that \(f\in W^{1,p}(\X)\) and \(\omega=\d f\).
\end{proposition}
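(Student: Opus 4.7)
The plan is to reduce everything to a strong-convergence statement via Mazur's lemma, applied in the product space $L^p(\mm)\times L^p(T^*\X)$, and then to exploit the defining property $|\d g| = |Dg|$ of the differential together with completeness of $W^{1,p}(\X)$.

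First I would consider the joint weak convergence $(f_n,\d f_n)\rightharpoonup(f,\omega)$ in the product Banach space $L^p(\mm)\times L^p(T^*\X)$. By Mazur's lemma I extract convex combinations
\[
\tilde f_k\coloneqq\sum_{j=k}^{N_k}\lambda_{j,k}f_j,\qquad\lambda_{j,k}\geq 0,\;\sum_j\lambda_{j,k}=1,
\]
such that $\tilde f_k\to f$ strongly in $L^p(\mm)$ and, by linearity of $\d$,
\[
\d\tilde f_k=\sum_{j=k}^{N_k}\lambda_{j,k}\,\d f_j\longrightarrow\omega\quad\text{strongly in }L^p(T^*\X).
\]
In particular, $(\tilde f_k)_k$ is Cauchy in $L^p(\mm)$ and $(\d\tilde f_k)_k$ is Cauchy in $L^p(T^*\X)$.

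Next I would use the identity $|\d g|=|Dg|$ (which characterises the differential, cf.\ Definition~\ref{def:cotg_mod}) to translate Cauchyness of $\d\tilde f_k$ into Cauchyness of the minimal weak upper gradients. Indeed, for all $k,m\in\N$,
\[
\bigl\||D(\tilde f_k-\tilde f_m)|\bigr\|_{L^p(\mm)}=\bigl\||\d\tilde f_k-\d\tilde f_m|\bigr\|_{L^p(\mm)}=\|\d\tilde f_k-\d\tilde f_m\|_{L^p(T^*\X)}\to 0.
\]
Combined with the fact that $(\tilde f_k)_k$ is Cauchy in $L^p(\mm)$, this shows that $(\tilde f_k)_k$ is a Cauchy sequence in $\bigl(W^{1,p}(\X),\|\cdot\|_{W^{1,p}(\X)}\bigr)$. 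By completeness of $W^{1,p}(\X)$, there exists $g\in W^{1,p}(\X)$ with $\tilde f_k\to g$ in $W^{1,p}(\X)$; since $\tilde f_k\to f$ in $L^p(\mm)$, uniqueness of limits forces $g=f$, hence $f\in W^{1,p}(\X)$.

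Finally, convergence in $W^{1,p}(\X)$ implies $\||D(\tilde f_k-f)|\|_{L^p(\mm)}\to 0$, i.e.\ $\|\d\tilde f_k-\d f\|_{L^p(T^*\X)}\to 0$. Matching this against $\d\tilde f_k\to\omega$ yields $\omega=\d f$, completing the proof. The only genuine obstacle is recognising that Mazur must be applied to the product (not separately to the two sequences, which would destroy the correspondence between $f_n$ and $\d f_n$); once this is done, the identity $|\d g|=|Dg|$ converts module-level convergence into Sobolev-norm Cauchyness and the argument closes itself via completeness of $W^{1,p}(\X)$. Proposition~\ref{prop:clos_wug} is not really needed here, since the Mazur step upgrades weak convergence to strong convergence and then the closedness of $W^{1,p}(\X)$ does the rest.
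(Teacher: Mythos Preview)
Your proof is correct. Both you and the paper begin with Mazur's lemma applied in the product $L^p(\mm)\times L^p(T^*\X)$ to upgrade the joint weak convergence $(f_n,\d f_n)\rightharpoonup(f,\omega)$ to strong convergence of convex combinations $(\tilde f_k,\d\tilde f_k)\to(f,\omega)$. From this point the arguments diverge: the paper invokes Proposition~\ref{prop:clos_wug} twice --- once to place $f$ in $W^{1,p}(\X)$ (from $|D\tilde f_k|\to|\omega|$), and once more, applied to the differences $\tilde f_n-\tilde f_k$, to obtain the pointwise bound $|\d f-\d\tilde f_k|\leq|\omega-\d\tilde f_k|$ and hence $\omega=\d f$. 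You instead observe that $\|\d\tilde f_k-\d\tilde f_m\|_{L^p(T^*\X)}=\||D(\tilde f_k-\tilde f_m)|\|_{L^p(\mm)}$, so $(\tilde f_k)_k$ is Cauchy in $W^{1,p}(\X)$, and then let completeness of $W^{1,p}(\X)$ do all the work. Your route is slightly more economical and, as you note, bypasses Proposition~\ref{prop:clos_wug} entirely; the paper's route is marginally more informative in that it yields the intermediate inequality $|Df|\leq|\omega|$ before identifying $\omega$ with $\d f$, but that extra information is not used here.
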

\begin{proof}
By Mazur's lemma, we may consider \(\tilde f_n\in{\rm conv}(\{f_i\,:\,i\geq n\})\subseteq W^{1,p}(\X)\)
so that \(\tilde f_n\to f\) strongly in \(L^p(\mm)\) and \(\d\tilde f_n\to\omega\) strongly in \(L^p(T^*\X)\). Then
\(f\in W^{1,p}(\X)\) by Proposition \ref{prop:clos_wug}. Moreover, for any given \(k\in\N\) we have that
\(\tilde f_n-\tilde f_k\to f-\tilde f_k\) strongly in \(L^p(\mm)\) and \(\d(\tilde f_n-\tilde f_k)\to\omega-\d\tilde f_k\)
strongly in \(L^p(T^*\X)\) as \(n\to\infty\). Again, by Proposition \ref{prop:clos_wug}, we deduce that
\(|\d f-\d\tilde f_k|\leq|\omega-\d\tilde f_k|\to 0\) strongly in \(L^p(\mm)\) as \(k\to\infty\), whence
it follows that \(\omega=\d f\).
\end{proof}
\begin{theorem}[Calculus rules for the differential]\label{thm:calculus_rules_diff}
Let \((\X,\sfd,\mm)\) be a metric measure space and \(p\in[1,\infty)\). Then the following properties hold:
\begin{itemize}
\item[\(\rm i)\)] {\sc Locality}. Let \(f,g\in D^{1,p}_{\mm}(\X)\) be given. Then
\begin{equation}\label{eq:diff_local_0}
\1_{\{f=g\}}^\mm\d f=\1_{\{f=g\}}^\mm\d g.
\end{equation}
\item[\(\rm ii)\)] {\sc Chain rule.} Let \(f\in D^{1,p}_{\mm}(\X)\) be given. Let \(N\subseteq\R\) be a Borel set with \(\mathscr L^1(N)=0\). Then
\begin{equation}\label{eq:diff_local_1}
\1_{f^{-1}(N)}^\mm\d f=0.
\end{equation}
Moreover, given any function \(\varphi\in\LIP(\R)\), it holds that \(\varphi\circ f\in D^{1,p}_{\mm}(\X)\) and
\begin{equation}\label{eq:diff_local_2}
\d(\varphi\circ f)= (\varphi'\circ f)\,\d f,
\end{equation}
where \(\varphi'\) is defined arbitrarily at the non-differentiability points of \(\varphi\).
\item[\(\rm iii)\)] {\sc Leibniz rule.} Let \(f,g\in D^{1,p}_{\mm}(\X)\cap L^\infty(\mm)\) be given. Then \(fg\in D^{1,p}_{\mm}(\X)\cap L^\infty(\mm)\) and
\begin{equation}\label{eq:diff_Leibniz}
\d(fg)=f\,\d g+g\,\d f.
\end{equation}
\item[\(\rm iv)\)]
{\sc Lattice property.} Let \(f,g\in D^{1,p}_{\mm}(\X)\) be given. Then \(f\vee g,f\wedge g\in D^{1,p}_{\mm}(\X)\) and
\begin{equation}\label{eq:diff_lattice}
\d(f\vee g)=\1_{\{f\geq g\}}^\mm \d f+\1_{\{f<g\}}^\mm\d g,\qquad\d(f\wedge g)=\1_{\{f\leq g\}}^\mm\d f+\1_{\{f>g\}}^\mm\d g.
\end{equation}
In particular, it holds that \(|f|\in D^{1,p}_{\mm}(\X)\) and \(\d|f|={\rm sgn}(f)\d f\).
\end{itemize}
\end{theorem}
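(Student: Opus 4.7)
My plan is to derive each calculus rule for the differential \(\d\) from the corresponding rule for \(|D\cdot|\) given by Theorem \ref{thm:calculus_rules_mwug}, exploiting the identity \(|\d h|=|Dh|\) together with the linearity of \(\d\) and the module axioms. The key observation is that, since the pointwise norm on \(L^p(T^*\X)\) separates points, an identity \(\omega_1=\omega_2\) in \(L^p(T^*\X)\) is equivalent to \(|\omega_1-\omega_2|=0\) in \(L^p(\mm)\). This at once handles (i) and the first assertion of (ii), because
\[
|\1_{\{f=g\}}^\mm(\d f-\d g)|=\1_{\{f=g\}}^\mm|D(f-g)|,\qquad|\1_{f^{-1}(N)}^\mm\d f|=\1_{f^{-1}(N)}^\mm|Df|,
\]
and both right-hand sides vanish \(\mm\)-a.e.\ by Theorem \ref{thm:calculus_rules_mwug} i) and ii), respectively.

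For the formula \(\d(\varphi\circ f)=(\varphi'\circ f)\d f\) I would proceed by increasing generality of \(\varphi\). For affine \(\varphi\) it is immediate from linearity of \(\d\). For piecewise affine \(\varphi\) with slopes \(a_j\) on the intervals of linearity \(A_j\), I use locality (i) to obtain \(\1_{f^{-1}(A_j^\circ)}^\mm\d(\varphi\circ f)=a_j\1_{f^{-1}(A_j^\circ)}^\mm\d f=\1_{f^{-1}(A_j^\circ)}^\mm(\varphi'\circ f)\d f\) and the first part of (ii) to discard the preimages of the (finitely many) breakpoints, which form an \(\mathscr{L}^1\)-null set. For a general Lipschitz \(\varphi\), I approximate by piecewise affine interpolants \(\varphi_n\) on \(n^{-1}\Z\) with \(\Lip(\varphi_n)\le\Lip(\varphi)\). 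Using the piecewise affine identity \(\d(\varphi_n\circ f)=(\varphi_n'\circ f)\d f\) together with the chain rule for \(|D\cdot|\), the triangle inequality gives
\[
|\d(\varphi\circ f)-(\varphi'\circ f)\d f|\le\lip(\varphi_n-\varphi)\circ f\cdot|Df|+|\varphi_n'-\varphi'|\circ f\cdot|Df|,
\]
with both summands dominated by \(2\Lip(\varphi)|Df|\in L^p(\mm)\). The main obstacle is controlling the first summand: the crucial technical point is that \(\lip(\varphi_n-\varphi)(t)\to 0\) at every Lebesgue point of \(\varphi'\), which holds because for generic \(t\) the function \(\varphi_n\) is affine in a neighbourhood of \(t\) with slope tending to \(\varphi'(t)\). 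The resulting \(\mathscr{L}^1\)-null exceptional sets in \(\R\) are absorbed by the first part of (ii), since \(\d f\) vanishes on their preimages, and dominated convergence in \(L^p(\mm)\) then closes the argument.

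The rules (iii) and (iv) follow formally from (i) and (ii). For Leibniz, I apply (ii) with the Lipschitz function \(\varphi_M(t)=((t\wedge M)\vee(-M))^2\) where \(M\ge\|f\|_{L^\infty(\mm)}\vee\|g\|_{L^\infty(\mm)}\) to compute \(\d(f^2)=2f\,\d f\), and analogously \(\d(g^2)\) and \(\d((f+g)^2)=2(f+g)(\d f+\d g)\); expanding \(2fg=(f+g)^2-f^2-g^2\) and invoking linearity of \(\d\) yields \(\d(fg)=f\,\d g+g\,\d f\). For the lattice property, I use \(f\vee g=\tfrac12(f+g+|f-g|)\) together with \(\d|f-g|=\operatorname{sgn}(f-g)\,\d(f-g)\), obtained from (ii) applied to \(\varphi(t)=|t|\) (with \(\operatorname{sgn}(0)\) arbitrary). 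The resulting expression for \(\d(f\vee g)\) reduces to \(\d f\) on \(\{f>g\}\), to \(\d g\) on \(\{f<g\}\), and on \(\{f=g\}\) locality (i) identifies \(\d f=\d g\), restoring the stated partition \(\1_{\{f\ge g\}}^\mm\d f+\1_{\{f<g\}}^\mm\d g\). The formula for \(\d(f\wedge g)\) is symmetric, and \(\d|f|=\d(f\vee(-f))=\operatorname{sgn}(f)\,\d f\) follows analogously.
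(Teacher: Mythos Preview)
Your proof is correct. Parts i) and the chain-rule approximation in ii) follow essentially the paper's approach: piecewise-affine interpolants, the chain rule for \(|D\cdot|\) to control \(|\d(\varphi_n\circ f)-\d(\varphi\circ f)|\), and dominated convergence. The paper invokes Doob's martingale convergence (on dyadic grids) to obtain \(\varphi'_n\to\varphi'\) \(\mathscr L^1\)-a.e.\ along a subsequence, whereas you argue directly via Lebesgue points of \(\varphi'\); both yield the same a.e.\ convergence, so this is a cosmetic difference.

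Parts iii) and iv) take genuinely different routes. For the Leibniz rule, the paper uses the logarithm trick: shift \(f,g\) by a constant so that \(\tilde f,\tilde g\ge 1\), write \(\d(\tilde f\tilde g)=\tilde f\tilde g\,\d\log^+(\tilde f\tilde g)\), split the logarithm additively, and unwind. Your polarization \(2fg=(f+g)^2-f^2-g^2\) combined with \(\d(h^2)=2h\,\d h\) (from the chain rule with a truncated square) is equally valid and arguably more elementary, avoiding the shift and the positivity bookkeeping. For the lattice property, the paper goes the shortest way: since \(f\vee g=f\) on \(\{f\ge g\}\) and \(f\vee g=g\) on \(\{f<g\}\), locality i) immediately gives the formula. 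Your detour through \(f\vee g=\tfrac12(f+g+|f-g|)\) and \(\d|h|=\operatorname{sgn}(h)\,\d h\) works but is slightly longer, and in the end you still appeal to locality on \(\{f=g\}\) to reconcile the \(\operatorname{sgn}(0)\) ambiguity with the stated partition.
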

\begin{proof}
\ \\
\(\bf i)\) Notice that \(|\1_{\{f=g\}}^\mm\d f-\1_{\{f=g\}}^\mm\d g|=\1_{\{f=g\}}^\mm|D(f-g)|=0\) by Theorem
\ref{thm:calculus_rules_mwug} i), proving \eqref{eq:diff_local_0}.\\
\(\bf ii)\) First, \(|\1_{f^{-1}(N)}^\mm\d f|=\1_{f^{-1}(N)}^\mm|Df|=0\) by Theorem \ref{thm:calculus_rules_mwug} ii),
proving \eqref{eq:diff_local_1}. Next, recall that \(\varphi\) is \(\mathscr L^1\)-a.e.\ differentiable by Rademacher's
theorem, thus the right-hand side of \eqref{eq:diff_local_2} is independent of how we define \(\varphi'\)
at the non-differentiability points of \(\varphi\), thanks to \eqref{eq:diff_local_1}. Given any \(n\in\N\) and letting \(t^n_k\coloneqq\frac{k}{2^n}\)
for every \(k\in\mathbb Z\), we define the piecewise affine interpolation \(\varphi_n\colon\R\to\R\) of \(\varphi\) as
\[
\varphi_n(t)\coloneqq\varphi(t^n_k)+\big(\varphi(t^n_{k+1})-\varphi(t^n_k)\big)(2^n t-k)\quad\text{ for every }k\in\mathbb Z\text{ and }t\in[t^n_k,t^n_{k+1}).
\]
Notice that \(\Lip(\varphi_n)\leq\Lip(\varphi)\) for every \(n\in\N\). Letting \(I^n_k\coloneqq[t^n_k,t^n_{k+1})\) for every \(k\in\mathbb Z\), we have
\[
\varphi'_n=\sum_{k\in\mathbb Z}2^n\1_{I^n_k}(\varphi(t^n_{k+1})-\varphi(t^n_k))=\sum_{k\in\mathbb Z}\1_{I^n_k}\fint_{I^n_k}\varphi'\,\d\mathscr L^1
\]
holds on \(\R\setminus\{t^n_k\,:\,k\in\mathbb Z\}\) (thus, \(\mathscr L^1\)-a.e.\ on \(\R\)). Thanks to Doob's martingale convergence theorem (cf.\ with the proof
of \cite[Theorem A]{Bre:Gig:24b}), we deduce that \(\varphi'_n\to\varphi'\) strongly in \(L^1_{loc}(\R)\). Up to a non-relabelled subsequence, we thus have that
\(\varphi'_n(t)\to\varphi'(t)\) for \(\mathscr L^1\)-a.e.\ \(t\in\R\). Now observe that \(\varphi\circ f\in D^{1,p}_{\mm}(\X)\) by Theorem \ref{thm:calculus_rules_mwug} ii).
It only remains to check that \eqref{eq:diff_local_2} is verified. Notice that \(\varphi_n\circ f\in D^{1,p}_{\mm}(\X)\) for every \(n\in\N\) by Theorem \ref{thm:calculus_rules_mwug} ii).
Moreover, we have that
\[\begin{split}
\1_{f^{-1}(I^n_k)}^\mm\d(\varphi_n\circ f)&=\1_{f^{-1}(I^n_k)}^\mm
\d\big(\varphi(t^n_k)+(\varphi(t^n_{k+1})-\varphi(t^n_k))(2^n f-k)\big)\\
&=(\varphi(t^n_{k+1})-\varphi(t^n_k))2^n\1_{f^{-1}(I^n_k)}^\mm\d f=\1_{f^{-1}(I^n_k)}^\mm((\varphi'_n\circ f)\,\d f)
\end{split}\]
for every \(k\in\mathbb Z\) by \eqref{eq:diff_local_0} and \eqref{eq:diff_local_1}, whence it follows that \(\d(\varphi_n\circ f)=(\varphi'_n\circ f)\,\d f\) for
every \(n\in\N\). Using \eqref{eq:diff_local_1}, Theorem \ref{thm:calculus_rules_mwug} ii), and the fact that \(\varphi'_n\to\varphi'\) holds \(\mathscr L^1\)-a.e.\ on \(\R\), we get that
\[
|\d(\varphi_n\circ f)-\d(\varphi\circ f)|=|\varphi'_n-\varphi'|\circ f\,|Df|\to 0\quad\mm\text{-a.e.\ on }\X\text{ as }n\to\infty.
\]
Since \(|\d(\varphi_n\circ f)-\d(\varphi\circ f)|\leq 2\,\Lip(\varphi)|Df|\in L^p(\mm)^+\) for every \(n\in\N\), by the dominated convergence theorem
we obtain that \(\d(\varphi_n\circ f)\to\d(\varphi\circ f)\) in \(L^p(T^*\X)\). Similarly, \(|\varphi'_n\circ f\,\d f-\varphi'\circ f\,\d f|\to 0\)
pointwise \(\mm\)-a.e.\ and \(|\varphi'_n\circ f\,\d f-\varphi'\circ f\,\d f|\leq 2\,\Lip(\varphi)|Df|\) for all \(n\in\N\), thus
\((\varphi'_n\circ f)\,\d f\to(\varphi'\circ f)\,\d f\) in \(L^p(T^*\X)\). It follows that \(\d(\varphi\circ f)=(\varphi'\circ f)\,\d f\), which proves the validity of
\eqref{eq:diff_local_2}.\\
\(\bf iii)\) We know from Theorem \ref{thm:calculus_rules_mwug} iii) that \(fg\in D^{1,p}_{\mm}(\X)\cap L^\infty(\mm)\). Now, fix a constant \(c\in\R\) so that
\(\tilde f\coloneqq f+c\geq 1\) and \(\tilde g\coloneqq g+c\geq 1\). Given that \(\log^+\in\LIP(\R)\), we deduce from \eqref{eq:diff_local_2} that
\[\begin{split}
\d(fg)&=\d(\tilde f\tilde g-c(f+g)-c^2)=\tilde f\tilde g\frac{\d(\tilde f\tilde g)}{\tilde f\tilde g}-c\,\d(f+g)
=\tilde f\tilde g\,\d(\log^+\circ(\tilde f\tilde g))-c\,\d(f+g)\\
&=\tilde f\tilde g\,\d(\log^+\circ\tilde f)+\tilde f\tilde g\,\d(\log^+\circ\tilde g)-c\,\d(f+g)=\tilde g\,\d\tilde f+\tilde f\,\d\tilde g-c\,\d(f+g)\\
&=(g+c)\d f+(f+c)\d g-c\,\d(f+g)=g\,\d f+f\,\d g,
\end{split}\]
which proves the validity of \eqref{eq:diff_Leibniz}.\\
\(\bf iv)\) We know from Theorem \ref{thm:calculus_rules_mwug} iv) that \(f\vee g,f\wedge g,|f|\in D^{1,p}_{\mm}(\X)\). By using the locality
property \eqref{eq:diff_local_0}, one then easily obtains \eqref{eq:diff_lattice} and the identity \(\d|f|={\rm sgn}(f)\d f\). The proof is complete.
\end{proof}
\begin{remark}\label{rmk:consist_cotg_mod}{\rm
We claim that
\begin{equation}\label{eq:d_Sob_gen}
\big\{\d f\;\big|\,f\in W^{1,p}(\X)\big\}\quad\text{ generates }L^p(T^*\X).
\end{equation}
To prove it, fix \(f\in D^{1,p}_{\mm}(\X)\), and notice that Lemma \ref{lem:trunc_cut-off} yields the existence of a sequence of functions \((f_k)_k\subseteq W^{1,p}(\X)\)
and of a Borel partition \((E_k)_k\) of \(\X\) such that \(f=f_k\) holds \(\mm\)-a.e.\ on \(E_k\) for all \(k\in\N\). By the locality of the differential, i.e.\ Theorem \ref{thm:calculus_rules_diff} i),
we get \(\d f=\sum_{k\in\N}\1_{E_k}^\mm\d f_k\), which gives \eqref{eq:d_Sob_gen}.
Consequently, our definition of cotangent module is equivalent (for \(p=2\)) to the original one by Gigli \cite{Gig:18,Gig:17} and it is consistent with the ones for \(p\neq 2\)
defined in \cite{Gig:Pas:19}.
}\end{remark}
\begin{lemma}\label{lem:gen_cotg_mod_crit}
Let \((\X,\sfd,\mm)\) be a metric measure space and \(p\in[1,\infty)\). Let \(S\) be a strongly dense subset of \(W^{1,p}(\X)\).
Then it holds that \(\{\d f\,:\,f\in S\}\) generates \(L^p(T^*\X)\).
\end{lemma}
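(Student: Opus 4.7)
The plan is to show that the closed submodule of $L^p(T^*\X)$ generated by $\{\d f : f\in S\}$ contains $\d g$ for every $g\in W^{1,p}(\X)$, since by Remark \ref{rmk:consist_cotg_mod} this will be enough to conclude that it equals all of $L^p(T^*\X)$.

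First, I would recall that the differential is a bounded linear operator from $W^{1,p}(\X)$ (equipped with its Sobolev norm) into the Banach space underlying $L^p(T^*\X)$. Indeed, for any $f,g\in W^{1,p}(\X)$ the identity $|\d f-\d g|=|\d(f-g)|=|D(f-g)|$ in $L^p(\mm)^+$ gives
\[
\|\d f-\d g\|_{L^p(T^*\X)}=\bigl\||D(f-g)|\bigr\|_{L^p(\mm)}\leq\|f-g\|_{W^{1,p}(\X)}.
\]
This continuity is the only fact I need about $\d$ for the argument.

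Next, let $\mathscr{N}$ denote the closed $L^\infty(\mm)$-submodule of $L^p(T^*\X)$ generated by $\{\d f : f\in S\}$, namely the closure in $L^p(T^*\X)$ of the set of glueings $\sum_{n\in\N}\1_{E_n}^\mm v_n$ arising from admissible sequences $((E_n,v_n))_n$ whose entries $v_n$ lie in the linear span of $\{\d f : f\in S\}$. Given any $g\in W^{1,p}(\X)$, by strong density of $S$ in $W^{1,p}(\X)$ there is a sequence $(g_n)_n\subseteq S$ with $\|g_n-g\|_{W^{1,p}(\X)}\to 0$. The continuity of $\d$ then yields $\d g_n\to\d g$ strongly in $L^p(T^*\X)$. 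Since $\d g_n\in\mathscr{N}$ for every $n$ and $\mathscr{N}$ is closed, we conclude $\d g\in\mathscr{N}$.

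Consequently $\{\d g : g\in W^{1,p}(\X)\}\subseteq\mathscr{N}$, and by Remark \ref{rmk:consist_cotg_mod} this family already generates $L^p(T^*\X)$, so $\mathscr{N}=L^p(T^*\X)$. There is no real obstacle here; the argument is essentially a density/continuity chase, and the only point worth underlining is that the pointwise-norm identity $|\d f|=|Df|$ upgrades strong convergence in $W^{1,p}(\X)$ to strong convergence in the Banach module $L^p(T^*\X)$.
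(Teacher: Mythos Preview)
Your proof is correct and follows essentially the same approach as the paper's: use the continuity of $\d\colon W^{1,p}(\X)\to L^p(T^*\X)$ (via $|\d f|=|Df|$) to conclude that each $\d g$ with $g\in W^{1,p}(\X)$ lies in the closure of $\{\d f:f\in S\}$, then invoke Remark~\ref{rmk:consist_cotg_mod}. The paper's version is more terse but the argument is identical.
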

\begin{proof}
Given any \(f\in W^{1,p}(\X)\), consider a sequence \((f_n)_n\subseteq S\) such that \(\|f_n-f\|_{W^{1,p}(\X)}\to 0\), whence it follows that
\(\d f_n\to\d f\) in \(L^p(T^*\X)\). This shows that \(\d f\) belongs to the closure of \(\{\d g\,:\,g\in S\}\) in \(L^p(T^*\X)\). Since
\(\{\d f\,:\,f\in W^{1,p}(\X)\}\) generates \(L^p(T^*\X)\) by Remark \ref{rmk:consist_cotg_mod}, we conclude that the set \(\{\d g\,:\,g\in S\}\)
generates \(L^p(T^*\X)\), as desired.
\end{proof}
\begin{corollary}
Let \((\X,\sfd,\mm)\) be a metric measure space and \(p\in[1,\infty)\). Then \(L^p(T^*\X)\) is separable if and only if \(W^{1,p}(\X)\) is separable.
\end{corollary}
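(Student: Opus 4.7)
The plan splits into the two implications, using throughout the fact that $L^p(\mm)$ is separable (which is automatic in this setting, since $(\X,\sfd)$ is complete and separable and $\mm$ is $\sigma$-finite, so $L^p(\mm)$ is norm-separable for every $p\in[1,\infty)$).

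For the implication $L^p(T^*\X)$ separable $\Rightarrow W^{1,p}(\X)$ separable, I would exploit the linear map
\[
J\colon W^{1,p}(\X)\to L^p(\mm)\times L^p(T^*\X),\qquad J(f)\coloneqq(f,\d f),
\]
where the target is endowed with the $p$-product norm. The identity $\|J(f)\|_p^p=\|f\|_{L^p(\mm)}^p+\|\d f\|_{L^p(T^*\X)}^p=\|f\|_{W^{1,p}(\X)}^p$, which uses $\|\d f\|_{L^p(T^*\X)}=\||Df|\|_{L^p(\mm)}$, shows that $J$ is an isometric embedding. Since both factors are separable Banach spaces, so is their $p$-product, and hence $W^{1,p}(\X)$ is separable as a subspace of a separable metric space.

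For the reverse implication, the plan is to combine Lemma \ref{lem:gen_cotg_mod_crit} with the $1$-Lipschitz continuity of $\d$ and a module-theoretic approximation. Let $\mathcal D\subseteq W^{1,p}(\X)$ be a countable norm-dense $\mathbb Q$-vector subspace. Since
\[
\|\d f-\d g\|_{L^p(T^*\X)}=\||D(f-g)|\|_{L^p(\mm)}\leq\|f-g\|_{W^{1,p}(\X)},
\]
the countable set $\mathcal E\coloneqq\d(\mathcal D)$ is norm-dense in $\d(W^{1,p}(\X))$, which by Lemma \ref{lem:gen_cotg_mod_crit} generates $L^p(T^*\X)$. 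By the separability of $(\X,\sfd)$, I fix a countable algebra $\mathcal A\subseteq\mathscr B(\X)$ generating the Borel $\sigma$-algebra (for instance, the algebra generated by a countable open base). I would then show that the countable set
\[
\mathcal C\coloneqq\Big\{\sum_{k=1}^N\1_{A_k}^\mm\omega_k\;\Big|\;N\in\N,\,A_k\in\mathcal A,\,\omega_k\in\mathcal E\Big\}
\]
is dense in $L^p(T^*\X)$, proceeding in three steps: (i) by the generating property together with Remark \ref{rmk:comments_Ban_mod} iv), every element of $L^p(T^*\X)$ is approximated by a finite sum $\sum_{k=1}^N\1_{E_k}^\mm\eta_k$ with $E_k\in\mathscr B(\X)$ and $\eta_k\in\d(W^{1,p}(\X))$; (ii) each $\eta_k$ is approximated in $L^p(T^*\X)$-norm by some $\omega_k\in\mathcal E$, with an error controlled uniformly by $\|\1_{E_k}^\mm\|_{L^\infty}=1$; (iii) using the identity $\|\1_{E_k}^\mm\omega_k-\1_{A_k}^\mm\omega_k\|_{L^p(T^*\X)}=\||\omega_k|\1_{E_k\triangle A_k}\|_{L^p(\mm)}$ and the fact that, for each fixed $\omega_k$, the finite Borel measure $|\omega_k|^p\mm$ satisfies $\inf_{A\in\mathcal A}|\omega_k|^p\mm(A\triangle E_k)=0$ for every Borel $E_k$, the set $E_k$ is then approximated by $A_k\in\mathcal A$.

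I expect no conceptual obstacle, only some careful bookkeeping in step (iii): the same countable algebra $\mathcal A$ has to work simultaneously for all choices of $\omega_k\in\mathcal E$. This is, however, guaranteed by the fact that $\mathcal A$ generates $\mathscr B(\X)$ and each $|\omega_k|^p\mm$ is a finite Borel measure on the separable metric space $(\X,\sfd)$, so $\mathcal A$ is dense in every such measure algebra by a standard approximation argument.
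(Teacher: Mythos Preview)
Your proposal is correct and follows essentially the same approach as the paper: both directions use the same isometric embedding $f\mapsto(f,\d f)$ for one implication, and for the other both combine Lemma \ref{lem:gen_cotg_mod_crit} with a countable family of Borel sets approximating arbitrary Borel sets in measure, then build the obvious countable dense subset of finite sums $\sum\1_{E_i}^\mm\d f_i$. The only cosmetic difference is that the paper approximates symmetric differences in $\mm$-measure (and then uses $|Df|^p\mm\ll\mm$), whereas you approximate directly in each $|\omega_k|^p\mm$; both arguments are standard and equivalent here.
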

\begin{proof}
First, assume that \(L^p(T^*\X)\) is separable. The map \(\phi\colon W^{1,p}(\X)\to L^p(\mm)\times L^p(T^*\X)\), given by \(\phi(f)\coloneqq(f,\d f)\) for
every \(f\in L^p(\mm)\), is an isometry when its codomain is endowed with the norm \((f,\omega)\mapsto\|(f,\omega)\|\coloneqq\big(\|f\|_{L^p(\mm)}^p+\|\omega\|_{L^p(T^*\X)}^p\big)^{1/p}\).
Since \(\big(L^p(\mm)\times L^p(T^*\X),\|\cdot\|\big)\) is separable (being the \(p\)-product of separable Banach spaces), we deduce that \(W^{1,p}(\X)\) is separable as well.

Conversely, assume that \(W^{1,p}(\X)\) is separable. Let \(S\) be a countable dense subset of \(W^{1,p}(\X)\). Hence, \(\{\d f\,:\,f\in S\}\) generates \(L^p(T^*\X)\)
by Lemma \ref{lem:gen_cotg_mod_crit}. Since \((\X,\sfd)\) is complete and separable, we can find a countable family \(\mathcal C\) of Borel subsets of \(\X\) such that
\(\inf_{E'\in\mathcal C}\mm(E\Delta E')=0\) holds for every \(E\subseteq\X\) Borel for the symmetric difference $E\Delta E' = ( E \setminus E' ) \cup (E' \setminus E)$. By taking also Remark \ref{rmk:comments_Ban_mod} iv) into account (here, the assumption
\(p<\infty\) plays a role), it can then be readily checked that \(\mathcal F\) is dense in \(L^p(T^*\X)\), where we set
\[
\mathcal F\coloneqq\bigg\{\sum_{i=1}^n q_i\1_{E_i}^\mm\d f_i\;\bigg|\;n\in\N,\,(q_i)_{i=1}^n\subseteq\mathbb Q,\,(E_i)_{i=1}^n\subseteq\mathcal C,\,(f_i)_{i=1}^n\subseteq S\bigg\}.
\]
Since \(\mathcal F\) is a countable set, we conclude that \(L^p(T^*\X)\) is separable. The proof is complete.
\end{proof}

In analogy with \cite[Definition 2.3.1]{Gig:18}, we then define the \emph{tangent module} $L^q(T\X)$ as the dual of \(L^p(T^*\X)\):
\begin{definition}[Tangent module]\label{def:tg_mod}
Let \((\X,\sfd,\mm)\) be a metric measure space and \(q\in(1,\infty]\). Then the \textbf{\(q\)-tangent module} \(L^q(T\X)\)
of \(\X\) is the dual of the cotangent module \(L^p(T^*\X)\), i.e.
\[
L^q(T\X)\coloneqq L^p(T^*\X)^*.
\]
\end{definition}

Even though \(L^q(T\X)\) is the dual of \(L^p(T^*\X)\) -- not the other way round --
we will denote the duality pairing between \(v\in L^q(T\X)\) and \(\omega\in L^p(T^*\X)\) by \(\omega(v)\in L^1(\mm)\),
and not by \(v(\omega)\). The aim of this choice is to be consistent with the notation used in
Riemannian geometry (and in \cite{Gig:18}).
\subsubsection*{Vector fields with divergence}
Let us now introduce two notions of divergence for elements of the tangent module \(L^q(T\X)\). We begin with a definition
of \emph{measure-valued divergence}, along the lines of \cite[Definition 2.4]{Gig:Pas:21} or \cite[Definition 3.2]{Gig:Mon:13}.
\begin{definition}[Vector fields with divergence]\label{def:vf_with_div}
Let \((\X,\sfd,\mm)\) be a metric measure space and \(q\in(1,\infty]\). Let \(v\in L^q(T\X)\).
Then we say that \(\boldsymbol\div_q(v)\in\mathfrak M(\X)\) is the \textbf{\(q\)-divergence} of \(v\) if
\[
\int\d f(v)\,\d\mm=-\int f\,\d\boldsymbol\div_q(v)\quad\text{ for every }f\in\LIP_{bs}(\X).
\]
The  measure \(\boldsymbol\div_q(v)\) is uniquely determined by duality with the space \(\LIP_{bs}(\X)\).
We denote by \(D(\boldsymbol\div_q;\X)\) the space of all those elements
of \(L^q(T\X)\) having \(q\)-divergence.
\end{definition}

It holds that \(D(\boldsymbol\div_q;\X)\) is a vector subspace of \(L^q(T\X)\) and
that \(\boldsymbol\div_q\colon D(\boldsymbol\div_q;\X)\to\mathfrak M(\X)\) is linear.
Next, let us introduce a notion of \emph{\(L^q\)-divergence}, which extends \cite[Definition 2.3.11]{Gig:18}:
\begin{definition}[\(L^q\)-divergence of a vector field]\label{def:vf_with_Lq_div}
Let \((\X,\sfd,\mm)\) be a metric measure space and \(q\in(1,\infty]\). Let \(v\in L^q(T\X)\). Then we say that \(\div_q(v)\in L^q(\mm)\) is the \textbf{\(L^q\)-divergence} of \(v\) if
\[
\int\d f(v)\,\d\mm=-\int f\,\div_q(v)\,\d\mm\quad\text{ for every }f\in W^{1,p}(\X).
\]
The function \(\div_q(v)\) is uniquely determined by the strong density of \(W^{1,p}(\X)\) in \(L^p(\mm)\). We denote by \(D(\div_q;\X)\) the space of all those elements
of \(L^q(T\X)\) having \(L^q\)-divergence.
\end{definition}

It holds that \(D(\div_q;\X)\) is a vector subspace of \(L^q(T\X)\) and that \(\div_q\colon D(\div_q;\X)\to L^q(\mm)\) is linear.
It is worth highlighting that \(D(\boldsymbol\div_q;\X)\) is defined in duality with \(\LIP_{bs}(\X)\), while \(D(\div_q;\X)\) in duality with \(W^{1,p}(\X)\).
As \(\LIP_{bs}(\X) \subseteq W^{1,p}(\X) \), it is easy to check that
\begin{equation}\label{eq:incl_between_D_div_q}
D(\div_q;\X)\subseteq\bigg\{v\in D(\boldsymbol\div_q;\X)\;\bigg|\;\boldsymbol\div_q(v)\ll\mm,\,\frac{\d\boldsymbol\div_q(v)}{\d\mm}\in L^q(\mm)\bigg\}
\end{equation}
and that \(\boldsymbol\div_q(v)=\div_q(v)\mm\) for every \(v\in D(\div_q;\X)\) which shows the consistency between \(\boldsymbol\div_q\) and \(\div_q\).
The inclusion \eqref{eq:incl_between_D_div_q} turns out be an equality, as a consequence of Lemma \ref{lem:Lip_vs_Sob_div} and Lemma
\ref{lem:relation_div_q} that we will prove below.
\subsection{Lipschitz and Sobolev derivations}\label{ss:Lip_and_Sob_der}
In Section \ref{ss:tg_cotg_mod}, we have seen that a notion of (measurable) vector field over a metric measure space can be given in terms of the
tangent module \(L^q(T\X)\). Another kind of approach, which we will present in this section, is based on the concept of \emph{derivation}.
\subsubsection*{Lipschitz derivations}
Let us begin with the definition of (Lipschitz) derivation, which is due to Di Marino \cite{DiMar:14,DiMaPhD:14}:
\begin{definition}[Lipschitz derivation]\label{def:Lip_der}
Let \((\X,\sfd,\mm)\) be a metric measure space and \(q\in[1,\infty]\). Then we say that a linear map
\(b\colon\LIP_{bs}(\X)\to L^q(\mm)\) is a \textbf{Lipschitz \(q\)-derivation} on \(\X\) provided:
\begin{itemize}
\item[\(\rm i)\)] {\sc Weak locality.} There exists a function \(g\in L^q(\mm)^+\) such that
\begin{equation}\label{eq:def_Lip_der_ineq}
|b(f)|\leq g\,\lip_a(f)\quad\mm\text{-a.e.\ on }\X,\text{ for every }f\in\LIP_{bs}(\X).
\end{equation}
\item[\(\rm ii)\)] {\sc Leibniz rule.} It holds that \(b(fg)=f\,b(g)+g\,b(f)\) for every \(f,g\in\LIP_{bs}(\X)\).
\end{itemize}
We denote by \({\rm Der}^q(\X)\), or by \({\rm Der}^q(\X,\mm)\) when the measure needs to be emphasized, the space of all Lipschitz \(q\)-derivations on \(\X\).
\end{definition}

To any derivation \(b\in{\rm Der}^q(\X)\) we associate the function \(|b|\in L^q(\mm)^+\), which we define as
\begin{equation}\label{eq:pointwise_norm_Lip_der}
|b|\coloneqq\bigwedge\big\{g\in L^q(\mm)^+\;\big|\;\eqref{eq:def_Lip_der_ineq}\text{ holds}\big\}
=\bigvee_{f\in\LIP_{bs}(\X)}\1_{\{\lip_a(f)>0\}}^\mm\frac{|b(f)|}{\lip_a(f)^\mm}.
\end{equation}
The space \({\rm Der}^q(\X)\) is a module over \(L^\infty(\mm)\) (thus, in particular, a vector space) if endowed with
\[\begin{split}
(b+\tilde b)(f)&\coloneqq b(f)+\tilde b(f)\quad\text{ for every }b,\tilde b\in{\rm Der}^q(\X)\text{ and }f\in\LIP_{bs}(\X),\\
(hb)(f)&\coloneqq h\,b(f)\quad\text{ for every }h\in L^\infty(\mm),\,b\in{\rm Der}^q(\X),\text{ and }f\in\LIP_{bs}(\X).
\end{split}\]
Moreover, one can readily check that \({\rm Der}^q(\X)\) is an \(L^q(\mm)\)-Banach \(L^\infty(\mm)\)-module if equipped
with the pointwise norm \(|\cdot|\colon{\rm Der}^q(\X)\to L^q(\mm)^+\) defined in \eqref{eq:pointwise_norm_Lip_der}.
\begin{remark}{\rm
Definition \ref{def:Lip_der} corresponds to the notion of `derivation in the sense of Di Marino' we considered
in \cite[Definition 4.4]{Amb:Iko:Luc:Pas:24}. Indeed, in \cite{Amb:Iko:Luc:Pas:24} we studied a larger class of
Lipschitz derivations, of which Di Marino derivations (and Weaver derivations) are particular cases. However,
in this paper it is sufficient to work with Di Marino derivations, which we call just `Lipschitz derivations'.
}\end{remark}

We now define Lipschitz derivations with \emph{(measure-valued) divergence} (see \cite[Definition 4.2]{Amb:Iko:Luc:Pas:24}):
\begin{definition}[Lipschitz derivations with divergence]\label{def:Lip_derivation_divergence}
Let \((\X,\sfd,\mm)\) be a metric measure space and \(q\in(1,\infty]\). Let \(b\in{\rm Der}^q(\X)\).
Then we say that \(\boldsymbol\div(b)\in\mathfrak M(\X)\) is the \textbf{\(q\)-divergence} of \(b\) if
\[
\int b(f)\,\d\mm=-\int f\,\d\boldsymbol\div(b)\quad\text{ for every }f\in\LIP_{bs}(\X).
\]
The measure \(\boldsymbol\div(b)\) is uniquely determined by duality with \(\LIP_{bs}(\X)\).
We denote by \({\rm Der}^q_{\mathfrak M}(\X)\) the space of all
elements of \({\rm Der}^q(\X)\) having \(q\)-divergence. Moreover, we define
\[\begin{split}
{\rm Der}^q_{\mathcal M}(\X)&\coloneqq\big\{b\in{\rm Der}^q_{\mathfrak M}(\X)\;\big|\;\boldsymbol\div(b)\in\mathcal M(\X)\big\},\\
{\rm Der}^q_q(\X)&\coloneqq\bigg\{b\in{\rm Der}^q_{\mathfrak M}(\X)\;\bigg|\;
\boldsymbol\div(b)\ll\mm,\,\div(b)\coloneqq\frac{\d\boldsymbol\div(b)}{\d\mm}\in L^q(\mm)\bigg\}.
\end{split}\]
\end{definition}

It holds that \({\rm Der}^q_{\mathfrak M}(\X)\),  ${\rm Der}^q_{\mathcal M}(\X)$ and \({\rm Der}^q_q(\X)\) are \(\LIP_{bs}(\X)\)-submodules
(and thus, in particular, vector subspaces) of \({\rm Der}^q(\X)\), where we identify \(\LIP_{bs}(\X)\)
with a subring of \(L^\infty(\mm)\) in the canonical way. Moreover,
\({\rm Der}^q_{\mathfrak M}(\X)\ni b\mapsto\boldsymbol\div(b)\in\mathfrak M(\X)\) and
\({\rm Der}^q_q(\X)\ni b\mapsto\div(b)\in L^q(\mm)\) are linear maps.
The divergence satisfies the following \textbf{Leibniz rule}, cf.\ with \cite[Proposition 4.3]{Amb:Iko:Luc:Pas:24}:
\begin{equation}\label{eq:Leibniz_Lip_der}
{\bf div}(\varphi b)=\varphi\,{\bf div}(b)+b(\varphi)\mm\quad\text{ for every }
\varphi\in\LIP_{bs}(\X)\text{ and }b\in{\rm Der}^q_{\mathfrak M}(\X).
\end{equation}
Following \cite{Amb:Iko:Luc:Pas:24, Amb:Iko:Luc:Pas:correction}, we then define the \emph{Lipschitz tangent module} of a metric measure space as follows:
\begin{definition}[Lipschitz tangent module]\label{def:Lip_tg_mod}
Let \((\X,\sfd,\mm)\) be a metric measure space and \(q\in(1,\infty]\). Then we define the \textbf{Lipschitz \(q\)-tangent module}
\(L^q_\Lip(T\X)\) of \(\X\) as
\[
L^q_\Lip(T\X)\coloneqq{\rm cl}_{{\rm Der}^q(\X)}\bigg(\bigg\{\sum_{n\in\N}\1_{E_n}^\mm b_n\;\bigg|\;((E_n,b_n))_n\in{\rm Adm}({\rm Der}^q(\X)),\,(b_n)_n\subseteq{\rm Der}^q_{\mathfrak M}(\X))\bigg\}\bigg).
\]
\end{definition}

We anticipate that much later in the paper, namely in Corollary \ref{cor:density_vf_with_div}, we will show that in the case \(q<\infty\)
it actually holds that \({\rm Der}^q_q(\X)\) is dense in \(L^q_\Lip(T\X)\).
\begin{remark}{\rm
The above definition is taken from \cite{Amb:Iko:Luc:Pas:correction}, and it is equivalent with the one in \cite{Amb:Iko:Luc:Pas:24} in the case \(q<\infty\).
Instead, the definition of \(L^\infty_\Lip(T\X)\) that was originally given in \cite{Amb:Iko:Luc:Pas:24}
is not appropriate, as it does not ensure the \(L^\infty(\mm)\)-module property; we refer to \cite{Amb:Iko:Luc:Pas:correction}
for further comments on this.
}\end{remark}
Since any function in \(L^\infty(\mm)\) is the pointwise \(\mm\)-a.e.\ limit of a sequence of functions
in \(\LIP_{bs}(\X)\) that are uniformly bounded in \(L^\infty(\mm)\), one can readily check that
\[
L^q_\Lip(T\X)\quad\text{ is an }L^q(\mm)\text{-Banach }L^\infty(\mm)\text{-submodule of }{\rm Der}^q(\X).
\]
Moreover, it is easy to check that, letting \(i\colon{\rm Der}^q_{\mathfrak M}(\X)\hookrightarrow{\rm Der}^q(\X)\)
be the inclusion map, one has
\[
(L^q_\Lip(T\X),i)\cong(\mathscr M_{\langle\psi\rangle},T_{\langle\psi\rangle}),
\]
with \(\psi\colon{\rm Der}^q_{\mathfrak M}(\X)\to L^q(\mm)^+\) given by \(\psi(b)\coloneqq|b|\)
and \((\mathscr M_{\langle\psi\rangle},T_{\langle\psi\rangle})\) as in Theorem \ref{thm:module_gen_sublin_map}.
\medskip

We recall the following extension result, taken from \cite[Corollary 4.7]{Amb:Iko:Luc:Pas:24} 
(taking into account \cite{Amb:Iko:Luc:Pas:correction}), which we will need later:
\begin{proposition}\label{prop:cont_Lip_der}
Let \((\X,\sfd,\mm)\) be a metric measure space and \(q\in(1,\infty]\). Let \(b\in L^q_\Lip(T\X)\) be given.
Then \(b\) can be uniquely extended to a linear operator \(b\colon\LIP(\X)\to L^q(\mm)\) such that
\[
b(fg)=f\,b(g)+g\,b(f)\quad\text{ for every }f,g\in\LIP(\X)
\]
and \(|b(f)|\leq|b|\,\lip_a(f)\) for every \(f\in\LIP(\X)\). Moreover, if \((f_n)_{n\in\N\cup\{\infty\}}\subseteq\LIP(\X)\) satisfies
\(\sup_{n\in\N}\Lip(f_n)<+\infty\) and \(f_n(x)\to f_\infty(x)\) for all \(x\in\X\), then \(b(f_n)\rightharpoonup b(f_\infty)\) weakly\(^*\) in \(L^q(\mm)\).
\end{proposition}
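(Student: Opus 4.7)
The plan is to extend $b$ via cut-offs and compatibility checks, then verify the calculus rules and the weak${}^*$ continuity. Fix $\bar x \in \X$ and cut-offs $\eta_n \in \LIP_{bs}(\X;[0,1])$ with $\eta_n \equiv 1$ on $B_n(\bar x)$ and $\Lip(\eta_n) \leq 1$. For $f \in \LIP(\X)$ the truncations $\eta_n f$ belong to $\LIP_{bs}(\X)$, so $b(\eta_n f) \in L^q(\mm)$ is defined. The weak locality $|b(g)| \leq |b|\lip_a(g)$ forces $b(g) = 0$ $\mm$-a.e.\ on any open set where $g \in \LIP_{bs}(\X)$ vanishes; applied to $(\eta_n - \eta_m)f$ on $B_{n \wedge m}(\bar x)$ this yields the compatibility $b(\eta_n f) = b(\eta_m f)$ $\mm$-a.e.\ on $B_{n \wedge m}(\bar x)$, so the prescription $b(f)|_{B_n(\bar x)} \coloneqq b(\eta_n f)|_{B_n(\bar x)}$ unambiguously defines $b(f) \in \mathcal L^0(\mm)$. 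Since $\lip_a(\eta_n) = 0$ on $B_n(\bar x)$, the subadditivity estimate \eqref{eq:convex_ineq_slope} gives $\lip_a(\eta_n f) \leq \lip_a(f)$ $\mm$-a.e.\ on $B_n(\bar x)$; letting $n \to \infty$ yields $|b(f)| \leq |b|\lip_a(f)$ $\mm$-a.e.\ on $\X$, whence $b(f) \in L^q(\mm)$.

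Linearity is immediate. For the Leibniz rule on $f, g \in \LIP(\X)$, choose $\eta, \chi \in \LIP_{bs}(\X;[0,1])$ with $\eta \equiv 1$ on $B_n(\bar x)$ and $\chi \equiv 1$ on ${\rm spt}(\eta)$; then $(\eta f)(\chi g) \in \LIP_{bs}(\X)$, and the $\LIP_{bs}$-Leibniz rule restricted to $B_n(\bar x)$ (where $\eta \equiv \chi \equiv 1$ and the extension agrees by construction with $b$ on the truncations) gives $b(fg) = f\,b(g) + g\,b(f)$ $\mm$-a.e.\ on $B_n(\bar x)$, hence on $\X$. For uniqueness, any extension $\tilde b$ obeying the bound satisfies $|\tilde b(\eta_n)| \leq |b|\lip_a(\eta_n) = 0$ $\mm$-a.e.\ on $B_n(\bar x)$, so Leibniz combined with $\tilde b(\eta_n f) = b(\eta_n f)$ yields $\tilde b(f)|_{B_n(\bar x)} = b(\eta_n f)|_{B_n(\bar x)}$, matching our construction.

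For the weak${}^*$ continuity, assume $L \coloneqq \sup_n \Lip(f_n) < \infty$ and $f_n \to f_\infty$ pointwise. The bound $|b(f_n)| \leq L|b|$ controls $(b(f_n))$ uniformly in $L^q(\mm)$, so by density of $\LIP_{bs}(\X)^\mm$ in $L^p(\mm)$ (in $L^1(\mm)$ when $q = \infty$) it suffices to show $\int \phi\, b(f_n)\,\d\mm \to \int \phi\, b(f_\infty)\,\d\mm$ for $\phi \in \LIP_{bs}(\X)$. Pick $\eta \in \LIP_{bs}(\X;[0,1])$ equal to one on a neighborhood of ${\rm spt}(\phi)$; since $|f_n(x) - f_n(\bar x)| \leq L\,\sfd(x, \bar x)$ and $(f_n(\bar x))_n$ is bounded, $(f_n)$ is uniformly bounded on the bounded set ${\rm spt}(\eta)$, so $h_n \coloneqq \eta f_n \in \LIP_{bs}(\X)$ is uniformly Lipschitz and converges pointwise to $h_\infty \coloneqq \eta f_\infty$. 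Since $b(f_n) = b(\eta f_n)$ $\mm$-a.e.\ on ${\rm spt}(\phi)$, the problem reduces to proving $\int \phi\, b(h_n)\,\d\mm \to \int \phi\, b(h_\infty)\,\d\mm$ for such sequences inside $\LIP_{bs}(\X)$.

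This reduced claim is settled in three stages. (i) For $b \in {\rm Der}^q_{\mathfrak M}(\X)$, apply Leibniz and the definition of $\boldsymbol\div(b)$ to $\phi h_n \in \LIP_{bs}(\X)$ to obtain
\[
\int \phi\, b(h_n)\,\d\mm = -\int \phi h_n\,\d\boldsymbol\div(b) - \int h_n\, b(\phi)\,\d\mm;
\]
since $\boldsymbol\div(b)$ is boundedly-finite and $b(\phi)$ has bounded support (because $|b(\phi)| \leq |b|\lip_a(\phi)$ and $\lip_a(\phi)$ vanishes outside a bounded set), both integrals pass to the limit by dominated convergence, and extension from $\phi \in \LIP_{bs}(\X)$ to $\phi \in L^p(\mm)$ (respectively $L^1(\mm)$) follows from the uniform $L^q$-bound on $(b(h_n))$ and a density argument. (ii) For a glued derivation $b = \sum_k \1_{E_k}^\mm b_k$ with $b_k \in {\rm Der}^q_{\mathfrak M}(\X)$, write $\int \phi\, b(h_n)\,\d\mm = \sum_k \int (\phi\1_{E_k}^\mm)\, b_k(h_n)\,\d\mm$, apply (i) to each summand with test function $\phi\1_{E_k}^\mm \in L^p(\mm)$, and exchange sum with limit through the dominating majorant $\sum_k \int_{E_k} |\phi||b|\,\d\mm \leq \|\phi\|_{L^p}\||b|\|_{L^q}$ furnished by Hölder's inequality on the partition $(E_k)$. (iii) An arbitrary $b \in L^q_\Lip(T\X)$ is approximated in norm by such glued derivations $b^{(k)}$, and the uniform estimate $\|(b - b^{(k)})(h_n)\|_{L^q} \leq L\||b - b^{(k)}|\|_{L^q}$ combined with the weak${}^*$ continuity of each $b^{(k)}$ from (ii) closes the argument via a three-$\varepsilon$ reduction. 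The main delicacy is step (ii), where the sum/limit exchange requires exact use of the partition structure and $L^p$--$L^q$ duality to build an absolutely summable majorant.
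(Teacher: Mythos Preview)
The paper does not prove this proposition; it is quoted from the companion work \cite[Corollary~4.7]{Amb:Iko:Luc:Pas:24} (taking \cite{Amb:Iko:Luc:Pas:correction} into account), so there is no in-paper argument to compare against directly.

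Your proof is correct and is a natural reconstruction of how such a result is obtained. The extension step, the Leibniz rule, and uniqueness use only the weak-locality bound $|b(g)|\leq|b|\lip_a(g)$, which is available for every $b\in{\rm Der}^q(\X)$; these parts therefore work in that generality, and your compatibility argument via $\lip_a((\eta_n-\eta_m)f)=0$ on $B_{n\wedge m}(\bar x)$ is clean. The weak${}^*$ continuity is where the specific structure of $L^q_\Lip(T\X)$ enters, and your three-stage scheme---integration by parts against $\boldsymbol\div(b)$ for $b\in{\rm Der}^q_{\mathfrak M}(\X)$, summation over a Borel partition for glued derivations, then norm approximation---matches exactly the definition of $L^q_\Lip(T\X)$ as the closure of such glued objects. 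Two small points worth making explicit: in step~(ii) the majorant relies on $\1_{E_k}^\mm|b_k|=\1_{E_k}^\mm|b|$, which follows from $\1_{E_k}^\mm b=\1_{E_k}^\mm b_k$ and the pointwise-norm axiom $|fv|=|f||v|$; and in step~(i), the integrability of $h_n\,b(\phi)$ needed for dominated convergence comes from $|b(\phi)|\leq|b|\lip_a(\phi)$ with $\lip_a(\phi)$ bounded and boundedly supported together with $|b|\in L^q(\mm)$ and $\mm$ boundedly finite. Both are implicit in what you wrote and are correct.
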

In order to investigate the relation between plans with barycenter and Lipschitz derivations
(see Theorem \ref{thm:plans_vs_der_vs_curr} below), we will need the following statement (which we attribute to \cite[Proposition 4.10]{Amb:Iko:Luc:Pas:24}):
\begin{proposition}[Lipschitz derivation induced by a plan]\label{prop:der_induced_by_plan}
Let \((\X,\sfd,\mm)\) be a metric measure space and \(q\in(1,\infty]\). Let \(\ppi\) be a plan on \(\X\) with barycenter \({\rm Bar}(\ppi)\)
in \(L^q(\mm)\). Then there exists a unique Lipschitz derivation \(b_\sppi\in{\rm Der}^q_{\mathcal M}(\X)\) such that
\begin{equation}\label{eq:der_induced_by_plan}
\int g\,b_\sppi(f)\,\d\mm=\int\!\!\!\int_0^1 g(\gamma^{\sf cs}(t))\frac{\d}{\d t}f(\gamma^{\sf cs}(t))\,\d t\,\d\ppi(\gamma)
\quad\text{ for all }(f,g)\in\LIP_{bs}(\X)\times\LIP_b(\X).
\end{equation}
Moreover, it holds that \(|b_\sppi|\leq{\rm Bar}(\ppi)\) and \({\bf div}(b_\sppi)=(\e_0)_\#\ppi-(\e_1)_\#\ppi\).
\end{proposition}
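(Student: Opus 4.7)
The plan is to define $b_\sppi(f)$ by Riesz representation, then verify the three required properties.

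For $f\in\LIP_{bs}(\X)$ fixed, define the linear functional $T_f$ on $\LIP_{bs}(\X)^\mm\subseteq L^p(\mm)$ by
\[
T_f(g)\coloneqq\int\!\!\!\int_0^1 g(\gamma^{\sf cs}(t))\,\tfrac{\d}{\d t}f(\gamma^{\sf cs}(t))\,\d t\,\d\ppi(\gamma).
\]
The metric chain rule gives $|\tfrac{\d}{\d t}f(\gamma^{\sf cs}(t))|\leq\lip_a(f)(\gamma^{\sf cs}(t))\,\ell(\gamma)$ for $\mathscr L^1$-a.e.\ $t$, so by \eqref{eq:def_path_int} and the definition of \({\rm Bar}(\ppi)\),
\[
|T_f(g)|\leq\int\!\!\!\int_\gamma|g|\,\lip_a(f)\,\d s\,\d\ppi(\gamma)=\int|g|\,\lip_a(f)\,{\rm Bar}(\ppi)\,\d\mm\leq\|\lip_a(f)\,{\rm Bar}(\ppi)\|_{L^q(\mm)}\|g\|_{L^p(\mm)},
\]
where $\lip_a(f)\,{\rm Bar}(\ppi)\in L^q(\mm)$ since $\lip_a(f)\in L^\infty(\mm)$ has bounded support. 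By density of $\LIP_{bs}(\X)^\mm$ in $L^p(\mm)$ and Proposition \ref{prop:map_Int} (Riesz representation), $T_f$ extends uniquely to a continuous functional on $L^p(\mm)$ represented by an element $b_\sppi(f)\in L^q(\mm)$. Testing with $g={\rm sgn}(b_\sppi(f))\1_E^\mm$ for bounded Borel $E$ (approximated in $L^p$ by Lipschitz functions) yields the pointwise bound $|b_\sppi(f)|\leq\lip_a(f)\,{\rm Bar}(\ppi)$ $\mm$-a.e., which gives weak locality \eqref{eq:def_Lip_der_ineq} with $|b_\sppi|\leq{\rm Bar}(\ppi)$.

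Linearity in $f$ is immediate from linearity of the $t$-derivative. For the Leibniz rule, the classical product rule for Lipschitz curves gives $\tfrac{\d}{\d t}[(f_1f_2)\circ\gamma^{\sf cs}]=(f_1\circ\gamma^{\sf cs})\tfrac{\d}{\d t}(f_2\circ\gamma^{\sf cs})+(f_2\circ\gamma^{\sf cs})\tfrac{\d}{\d t}(f_1\circ\gamma^{\sf cs})$, and applying the defining formula to $T_{f_1f_2}(g)$ versus $T_{f_2}(gf_1)+T_{f_1}(gf_2)$ yields $\int g\,b_\sppi(f_1f_2)\,\d\mm=\int g[f_1b_\sppi(f_2)+f_2b_\sppi(f_1)]\,\d\mm$ for every $g\in\LIP_{bs}(\X)$, hence the Leibniz identity. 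To extend the defining formula \eqref{eq:der_induced_by_plan} from $g\in\LIP_{bs}(\X)$ to $g\in\LIP_b(\X)$, fix $\bar x\in\X$, take cut-offs $\eta_n\in\LIP_{bs}(\X;[0,1])$ with $\eta_n\equiv 1$ on $B_n(\bar x)$, and apply the formula with $g\eta_n\in\LIP_{bs}(\X)$; dominated convergence on both sides is justified by the integrable bounds $\|g\|_\infty|b_\sppi(f)|\leq\|g\|_\infty\lip_a(f)\,{\rm Bar}(\ppi)\in L^1(\mm)$ (bounded support) on the left and $\|g\|_\infty\,\lip_a(f)(\gamma^{\sf cs}(t))\ell(\gamma)$ on the right (integrable for the same reason).

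For the divergence, fix $f\in\LIP_{bs}(\X)$ and observe that $b_\sppi(f)$ is essentially supported in ${\rm spt}(\lip_a(f))$, hence lies in $L^1(\mm)$. Choose $\eta\in\LIP_b(\X)$ with $\eta\equiv 1$ on an open neighbourhood of ${\rm spt}(f)$. Since $f\circ\gamma^{\sf cs}$ is locally constant on $\{t:\gamma^{\sf cs}(t)\notin{\rm spt}(f)\}$, we have $\tfrac{\d}{\d t}f(\gamma^{\sf cs}(t))=0$ there, so $\eta(\gamma^{\sf cs}(t))\tfrac{\d}{\d t}f(\gamma^{\sf cs}(t))=\tfrac{\d}{\d t}f(\gamma^{\sf cs}(t))$ for a.e.\ $t$. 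Moreover, on ${\rm spt}(\lip_a(f))\cup{\rm spt}(f)$ we have $\eta=1$ $\mm$-a.e., so $\eta\,b_\sppi(f)=b_\sppi(f)$. The extended formula from the previous paragraph then gives
\[
\int b_\sppi(f)\,\d\mm=\int\!\!\!\int_0^1\tfrac{\d}{\d t}f(\gamma^{\sf cs}(t))\,\d t\,\d\ppi(\gamma)=\int\bigl[f(\e_1(\gamma))-f(\e_0(\gamma))\bigr]\,\d\ppi(\gamma),
\]
which by Definition \ref{def:Lip_derivation_divergence} identifies ${\bf div}(b_\sppi)=(\e_0)_\#\ppi-(\e_1)_\#\ppi\in\mathcal M(\X)$. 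Uniqueness of $b_\sppi$ follows since $\LIP_b(\X)\cap L^p(\mm)\supseteq\LIP_{bs}(\X)$ is dense in $L^p(\mm)$. The main technical point requiring care is the twofold use of dominated convergence in the cut-off extension, which rests essentially on the $L^q$-integrability of ${\rm Bar}(\ppi)$ and the bounded support of $\lip_a(f)$; the divergence identification additionally requires noting where the velocity $\tfrac{\d}{\d t}(f\circ\gamma^{\sf cs})$ vanishes, which is the only place the localization properties of Lipschitz functions enter.
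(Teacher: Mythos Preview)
The paper does not prove this proposition; it merely attributes it to \cite[Proposition 4.10]{Amb:Iko:Luc:Pas:24}. Your argument is the natural one and is essentially correct: build $b_\sppi(f)$ by Riesz representation from the linear functional $T_f$, extract the pointwise bound $|b_\sppi(f)|\le\lip_a(f)\,{\rm Bar}(\ppi)$, and read off Leibniz and the divergence from the calculus along curves.

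Two small points worth tightening. First, the barycenter identity in Definition~\ref{def:barycenter} is stated only for $\rho\in\LIP_{bs}(\X)^+$, whereas you invoke it with $\rho=|g|\lip_a(f)$, which need not be Lipschitz. This is harmless---both sides of the barycenter identity define boundedly-finite Borel measures that agree on $\LIP_{bs}(\X)^+$, hence agree on all nonnegative Borel functions by a routine monotone-class argument---but it deserves a one-line remark. Second, once you have extended \eqref{eq:der_induced_by_plan} to all $g\in\LIP_b(\X)$, the divergence computation becomes a single line by taking $g\equiv 1$; the auxiliary cut-off $\eta$ and the discussion of where $\tfrac{\d}{\d t}(f\circ\gamma^{\sf cs})$ vanishes are then unnecessary.
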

Derivations in the sense of Di Marino were introduced in \cite{DiMar:14,DiMaPhD:14} as a key tool for defining a notion
of metric \(p\)-Sobolev space based on an integration-by-parts formula. The resulting Sobolev space is equivalent to
the other notions we presented in Section \ref{ss:metric_Sob_spaces}. The equivalence was first proved for \(p\in(1,\infty)\)
in \cite{DiMar:14,DiMaPhD:14} and then extended also to the exponent \(p=1\) in \cite{Amb:Iko:Luc:Pas:24}. Namely:
\begin{theorem}[Sobolev space via Lipschitz derivations]\label{thm:Sob_via_der}
Let \((\X,\sfd,\mm)\) be a metric measure space and \(p\in[1,\infty)\). Let \(f\in L^p(\mm)\) be given.
Then it holds that \(f\in W^{1,p}(\X)\) if and only if there exists a $\LIP_{bs}(\X)$-linear operator \({\sf L}_f\colon{\rm Der}^q_q(\X)\to L^1(\mm)\)
satisfying the following properties:
\begin{itemize}
\item[\(\rm i)\)] There exists a function \(G\in L^p(\mm)^+\) such that
\begin{equation}\label{eq:def_Sob_via_der}
|{\sf L}_f(b)|\leq G|b|\quad\text{ for every }b\in{\rm Der}^q_q(\X).
\end{equation}
\item[\(\rm ii)\)] It holds that
\[
\int {\sf L}_f(b)\,\d\mm=-\int f\,\div(b)\,\d\mm\quad\text{ for every }b\in{\rm Der}^q_q(\X).
\]
\end{itemize}
Moreover, for any \(f\in W^{1,p}(\X)\), we have that \(|{\sf L}_f(b)|\leq|Df||b|\) for every \(b\in{\rm Der}^q_q(\X)\) and
\begin{equation}\label{eq:formula_mwug_via_der}
|Df|=\bigwedge\big\{G\in L^p(\mm)^+\;\big|\;\eqref{eq:def_Sob_via_der}\text{ holds}\big\}.
\end{equation}
\end{theorem}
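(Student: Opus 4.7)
The plan is to prove both implications of the equivalence by leveraging the strong approximation of Sobolev functions by Lipschitz functions (Theorem \ref{thm:density_nrg_Lip}) and, for the reverse direction, the correspondence between derivations and plans recorded in Proposition \ref{prop:der_induced_by_plan}.

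For the \emph{forward direction} I would start from \(f\in W^{1,p}(\X)\) and pick, via Theorem \ref{thm:density_nrg_Lip}, a sequence \((f_n)_n\subseteq\LIP_{bs}(\X)\) with \(\sfd_{\rm en}(f_n^\mm,f)\to 0\) and \(\lip_a(f_n)^\mm\to|Df|\) strongly in \(L^p(\mm)\). For any \(b\in{\rm Der}^q_q(\X)\), the functions \(b(f_n)\in L^1(\mm)\) satisfy both the pointwise bound \(|b(f_n)|\leq|b|\,\lip_a(f_n)^\mm\) and, by the defining property of \(\div(b)\), the integration-by-parts identity \(\int b(f_n)\,\d\mm=-\int f_n\,\div(b)\,\d\mm\). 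Strong convergence of \(\lip_a(f_n)^\mm\) in \(L^p(\mm)\) makes \((|b|\,\lip_a(f_n)^\mm)_n\) equi-integrable, so \((b(f_n))_n\) is weakly precompact in \(L^1(\mm)\). I would then define \({\sf L}_f(b)\) as the weak \(L^1\) limit and verify that it is independent of the approximating sequence by showing that any weak subsequential limit \(h\) satisfies both \(\int\psi h\,\d\mm=-\int f\,\div(\psi b)\,\d\mm\) for every \(\psi\in\LIP_{bs}(\X)\) (via the Leibniz rule \eqref{eq:Leibniz_Lip_der}) and the pointwise bound \(|h|\leq|Df|\,|b|\) (from weak lower semicontinuity of the norm against the limit \(|Df|\,|b|\)). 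The \(\LIP_{bs}(\X)\)-linearity and property ii) follow from passing to the limit, and property i) follows from taking \(G=|Df|\).

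For the \emph{reverse direction}, assume \({\sf L}_f\) exists with bound \(|{\sf L}_f(b)|\leq G|b|\) and integration-by-parts against every \(b\in{\rm Der}^q_q(\X)\). The strategy, in the spirit of \cite{DiMar:14,DiMaPhD:14}, is to test against a rich enough family of derivations coming from plans. Given a dynamic plan \(\ppi\) whose marginals \((\e_0)_\#\ppi,(\e_1)_\#\ppi\) are absolutely continuous with respect to \(\mm\) with \(L^q\)-densities and whose barycenter lies in \(L^q(\mm)\), Proposition \ref{prop:der_induced_by_plan} yields \(b_\sppi\in{\rm Der}^q_q(\X)\) with \(\boldsymbol\div(b_\sppi)=(\e_0)_\#\ppi-(\e_1)_\#\ppi\) and \(|b_\sppi|\leq{\rm Bar}(\ppi)\). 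Testing ii) against \(b_\sppi\) and using i) combined with \eqref{eq:der_induced_by_plan} gives
\[
\int\bigl(\bar f\circ\e_1-\bar f\circ\e_0\bigr)\,\d\ppi=\int {\sf L}_f(b_\sppi)\,\d\mm\leq\int\!\!\!\int_\gamma G\,\d s\,\d\ppi(\gamma),
\]
for a suitable Borel representative \(\bar f\) of \(f\). Since plans with \(L^q\)-barycenter are dominated by \(\Mod_p\) (see \eqref{eq:plan_ll_Mod}) and a sufficiently rich family of such plans is available, a standard modulus argument (saturating the family through the superposition principle, Theorem \ref{thm:Paolini-Stepanov}) promotes the inequality to the pointwise weak-upper-gradient inequality along \(\Mod_p\)-a.e.\ curve. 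This shows \(G\in{\rm WUG}_p(\bar f)\), hence \(f\in W^{1,p}(\X)\) with \(|Df|\leq G\); taking infima over admissible \(G\) gives \eqref{eq:formula_mwug_via_der}.

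The hard part will be the passage to the limit in the forward direction: one must upgrade the \emph{a priori} bound \(|b(f_n)|\leq|b|\,\lip_a(f_n)^\mm\), whose right-hand side only converges to \(|b|\,|Df|\) in \(L^1(\mm)\)-norm, into a \emph{pointwise} control on the weak limit by \(|Df|\,|b|\). This requires combining weak \(L^1\) compactness with localisation via \(\LIP_{bs}(\X)\)-cutoffs and the chain rule, exploiting the fact that one is free to choose the approximating sequence so that \(\lip_a(f_n)^\mm\) converges strongly, not merely weakly. The reverse direction's delicacy is complementary: one must ensure that the family of derivations \(\{b_\sppi\}\) coming from admissible plans is \emph{saturating} for \(\Mod_p\)-a.e.\ curve, which is precisely where the superposition principle and the integrability of barycenters enter in an essential way.
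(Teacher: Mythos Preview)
The paper does not prove this theorem here; its entire proof is a citation to \cite[Theorem 7.1]{Amb:Iko:Luc:Pas:24} (with the correction \cite{Amb:Iko:Luc:Pas:correction}). Your proposal is therefore not to be compared with an argument in the present paper but with the one in the prequel, and in broad outline your sketch matches that approach: forward direction via density in energy of Lipschitz functions and weak \(L^1\) compactness of \((b(f_n))_n\); reverse direction by testing against derivations induced by plans and reducing to the plan/modulus characterisation of \(W^{1,p}\).

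Two points deserve comment. In the forward direction, your uniqueness argument via \(\int\psi h\,\d\mm=-\int f\,\div(\psi b)\,\d\mm\) for \(\psi\in\LIP_{bs}(\X)\) is correct and suffices (since \(\LIP_{bs}(\X)\) separates \(L^1(\mm)\) by bounded pointwise approximation and dominated convergence), and the pointwise bound \(|h|\leq|Df||b|\) does follow from weak convergence against the strongly convergent dominating sequence \(|b|\lip_a(f_n)^\mm\). In the reverse direction, however, your ``standard modulus argument'' and appeal to the superposition principle hide the real work: one must exhibit, for \(\Mod_p\)-a.e.\ nonconstant curve, a plan \(\ppi\) with \(L^q\)-barycenter \emph{and} \(L^q\)-marginal densities (so that \(b_\sppi\in{\rm Der}^q_q(\X)\), not merely \({\rm Der}^q_{\mathcal M}(\X)\)) that detects the curve. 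This is precisely the saturation/equivalence machinery developed across \cite[Sections 4--7]{Amb:Iko:Luc:Pas:24}, and it is not a consequence of Theorem \ref{thm:Paolini-Stepanov} alone---Paolini--Stepanov gives plans from currents, but the marginals need not be in \(L^q(\mm)\). You would need either to invoke the full equivalence theorem of the prequel (which is what the paper does) or to reproduce the construction of sufficiently many admissible test plans, which is substantial.
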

\begin{proof}
The statement follows from \cite[Theorem 7.1]{Amb:Iko:Luc:Pas:24} (taking \cite[Definition 2.1]{Amb:Iko:Luc:Pas:correction} into account).
\end{proof}
Given any function \(f\in W^{1,p}(\X)\), it follows from \eqref{eq:def_Sob_via_der} that the operator \({\sf L}_f\) can be uniquely extended to an element
of \(L^q_\Lip(T\X)^*\), which we still denote by \({\sf L}_f\). Furthermore, \eqref{eq:formula_mwug_via_der} gives that
\begin{equation}\label{eq:Sob_seminorm_L_f}
\||Df|\|_{L^p(\mm)}=\|{\sf L}_f\|_{L^q_\Lip(T\X)^*}\quad\text{ for every }f\in W^{1,p}(\X).
\end{equation}
Letting \(\bar{\sf L}_f\in L^q_\Lip(T\X)'\) be defined as \(\bar{\sf L}_f(b)\coloneqq\int{\sf L}_f(b)\,\d\mm=\textsc{Int}_{L^q_\Lip(T\X)}({\sf L}_f)(b)\)
for every \(b\in L^q_\Lip(T\X)\), we deduce from Proposition \ref{prop:map_Int} and \eqref{eq:Sob_seminorm_L_f} that the map
\begin{equation}\label{eq:prop_bar_L_f}
W^{1,p}(\X)\ni f\mapsto(f,\bar{\sf L}_f)\in L^p(\mm)\times L^q_\Lip(T\X)'\quad\text{ is a linear isometry,}
\end{equation}
where we equip the target space \(L^p(\mm)\times L^q_\Lip(T\X)'\) with the \(p\)-norm
\[
\|(h,\bar{\sf L})\|_p\coloneqq\big(\|h\|_{L^p(\mm)}^p+\|\bar{\sf L}\|_{L^q_\Lip(T\X)'}^p\big)^{1/p}\quad\text{ for every }(h,\bar{\sf L})\in L^p(\mm)\times L^q_\Lip(T\X)'.
\]
Next, we introduce the concepts of \emph{acyclic derivation} and \emph{cyclic derivation}, by mimicking the
corresponding notions in the setting of currents:
\begin{definition}
Let \((\X,\sfd,\mm)\) be a metric measure space and \(q\in[1,\infty]\). Fix \(b\in{\rm Der}^q(\X)\). Then:
\begin{itemize}
\item[\(\rm i)\)] We say that \(s\in{\rm Der}^q(\X)\) is a \textbf{subderivation} of \(b\) provided it holds that \(|b-s|+|s|\leq|b|\).
We denote by \(S(b)\) the set of all subderivations of \(b\).
\item[\(\rm ii)\)] We say that \(c\in{\rm Der}^q_{\mathfrak M}(\X)\) is a \textbf{cycle} if \(\boldsymbol\div(c)=0\).
If, in addition, \(c\) is a subderivation of \(b\), then we say that \(c\) is a \textbf{subcycle} of \(b\). We denote by $C(b)$ the set of all subcycles of $b$. 
\item[\(\rm iii)\)] We say that \(b\) is \textbf{acyclic} if $C(b) = \left\{0\right\}$.
\end{itemize}
\end{definition}

Given that the inequality \(|b|\leq|b-s|+|s|\) is always satisfied, we see that
\begin{equation}\label{eq:equiv_subder}
s\in S(b)\quad\Longleftrightarrow\quad|b-s|+|s|=|b|.
\end{equation}
In analogy with currents, each derivation can be written as the sum of an acyclic subderivation and
a subcycle, as shown by the following result, which is inspired by \cite[Proposition 3.8]{Pao:Ste:12}.
\begin{proposition}[Decomposition of a derivation]\label{prop:acycl_cyl_decomp}
Let \((\X,\sfd,\mm)\) be a metric measure space and \(q\in[1,\infty]\). Let \(b\in{\rm Der}^q(\X)\) be given.
Then we can write $b=a+c$, where \(c\) is a cycle and \(a\in {\rm Der}^q(\X)\) is an acyclic subderivation of \(b\).
\end{proposition}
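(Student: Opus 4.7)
The plan is to adapt the Paolini--Stepanov argument \cite[Proposition 3.8]{Pao:Ste:12}: produce a maximal subcycle $c$ of $b$ via Zorn's lemma, then check that $a \coloneqq b - c$ is automatically acyclic. I would endow $C(b)$ with the partial order
\[
c_1 \preceq c_2 \quad \Longleftrightarrow \quad |b - c_2| + |c_2 - c_1| = |b - c_1| \text{ }\mm\text{-a.e.},
\]
which is equivalent to asking that $c_2 - c_1 \in C(b - c_1)$. Reflexivity is immediate, antisymmetry follows by summing the two defining identities (forcing $|c_1 - c_2| = 0$), and transitivity is the reverse triangle inequality applied to $|c_3 - c_1| \leq |c_3 - c_2| + |c_2 - c_1|$. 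Note that $0 \preceq c'$ for every $c' \in C(b)$, so $0$ is the minimum.

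The main work is to verify that every chain $\{c_i\}_{i \in I} \subseteq C(b)$ admits an upper bound. Whenever $c_i \preceq c_j$, one has $|b - c_j| \leq |b - c_i|$ pointwise, so $\alpha \coloneqq \inf_i \| |b - c_i| \|_{L^q(\mm)}$ is well defined. Using totality of the order, replace a minimizing sequence by an increasing one $c_{i_1} \preceq c_{i_2} \preceq \cdots$ with $\| |b - c_{i_n}| \|_{L^q(\mm)} \to \alpha$. The order identity gives $|c_{i_m} - c_{i_n}| = |b - c_{i_n}| - |b - c_{i_m}|$ pointwise for $n \leq m$, and monotone/dominated convergence imply that $(c_{i_n})_n$ is Cauchy in ${\rm Der}^q(\X)$, converging in norm to some $c_\infty$. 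Passing to the limit in $|b - c_{i_n}| + |c_{i_n}| = |b|$ and in $\int c_{i_n}(f)\,\d\mm = 0$ for $f \in \LIP_{bs}(\X)$ (justified by norm convergence of $c_{i_n}(f)$ in $L^q(\mm)$ on the bounded support of $f$) gives $c_\infty \in C(b)$. A case analysis based on totality (either $c_i \preceq c_{i_n}$ for some $n$, hence $c_i \preceq c_\infty$ by transitivity; or $c_{i_n} \preceq c_i$ for all $n$, forcing $|b - c_i| \leq |b - c_\infty|$ pointwise together with $\| |b - c_i| \|_{L^q} = \alpha = \| |b - c_\infty| \|_{L^q}$, hence $c_i = c_\infty$) then shows that $c_\infty$ dominates every $c_i$ in the chain.

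Let $c \in C(b)$ be a maximal element given by Zorn's lemma and set $a \coloneqq b - c$. If $c'$ is any subcycle of $a$, then $\boldsymbol\div(c + c') = 0$, and using $c \in S(b)$ and $c' \in S(a)$ one estimates
\[
|b - (c + c')| + |c + c'| \leq (|b - c| - |c'|) + (|c| + |c'|) = |b|,
\]
so $c + c' \in C(b)$. Moreover, $|b - (c + c')| + |(c + c') - c| = |b - c - c'| + |c'| = |b - c|$, which means $c \preceq c + c'$. Maximality of $c$ forces $c + c' = c$, hence $c' = 0$, so $a$ is acyclic, completing the decomposition $b = c + a$.

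The main obstacle is the chain step in the case $q = \infty$: the pointwise decreasing sequence $(|b - c_{i_n}|)$ need not converge in $L^\infty$-norm, so the Cauchy argument above breaks down, and the deduction ``$|b - c_i| \leq |b - c_\infty|$ with equal $L^\infty$-norms forces equality a.e.'' also fails. Handling this case seems to require replacing norm convergence with a weak-\(^*\) compactness argument in ${\rm Der}^\infty(\X)$, using the duality $L^\infty(\mm) = L^1(\mm)'$ and lower semicontinuity of the pointwise norm to preserve both the subderivation inequality and the vanishing of divergence in the limit.
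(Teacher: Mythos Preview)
Your Zorn's lemma argument is correct and complete for \(q\in[1,\infty)\), and it takes a genuinely different route from the paper. The paper does not use Zorn's lemma at all: instead it fixes an auxiliary finite measure \(\tilde\mm\) with \(\mm\ll\tilde\mm\leq\mm\), and then runs a greedy iteration, at each step choosing a subcycle \(c_n\in C(b_{n-1})\) with \(\||c_n|\|_{L^1(\tilde\mm)}\geq\tfrac12\sup_{c\in C(b_{n-1})}\||c|\|_{L^1(\tilde\mm)}\). The point of using \(L^1(\tilde\mm)\) rather than \(L^q(\mm)\) is exactly what you identify as the obstacle: it decouples the compactness step from the exponent \(q\), so the argument works uniformly for \(q=\infty\) as well. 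Moreover, the paper does \emph{not} show that the partial sums \(\sum_{k=1}^n c_k\) converge in the \({\rm Der}^q(\X)\) norm; instead it defines \(c(f)\coloneqq\sum_k c_k(f)\) as an \(L^1(\mm)\)-convergent series for each fixed \(f\in\LIP_{bs}(\X)\), using only the pointwise bound \(\sum_k|c_k|\leq|b|\) and dominated convergence, and then checks that the resulting \(c\) lies in \({\rm Der}^q(\X)\).

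This gives a concrete fix for your \(q=\infty\) gap that avoids weak\(^*\) machinery entirely: in the chain step, define \(\alpha\coloneqq\inf_i\||b-c_i|\|_{L^1(\tilde\mm)}\) instead of the \(L^q(\mm)\) norm. The pointwise decreasing sequence \((|b-c_{i_n}|)_n\) is dominated by \(|b|\in L^1(\tilde\mm)\), so it is Cauchy in \(L^1(\tilde\mm)\). Then define \(c_\infty(f)\coloneqq\lim_n c_{i_n}(f)\) as an \(L^1(\mm)\)-limit for each \(f\) (the dominated convergence bound \(|(c_{i_m}-c_{i_n})(f)|\leq(|b-c_{i_n}|-|b-c_{i_m}|)\lip_a(f)\leq|b|\lip_a(f)\in L^1(\mm)\) works for all \(q\)), and verify \(c_\infty\in C(b)\) and \(c_{i_n}\preceq c_\infty\) by passing to the limit pointwise. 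The second case of your dichotomy then goes through as well, since ``\(0\leq f\leq g\) with \(\|f\|_{L^1(\tilde\mm)}=\|g\|_{L^1(\tilde\mm)}<\infty\) forces \(f=g\) \(\tilde\mm\)-a.e., hence \(\mm\)-a.e.'' is valid.
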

\begin{proof}
Fix a finite Borel measure \(\tilde\mm\) on \(\X\) such that \(\mm\ll\tilde\mm\leq\mm\);
for example, fix a Borel partition \((E_n)_n\) of \(\X\) into sets with \(\mm(E_n)<+\infty\)
and define \(\tilde\mm\coloneqq\sum_{n=1}^\infty 2^{-n}(\mm(E_n)\vee 1)^{-1}\mm|_{E_n}\). Now let us define
\[
m(b)\coloneqq\sup_{c\in C(b)}\||c|\|_{L^1(\tilde\mm)}\quad\text{ for every }b\in{\rm Der}^q(\X).
\]
Now fix any \(b\in{\rm Der}^q(\X)\). Recursively, we can construct a sequence \((c_n)_n\subseteq{\rm Der}^q(\X)\) such that
\[
c_n\in C(b_{n-1}),\quad\||c_n|\|_{L^1(\tilde\mm)}\geq\frac{m(b_{n-1})}{2}\quad\text{ for every }n\in\N,
\]
where we set \(b_0\coloneqq b\) and \(b_n\coloneqq b-\sum_{i=1}^n c_i=b_{n-1}-c_n\) for every \(n\in\N\). We claim that
\begin{equation}\label{eq:acycl_cyl_decomp_1}
\tilde c+c_n\in C(b_{n-1})\quad\text{ for every }n\in\N\text{ and }\tilde c\in C(b_n).
\end{equation}
To prove it, observe that cycles form a vector subspace of \({\rm Der}^q(\X)\), thus \(\tilde c+c_n\) is a cycle. Moreover,
\begin{equation}\label{eq:acycl_cyl_decomp_2}
|\tilde c+c_n|+|b_{n-1}-(\tilde c+c_n)|\leq|c_n|+|\tilde c|+|b_n-\tilde c|=|c_n|+|b_n|=|c_n|+|b_{n-1}-c_n|=|b_{n-1}|,
\end{equation}
which shows that \(\tilde c+c_n\) is a subderivation of \(b_{n-1}\) and thus \(\tilde c+c_n\in C(b_{n-1})\), proving \eqref{eq:acycl_cyl_decomp_1}.
Given that \(|b_{n-1}|\leq|\tilde c+c_n|+|b_{n-1}-(\tilde c+c_n)|\), we also deduce that all inequalities in \eqref{eq:acycl_cyl_decomp_2} are in
fact equalities, thus in particular we have that \(|c_n|+|\tilde c|=|\tilde c+c_n|\) for every \(n\in\N\) and \(\tilde c\in C(b_n)\),
which implies that \(\tilde c\in C(\tilde c+c_n)\) for every \(\tilde c\in C(b_n)\) and $n \in \N$. Moreover, we can estimate
\[\begin{split}
\||\tilde c|\|_{L^1(\tilde\mm)}=\||\tilde c+c_n|\|_{L^1(\tilde\mm)}-\||c_n|\|_{L^1(\tilde\mm)}
\leq m(b_{n-1})-\frac{m(b_{n-1})}{2}=\frac{m(b_{n-1})}{2}
\end{split}\]
thanks to \eqref{eq:acycl_cyl_decomp_1} and to the assumptions on \((c_n)_n\). Taking the supremum over \(\tilde c\in C(b_n)\), we get
\begin{equation}\label{eq:acycl_cyl_decomp_3}
m(b_n)\leq\frac{m(b_{n-1})}{2}\quad\text{ for every }n\in\N.
\end{equation}
The recursion also gives that $c_{n+1} + \dots + c_{m} \in C( b_n )$ for every $0 \leq n < m$. Therefore, induction shows that
\begin{equation}\label{eq:pointwisecontrol}
    \sum_{ k = n+1 }^{ m } | c_{k} |
    =
    \left| \sum_{ k = n + 1 }^{ m } c_k \right|
    \leq
    | b_n |
    \leq 
    |b|
    \quad\text{for $0 \leq n < m$.}
\end{equation}
Observe that the upper bound in \eqref{eq:pointwisecontrol} is in $L^{q}( \mm )$. Using this observation, we define now $c \colon \LIP_{bs}( \X ) \rightarrow L^{q}( \mm )$ using the series $c(f) \coloneqq \sum_{ k = 1 }^{ \infty } c_{k}(f)$ convergent in $L^{1}( \mm )$ and in $L^{1}( \widetilde{\mm} )$. Indeed, by \eqref{eq:pointwisecontrol},
\begin{equation}\label{eq:cauchysequence}
    \left| \sum_{ k = n+1 }^{ m } c_k(f) \right|
    \leq
    \sum_{ k = n+1 }^{ m } | c_{k} | \lip_a(f)
    \leq
    | b_n | \lip_a(f)
    \leq
    | b | \lip_a(f)
    \quad\text{for $0 \leq n < m \in \N$,}
\end{equation}
so the claim follows by dominated convergence. The convergence of $( \sum_{ k = 1 }^{ m }c_k(f) )_{ m = 1 }^{ \infty }$ to $c(f)$ in $L^{1}( \mm )$ and \eqref{eq:pointwisecontrol} imply that $c \in \mathrm{Der}^{q}( \X )$ with $|c| = \sum_{ k = 1 }^{ \infty } | c_k |$, the latter decomposition valid in the strong topology of $L^{1}( \widetilde{\mm} )$. Moreover, we have that
\begin{align*}
    \Big| |b-c| - |b- \sum_{k=1}^{n} c_k| \Big|
    \leq
    \Big| c - \sum_{k=1}^{n} c_k \Big|
    \leq
    \sum_{ k = n+1 }^{ \infty } |c_k|
    \leq
    |b|,
\end{align*}
so, in the strong topology in $L^{1}( \widetilde{\mm} )$,
\begin{align*}
    | b - c | + | c |
    =
    \lim_{ n \rightarrow \infty } \left|b- \sum_{k=1}^{n} c_k\right| + \left| \sum_{ k = 1 }^{ n } c_k \right|
    =
    \lim_{ n \rightarrow \infty } |b|
    =
    |b|.
\end{align*}
This implies that $c$ is a subderivation of $b$. The fact that $c$ is a subcycle of $b$ follows from \eqref{eq:cauchysequence} and dominated convergence in $L^{1}( \mm )$.

It remains to prove that $a = b-c$ is acyclic. Since \(c_n\in C(b_{n-1})\) and \(b_n=b_{n-1}-c_n\), we have that \(b_n\) is a subderivation of \(b_{n-1}\) for every \(n\in\N\).
In particular, \(|b_n-b_k|+|b_k|=|b_n|\) for every \(n,k\in\N\) with \(n<k\). Since \(|b_k|\to|a|\) and \(|b_n-b_k|\to|b_n-a|\)
strongly in \(L^1(\tilde\mm)\) as \(k\to\infty\), we deduce that \(|b_n-a|+|a|=|b_n|\), which means that \(a\) is a subderivation
of \(b_n\) for every \(n\in\N\). Hence, for any given \(\tilde c\in C(a)\), we have that \(\tilde c\in C(b_n)\) for every
\(n\in\N\). Thus \(\||\tilde c|\|_{L^1(\tilde\mm)}\leq m(b_n)\leq 2^{-n}m(b)\) holds for every \(n\in\N\)
by \eqref{eq:acycl_cyl_decomp_3}. Letting \(n\to\infty\), we conclude that \(\tilde c=0\), meaning that \(a\) is acyclic.
The proof is complete.
\end{proof}

We point out that the cyclic-acyclic decomposition in Proposition \ref{prop:acycl_cyl_decomp} can fail to be unique. For example, consider (the derivation induced by) the curve obtained by concatenating a rectifiable simple closed curve \(\sigma\colon[0,1]\to\R^2\) and a rectifiable simple curve \(\gamma\colon[0,1]\to\R^2\)
satisfying \(\gamma_0=\sigma_0\), \(\gamma_1=\sigma_{1/2}\), and \(\gamma((0,1))\cap\sigma([0,1])=\varnothing\), and consider on $\R^2$ the restriction of the $1$-dimensional Hausdorff measure
on the images of $\sigma$ and $\gamma$.
\subsubsection*{Plans, Lipschitz derivations, and normal \texorpdfstring{\(1\)}{1}-currents}
Let us now show that the dynamic plans with barycenter in \(L^q(\mm)\) can be identified with the integrable Lipschitz \(q\)-derivations having
\(q\)-divergence; this is the content of Theorem \ref{thm:plans_vs_der_vs_curr} below. One inclusion follows from Proposition \ref{prop:der_induced_by_plan}.
To prove the converse inclusion, we employ the superposition principle for normal \(1\)-currents (Theorem \ref{thm:Paolini-Stepanov}). En route,
we prove that dynamic plans with barycenter in \(L^q(\mm)\) can be identified with a suitable class of normal \(1\)-currents, which we denote
by \({\bf N}_1^q(\X;\mm)\). Namely, given a metric measure space \((\X,\sfd,\mm)\) and \(q\in(1,\infty]\), we define
\[
{\bf N}_1^q(\X;\mm)\coloneqq\bigg\{T\in{\bf N}_1(\X)\;\bigg|\;\|T\|\ll\mm\text{ and }\frac{\d\|T\|}{\d\mm}\in L^q(\mm)\bigg\}.
\]
With the notation of Definition~\ref{def:barycenter}, the following result holds.
\begin{theorem}[Dynamic plans, Lipschitz derivations, and normal \(1\)-currents]\label{thm:plans_vs_der_vs_curr}
Let \((\X,\sfd,\mm)\) be a metric measure space and \(q\in(1,\infty]\). Let us define the maps
\[
\mathcal B_q(\X)\overset{\mathfrak p}{\longrightarrow}{\rm Der}^q_{\mathcal M}(\X)\cap{\rm Der}^1(\X)\overset{\mathfrak t}{\longrightarrow}{\bf N}_1^q(\X;\mm)
\]
as \(\mathfrak p(\ppi)\coloneqq b_\sppi\) for every \(\ppi\in\mathcal B_q(\X)\) (where \(b_\sppi\) is the derivation
given by Proposition \ref{prop:der_induced_by_plan}) and
\[
\mathfrak t(b)(f,g)\coloneqq\int f\,b(g)\,\d\mm
\]
for every \(b\in{\rm Der}^q_{\mathcal M}(\X)\cap{\rm Der}^1(\X)\) and \((f,g)\in\LIP_b(\X)\times\LIP(\X)\). Then the following hold:
\begin{itemize}
\item[\(\rm i)\)] The map \(\mathfrak p\) is additive and positively \(1\)-homogeneous. Given any plan \(\ppi\in\mathcal B_q(\X)\), it holds that \(|\mathfrak p(\ppi)|\leq{\rm Bar}(\ppi)\)
and \({\bf div}(\mathfrak p(\ppi))=(\e_0)_\#\ppi-(\e_1)_\#\ppi\). Moreover, the map \(\mathfrak p\) is surjective and, more
precisely, for any \(b\in{\rm Der}^q_{\mathcal M}(\X)\cap{\rm Der}^1(\X)\) there
exists a plan \(\ppi_b\in\mathfrak p^{-1}(b)\) such that \({\rm Bar}(\ppi_b)=|b|\).
\item[\(\rm ii)\)] The operator \(\mathfrak t\) is linear and bijective. Given any \(b\in{\rm Der}^q_{\mathcal M}(\X)\cap{\rm Der}^1(\X)\),
it holds that
\begin{equation}\label{eq:def_t_Der_curr}
\|\mathfrak t(b)\|=|b|\mm,\qquad\partial(\mathfrak t(b))=-{\bf div}(b).
\end{equation}
\end{itemize}
\end{theorem}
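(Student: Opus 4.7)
My plan is to verify part (ii) first and then invoke the superposition principle (Theorem \ref{thm:Paolini-Stepanov}) to obtain both the refined surjectivity claim in (i) and the mass identity in (ii). Part (i) itself is essentially a repackaging of Proposition \ref{prop:der_induced_by_plan}: the inequality $|b_\sppi|\leq{\rm Bar}(\ppi)$ together with ${\rm Bar}(\ppi)\in L^q(\mm)\cap L^1(\mm)$ places $b_\sppi$ in ${\rm Der}^q_{\mathcal M}(\X)\cap{\rm Der}^1(\X)$, and additivity and positive $1$-homogeneity of $\mathfrak p$ are transparent from \eqref{eq:der_induced_by_plan}.

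For (ii), I first check the three metric-current axioms for $\mathfrak t(b)$. The mass bound $|\mathfrak t(b)(f,g)|\leq\Lip(g)\int|f||b|\,\d\mm$, hence $\|\mathfrak t(b)\|\leq|b|\mm$, follows from $|b(g)|\leq|b|\lip_a(g)$. Locality in the $g$-slot is clear since $b(g)=0$ on any open set where $g$ is constant. The continuity axiom is delicate because $f\in\LIP_b(\X)$ is not generally in $L^p(\mm)$: I extend $b$ to $\LIP(\X)$ via Proposition \ref{prop:cont_Lip_der} so that pointwise convergence $g_n\to g_\infty$ with uniform Lipschitz bound yields weak$^*$-convergence $b(g_n)\rightharpoonup b(g_\infty)$ in $L^q(\mm)$; splitting $f=\eta_k f+(1-\eta_k)f$ for cutoffs $\eta_k\in\LIP_{bs}(\X;[0,1])$ with $\eta_k\nearrow 1$, the $\eta_k f$-piece passes to the limit by weak$^*$-convergence (since $\eta_k f\in L^p(\mm)$ by bounded support), while the remainder is controlled uniformly in $n$ by a constant multiple of $\int(1-\eta_k)|b|\,\d\mm$, which vanishes by $|b|\in L^1(\mm)$ and dominated convergence.

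The boundary identity $\partial\mathfrak t(b)=-\boldsymbol\div(b)$ uses the same cutoffs: from $\int b(\eta_k f)\,\d\mm=-\int\eta_k f\,\d\boldsymbol\div(b)$ (valid since $\eta_k f\in\LIP_{bs}(\X)$) and the Leibniz rule $b(\eta_k f)=\eta_k b(f)+f\,b(\eta_k)$, I pass to the limit as $k\to\infty$ by dominated convergence, exploiting $|b(f)|\leq\Lip(f)|b|\in L^1$, $|f\,b(\eta_k)|\leq\|f\|_\infty|b|\1_{\{\eta_k<1\}}$, and $\boldsymbol\div(b)\in\mathcal M(\X)$. Hence $\mathfrak t(b)\in{\bf N}_1^q(\X;\mm)$. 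Linearity of $\mathfrak t$ is immediate, and injectivity follows since $\mathfrak t(b)=0$ forces $b(g)=0$ $\mm$-a.e.\ for all $g\in\LIP_{bs}(\X)$ by strong density of $\LIP_{bs}(\X)^\mm$ in $L^p(\mm)$. For surjectivity, given $T\in{\bf N}_1^q(\X;\mm)$ the superposition principle furnishes a plan $\ppi$ with $T=[\![\ppi]\!]$ and $\int f\,\d\|T\|=\iint_\gamma f\,\d s\,\d\ppi$ for $f\in\LIP_b(\X)^+$; restricting the latter to $f\in\LIP_{bs}(\X)^+$ and matching against Definition \ref{def:barycenter} yields ${\rm Bar}(\ppi)=\d\|T\|/\d\mm\in L^q(\mm)\cap L^1(\mm)$, so $\ppi\in\mathcal B_q(\X)$, and direct comparison of \eqref{eq:current_induced_by_plan} with \eqref{eq:der_induced_by_plan} gives $\mathfrak t(\mathfrak p(\ppi))=[\![\ppi]\!]=T$.

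Finally, to upgrade the mass bound to equality and complete the refined surjectivity claim in (i), I apply the above construction with $T\coloneqq\mathfrak t(b)$ to obtain $\ppi_b$ with ${\rm Bar}(\ppi_b)\mm=\|\mathfrak t(b)\|$ and $\mathfrak p(\ppi_b)=b$ (the latter by injectivity of $\mathfrak t$). Combining with $|b|=|b_{\sppi_b}|\leq{\rm Bar}(\ppi_b)$ from Proposition \ref{prop:der_induced_by_plan} and the reverse inequality $\|\mathfrak t(b)\|\leq|b|\mm$ already shown, I obtain $\|\mathfrak t(b)\|=|b|\mm$ and ${\rm Bar}(\ppi_b)=|b|$ $\mm$-a.e. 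The main technical obstacle is the systematic mismatch between $\LIP_{bs}$ (the natural test-space for the divergence of a derivation) and $\LIP_b$ (that for the boundary of a current); this is bridged uniformly by the cutoff approximations above, relying crucially on the finiteness of the measures $|b|\mm$ and $\boldsymbol\div(b)$.
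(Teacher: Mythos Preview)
Your proposal is correct and follows essentially the same approach as the paper: verify the current axioms for $\mathfrak t(b)$ (using Proposition \ref{prop:cont_Lip_der} and a cutoff argument for the continuity axiom), then invoke the superposition principle to obtain surjectivity of $\mathfrak t$ and $\mathfrak p$ together with the mass identity via the chain $|b|\leq{\rm Bar}(\ppi_b)=\d\|\mathfrak t(b)\|/\d\mm\leq|b|$. Your treatment of the boundary identity $\partial\mathfrak t(b)=-\boldsymbol\div(b)$ via cutoffs and the Leibniz rule is in fact more explicit than the paper's, which states this identity for $f\in\LIP_b(\X)$ directly without spelling out the passage from $\LIP_{bs}(\X)$.
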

\begin{proof}
First, we know from Proposition \ref{prop:der_induced_by_plan} that \(\mathfrak p(\ppi)=b_\sppi\in{\rm Der}^q_{\mathcal M}(\X)\)
satisfies \(|\mathfrak p(\ppi)|\leq{\rm Bar}(\ppi)\) and \({\bf div}(\mathfrak p(\ppi))=(\e_0)_\#\ppi-(\e_1)_\#\ppi\) for every
\(\ppi\in\mathcal B_q(\X)\). Moreover, it follows directly from \eqref{eq:der_induced_by_plan} that \(\mathfrak p\) is additive and
positively \(1\)-homogeneous. Since \({\rm Bar}(\ppi)\in L^1(\mm)\), we also have that \(\mathfrak p(\ppi)\in{\rm Der}^1(\X)\).

Now fix \(b\in{\rm Der}^q_{\mathcal M}(\X)\cap{\rm Der}^1(\X)\). Let us check that the functional
\(\mathfrak t(b)\colon\LIP_b(\X)\times\LIP(\X)\to\R\) defined in \eqref{eq:def_t_Der_curr} is a normal \(1\)-current.
First, \(\mathfrak t(b)\) is bilinear by construction. Furthermore:
\begin{itemize}
\item Assume that \(f\in\LIP_b(\X)\) and \((g_n)_{n\in\N\cup\{\infty\}}\subseteq\LIP(\X)\) satisfy \(L\coloneqq\sup_{n\in\N}\Lip(g_n)<+\infty\)
and \(g_n(x)\to g_\infty(x)\) for every \(x\in\X\). Then Proposition \ref{prop:cont_Lip_der} ensures that \(b(g_n)\rightharpoonup b(g_\infty)\)
weakly\(^*\) in \(L^q(\mm)\). Fix any \(\bar x\in\X\) and define \(f_k\coloneqq\big(1-\sfd(\cdot,B_k(\bar x))\big)^+ f\) for every \(k\in\N\). Then
\[
\bigg|\int f\,b(g_n)\,\d\mm-\int f\,b(g_\infty)\,\d\mm\bigg|\leq 2L\|(f_k-f)|b|\|_{L^1(\mm)}+\bigg|\int f_k\,b(g_n)\,\d\mm-\int f_k\,b(g_\infty)\,\d\mm\bigg|
\]
by H\"{o}lder's inequality. Since \(\|(f_k-f)|b|\|_{L^1(\mm)}\to 0\) by the dominated convergence theorem and \(f_k\in\LIP_{bs}(\X)\),
by first letting \(n\to\infty\) and then \(k\to\infty\) we get \(\mathfrak t(b)(f,g_n)\to\mathfrak t(b)(f,g_\infty)\).
\item If \(f\in\LIP_b(\X)\) is given and \(g\in\LIP(\X)\) is constant on a neighbourhood of \(\{f\neq 0\}\), then
\(\lip_a(g)=0\) on \(\{f\neq 0\}\), so that \(\big|\int f\,b(g)\,\d\mm\big|\leq\int|f||b|\lip_a(g)\,\d\mm=0\) and thus \(\mathfrak t(b)(f,g)=0\).
\item Since \(|\mathfrak t(b)(f,g)|\leq\Lip(g)\int|f||b|\,\d\mm\) holds for every \((f,g)\in\LIP_b(\X)\times\LIP(\X)\), we have proved that
\(\mathfrak t(b)\in{\bf M}_1(\X)\) and \(\|\mathfrak t(b)\|\leq|b|\mm\).
\item Observe that \(\partial(\mathfrak t(b))(f)=\int b(f)\,\d\mm=-\int f\,\d{\bf div}(b)\) for every \(f\in\LIP_b(\X)\), whence it follows
that \(\mathfrak t(b)\in{\bf N}_1^q(\X;\mm)\) and \(\partial(\mathfrak t(b))=-{\bf div}(b)\).
\end{itemize}
Hence, \(\mathfrak t\colon{\rm Der}^q_{\mathcal M}(\X)\cap{\rm Der}^1(\X)\to{\bf N}_1^q(\X;\mm)\) is well-defined, and \(\|\mathfrak t(b)\|\leq|b|\mm\) and
\(\partial(\mathfrak t(b))=-{\bf div}(b)\) hold for every \(b\in{\rm Der}^q_{\mathcal M}(\X)\cap{\rm Der}^1(\X)\). Notice also that \(\mathfrak t\) is linear and injective by construction.

To conclude, fix \(T\in{\bf N}_1^q(\X;\mm)\). Theorem \ref{thm:Paolini-Stepanov} yields the existence of a plan \(\ppi_T\) on \(\X\) such that
\begin{equation}\label{eq:appl_PS}\begin{split}
T(f,g)&=\int\!\!\!\int_0^1 f(\gamma^{\sf cs}(t))\frac{\d}{\d t}g(\gamma^{\sf cs}(t))\,\d t\,\d\ppi_T(\gamma),\\
\int f\,\d\|T\|&=\int\ell(\gamma)\int_0^1 f(\gamma^{\sf cs}(t))\,\d t\,\d\ppi_T(\gamma)
\end{split}\end{equation}
for every \(f\in\LIP_b(\X)\) and \(g\in\LIP(\X)\). The second identity in \eqref{eq:appl_PS} implies that \(\ppi_T\in\mathcal B_q(\X)\) and \({\rm Bar}(\ppi_T)=\frac{\d\|T\|}{\d\mm}\).
We then have that \(T=(\mathfrak t\circ\mathfrak p)(\ppi_T)\), thus in particular the map \(\mathfrak t\) is bijective. If \(b\in{\rm Der}^q_{\mathcal M}(\X)\cap{\rm Der}^1(\X)\),
then \(\ppi_{\mathfrak t(b)}\in\mathfrak p^{-1}(b)\) and \(|b|=|\mathfrak p(\ppi_{\mathfrak t(b)})|\leq{\rm Bar}(\ppi_{\mathfrak t(b)})=\frac{\d\|\mathfrak t(b)\|}{\d\mm}\leq|b|\),
which forces the identities \({\rm Bar}(\ppi_{\mathfrak t(b)})=|b|=\frac{\d\|\mathfrak t(b)\|}{\d\mm}\). All in all, the proof is complete.
\end{proof}
\begin{remark}[The case of local currents] {\rm
If we require only $L^q(\mm)$ integrability of the divergence, the operator $\mathfrak p$ maps plans with barycenter in $L^q(\mm)$ to 
${\rm Der}^q_{\mathcal M}(\X)$ and the operator $\mathfrak t$ maps the larger set ${\rm Der}^q_{\mathfrak M}(\X)$ to locally normal currents 
(according to Remark~\ref{rem_local_currents}) with a mass measure absolutely continuous with respect to $\mm$ and with an $L^{q}(\mm)$-integrable density.
}\end{remark}

\begin{remark}\label{rmk:improved_PS}{\rm
Under the assumptions of Theorem \ref{thm:plans_vs_der_vs_curr}, it holds that if \(b\in{\rm Der}^q_{\mathcal M}(\X)\cap{\rm Der}^1(\X)\)
is acyclic, then the plan \(\ppi_b\) as in Theorem \ref{thm:plans_vs_der_vs_curr} i) can be also chosen so that
\[
(\e_0)_\#\ppi_b={\bf div}(b)^+,\qquad(\e_1)_\#\ppi_b={\bf div}(b)^-.
\]
This follows from Theorem \ref{thm:Paolini-Stepanov} by taking into account the fact that for \(b\in{\rm Der}^q_{\mathcal M}(\X)\cap{\rm Der}^1(\X)\),
\[
b\text{ is an acyclic derivation}\quad\Longleftrightarrow\quad\mathfrak t(b)\text{ is an acyclic current,}
\]
which can be easily obtained as a consequence of \eqref{eq:def_t_Der_curr} and of the linearity of \(\mathfrak t\).
}\end{remark}
\subsubsection*{Sobolev derivations}
We now discuss yet another notion of derivation, which we call `Sobolev derivation' since it is defined
in duality with Sobolev functions. As such, it is not an auxiliary tool for defining a Sobolev space, but it is
rather the result of an already-established Sobolev theory. The interplay between Lipschitz and Sobolev derivations
is tighly connected to the reflexivity properties of the Sobolev space, as we will see in Section \ref{ss:reflexivity_properties}.
Let us now give a definition of Sobolev derivation that we borrow from \cite[Definition 2.3.2]{Gig:18},
and that extends the latter to all exponents \(q\in(1,\infty]\).
\begin{definition}[Sobolev derivation]\label{def:Sob_der}
Let \((\X,\sfd,\mm)\) be a metric measure space and \(q\in(1,\infty]\). Then we say that a linear functional
\(\delta\colon W^{1,p}(\X)\to L^1(\mm)\) is a \textbf{Sobolev \(q\)-derivation} on \(\X\) if there exists
a function \(g\in L^q(\mm)^+\) such that
\begin{equation}\label{eq:def_Sob_der}
|\delta(f)|\leq g|Df|\quad\text{ for every }f\in W^{1,p}(\X).
\end{equation}
To the derivation \(\delta\), we associate the function \(|\delta|\in L^q(\mm)^+\), which we define as
\begin{equation}\label{eq:ptwse_norm_LqSob}
|\delta|\coloneqq\bigwedge\Big\{g\in L^q(\mm)^+\;\Big|\;g\text{ satisfies \eqref{eq:def_Sob_der}}\Big\}
=\bigvee_{f\in W^{1,p}(\X)}\1_{\{|Df|>0\}}^\mm\frac{|\delta(f)|}{|Df|}.
\end{equation}
We denote by \(L^q_{\rm Sob}(T\X)\) the space of all Sobolev \(q\)-derivations on \(\X\).
\end{definition}

Given any \(\delta,\tilde\delta\in L^q_{\rm Sob}(T\X)\) and \(h\in L^\infty(\mm)\), we define \(\delta+\tilde\delta,h\delta\colon W^{1,p}(\X)\to L^1(\mm)\) as
\begin{equation}\label{eq:mod_struct_LqSob}
(\delta+\tilde\delta)(f)\coloneqq\delta(f)+\tilde\delta(f),\qquad(h\delta)(f)\coloneqq h\,\delta(f)
\end{equation}
for every \(f\in W^{1,p}(\X)\). It can be readily checked that \(L^q_{\rm Sob}(T\X)\) is a module over \(L^\infty(\mm)\) if endowed with the operations
defined in \eqref{eq:mod_struct_LqSob}. Moreover, \(L^q_{\rm Sob}(T\X)\) is an \(L^q(\mm)\)-Banach \(L^\infty(\mm)\)-module if equipped with the pointwise norm operator
\(\delta\mapsto|\delta|\) defined in \eqref{eq:ptwse_norm_LqSob}. Consequently, we are entitled to call \(L^q_{\rm Sob}(T\X)\) the \textbf{Sobolev \(q\)-tangent module} of \(\X\).
\medskip

The following result states that the Sobolev \(q\)-tangent module \(L^q_{\rm Sob}(T\X)\) can be canonically identified with the \(q\)-tangent module \(L^q(T\X)\).
Its proof extends \cite[Theorem 2.3.3]{Gig:18} to all \(q\neq 2\).
\begin{theorem}[Tangent module and Sobolev derivations]\label{thm:Lq(TX)=LqSob(TX)}
Let \((\X,\sfd,\mm)\) be a metric measure space and \(q\in(1,\infty]\). Then for every \(\delta\in L^q_{\rm Sob}(T\X)\)
there exists a unique \({\rm I}(\delta)\in L^q(T\X)\) such that
\[\begin{tikzcd}
W^{1,p}(\X) \arrow[r,"\d"] \arrow[rd,swap,"\delta"] & L^p(T^*\X) \arrow[d,"{\rm I}(\delta)"] \\
& L^1(\mm)
\end{tikzcd}\]
is a commutative diagram. Moreover, the resulting map \({\rm I}\colon L^q_{\rm Sob}(T\X)\to L^q(T\X)\)
is an isomorphism of \(L^q(\mm)\)-Banach \(L^\infty(\mm)\)-modules.
\end{theorem}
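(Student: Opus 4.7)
The plan is to apply the universal property of the cotangent module (Proposition \ref{prop:univ_prop_mod_gen}) to each Sobolev derivation, obtaining its representation as an element of $L^q(T\X)$, and then show that the assignment is an isomorphism by matching pointwise norms. The key technical point is that $\{\d f : f \in W^{1,p}(\X)\}$ already generates $L^p(T^*\X)$; combined with the identity $|\d f|=|Df|$, this allows us to treat $(L^p(T^*\X),\d|_{W^{1,p}(\X)})$ as the module generated by the map $\psi\colon W^{1,p}(\X)\to L^p(\mm)^+$, $\psi(f)\coloneqq|Df|$.

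First I would verify that $(L^p(T^*\X),\d|_{W^{1,p}(\X)})$ satisfies the three characterising properties of Theorem \ref{thm:module_gen_sublin_map} with respect to $\psi\coloneqq|D\cdot|$ on $V\coloneqq W^{1,p}(\X)$: it is an $L^p(\mm)$-Banach $L^\infty(\mm)$-module, $\d$ is linear with $|\d f|=\psi(f)$, and its image generates $L^p(T^*\X)$ by Remark \ref{rmk:consist_cotg_mod} (or Lemma \ref{lem:gen_cotg_mod_crit}). By the uniqueness up to isomorphism in Theorem \ref{thm:module_gen_sublin_map}, we may therefore invoke Proposition \ref{prop:univ_prop_mod_gen} with $V=W^{1,p}(\X)$, $\psi=|D\cdot|$, and $S=\delta$: for every $\delta\in L^q_{\rm Sob}(T\X)$ there exists a unique $L^\infty(\mm)$-linear continuous map ${\rm I}(\delta)\colon L^p(T^*\X)\to L^1(\mm)$ such that ${\rm I}(\delta)\circ\d=\delta$ on $W^{1,p}(\X)$, and this map satisfies the pointwise estimate $|{\rm I}(\delta)(\omega)|\leq|\delta|\,|\omega|$ for all $\omega\in L^p(T^*\X)$. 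In particular ${\rm I}(\delta)\in L^q(T\X)$ with $|{\rm I}(\delta)|\leq|\delta|$.

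Next I would check the module-isomorphism properties of ${\rm I}\colon L^q_{\rm Sob}(T\X)\to L^q(T\X)$. Linearity and $L^\infty(\mm)$-linearity both follow from the uniqueness statement: for instance $h\,{\rm I}(\delta)\in L^q(T\X)$ satisfies $(h\,{\rm I}(\delta))(\d f)=h\,\delta(f)=(h\delta)(f)$, so it must equal ${\rm I}(h\delta)$. For the pointwise norms, the inequality $|{\rm I}(\delta)|\leq|\delta|$ is above, and the reverse inequality comes from evaluating on differentials: $|\delta(f)|=|{\rm I}(\delta)(\d f)|\leq|{\rm I}(\delta)|\,|\d f|=|{\rm I}(\delta)|\,|Df|$, whence the characterisation \eqref{eq:ptwse_norm_LqSob} of $|\delta|$ yields $|\delta|\leq|{\rm I}(\delta)|$. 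For surjectivity, given $v\in L^q(T\X)=L^p(T^*\X)^*$ I would set $\delta(f)\coloneqq v(\d f)\in L^1(\mm)$ for $f\in W^{1,p}(\X)$; the pointwise bound $|v(\omega)|\leq|v|\,|\omega|$ shows $\delta\in L^q_{\rm Sob}(T\X)$ with $|\delta|\leq|v|$, and the uniqueness clause in Proposition \ref{prop:univ_prop_mod_gen} forces ${\rm I}(\delta)=v$. Injectivity follows from $|{\rm I}(\delta)|=|\delta|$.

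The main obstacle, in my view, is the correct calibration of the universal property: Proposition \ref{prop:univ_prop_mod_gen} is stated for $(\mathscr M_{\langle\psi\rangle},T_{\langle\psi\rangle})$ rather than for the pair $(L^p(T^*\X),\d|_{W^{1,p}(\X)})$, so one must first legitimise the restriction from $D^{1,p}_{\mm}(\X)$ down to $W^{1,p}(\X)$ and verify that the restricted differential still induces the cotangent module. Once this identification is made, the rest of the argument is a bookkeeping exercise: all the module-theoretic content is already packaged in Proposition \ref{prop:univ_prop_mod_gen}, and the pointwise-norm identity is the only substantive verification beyond that.
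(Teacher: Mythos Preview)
Your proposal is correct and follows essentially the same approach as the paper: invoke Proposition \ref{prop:univ_prop_mod_gen} to extend each $\delta$ to ${\rm I}(\delta)\in L^q(T\X)$ with $|{\rm I}(\delta)|\leq|\delta|$, then use the map $v\mapsto v\circ\d$ as the inverse to obtain surjectivity and the reverse pointwise-norm inequality. Your treatment is in fact slightly more careful than the paper's, since you explicitly flag and resolve the domain-restriction issue (from $D^{1,p}_\mm(\X)$ to $W^{1,p}(\X)$) via Remark \ref{rmk:consist_cotg_mod}, whereas the paper applies the universal property without comment.
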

\begin{proof}
Since \(\delta\colon W^{1,p}(\X)\to L^1(\mm)\) is linear and \(|\delta(f)|\leq|\delta|\psi_p(f)=|\delta||Df|\)
for every \(f\in W^{1,p}(\X)\) (see Definition \ref{def:cotg_mod}), Proposition \ref{prop:univ_prop_mod_gen} ensures that there exists a unique
\(L^\infty(\mm)\)-linear operator \({\rm I}(\delta)\colon L^p(T^*\X)\to L^1(\mm)\) such that \({\rm I}(\delta)\circ\d=\delta\),
and it holds that \(|{\rm I}(\delta)(\omega)|\leq|\delta||\omega|\) for every \(\omega\in L^p(T^*\X)\).
Therefore, it holds that \({\rm I}(\delta)\in L^q(T\X)\) and \(|{\rm I}(\delta)|\leq|\delta|\). On the other
hand, for every \(v\in L^q(T\X)\) we have that the map \((v\circ\d)(f)=\d f(v)\) maps linearly \(W^{1,p}(\X)\) to \(L^1(\mm)\) and
\(|(v\circ\d)(f)|=|\d f(v)|\leq|v||Df|\) for all \(f\in W^{1,p}(\X)\), so that \(v\circ\d\in L^q_{\rm Sob}(T\X)\)
and \(|v\circ\d|\leq|v|\). Noticing that \({\rm I}(v\circ\d)=v\) and thus \(|{\rm I}(v\circ\d)|=|v|\geq|v\circ\d|\),
we conclude that \({\rm I}\) is an isomorphism of \(L^q(\mm)\)-Banach \(L^\infty(\mm)\)-modules.
\end{proof}

In view of Theorem \ref{thm:Lq(TX)=LqSob(TX)} and Remarks \ref{remark:Dirichletspace:differential} and \ref{rmk:consist_cotg_mod}, we could have equivalently
defined Sobolev derivations as maps over the Dirichlet spaces $D^{1,p}( \X )$ or \(D^{1,p}_{\mm}(\X)\), or
\(S^{1,p}(\X)\) considered in \cite{Gig:18} (which consists of locally-integrable functions having a \(p\)-weak
upper gradient). In particular, Definition \ref{def:Sob_der} is fully consistent with \cite[Definition 2.3.2]{Gig:18}.
Our choice of defining Sobolev derivations over the Sobolev space \(W^{1,p}(\X)\) instead is due to the fact that this
simplifies the investigation of their relation with Lipschitz derivations.
\medskip

As a consequence of Theorems \ref{thm:Lq(TX)=LqSob(TX)} and \ref{thm:calculus_rules_diff} iii),
Sobolev derivations satisfy the \textbf{Leibniz rule}:
\begin{equation}\label{eq:Leibniz_Sob_der}
\delta(fg)=f\,\delta(g)+g\,\delta(f)\quad\text{ for every }f,g\in W^{1,p}(\X)\cap L^\infty(\mm).
\end{equation}
Furthermore, in light of the identification \(L^q(T\X)\cong L^q_{\rm Sob}(T\X)\) we presented in Theorem
\ref{thm:Lq(TX)=LqSob(TX)}, we can unambiguously denote by \(D(\boldsymbol\div_q;\X)\) the space of all
\(\delta\in L^q_{\rm Sob}(T\X)\) for which there exists a measure \(\boldsymbol\div_q(\delta)\in\mathfrak M(\X)\),
called the \textbf{\(q\)-divergence} of \(\delta\), satisfying the following identity:
\[
\int\delta(f)\,\d\mm=-\int f\,\d\boldsymbol\div_q(\delta)\quad\text{ for every }f\in\LIP_{bs}(\X).
\]
Similarly for \(D(\div_q;\X)\). These definitions are fully consistent with Definitions \ref{def:vf_with_div}
and \ref{def:vf_with_Lq_div}.
\medskip

Let us now study the relation between Lipschitz and Sobolev derivations. First:
\begin{lemma}\label{lem:Sob_der_induces_Lip_der}
Let \((\X,\sfd,\mm)\) be a metric measure space and \(q\in(1,\infty]\). The restriction operator \(\varrho\colon L^q_{\rm Sob}(T\X)\to{\rm Der}^q(\X)\) defined by
\[
\varrho(\delta)(f)\coloneqq\delta(f)\quad\text{ for every }( f, \delta ) \in \LIP_{bs}(\X) \times L^q_{\rm Sob}(T\X),
\]
is well-defined and a homomorphism of \(L^q(\mm)\)-Banach \(L^\infty(\mm)\)-modules
satisfying \(|\varrho(\delta)|\leq|\delta|\) for every \(\delta\in L^q_{\rm Sob}(T\X)\). Moreover, for every
\(\delta\in D(\boldsymbol\div_q;\X)\), it holds that \(\varrho(\delta)\in{\rm Der}^q_{\mathfrak M}(\X)\) and \(\boldsymbol\div(\varrho(\delta))=\boldsymbol\div_q(\delta)\).
\end{lemma}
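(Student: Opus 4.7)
The plan is to unpack both definitions and check the four assertions (well-definedness, module homomorphism, pointwise norm inequality, divergence compatibility) in turn; no serious obstacle is expected because everything is designed to work.

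First I would verify that $\varrho(\delta)$ lands in the right target space. Given $\delta \in L^q_{\rm Sob}(T\X)$ and $f \in \LIP_{bs}(\X)$, interpret $f$ as the element $f^\mm \in W^{1,p}(\X)$ (note $\LIP_{bs}(\X) \subseteq \bar{N}^{1,p}(\X)$ and $|Df| \leq \lip_a(f)^\mm \in L^\infty(\mm)$ as recorded after Definition \ref{def:Sobolev}). By the defining inequality \eqref{eq:def_Sob_der} for Sobolev derivations,
\[
|\delta(f^\mm)| \leq |\delta|\,|Df| \leq |\delta|\,\lip_a(f)^\mm \quad \mm\text{-a.e.,}
\]
and since $|\delta| \in L^q(\mm)^+$ and $\lip_a(f) \in L^\infty(\mm)$, the right-hand side lies in $L^q(\mm)$. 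Hence $\varrho(\delta)(f) \coloneqq \delta(f^\mm) \in L^q(\mm)$, the map is well-defined, and the same inequality certifies the weak locality axiom (Definition \ref{def:Lip_der} i)) with the choice $g \coloneqq |\delta|$. Linearity is inherited from $\delta$, and the Leibniz rule on $\LIP_{bs}(\X) \times \LIP_{bs}(\X)$ is precisely the restriction of \eqref{eq:Leibniz_Sob_der} (valid on $W^{1,p}(\X) \cap L^\infty(\mm)$). Therefore $\varrho(\delta) \in {\rm Der}^q(\X)$.

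Next I would check that $\varrho$ respects the module structures. $L^\infty(\mm)$-linearity follows directly from comparing \eqref{eq:mod_struct_LqSob} with the pointwise operations on ${\rm Der}^q(\X)$ recalled right after Definition \ref{def:Lip_der}; both prescribe multiplication of the output in $L^1(\mm)$ (respectively $L^q(\mm)$) by the same $L^\infty$-function. The pointwise-norm inequality $|\varrho(\delta)| \leq |\delta|$ is now immediate: the characterisation of $|\varrho(\delta)|$ as the $\mm$-a.e.\ minimal $g \in L^q(\mm)^+$ satisfying \eqref{eq:def_Lip_der_ineq} (see \eqref{eq:pointwise_norm_Lip_der}) gives $|\varrho(\delta)| \leq |\delta|$ because $g=|\delta|$ is admissible by the preceding step. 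Consequently $\|\varrho(\delta)\|_{{\rm Der}^q(\X)} \leq \|\delta\|_{L^q_{\rm Sob}(T\X)}$, so $\varrho$ is continuous, hence a homomorphism of $L^q(\mm)$-Banach $L^\infty(\mm)$-modules.

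Finally, for the divergence statement, suppose $\delta \in D(\boldsymbol\div_q;\X)$ (in the Sobolev sense established just before the lemma). Then for every $f \in \LIP_{bs}(\X)$,
\[
\int \varrho(\delta)(f)\,\d\mm \;=\; \int \delta(f^\mm)\,\d\mm \;=\; -\int f\,\d\boldsymbol\div_q(\delta),
\]
which is exactly the characterising identity of Definition \ref{def:Lip_derivation_divergence} applied to the Lipschitz derivation $\varrho(\delta)$ with candidate measure $\boldsymbol\div_q(\delta) \in \mathfrak M(\X)$. By the uniqueness clause in that definition (duality with $\LIP_{bs}(\X)$), we conclude $\varrho(\delta) \in {\rm Der}^q_{\mathfrak M}(\X)$ and $\boldsymbol\div(\varrho(\delta)) = \boldsymbol\div_q(\delta)$, completing the proof. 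The only mildly delicate point is the target-space check in the first paragraph; beyond that, the lemma is a bookkeeping exercise because the axioms for Lipschitz derivations are strictly weaker than those satisfied by a Sobolev derivation restricted to $\LIP_{bs}(\X)^\mm$.
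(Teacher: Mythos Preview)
Your proof is correct and follows essentially the same approach as the paper's: both rely on the chain of inequalities \(|\varrho(\delta)(f)|\leq|\delta||Df|\leq|\delta|\,\lip_a(f)^\mm\), invoke \eqref{eq:Leibniz_Sob_der} for the Leibniz rule, observe that \(L^\infty(\mm)\)-linearity is immediate from the definitions, and read off the divergence identity directly. Your version is simply more detailed, spelling out why the image lands in \(L^q(\mm)\) and citing the relevant definitions explicitly.
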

\begin{proof}
Let \(\delta\in L^q_{\rm Sob}(T\X)\) be given. Then \(\varrho(\delta)\) is linear and
\(|\varrho(\delta)(f)|\leq|\delta||Df|\leq|\delta|\,\lip_a(f)\) for every \(f\in\LIP_{bs}(\X)\).
Since \(\varrho(\delta)\) satisfies also the Leibniz rule thanks to \eqref{eq:Leibniz_Sob_der},
we have shown that \(\varrho(\delta)\in{\rm Der}^q(\X)\) and \(|\varrho(\delta)|\leq|\delta|\).
Moreover, \(\varrho\) is clearly \(L^\infty(\mm)\)-linear, thus accordingly it is a homomorphism
of \(L^q(\mm)\)-Banach \(L^\infty(\mm)\)-modules. Finally, if \(\delta\in D(\boldsymbol\div_q;\X)\) is given,
then we have that \(\int\varrho(\delta)(f)\,\d\mm=-\int f\,\d\boldsymbol\div_q(\delta)\) for every \(f\in\LIP_{bs}(\X)\),
which proves the last claim.
\end{proof}

We anticipate that, in fact, up to an isometric embedding by a right-inverse of $\rho$, it holds that
\[
L^q_\Lip(T\X)\quad\text{ is an }L^q(\mm)\text{-Banach }L^\infty(\mm)\text{-submodule of }L^q_{\rm Sob}(T\X).
\]
We postpone its verification -- which requires some results from Section \ref{ss:divergence_measures}
-- to Theorem \ref{thm:Lip_and_Sob_tg_mod}.
\section{Applications to potential theory}
In this section, we study the dual energy functional (defined on the space of boundedly-finite signed Borel measures) of the so-called pre-Cheeger energy.
More precisely, we investigate the relation of the dual energy functional with the Sobolev \(p\)-capacity. This will enable us to show that measures in the finiteness domain
of the dual energy functional are divergences of derivations in a suitable class. As a consequence, we refine previous results concerning the relation between Lipschitz
and Sobolev tangent modules. Based on the above, we propose and study a notion of \(p\)-Laplacian in the general setting of metric measure spaces (under no additional assumptions).
\subsection{Divergence measures}\label{ss:divergence_measures}
Given a metric measure space \((\X,\sfd,\mm)\), \(p\in [1,\infty)\), and \(\mu\in\mathfrak M(\X)\), we define \({\sf F}_p(\mu)\in[0,\infty]\) as
\begin{equation}\label{eq:predual:energy}
{\sf F}_p(\mu)\coloneqq\sup\bigg\{\int f\,\d\mu\;\bigg|\;f\in\LIP_{bs}(\X)\text{ with }\int\lip_a(f)^p\,\d\mm\leq 1\bigg\}.
\end{equation}
We denote \(D({\sf F}_p)\coloneqq\big\{\mu\in\mathfrak M(\X)\,:\,{\sf F}_p(\mu)<+\infty\big\}\).
We say that \({\sf F}_p(\mu)\) is the \textbf{dual \(p\)-energy} of \(\mu\).
\begin{remark}\label{rmk:comments_def_F_p}{\rm
Let us collect here a few observations concerning the functional \({\sf F}_p\):
\begin{itemize}
\item[\(\rm i)\)] The term `dual \(p\)-energy' comes from the fact that \({\sf F}_p\) coincides with the conjugate of the convex \(p\)-homogeneous functional \(p\,\mathcal E_{p,\lip}\),
where \(\mathcal E_{p,\lip}\) denotes the pre-Cheeger energy functional defined in \eqref{eq:pre-Cheeger}. Namely, for every \(\mu\in\mathfrak M(\X)\), it holds that
\[
{\sf F}_p(\mu)=(p\,\mathcal E_{p,\lip})_*(\mu)\coloneqq\sup\bigg\{\int f\,\d\mu\;\bigg|\;f\in\LIP_{bs}(\X)\text{ with }p\,\mathcal E_{p,\lip}(f)\leq 1\bigg\}.
\]
\item[\(\rm ii)\)] Given any \(\mu\in D({\sf F}_p)\), we have that
\begin{equation}\label{eq:conseq_def_Fp_mu}
\int f\,\d\mu\leq{\sf F}_p(\mu)\|\lip_a(f)\|_{\mathcal L^p(\mm)}\quad\text{ for every }f\in\LIP_{bs}(\X).
\end{equation}
Indeed, let \(f\in\LIP_{bs}(\X)\) be a given function. If \(\|\lip_a(f)\|_{\mathcal L^p(\mm)}\neq 0\),
then we can consider the function \(\tilde f\coloneqq\|\lip_a(f)\|_{\mathcal L^p(\mm)}^{-1}f\in\LIP_{bs}(\X)\) which satisfies \(\int\lip_a(\tilde f)^p\,\d\mm=1\)
and thus accordingly \(\|\lip_a(f)\|_{\mathcal L^p(\mm)}^{-1}\int f\,\d\mu=\int\tilde f\,\d\mu\leq{\sf F}_p(\mu)\). If \(\|\lip_a(f)\|_{\mathcal L^p(\mm)}=0\), then, for every $\varepsilon \in \mathbb{R} \setminus \{0\}$, it holds that
\begin{align*}
    \int_{\X} \frac{1}{\varepsilon} f\,\d\mu \leq {\sf F}_p( \mu ).
\end{align*}
Thus \eqref{eq:conseq_def_Fp_mu} is trivially satisfied in this case. 
\item[\(\rm iii)\)]
If \((\X,\sfd,\mm)\) 
is such that \(\mm\) is a finite measure,
then \(\mu(\X)=0\) holds for
every \(\mu\in D({\sf F}_p)\cap\mathcal M(\X)\).
Indeed, for any \(k\in\N\), consider a non-decreasing sequence \((\eta^k_n)_n\subseteq\LIP_{bs}(\X;[0,k])\)
such that \(\lim_n\eta_n^k(x)=k\) for every \(x\in\X\) and \(\|\lip_a(\eta^k_n)\|_{\mathcal L^p(\mm)}\leq 1\) for every
\(n\in\N\). 
The monotone convergence theorem and
\eqref{eq:conseq_def_Fp_mu} then give
\[
k|\mu(\X)|=\lim_{n\to\infty}\bigg|\int\eta^k_n\,\d\mu^+ -\int\eta^k_n\,\d\mu^-\bigg|\leq{\sf F}_p(\mu)\quad\text{ for every }k\in\N,
\]
whence it follows that \(\mu(\X)=0\).
\end{itemize}
}\end{remark}
\begin{proposition}\label{prop:mu_ll_Cap}
Let \((\X,\sfd,\mm)\) be a metric measure space and \(p\in[1,\infty)\). Then it holds that
\[
\mu\ll{\rm Cap}_p\quad\text{ for every }\mu\in D({\sf F}_p).
\]
\end{proposition}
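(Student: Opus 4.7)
The plan is to reduce the claim to compact sets via inner regularity, use the Hahn decomposition of $\mu$ combined with inner regularity to spatially separate its positive and negative parts, and then feed a carefully chosen test function into the defining inequality \eqref{eq:conseq_def_Fp_mu} of $\mathsf F_p$.

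\textbf{Reduction to compact sets.} Since $|\mu| = \mu^+ + \mu^-$ is boundedly finite and $(\X,\sfd)$ is complete and separable, the restriction of $|\mu|$ to any bounded Borel set is a finite Radon measure. For any Borel $N \subseteq \X$ with $\mathrm{Cap}_p(N) = 0$ and any bounded Borel $B$, inner regularity yields $|\mu|(N\cap B) = \sup\{|\mu|(K) : K \subseteq N\cap B \text{ compact}\}$, and each such $K$ has $\mathrm{Cap}_p(K) = 0$ by monotonicity. Covering $\X$ by countably many bounded sets, it suffices to prove $|\mu|(K) = 0$ for every compact $K$ with $\mathrm{Cap}_p(K) = 0$. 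Since $\mathsf F_p(-\mu) = \mathsf F_p(\mu)$, the task further reduces to showing $\mu^+(K) = 0$.

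\textbf{Construction.} Fix such $K$ and a Hahn decomposition $\X = A^+ \cup A^-$ for $\mu$, and pick a bounded open set $B_0 \supseteq K$. Given $\delta > 0$, by inner regularity of the finite positive Borel measures $\mu^+|_{K\cap A^+}$ and $\mu^-|_{B_0 \cap A^-}$, select disjoint compacts
\[
L \subseteq K \cap A^+, \qquad L' \subseteq B_0 \cap A^-
\]
with $\mu^+((K\cap A^+)\setminus L) < \delta$ and $\mu^-((B_0\cap A^-)\setminus L') < \delta$. As $L\cap L'=\varnothing$, the distance $r \coloneqq \inf\{\sfd(x,y): x\in L, y\in L'\}$ is strictly positive, so the function
\[
\psi(x) \coloneqq \min\bigl\{1,\, \sfd(x,L')/r\bigr\}
\]
belongs to $\LIP_b(\X;[0,1])$, with $\Lip(\psi)\leq 1/r$, $\psi=1$ on $L$, $\psi=0$ on $L'$. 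Given $\eta>0$, apply \Cref{lem:Cap_with_Lip}, followed if necessary by multiplication with a fixed cut-off supported in $B_0$ and equal to $1$ on $K$, to obtain $\varphi\in\LIP_{bs}(\X;[0,1])$ with $\mathrm{supp}(\varphi)\subseteq B_0$, $\varphi=1$ on $K$, and $\int|\varphi|^p+\lip_a(\varphi)^p\,\d\mm\leq C_0\,\eta$, where $C_0$ depends only on $K$ and $B_0$. Set $f \coloneqq \psi\,\varphi \in \LIP_{bs}(\X;[0,1])$.

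\textbf{Estimate and conclusion.} By \eqref{eq:convex_ineq_slope}, $\lip_a(f) \leq \psi\,\lip_a(\varphi) + \varphi\,\lip_a(\psi) \leq \lip_a(\varphi) + \Lip(\psi)\,\varphi$, whence
\[
\|\lip_a(f)\|_{\mathcal L^p(\mm)} \leq \bigl(1 + \Lip(\psi)\bigr) C_0^{1/p}\,\eta^{1/p}.
\]
Since $f\geq \1_L$, $f=0$ on $L'$, $0\leq f\leq 1$, $\mathrm{supp}(f)\subseteq B_0$, and $\mu^-$ is concentrated on $A^-$,
\[
\int f\,\d\mu = \int f\,\d\mu^+ - \int f\,\d\mu^- \geq \mu^+(L) - \mu^-\bigl((B_0\cap A^-)\setminus L'\bigr) > \mu^+(K) - 2\delta.
\]
Combining this with \eqref{eq:conseq_def_Fp_mu} from \Cref{rmk:comments_def_F_p} gives
\[
\mu^+(K) - 2\delta \;<\; \mathsf F_p(\mu)\bigl(1 + \Lip(\psi)\bigr) C_0^{1/p}\,\eta^{1/p}.
\]
Letting $\eta \downarrow 0$ with $\delta$ (and hence $L, L', \psi$) fixed yields $\mu^+(K) \leq 2\delta$, and then $\delta \downarrow 0$ gives $\mu^+(K)=0$, as required.

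\textbf{Main obstacle.} The difficulty is that $\mathsf F_p$ controls the \emph{signed} pairing $\int f\,\d\mu$ and not $\int f\,\d|\mu|$, so one cannot simply plug a capacity-minimizer equal to $1$ on $K$ into \eqref{eq:conseq_def_Fp_mu}. The point of multiplying $\varphi$ by the Urysohn-type $\psi$ is precisely to kill the $\mu^-$-contribution at the price of a (fixed, $\delta$-dependent) Lipschitz factor $\Lip(\psi)$, while the $\mathcal L^p(\mm)$-smallness coming from $\mathrm{Cap}_p(K)=0$ is then used to drive the bound to zero by letting $\eta\downarrow 0$ before $\delta\downarrow 0$.
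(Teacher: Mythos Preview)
Your proof is correct and follows essentially the same strategy as the paper: reduce to compact sets via inner regularity and the Hahn decomposition, multiply a capacity-minimising function from \Cref{lem:Cap_with_Lip} by an auxiliary Lipschitz cutoff that suppresses the $\mu^-$-contribution, and then pass to the limit in the right order. The only cosmetic difference is that the paper takes $K$ inside the positive Hahn set from the outset and uses a cutoff shrinking toward $K$ (exploiting $\mu^-(K)=0$), whereas you leave $K$ arbitrary and instead separate compacts $L\subseteq K\cap A^+$ and $L'\subseteq B_0\cap A^-$ with a Urysohn-type function; both devices serve the same purpose and yield the same two-parameter limit argument.
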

\begin{proof}
Up to repeating the same argument for \(-\mu\) in place of \(\mu\), it suffices to show that \(\mu^+\ll{\rm Cap}_p\).
To this aim, we fix a Borel set \(P\subseteq\X\) such that \(\mu^+(\X\setminus P)=\mu^-(P)=0\).
Since \(\mu^+\) is inner regular, it is enough to verify that
\begin{equation}\label{eq:div_ll_Cap_aux}
\mu^+(K)=0\quad\text{ for every compact set }K\subseteq P\text{ such that }{\rm Cap}_p(K)=0.
\end{equation}
By virtue of Lemma \ref{lem:Cap_with_Lip}, we can find \((f_n)_n\subseteq\LIP_{bs}(\X;[0,1])\) such that \(f_n=1\) on \(K\) and
\begin{equation}\label{eq:div_ll_Cap_aux2}
\int|f_n|^p\,\d\mm+\int\lip_a(f_n)^p\,\d\mm\to 0\quad\text{ as }n\to\infty.
\end{equation}
The compactness of \(K\) ensures the existence of \(r_j>0\) such that the closed \(r_j\)-neighbourhood \(K_j\) of \(K\)
satisfies \(\mu^-(K_j\setminus K)\leq 1/j\). The cut-off function
\(\varphi_j\coloneqq(1-r_j^{-1}\sfd(\cdot,K))\vee 0\in\LIP_{bs}(\X;[0,1])\) is \(r_j^{-1}\)-Lipschitz and
its support is contained in \(K_j\). Since \(\lip_a(\varphi_j f_n)\leq\varphi_j\lip_a(f_n)+f_n\lip_a(\varphi_j)\), we deduce
that \(\|\lip_a(\varphi_j f_n)\|_{\mathcal L^p(\mm)}\leq\|\lip_a(f_n)\|_{\mathcal L^p(\mm)}+r_j^{-1}\|f_n\|_{\mathcal L^p(\mm)}\), thus \eqref{eq:conseq_def_Fp_mu} gives
\[\begin{split}
\mu^+(K)&\leq\int\varphi_j f_n\,\d\mu^+=\int\varphi_j f_n\,\d\mu +\int_{K_j\setminus K}\varphi_j f_n\,\d\mu^-\\
&\leq{\sf F}_p(\mu)\|\lip_a(\varphi_j f_n)\|_{\mathcal L^p(\mm)}+\mu^-(K_j\setminus K)\\
&\leq{\sf F}_p(\mu)\|\lip_a(f_n)\|_{\mathcal L^p(\mm)}+\frac{1}{r_j}{\sf F}_p(\mu)\|f_n\|_{\mathcal L^p(\mm)}+\frac{1}{j}.
\end{split}\]
Thanks to \eqref{eq:div_ll_Cap_aux2}, by first letting \(n\to\infty\), then \(j\to\infty\) we get 
\(\mu^+(K)=0\). This proves the validity of \eqref{eq:div_ll_Cap_aux}, whence the statement follows.
\end{proof}
\begin{theorem}\label{thm:L_mu}
Let \((\X,\sfd,\mm)\) be a metric measure space and \(p\in[1,\infty)\). Let \(\mu\in D({\sf F}_p)\) be given.
Define \(\tilde L_\mu\colon\LIP_{bs}(\X)\to\R\) as \(\tilde L_\mu(f)\coloneqq\int f\,\d\mu\) for every \(f\in\LIP_{bs}(\X)\).
Then there exists a unique linear map \(L_\mu\colon W^{1,p}(\X)\to\R\) that is continuous from \((W^{1,p}(\X),\sfd_{\rm en})\) to \(\R\)
and for which
\begin{equation}\label{eq:diagr_Lmu}\begin{tikzcd}
\LIP_{bs}(\X) \arrow[d,swap,"\pi_\mm"] \arrow[r,"\tilde L_\mu"] & \R \\
W^{1,p}(\X) \arrow[ur,swap,"L_\mu"] &
\end{tikzcd}\end{equation}
is a commutative diagram. It also holds that
\begin{equation}\label{eq:cont_Lmu}
|L_\mu(f)|\leq{\sf F}_p(\mu)\||Df|\|_{L^p(\mm)}\quad\text{ for every }f\in W^{1,p}(\X).
\end{equation}
Moreover, \(L_\mu\) is the unique linear map such that the diagram \eqref{eq:diagr_Lmu} commutes, \eqref{eq:cont_Lmu} holds, and
\begin{equation}\label{Lmu_consistency}
L_\mu(f) = \int {\sf qcr}(f)\,\d\mu\quad\text{ for every }f\in\mathcal W,
\end{equation}
where \(\mathcal W\) denotes the vector space of all functions in \( W^{1,p}(\X)\cap L^\infty(\mm)\) having bounded support.
\end{theorem}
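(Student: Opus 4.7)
The plan is to build \(L_\mu\) in three stages. First, I would define \(L_\mu\) on the subspace \(\mathcal W\) by the explicit formula \(L_\mu(f)\coloneqq\int{\sf qcr}(f)\,\d\mu\), which is well-posed because \(\mu\ll{\rm Cap}_p\) (Proposition \ref{prop:mu_ll_Cap}) makes the integral independent of the Borel representative of the \({\rm Cap}_p\)-class \({\sf qcr}(f)\), and because every \(f\in\mathcal W\) admits a bounded, boundedly-supported Newtonian representative that is integrable against the boundedly-finite measure \(|\mu|\). Linearity on \(\mathcal W\) is inherited from that of \({\sf qcr}\) (Theorem \ref{thm:qc_repr}), and the consistency \(L_\mu(\pi_\mm(g))=\tilde L_\mu(g)\) for \(g\in\LIP_{bs}(\X)\) holds because such a \(g\) is itself a quasicontinuous Newtonian representative of \(\pi_\mm(g)\), so \({\sf qcr}(\pi_\mm(g))=\pi_{{\rm Cap}_p}(g)\).

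Second, I would establish the sharp bound \(|L_\mu(f)|\leq{\sf F}_p(\mu)\||Df|\|_{L^p(\mm)}\) on \(\mathcal W\). Fix \(f\in\mathcal W\) with \(|f|\leq M\) and \(\mm\)-essential support in \(\bar B_R(\bar x)\); Theorem \ref{thm:density_nrg_Lip} yields \((g_n)\subseteq\LIP_{bs}(\X)\) with \(g_n^\mm\to f\) in \(L^p(\mm)\) and \(\lip_a(g_n)^\mm\to|Df|\) strongly in \(L^p(\mm)\), and I would set \(\hat g_n\coloneqq\eta\cdot((g_n\wedge M')\vee(-M'))\) for a fixed \(\eta\in\LIP_{bs}(\X;[0,1])\) equal to \(1\) on an open neighbourhood of \(\bar B_R(\bar x)\) and some \(M'>M\). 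The \(\hat g_n\) are then uniformly bounded in \(L^\infty\), supported in a common bounded set, and converge to \(f\) in \(L^p(\mm)\); the Leibniz-type estimate \eqref{eq:convex_ineq_slope} gives \(\lip_a(\hat g_n)\leq\eta\lip_a(g_n)+|\hat g_n|\lip_a(\eta)\), and the second term vanishes in \(L^p(\mm)\)-norm as \(n\to\infty\) because \(\lip_a(\eta)\) is supported where \(f\equiv 0\) \(\mm\)-a.e.\ while \(\hat g_n\to f\) in \(L^p(\mm)\), hence \(\limsup_n\|\lip_a(\hat g_n)\|_{L^p(\mm)}\leq\||Df|\|_{L^p(\mm)}\); combined with \eqref{eq:conseq_def_Fp_mu} this bounds \(\limsup_n|\tilde L_\mu(\hat g_n)|\) by \({\sf F}_p(\mu)\||Df|\|_{L^p(\mm)}\). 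On the other hand, \(|D\hat g_n|\) is dominated in \(L^p(\mm)\) by the convergent sequence \(\eta^\mm\lip_a(g_n)^\mm+M'\lip_a(\eta)^\mm\), so Proposition \ref{prop:good_repres_Sob} produces a subsequence with \({\sf qcr}(\hat g_n^\mm)\to{\sf qcr}(f)\) pointwise \({\rm Cap}_p\)-a.e., hence \(|\mu|\)-a.e.; uniform \(L^\infty\) and support bounds allow dominated convergence to conclude \(\tilde L_\mu(\hat g_n)\to L_\mu(f)\), so the bound follows. The extension to \(W^{1,p}(\X)\) is then a Cauchy argument via Lemma \ref{lem:trunc_cut-off}: \(\mathcal T_k f\in\mathcal W\) with \(\|\mathcal T_k f-f\|_{W^{1,p}(\X)}\to 0\), and the bound on \(\mathcal W\) makes \((L_\mu(\mathcal T_k f))_k\) Cauchy in \(\R\), so \(L_\mu(f)\coloneqq\lim_k L_\mu(\mathcal T_k f)\) defines a linear extension preserving \eqref{eq:cont_Lmu}, the commutativity of \eqref{eq:diagr_Lmu}, and \eqref{Lmu_consistency} (the latter because \(\mathcal T_k f=f\) eventually when \(f\in\mathcal W\)).

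Third, I would prove \(\sfd_{\rm en}\)-continuity. Given \(\sfd_{\rm en}(f_n,f)\to 0\), the \(L^p(\mm)\)-convergences \(f_n\to f\) and \(|Df_n|\to|Df|\) render \((|f_n|^p+|Df_n|^p)_n\) uniformly integrable and tight; for any \(\varepsilon>0\) there is \(k\) with \(\sup_n\|\1_{\{|f_n|>k\}\cup(\X\setminus B_k(\bar x))}^\mm(|f_n|+|Df_n|)\|_{L^p(\mm)}<\varepsilon\) (and analogously for \(f\)), which via Lemma \ref{lem:trunc_cut-off} and \eqref{eq:cont_Lmu} controls \(|L_\mu(f_n)-L_\mu(\mathcal T_k f_n)|\) and \(|L_\mu(f)-L_\mu(\mathcal T_k f)|\) by \({\sf F}_p(\mu)\varepsilon\) uniformly in \(n\). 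For fixed \(k\), the \(\mathcal T_k f_n\) are uniformly bounded with a common bounded support, and \(|D\mathcal T_k f_n|\) is dominated in \(L^p(\mm)\) by the convergent upper bound \(\eta_k^\mm|Df_n|+k\Lip(\eta_k)\1_{B_{k+1}(\bar x)}^\mm\); Proposition \ref{prop:good_repres_Sob} and dominated convergence give \(L_\mu(\mathcal T_k f_n)\to L_\mu(\mathcal T_k f)\) along a subsequence, and the subsequence principle closes the argument. Both uniqueness statements are now routine: \(\sfd_{\rm en}\)-density of \(\pi_\mm(\LIP_{bs}(\X))\) (Theorem \ref{thm:density_nrg_Lip}) settles the first, while \(W^{1,p}\)-density of \(\mathcal W\) (Lemma \ref{lem:trunc_cut-off}) together with \eqref{eq:cont_Lmu} settles the second. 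The main obstacle throughout is the \(\sfd_{\rm en}\)-continuity, since the bound \eqref{eq:cont_Lmu} by itself gives only \(W^{1,p}\)-norm continuity—\(|D(f_n-f)|\) is not controlled by \(\||Df_n|-|Df|\|_{L^p(\mm)}\)—and the resolution crucially exploits the quasicontinuous representation \eqref{Lmu_consistency}, Proposition \ref{prop:good_repres_Sob}, and the tightness/uniform-integrability bookkeeping above.
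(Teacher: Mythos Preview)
Your proposal is correct and follows essentially the same route as the paper's proof: define \(L_\mu\) on \(\mathcal W\) by the integral formula \eqref{Lmu_consistency}, establish the bound \eqref{eq:cont_Lmu} on \(\mathcal W\) by approximating with uniformly bounded, commonly supported Lipschitz functions (invoking Proposition~\ref{prop:good_repres_Sob} for \({\rm Cap}_p\)-a.e.\ convergence and dominated convergence against \(|\mu|\)), extend by density of \(\mathcal W\), and then prove \(\sfd_{\rm en}\)-continuity via the truncation--cutoff operators \(\mathcal T_k\) together with a subsequence argument. Your treatment is in fact more explicit than the paper's in two places: you construct the bounded, boundedly-supported Lipschitz approximants \(\hat g_n\) directly (the paper simply asserts such a sequence exists), and you phrase the tail control in Stage~3 via uniform integrability and tightness rather than passing to a subsequence with an \(L^p\) majorant---both approaches are equivalent here.
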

\begin{proof}
\ \\
\textbf{Step 1.} Recall from Lemma \ref{lem:trunc_cut-off} that \(\mathcal W\) is a strongly dense vector subspace of \(W^{1,p}(\X)\).
For any \(f\in\mathcal W\), consider any \((f_n)_n\subseteq\LIP_{bs}(\X)\) such that \(\sfd_{\rm en}(f_n^\mm,f)\to 0\)
and \(\|\lip_a(f_n)^\mm-|Df|\|_{L^p(\mm)}\to 0\). We can also assume that for some bounded Borel set \(B\subseteq\X\)
and for some \(C>0\), it holds that \({\rm spt}(f_n)\subseteq B\) and \(|f_n|\leq C\) for every \(n\in\N\).
Proposition \ref{prop:good_repres_Sob} ensures that, up to a non-relabelled subsequence, \(f_n\to{\sf qcr}(f)\) holds
\({\rm Cap}_p\)-a.e.\ on \(\X\), thus in particular \(f_n\to{\sf qcr}(f)\) holds \(\mu\)-a.e.\ on \(\X\) by Proposition
\ref{prop:mu_ll_Cap}. Since \(|\int f_n\,\d\mu|\leq{\sf F}_p(\mu)\|\lip_a(f_n)\|_{\mathcal L^p(\mm)}\) by \eqref{eq:conseq_def_Fp_mu}
and \(\int{\sf qcr}(f)\,\d\mu=\lim_n\int f_n\,\d\mu\) by the dominated convergence theorem, we get
\(|\int{\sf qcr}(f)\,\d\mu|\leq{\sf F}_p(\mu)\||Df|\|_{L^p(\mm)}\). Now,  for \(f\in\mathcal W\), define \(L_\mu(f)\coloneqq\int{\sf qcr}(f)\,\d\mu\),
so that \(L_\mu\) is linear, \(|L_\mu(f)|\leq{\sf F}_p(\mu)\||Df|\|_{L^p(\mm)}\)
for every \(f\in\mathcal W\), and \(L_\mu\circ\pi_\mm=\tilde L_\mu\).
Given that \(\||Df|\|_{L^p(\mm)}\leq\|f\|_{W^{1,p}(\X)}\)
for all \(f\in W^{1,p}(\X)\) and \(\mathcal W\) is strongly dense in \(W^{1,p}(\X)\), we can extend \(L_\mu\)
to a map \(L_\mu\colon W^{1,p}(\X)\to\R\) satisfying \(|L_\mu(f)|\leq{\sf F}_p(\mu)\||Df|\|_{L^p(\mm)}\) for every
\(f\in W^{1,p}(\X)\).\\
\textbf{Step 2.} We now show that \(L_\mu\) is \(\sfd_{\rm en}\)-continuous.
Pick a sequence \((f_n)_{n\in \N\cup\{\infty\}}\subseteq W^{1,p}(\X)\) such that \(f_n\to f_\infty\) and \(|Df_n|\to |Df_\infty|\) strongly in \(L^p(\mm)\) as \(n\to +\infty\).
Then up to a non-relabelled subsequence, we can find \(h\in L^p(\mm)^+\) such that \(|f_n|+|Df_n|\leq h\)
for every \(n\in\N\cup\{\infty\}\). By virtue of Proposition \ref{prop:good_repres_Sob}, we can also assume that
\({\sf qcr}(f_n)(x)\to{\sf qcr}(f_\infty)(x)\) for \({\rm Cap}_p\)-a.e.\ \(x\in\X\). Now consider \(\eta_k\), \(\psi_k\),
\(\mathcal T_k f_n\) as in Lemma \ref{lem:trunc_cut-off}. Thanks to \eqref{eq:trunc_cut-off_estimates}, we have
\(|D(\mathcal T_k f_n-f_n)|\leq 2\1_{E_k}^\mm h\) for every
\(n\in\N\cup\{\infty\}\), where \(E_k\coloneqq\{|h|>k\}\cup(\X\setminus B_k(\bar x))\). Given any \(\varepsilon>0\),
we find \(k(\varepsilon)\in\N\) so that \(\|2\1_{E_{k(\varepsilon)}}^\mm h\|_{L^p(\mm)}\leq\varepsilon\).
Since \(\eta_{k(\varepsilon)}\) and \(\psi_{k(\varepsilon)}\) are continuous, we have that
\[
{\sf qcr}(\mathcal T_{k(\varepsilon)}f_n)(x)\to{\sf qcr}(\mathcal T_{k(\varepsilon)}f_\infty)(x)\quad\text{ for }{\rm Cap}_p\text{-a.e.\ }x\in\X.
\]
Recalling that \(|\mathcal T_{k(\varepsilon)}f_n|\leq k(\varepsilon)\) and
\({\rm spt}(\mathcal T_{k(\varepsilon)}f_n)\subseteq B_{k(\varepsilon)+1}(\bar x)\) for every \(n\in\N\), we get that
\[
\lim_{n\to\infty}L_\mu(\mathcal T_{k(\varepsilon)}f_n)=L_\mu(\mathcal T_{k(\varepsilon)}f_\infty)
\]
by \textbf{Step 1} (cf.\ Remark \ref{rmk:bdd_cont_L_mu}). Therefore, by letting \(n\to\infty\) in
\[\begin{split}
\big|L_\mu(f_n)-L_\mu(f_\infty)\big|&\leq\big|L_\mu(f_n-\mathcal T_{k(\varepsilon)}f_n)\big|
+\big|L_\mu(\mathcal T_{k(\varepsilon)}f_n)-L_\mu(\mathcal T_{k(\varepsilon)}f_\infty)\big|
+\big|L_\mu(\mathcal T_{k(\varepsilon)}f_\infty-f_\infty)\big|\\
&\leq 2\,{\sf F}_p(\mu)\|2\1_{E_{k(\varepsilon)}}^\mm h\|_{L^p(\mm)}
+\big|L_\mu(\mathcal T_{k(\varepsilon)}f_n)-L_\mu(\mathcal T_{k(\varepsilon)}f_\infty)\big|\\
&\leq 2\,{\sf F}_p(\mu)\varepsilon+\big|L_\mu(\mathcal T_{k(\varepsilon)}f_n)-L_\mu(\mathcal T_{k(\varepsilon)}f_\infty)\big|,
\end{split}\]
we obtain that \(\lims_n|L_\mu(f_n)-L_\mu(f_\infty)|\leq 2\,{\sf F}_p(\mu)\varepsilon\), thus accordingly
\(L_\mu(f_n)\to L_\mu(f_\infty)\) as \(n\to\infty\) thanks to the arbitrariness of \(\varepsilon\). Since
the limit \(L_\mu(f_\infty)\) is independent of the chosen subsequence of \((f_n)_n\), we conclude that
also the original sequence satisfies \(L_\mu(f_n)\to L_\mu(f_\infty)\), proving the continuity with respect to \(\sfd_{\rm en}\). The claimed uniqueness follows thanks to the density of \(\LIP_{bs}(\X)^\mm\)
in \((W^{1,p}(\X),\sfd_{\rm en})\). 
Notice also that the last part of the statement holds true by construction and the strong density of \(\mathcal W\) in \(W^{1,p}(\X)\).
\end{proof}
\begin{remark}\label{rmk:bdd_cont_L_mu}{\rm
From \eqref{Lmu_consistency} and the dominated convergence theorem, we infer that \(L_\mu\colon\mathcal W\to\R\) enjoys the following continuity
property: if \((f_n)_n\subseteq\mathcal W\) is a sequence with \(\sup_n\|f_n\|_{L^\infty(\mm)}<+\infty\)
such that \(\bigcup_n{\rm spt}(f_n)\) is bounded and \({\sf qcr}(f_n)\to{\sf qcr}(f)\) holds \({\rm Cap}_p\)-a.e.\ for some
\(f\in W^{1,p}\), then \(f\in\mathcal W\) and it holds that \(L_\mu(f_n)\to L_\mu(f)\).
}\end{remark}

The following result shows that the map \(L_\mu\) associated with a \(\mu\in D({\sf F}_p)\) enjoys a stronger continuity property,
where the $\mu$-a.e. convergence of the quasicontinuous representative is not part of the assumption.
\begin{corollary}\label{cor:L_mu_cont_in_energy}
Let \((\X,\sfd,\mm)\) be a metric measure space and \(p\in[1,\infty)\). Let \(\mu\in D({\sf F}_p)\) be given.
Let \(L_\mu\) be as in 
Theorem \ref{thm:L_mu}. 
Assume that \((f_n)_{n\in\N\cup\{\infty\}}\subseteq W^{1,p}(\X)\) is a sequence such that $f_n \rightharpoonup f_\infty$ weakly in $L^{p}( \mm )$ and $\{|Df_n|\,:\,n\in\N\}$
is relatively weakly compact in $L^{p}( \mm )$. Then
\begin{equation}\label{eq:L_mu_cont_in_energy}
L_\mu(f_n)\to L_\mu(f_\infty)\quad\text{ as }n\to\infty.
\end{equation}
\end{corollary}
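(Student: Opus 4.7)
The plan is to reduce weak convergence to strong convergence by Mazur's lemma and then mimic the truncation argument from Step~2 of the proof of Theorem~\ref{thm:L_mu}. By \eqref{eq:cont_Lmu} the sequence $\big(L_\mu(f_n)\big)$ is bounded (thanks to the boundedness in $L^p(\mm)$ of $\{|Df_n|\}$), so by a standard subsequence argument it suffices to show that whenever a subsequence $(f_{n_k})$ satisfies $L_\mu(f_{n_k}) \to L \in \R$, necessarily $L = L_\mu(f_\infty)$. Passing to a further subsequence (not relabeled) via the assumed weak relative compactness of $\{|Df_n|\}$, I may assume $|Df_{n_k}| \rightharpoonup G$ weakly in $L^p(\mm)$ for some $G \in L^p(\mm)^+$; Proposition~\ref{prop:clos_wug} then yields $|Df_\infty| \leq G$.

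Next, I would apply Mazur's lemma to the weakly convergent sequence $\big((f_{n_k}, |Df_{n_k}|)\big)_k$ in $L^p(\mm) \times L^p(\mm)$, producing tail convex combinations
\[
g_k \coloneqq \sum_{j=k}^{N_k} \lambda_{k,j}\, f_{n_j}, \qquad H_k \coloneqq \sum_{j=k}^{N_k} \lambda_{k,j}\, |Df_{n_j}|,
\]
that converge strongly in $L^p(\mm)$ to $f_\infty$ and $G$ respectively; by the sublinearity of minimal weak upper gradients (Theorem~\ref{thm:calculus_rules_mwug}), $|Dg_k| \leq H_k$. Up to a further non-relabeled subsequence, I may also assume that $|g_k| + H_k \leq h$ holds $\mm$-a.e.\ for some $h \in L^p(\mm)^+$, and, by Proposition~\ref{prop:good_repres_Sob} applied to $(g_k, H_k)$, that ${\sf qcr}(g_k) \to {\sf qcr}(f_\infty)$ holds ${\rm Cap}_p$-a.e.

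At this point I would run the truncation argument from Step~2 of the proof of Theorem~\ref{thm:L_mu} verbatim: the estimate \eqref{eq:trunc_cut-off_estimates} for $\mathcal T_j g_k - g_k$ together with the domination by $h$ controls $|L_\mu(g_k - \mathcal T_j g_k)|$ uniformly in $k$ by an $L^p$-tail of $h$ which vanishes as $j \to \infty$; for each fixed large $j$, Remark~\ref{rmk:bdd_cont_L_mu} applied to the uniformly bounded and uniformly boundedly-supported sequence $(\mathcal T_j g_k)_k$ yields $L_\mu(\mathcal T_j g_k) \to L_\mu(\mathcal T_j f_\infty)$ as $k \to \infty$. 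Combining these two estimates gives $L_\mu(g_k) \to L_\mu(f_\infty)$. On the other hand, by linearity of $L_\mu$, the quantity $L_\mu(g_k) = \sum_{j=k}^{N_k} \lambda_{k,j} L_\mu(f_{n_j})$ is a tail convex combination of a sequence converging to $L$, hence itself converges to $L$. We conclude $L = L_\mu(f_\infty)$, which proves \eqref{eq:L_mu_cont_in_energy}.

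The main obstacle is that Mazur's lemma does not provide strong $L^p$-convergence of $|Dg_k|$ to $|Df_\infty|$, but only domination by a strongly convergent sequence $H_k \to G$; in particular $\sfd_{\rm en}$-continuity of $L_\mu$ is not directly applicable, and one is forced to re-invoke the internal mechanism of Theorem~\ref{thm:L_mu} (Proposition~\ref{prop:good_repres_Sob} together with Remark~\ref{rmk:bdd_cont_L_mu}) rather than treating $L_\mu$ as a black box.
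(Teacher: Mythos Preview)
Your proposal is correct and follows essentially the same route as the paper: a subsequence argument reduces to the case where $|Df_{n_k}|\rightharpoonup G$ weakly, Mazur's lemma produces tail convex combinations $g_k\to f_\infty$ with $|Dg_k|\leq H_k\to G$ strongly, and then the truncation machinery from Step~2 of Theorem~\ref{thm:L_mu} (via Proposition~\ref{prop:good_repres_Sob} and Remark~\ref{rmk:bdd_cont_L_mu}) yields $L_\mu(g_k)\to L_\mu(f_\infty)$, while linearity forces $L_\mu(g_k)\to L$. The paper merely organises the same argument into three layered cases (strong convergence with domination, weak convergence of both $f_n$ and a dominating sequence, and finally relative weak compactness), whereas you unfold them into a single pass; your closing remark that $\sfd_{\rm en}$-continuity alone is insufficient and one must re-enter the proof of Theorem~\ref{thm:L_mu} is precisely the point the paper addresses by stating its first case ``follows along the same lines as in Step~2''.
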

\begin{proof}
First of all, notice that the statement holds true under the stronger assumption that \(f_n\to f_\infty\) strongly in \(L^p(\mm)\) and 
$|Df_n| \leq g_n$ for all $n \in \N$, for some sequence $( g_n )_{ n \in \N \cup \{\infty\} } \subseteq L^{p}( \mm )$ such that
\(g_n\to g_\infty\) 
strongly in \(L^p(\mm)\). The proof follows along the same lines as in \textbf{Step 2} of the proof of Theorem \ref{thm:L_mu}.

Now, let us prove the statement under the weaker assumption that \(f_n\rightharpoonup f_\infty\) and \(g_n\rightharpoonup g_\infty\)
weakly in \(L^p(\mm)\). Since \(\sup_n|L_\mu(f_n)|\leq{\sf F}_p(\mu)\sup_n\|g_n\|_{L^p(\mm)}<+\infty\)
we have (up to a non-relabelled subsequence) that \(L_\mu(f_n)\to\lambda\)
for some \(\lambda\in\R\). By Mazur's lemma, we can find \(h_n\in{\rm conv}(\{f_i\,:\,i\geq n\})\) and
\(\rho_n\in L^p(\mm)^+\) with \(|Dh_n|\leq\rho_n\) such that \(h_n\to f_\infty\) and \(\rho_n\to g_\infty\)
strongly in \(L^p(\mm)\). Therefore, the first part of the proof ensures that \(L_\mu(h_n)\to L_\mu(f_\infty)\).
Notice also that \(L_\mu(h_n)\in{\rm conv}(\{L_\mu(f_i)\,:\,i\geq n\})\) thanks to the linearity of the operator \(L_\mu\),
so that \(L_\mu(f_\infty)=\lim_nL_\mu(h_n)=\lambda\) and therefore \(L_\mu(f_n)\to L_\mu(f_\infty)\) for the original sequence.

Finally, suppose that \(f_n\rightharpoonup f_\infty\) weakly in $L^{p}( \mm )$ and that $\{|Df_n|\}_{ n \in \N }$ is relatively weakly compact in
\(L^p(\mm)\). As $( L_\mu( f_n ) )_{ n = 1 }^{ \infty }$ is bounded, it suffices to prove that an arbitrary subsequential limit is equal to $L_\mu( f_\infty )$. To this end, if 
\(L_\mu( f_{n_j} ) \rightarrow \lambda\) for some subsequence $(n_j)_{j=1}^{\infty}$, then a further subsequence $( n_{j_k} )_{k=1}^{\infty}$ has the property that $|Df_{n_{j_k}}|$ converges weakly in \(L^p( \mm )\). Therefore, $L_\mu(f_\infty) = \lambda$ by the previous case above.
\end{proof}
\begin{remark}\label{rmk:L_mu_cont_in_energy_p>1}{\rm
In case \(p \in (1,\infty)\), a necessary and sufficient condition for the assumptions in Corollary \ref{cor:L_mu_cont_in_energy} 
to be fulfilled is that \(f_n\rightharpoonup f_\infty\) weakly in $L^p(\mm)$ and \(\sup_n\||Df_n|\|_{L^p(\mm)}<+\infty\), by the reflexivity of $L^{p}( \mm )$. 
In case \(p=1\), thanks to the Dunford--Pettis theorem, a necessary and sufficient condition is that \(f_n\rightharpoonup f_\infty\) weakly in $L^{1}( \mm )$ and the family $\mathcal{F} = \left\{ |Df_n| \colon n \in \mathbb{N} \right\}$ is bounded in $L^{1}( \mm )$ and equi-integrable; the latter property on $\mathcal{F}$ holds, for example, if there exists a bounded sequence $( g_n )_{ n \in \mathbb{N} } \subseteq L^{1}( \mm )$ such that $|Df_n| \leq g_n$ for $n \in \mathbb{N}$ and $g_n \rightharpoonup g$ in $L^{1}( \mm )$ for some $g \in L^{1}( \mm )$,
see e.g.\ \cite[Chapter 4, Theorem 4.7.20]{Bog:07} for more details.
}\end{remark}
\begin{remark}\label{rmk:L_mu_for_mu_finite}{\rm
When \(\mu\in D({\sf F}_p)\cap\mathcal M(\X)\), \eqref{Lmu_consistency} can be improved to
\[
L_\mu(f)=\int{\sf qcr}(f)\,\d\mu\quad\text{ for every }f\in W^{1,p}(\X)\cap L^\infty(\mm).
\]
The validity of this claim follows from Corollary \ref{cor:L_mu_cont_in_energy} and the proof of Theorem \ref{thm:L_mu}.
}\end{remark}
\subsubsection*{Dual dynamic cost}
If \((\X,\sfd,\mm)\) is a metric measure space and \(q\in(1,\infty]\), we define the \textbf{dual dynamic cost} \({\sf D}_q(\mu)\) as
\begin{equation}\label{eq:def_D_q}
{\sf D}_q(\mu)\coloneqq\inf\Big\{\|b\|_{{\rm Der}^q(\X)}\;\Big|\;b\in{\rm Der}^q_{\mathfrak M}(\X),\,{\bf div}(b)=\mu\Big\}\in[0,\infty]
\end{equation}
for every \(\mu\in\mathfrak M(\X)\). We denote the finiteness domain of \({\sf D}_q\) by
\[
D({\sf D}_q)\coloneqq\big\{\mu\in\mathfrak M(\X)\;\big|\;{\sf D}_q(\mu)<+\infty\big\}.
\]

\begin{lemma}\label{lem:vector_field_induced_by_mu}
Let \((\X,\sfd,\mm)\) be a metric measure space and \(p\in[1,\infty)\). Let \(\mu\in D({\sf F}_p)\) be given.
Then there exists a vector field \(v_\mu\in D(\boldsymbol\div_q;\X)\) such that \(\|v_\mu\|_{L^q(T\X)}\leq{\sf F}_p(\mu)\) and
\begin{equation}\label{eq:vector_field_induced_by_mu}
L_\mu(f)=-\int\d f(v_\mu)\,\d\mm\quad\text{ for every }f\in W^{1,p}(\X),
\end{equation}
where \(L_\mu\colon W^{1,p}(\X)\to\R\) is the functional given by Theorem \ref{thm:L_mu}. In particular, \(\boldsymbol\div_q(v_\mu)=\mu\).
\end{lemma}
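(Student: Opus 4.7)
The plan is to produce $v_\mu$ by a Hahn--Banach extension argument, exploiting that by Theorem~\ref{thm:L_mu} the functional $L_\mu$ factors through the differential $\d\colon W^{1,p}(\X)\to L^p(T^*\X)$ with the correct quantitative bound.

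First, I would observe that by the continuity estimate \eqref{eq:cont_Lmu}, whenever $f_1,f_2\in W^{1,p}(\X)$ satisfy $\d f_1=\d f_2$, i.e.\ $|D(f_1-f_2)|=0$, one has $L_\mu(f_1)=L_\mu(f_2)$. Hence the map
\[
T\colon\d(W^{1,p}(\X))\subseteq L^p(T^*\X)\longrightarrow\R,\qquad T(\d f)\coloneqq -L_\mu(f)
\]
is a well-defined linear functional on the subspace $V\coloneqq\d(W^{1,p}(\X))$. Since $\|\d f\|_{L^p(T^*\X)}=\||Df|\|_{L^p(\mm)}$ by Definition~\ref{def:cotg_mod} and \eqref{eq:cont_Lmu} gives $|T(\d f)|\le{\sf F}_p(\mu)\|\d f\|_{L^p(T^*\X)}$, the operator norm of $T$ on $V$ is at most ${\sf F}_p(\mu)$.

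Next, by the classical Hahn--Banach theorem, I would extend $T$ to a continuous linear functional $\hat T\in L^p(T^*\X)'$ of norm at most ${\sf F}_p(\mu)$. Since $p\in[1,\infty)$, Proposition~\ref{prop:map_Int} provides an isometric Banach-space isomorphism $\textsc{Int}_{L^p(T^*\X)}\colon L^q(T\X)=L^p(T^*\X)^*\to L^p(T^*\X)'$. Therefore there exists a unique $v_\mu\in L^q(T\X)$ with
\[
\hat T(\omega)=\int\omega(v_\mu)\,\d\mm\quad\text{for every }\omega\in L^p(T^*\X),\qquad\|v_\mu\|_{L^q(T\X)}=\|\hat T\|\le{\sf F}_p(\mu).
\]
Taking $\omega=\d f$ with $f\in W^{1,p}(\X)$ and recalling $T(\d f)=-L_\mu(f)$ yields precisely \eqref{eq:vector_field_induced_by_mu}.

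Finally, to identify the divergence, I would test \eqref{eq:vector_field_induced_by_mu} against $f\in\LIP_{bs}(\X)\subseteq W^{1,p}(\X)$: by the factorization in Theorem~\ref{thm:L_mu} one has $L_\mu(f)=\tilde L_\mu(f)=\int f\,\d\mu$, so
\[
\int\d f(v_\mu)\,\d\mm=-L_\mu(f)=-\int f\,\d\mu\quad\text{for every }f\in\LIP_{bs}(\X),
\]
which by Definition~\ref{def:vf_with_div} means $v_\mu\in D(\boldsymbol\div_q;\X)$ with $\boldsymbol\div_q(v_\mu)=\mu$. The argument is essentially a bounded extension of a linear functional; the only non-routine point is the well-posedness of $T$ on $V$, which is where \eqref{eq:cont_Lmu} from Theorem~\ref{thm:L_mu} is crucial, and the identification $L^p(T^*\X)'\cong L^q(T\X)$, which requires $p<\infty$ and is ensured by Proposition~\ref{prop:map_Int}.
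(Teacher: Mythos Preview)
Your proof is correct and follows essentially the same approach as the paper's own proof: define a bounded linear functional on the subspace $\{\d f:f\in W^{1,p}(\X)\}$ via $-L_\mu$, extend it by Hahn--Banach, and identify the extension with an element of $L^q(T\X)$ via $\textsc{Int}_{L^p(T^*\X)}^{-1}$. Your explicit justification of the well-posedness of $T$ (using that $|D(f_1-f_2)|=0$ forces $L_\mu(f_1)=L_\mu(f_2)$) is a detail the paper leaves implicit, but otherwise the arguments are identical.
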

\begin{proof}
Given that \(\d\colon W^{1,p}(\X)\to L^p(T^*\X)\) is linear, the set \(\mathbb V\coloneqq\{\d f\,:\,f\in W^{1,p}(\X)\}\) is a
vector subspace of \(L^p(T^*\X)\). Let us define \(\Phi\colon\mathbb V\to\R\) as \(\Phi(\d f)\coloneqq-L_\mu(f)\)
for every \(f\in W^{1,p}(\X)\). Notice that \(|\Phi(\d f)|\leq{\sf F}_p(\mu)\|\d f\|_{L^p(T^*\X)}\) for every
\(f\in W^{1,p}(\X)\) by Theorem \ref{thm:L_mu}, thus in particular \(\Phi\) is well-defined.
By the Hahn--Banach theorem, we can extend \(\Phi\) to an element \(\Phi\in L^p(T^*\X)'\) such that
\(\|\Phi\|_{L^p(T^*\X)'}\leq{\sf F}_p(\mu)\). Therefore, the element \(v_\mu\coloneqq\textsc{Int}_{L^p(T^*\X)}^{-1}(\Phi)\in L^q(T\X)\)
satisfies \(\|v_\mu\|_{L^q(T\X)}\leq{\sf F}_p(\mu)\) and \(\int\d f(v_\mu)\,\d\mm=\Phi(\d f)=-L_\mu(f)\) for every \(f\in W^{1,p}(\X)\).
In particular, we have that \(\int\d f(v_\mu)\,\d\mm=-L_\mu(f)=-\int f\,\d\mu\) for every \(f\in\LIP_{bs}(\X)\),
so that \(\boldsymbol\div_q(v_\mu)=\mu\).
\end{proof}
The next result provides the converse inclusion and establishes the equivalence between \({\sf D}_q\) and the dual \(p\)-energy \({\sf F}_p\).
\begin{theorem}[\({\sf F}_p={\sf D}_q\)]\label{thm:F_p=D_q}
Let \((\X,\sfd,\mm)\) be a metric measure space and \(p\in[1,\infty)\). Then it holds
\begin{equation}\label{eq:F_p=D_q}
{\sf F}_p(\mu)={\sf D}_q(\mu)\quad\text{ for every }\mu\in\mathfrak M(\X).
\end{equation}
In particular, \(D({\sf F}_p)\) coincides with the image of \({\bf div}\colon{\rm Der}^q_{\mathfrak M}(\X)\to\mathfrak M(\X)\).
Moreover, for every \(\mu\in D({\sf D}_q)\) the infimum in \eqref{eq:def_D_q} is a minimum, and each minimiser
of \eqref{eq:def_D_q} is an acylic derivation. In particular, every vector field \(v_\mu\) as in Lemma
\ref{lem:vector_field_induced_by_mu} satisfies \(\|v_\mu\|_{L^q(T\X)}={\sf F}_p(\mu)={\sf D}_q(\mu)\).
\end{theorem}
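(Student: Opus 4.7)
For the inequality \({\sf F}_p(\mu)\leq{\sf D}_q(\mu)\), fix any \(b\in{\rm Der}^q_{\mathfrak M}(\X)\) with \({\bf div}(b)=\mu\). If \(f\in\LIP_{bs}(\X)\) satisfies \(\int\lip_a(f)^p\,\d\mm\leq 1\), then the definition of divergence together with the bound \(|b(f)|\leq|b|\lip_a(f)\) and H\"older's inequality give
\[
\int f\,\d\mu \,=\, -\int b(f)\,\d\mm \,\leq\, \int|b|\lip_a(f)\,\d\mm \,\leq\, \|b\|_{{\rm Der}^q(\X)}.
\]
Taking the supremum over \(f\) and the infimum over \(b\) yields \({\sf F}_p(\mu)\leq{\sf D}_q(\mu)\); in particular, it suffices to treat \(\mu\in D({\sf F}_p)\) below, the other case being trivial.

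For such \(\mu\), Lemma \ref{lem:vector_field_induced_by_mu} produces \(v_\mu\in L^q(T\X)\) with \(\boldsymbol\div_q(v_\mu)=\mu\) and \(\|v_\mu\|_{L^q(T\X)}\leq{\sf F}_p(\mu)\). Under the module isomorphism of Theorem \ref{thm:Lq(TX)=LqSob(TX)}, \(v_\mu\) corresponds to a Sobolev derivation \(\delta_\mu\in L^q_{\rm Sob}(T\X)\) of the same pointwise norm and the same divergence. Lemma \ref{lem:Sob_der_induces_Lip_der} then supplies \(b_\mu\coloneqq\varrho(\delta_\mu)\in{\rm Der}^q_{\mathfrak M}(\X)\) with \({\bf div}(b_\mu)=\mu\) and \(|b_\mu|\leq|\delta_\mu|=|v_\mu|\) \(\mm\)-a.e. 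Consequently
\[
{\sf D}_q(\mu) \,\leq\, \|b_\mu\|_{{\rm Der}^q(\X)} \,\leq\, \||\delta_\mu|\|_{L^q(\mm)} \,=\, \|v_\mu\|_{L^q(T\X)} \,\leq\, {\sf F}_p(\mu),
\]
which, combined with the first step, forces all four quantities to coincide. This simultaneously gives \eqref{eq:F_p=D_q}, shows that \(b_\mu\) attains the infimum in \eqref{eq:def_D_q}, and proves that every \(v_\mu\) as in Lemma \ref{lem:vector_field_induced_by_mu} automatically satisfies \(\|v_\mu\|_{L^q(T\X)}={\sf F}_p(\mu)={\sf D}_q(\mu)\).

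It remains to show that every minimiser \(b\) of \eqref{eq:def_D_q} is acyclic. Apply Proposition \ref{prop:acycl_cyl_decomp} to write \(b=a+c\) with \(c\) a cycle and \(a\) an acyclic subderivation of \(b\). Then \({\bf div}(a)={\bf div}(b)=\mu\), so \(a\) is admissible, while the subderivation identity \(|a|+|c|=|b|\) (see \eqref{eq:equiv_subder}) combined with the minimality of \(b\) enforces \(\|a\|_{{\rm Der}^q(\X)}=\|b\|_{{\rm Der}^q(\X)}\). For \(q\in(1,\infty)\), integrating the elementary pointwise inequality \((|a|+|c|)^q\geq|a|^q+|c|^q\) (strict wherever \(|a|\wedge|c|>0\)) gives
\[
\|b\|_{L^q(\mm)}^q \,\geq\, \|a\|_{L^q(\mm)}^q + \|c\|_{L^q(\mm)}^q,
\]
and therefore \(\|c\|_{L^q(\mm)}=0\), so \(c=0\) and \(b=a\) is acyclic. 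The main obstacle is the endpoint \(q=\infty\) (i.e.\ \(p=1\)), where strict convexity of the \(L^q\)-norm is unavailable: a direct comparison no longer rules out a nontrivial cycle \(c\) concentrated off the essential-supremum set of \(|b|\), and one must instead iterate the decomposition by extracting cycles supported on \(\{|c|>0\}\), or exploit the Leibniz-type localisation \eqref{eq:Leibniz_Lip_der} to reduce to the finite-exponent case on sublevels and force \(|c|=0\) everywhere.
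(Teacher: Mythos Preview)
Your argument for \({\sf F}_p={\sf D}_q\), the attainment of the infimum, and the norm identity for \(v_\mu\) is correct and matches the paper's line for line. For acyclicity of minimisers when \(q\in(1,\infty)\), your strict-convexity inequality \((|a|+|c|)^q\ge|a|^q+|c|^q\) works; the paper's route is marginally cleaner: from \(|a|\le|b|\) \(\mm\)-a.e.\ (subderivation identity) and \(\|a\|_{L^q}=\|b\|_{L^q}\) (minimality of \(b\) plus \(|a|\le|b|\)), conclude directly \(|a|=|b|\) a.e., hence \(|b-a|=|b|-|a|=0\). Either way the case \(q<\infty\) is settled.

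Your caution at \(q=\infty\) is well placed, but the fixes you sketch cannot succeed, because the assertion ``every minimiser is acyclic'' is \emph{false} at \(q=\infty\). Take \(\X=[0,1]\sqcup S^1\) with arc-length measure and \(\mu=\delta_0-\delta_1\) supported on the interval. Then \({\sf D}_\infty(\mu)={\sf F}_1(\mu)=1\); the derivation \(b=\partial_x\oplus\partial_\theta\) satisfies \({\bf div}(b)=\mu\) and \(|b|\equiv 1\), so it is a minimiser, yet \(c=0\oplus\partial_\theta\) is a nontrivial subcycle. The paper's own proof has the same gap: the implication ``\(|a|\le|b|\) a.e.\ and \(\|a\|_{L^q}=\|b\|_{L^q}\Rightarrow|b-a|=0\)'' relies on pointwise equality from equal \(L^q\)-norms, which fails for \(q=\infty\). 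What \emph{does} survive at \(q=\infty\) is the existence of an acyclic minimiser: applying Proposition~\ref{prop:acycl_cyl_decomp} to any minimiser \(b\) yields an acyclic \(a\) with \({\bf div}(a)=\mu\) and \(|a|\le|b|\), hence \(\|a\|_{L^\infty}\le\|b\|_{L^\infty}={\sf D}_\infty(\mu)\).
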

\begin{proof}
Let us first prove that \({\sf F}_p(\mu)\leq{\sf D}_q(\mu)\) for every \(\mu\in\mathfrak M(\X)\). To this aim, fix any \(f\in\LIP_{bs}(\X)\) with
\(\int\lip_a(f)^p\,\d\mm\leq 1\) and any \(b\in{\rm Der}^q_{\mathfrak M}(\X)\) with \({\bf div}(b)=\mu\). Therefore, we have that
\[
\int f\,\d\mu=\int f\,\d{\bf div}(b)=-\int b(f)\,\d\mm\leq\|b\|_{{\rm Der}^q(\X)}\|\lip_a(f)\|_{\mathcal L^p(\mm)}\leq\|b\|_{{\rm Der}^q(\X)}.
\]
Thanks to the arbitrariness of \(f\) and \(b\), we deduce that \({\sf F}_p(\mu)\leq{\sf D}_q(\mu)\). Let us now verify the
converse inequality \({\sf D}_q(\mu)\leq{\sf F}_p(\mu)\). This inequality is trivially verified if \({\sf F}_p(\mu)=+\infty\),
so let us assume that \({\sf F}_p(\mu)<+\infty\). Consider \(v_\mu\in L^q(T\X)\) as in Lemma \ref{lem:vector_field_induced_by_mu}.
Letting \(\delta_\mu\coloneqq{\rm I}^{-1}(v_\mu)\in L^q_{\rm Sob}(T\X)\) (with \(\rm I\) as in Theorem \ref{thm:Lq(TX)=LqSob(TX)}),
we then consider \(b_\mu\coloneqq\varrho(\delta_\mu)\in{\rm Der}^q(\X)\) as in Lemma \ref{lem:Sob_der_induces_Lip_der}. Notice that
\(b_\mu\in{\rm Der}^q_{\mathfrak M}(\X)\) and \(\boldsymbol\div(b_\mu)=\mu\),
whence it follows that \({\sf D}_q(\mu)\leq\|b_\mu\|_{{\rm Der}^q(\X)}\leq{\sf F}_p(\mu)\). This yields
\({\sf F}_p(\mu)={\sf D}_q(\mu)=\|b_\mu\|_{{\rm Der}^q(\X)}\). This shows that the claimed identity \eqref{eq:F_p=D_q} holds,
and that the infimum in \eqref{eq:def_D_q} is in fact  a minimum whenever \({\sf D}_q(\mu)<+\infty\).

Finally, assume that \(b\in{\rm Der}^q_{\mathfrak M}(\X)\) satisfies \(\|b\|_{{\rm Der}^q(\X)}={\sf D}_q({\bf div}(b))\).
Let \(a\in{\rm Der}^q(\X)\) be as in Proposition \ref{prop:acycl_cyl_decomp}. Since \(b-a\in{\rm Der}^q_{\mathfrak M}(\X)\) and \({\bf div}(b-a)=0\),
we get \(a=b-(b-a)\in{\rm Der}^q_{\mathfrak M}(\X)\) and \({\bf div}(a)={\bf div}(b)\), in other words \(a\) is a competitor
for \({\sf D}_q({\bf div}(b))\) and thus \(\|b\|_{{\rm Der}^q(\X)}\leq\|a\|_{{\rm Der}^q(\X)}\). Since \(|a|\leq|a|+|b-a|=|b|\),
we conclude that \(|b-a|=0\), which means that \(b=a\) is acyclic.
\end{proof}
\begin{remark}\label{rmk:integrability_issue_D_q}{\rm
In the case where the reference measure \(\mm\) is infinite and \(q\in(1,\infty]\), it can happen that there exists a finite measure
\(\mu\in\mathcal M(\X)\) with \({\sf D}_q(\mu)<+\infty\) such that
\[
\big\{b\in{\rm Der}^1_{\mathcal M}(\X)\;\big|\;{\bf div}(b)=\mu\big\}=\varnothing. 
\]
For example, for \(n\in\N\) we call \(I_n\) the interval joining \((0,n)\in\R^2\) and \((n^2,n)\in\R^2\),
and we consider the space \(\X\coloneqq\bigcup_{n\in\N}I_n\subseteq\R^2\) endowed with the (restriction of the) Euclidean distance and
with the boundedly-finite Borel measure \(\mm\coloneqq\sum_{n\in\N}\mathcal H^1|_{I_n}\), where \(\mathcal H^1\) stands for the \(1\)-dimensional
Hausdorff measure. Now consider the finite signed Borel measure
\[
\mu\coloneqq\sum_{n\in\N}\frac{1}{n^2}\delta_{(0,n)}-\sum_{n\in\N}\frac{1}{n^2}\delta_{(n^2,n)}\in\mathcal M(\X).
\]
The plan \(\ppi\coloneqq\sum_{n\in\N}\frac{1}{n^2}\delta_{\gamma^n}\) on \(\X\) (where the curve \(\gamma^n\colon[0,1]\to\X\) is defined as \(\gamma^n_t\coloneqq(tn^2,n)\)
for every \(t\in[0,1]\)) satisfies \((\e_0)_\#\ppi=\mu^+\) and \((\e_1)_\#\ppi=\mu^-\). Observe also that \({\rm Bar}(\ppi)=\sum_{n\in\N}\frac{1}{n^2}\1_{I_n}^\mm\) and thus
\(\|{\rm Bar}(\ppi)\|_{L^2(\mm)}^2=\sum_{n\in\N}\frac{1}{n^2}<+\infty\). Hence, the derivation \(b_\sppi\in{\rm Der}^2_{\mathcal M}(\X)\)
induced by \(\ppi\) as in Proposition \ref{prop:der_induced_by_plan} satisfies \({\bf div}(b_\sppi)=\mu\), which implies
that \({\sf D}_2(\mu)<+\infty\). Let us now show that the set of \(b\in{\rm Der}^1_{\mathcal M}(\X)\) satisfying
\({\bf div}(b)=\mu\) is empty. We argue by contradiction: assume that such a derivation \(b\) exists. Letting \(a\) be
as in Proposition \ref{prop:acycl_cyl_decomp}, we know from Theorem \ref{thm:plans_vs_der_vs_curr} and Remark \ref{rmk:improved_PS} that there
is a plan \(\tilde\ppi\) on \(\X\) concentrated on non-constant Lipschitz curves of constant speed that
satisfies \({\rm Bar}(\tilde\ppi)=|b|\in L^1(\mm)\), \((\e_0)_\#\tilde\ppi={\bf div}(b)^+=\mu^+\), and
\((\e_1)_\#\tilde\ppi={\bf div}(b)^-=\mu^-\). Since $\|{\rm Bar}(\tilde\ppi)\|_{L^1(\mm)}$ is larger than
the optimal transport cost (with $c=\sfd$) from $\mu^+$ to $\mu^-$, to reach a contradiction it suffices to show that
this cost is infinite. This follows immediately from Kantorovich duality, using the Lipschitz function $\phi(x,y)=x$.
}\end{remark}

We previously proved in \Cref{prop:mu_ll_Cap} that the divergence measure is absolutely continuous with respect to $\mathrm{Cap}_p$. The following theorem refines this conclusion using $p$-exceptional sets. To this end, recall that a set $E$ is $p$-exceptional if $\Mod_p( \Gamma_E ) = 0$ where $\Gamma_E$ is the family of nonconstant curves intersecting $E$. We also recall that a set $E \subseteq \X$ satisfies $\mathrm{Cap}_p(E) = 0$ if and only if $\mm( E ) = 0$ and $E$ is $p$-exceptional, cf. \cite[Proposition 7.2.8]{HKST:15}.
\begin{theorem}\label{thm:refined_F_p_Cap}
Let \((\X,\sfd,\mm)\) be a metric measure space and \(q\in(1,\infty]\). Let \(b\in{\rm Der}^q_{\mathfrak M}(\X)\)
be a given derivation. Let \(E\subseteq\X\) be a Borel set. Then it holds that
\[\begin{split}
&{\bf div}(b)^-(E)>0\quad\Longrightarrow\quad\Mod_p\big(\big\{\gamma\in\e_0^{-1}(E)\;\big|\;\ell(\gamma)>0\big\}\big)>0,\\
&{\bf div}(b)^+(E)>0\quad\Longrightarrow\quad\Mod_p\big(\big\{\gamma\in\e_1^{-1}(E)\;\big|\;\ell(\gamma)>0\big\}\big)>0.
\end{split}\]
In particular, if $E$ is $p$-exceptional, then $|{\bf div}(b)|( E ) = 0$.
\end{theorem}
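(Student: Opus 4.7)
The plan is to reduce to an acyclic Lipschitz derivation of finite mass and finite-variation divergence via a localisation argument, invoke the cyclic/acyclic decomposition of Proposition \ref{prop:acycl_cyl_decomp} together with the superposition principle (Theorem \ref{thm:plans_vs_der_vs_curr} and Remark \ref{rmk:improved_PS}), and then conclude using the absolute continuity \eqref{eq:plan_ll_Mod} combined with the reversal invariance of the \(p\)-modulus.

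First, by inner regularity of the boundedly-finite measure \({\bf div}(b)^-\), it suffices to prove the first implication with a compact \(K\subseteq E\) such that \({\bf div}(b)^-(K)>0\). Fix a bounded open set \(V\supseteq K\) and a cutoff \(\eta\in\LIP_{bs}(\X;[0,1])\) with \(\eta\equiv 1\) on \(V\), and consider \(\eta b\in{\rm Der}^q(\X)\). Since \(\eta\) has bounded support, H\"older's inequality yields \(\eta|b|\in L^1(\mm)\), hence \(\eta b\in{\rm Der}^1(\X)\). The Leibniz rule \eqref{eq:Leibniz_Lip_der} gives \({\bf div}(\eta b)=\eta\,{\bf div}(b)+b(\eta)\mm\); both summands have bounded support and finite total variation, so \(\eta b\in{\rm Der}^q_{\mathcal M}(\X)\cap{\rm Der}^1(\X)\). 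Moreover, the weak locality \eqref{eq:def_Lip_der_ineq} forces \(b(\eta)=0\) \(\mm\)-a.e.\ on the open set \(V\) (where \(\lip_a(\eta)\equiv 0\)), so \({\bf div}(\eta b)\) and \({\bf div}(b)\) coincide as signed measures on Borel subsets of \(V\); in particular \({\bf div}(\eta b)^-(K)={\bf div}(b)^-(K)>0\). Replacing \(b\) with \(\eta b\), I may therefore assume \(b\in{\rm Der}^q_{\mathcal M}(\X)\cap{\rm Der}^1(\X)\).

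Next, I apply Proposition \ref{prop:acycl_cyl_decomp} to decompose \(b=a+c\) with \(a\) acyclic and \(c\) a subcycle; from \(|a|\leq|b|\) and \({\bf div}(c)=0\) I get \(a\in{\rm Der}^q_{\mathcal M}(\X)\cap{\rm Der}^1(\X)\) with \({\bf div}(a)={\bf div}(b)\). Theorem \ref{thm:plans_vs_der_vs_curr} together with Remark \ref{rmk:improved_PS} then produces a plan \(\ppi\in\mathcal B_q(\X)\), concentrated on non-constant constant-speed Lipschitz curves, such that
\[
(\e_0)_\#\ppi={\bf div}(b)^+,\qquad(\e_1)_\#\ppi={\bf div}(b)^-.
\]
Since \(\ppi\) is concentrated on \(\{\ell(\gamma)>0\}\), we have \(\ppi\big(\e_1^{-1}(E)\cap\{\ell>0\}\big)={\bf div}(b)^-(E)>0\), and \eqref{eq:plan_ll_Mod} forces \(\Mod_p\big(\e_1^{-1}(E)\cap\{\ell>0\}\big)>0\). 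The reversal \(R(\gamma)(t)\coloneqq\gamma(1-t)\) is a Borel bijection of \(\mathscr C(\X)\) that preserves length and path integrals, hence leaves \(\Mod_p\) invariant while swapping \(\e_0\) and \(\e_1\); applying \(R\) yields the desired \(\Mod_p\big(\e_0^{-1}(E)\cap\{\ell>0\}\big)>0\). The implication for \({\bf div}(b)^+\) is symmetric, starting from the marginal \((\e_0)_\#\ppi\) and then reversing. The last assertion is immediate: if \(E\) is \(p\)-exceptional, the modulus bounds above vanish, so \({\bf div}(b)^\pm(E)=0\) and hence \(|{\bf div}(b)|(E)=0\).

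The main obstacle I foresee is the localisation step: one must verify carefully that \(\eta b\) genuinely lies in the intersection \({\rm Der}^q_{\mathcal M}(\X)\cap{\rm Der}^1(\X)\) required by Theorem \ref{thm:plans_vs_der_vs_curr}, and that the negative charge of \({\bf div}(b)\) on \(K\) is transferred without loss to \({\bf div}(\eta b)\); this reduces to checking that the two divergences agree as signed measures on every Borel subset of \(V\), which in turn hinges on the identity \(b(\eta)=0\) \(\mm\)-a.e.\ on \(V\) coming from weak locality. Once this reduction is secured, the remaining steps are a direct assembly of results already developed in the paper.
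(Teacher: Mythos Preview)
Your proof is correct and follows essentially the same approach as the paper's: reduce to compact \(K\) by inner regularity, multiply by a Lipschitz cutoff \(\eta\) to land in \({\rm Der}^q_{\mathcal M}(\X)\cap{\rm Der}^1(\X)\) while preserving the divergence on a neighbourhood of \(K\), invoke the superposition principle via Remark~\ref{rmk:improved_PS} to obtain a plan \(\ppi\) with marginals \({\bf div}(\eta b)^\pm\), and conclude from \(\ppi\ll\Mod_p\). The paper glosses over the acyclic decomposition and absorbs your time-reversal step by implicitly applying Remark~\ref{rmk:improved_PS} with the opposite sign, but these are cosmetic differences.
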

\begin{proof}
Fix \(i\in\{0,1\}\). Since \({\bf div}(b)^\pm\) are inner regular and \(\Mod_p\) is monotone, it suffices to
check the validity of the claim for \(E\) compact. Thus, let us assume that \(E\) is compact. Suppose that
\[
\Mod_p(\Gamma_i)=0,\quad\text{ where we set }\Gamma_i\coloneqq\big\{\gamma\in\e_i^{-1}(E)\;\big|\;\ell(\gamma)>0\big\}.
\]
Fix \(\varphi\in\LIP_{bs}(\X)\) satisfying \(\varphi=1\) on a neighbourhood of \(E\). Then
\(\varphi b\in{\rm Der}^q_{\mathcal M}(\X)\cap{\rm Der}^1(\X)\) satisfies \({\bf div}(\varphi b)=\varphi\,{\bf div}(b)+b(\varphi)\mm\)
by \eqref{eq:Leibniz_Lip_der}. Since \(|b(\varphi)|\leq|b|\lip_a(\varphi)=0\) \(\mm\)-a.e.\ on \(E\), we have
\[
{\bf div}(\varphi b)^\pm(E)=(\varphi\,{\bf div}(b)^\pm)(E)={\bf div}(b)^\pm(E).
\]
Remark \ref{rmk:improved_PS} yields a plan \(\ppi\in\mathcal B_q(\X)\) with \((\e_0)_\#\ppi={\bf div}(\varphi b)^-\)
and \((\e_1)_\#\ppi={\bf div}(\varphi b)^+\). We can assume \(\ppi\) is concentrated on non-constant curves.
Since \(\ppi\ll\Mod_p\) by \eqref{eq:plan_ll_Mod} and \(\Mod_p(\Gamma_i)=0\),
\[\begin{split}
\boldsymbol\div(b)^-(E)={\bf div}(\varphi b)^-(E)=((\e_0)_\#\ppi)(E)=\ppi(\Gamma_0)=0\quad\text{ if }i=0,\\
\boldsymbol\div(b)^+(E)={\bf div}(\varphi b)^+(E)=((\e_1)_\#\ppi)(E)=\ppi(\Gamma_1)=0\quad\text{ if }i=1.
\end{split}\]
Either way, the statement is proved.
\end{proof}

We now introduce another functional on the space of finite signed Borel measures. Given a metric measure space \((\X,\sfd,\mm)\),
an exponent \(q\in(1,\infty]\), and measures \(\mu_0,\mu_1\in\mathcal M(\X)\), we denote by \(\Pi(\mu_1,\mu_0)\) the set of
plans \(\ppi\) on \(\X\) such that \((\e_0)_\#\ppi=\mu_0\) and \((\e_1)_\#\ppi=\mu_1\). We then define
\begin{equation}\label{eq:def_B_q_mu}
{\sf B}_q(\mu)\coloneqq\inf\Big\{\|{\rm Bar}(\ppi)\|_{L^q(\mm)}\;\Big|\;\ppi\in\Pi(\mu^+,\mu^-),\,\exists\,{\rm Bar}(\ppi)\in L^1(\mm)\cap L^q(\mm)\Big\}\in[0,\infty]
\end{equation}
for every \(\mu\in\mathcal M(\X)\). The next result, which is ultimately due to the superposition principle
(Theorem \ref{thm:Paolini-Stepanov}), states that \({\sf B}_q={\sf D}_q\).
\begin{proposition}[\({\sf B}_q={\sf D}_q\)]\label{prop:B_q=D_q}
Let \((\X,\sfd,\mm)\) be a metric measure space with \(\mm\) finite 
and \(q\in(1,\infty]\). Then it holds that
\[
{\sf B}_q(\mu)={\sf D}_q(\mu)\quad\text{ for every }\mu\in\mathcal M(\X).
\]
\end{proposition}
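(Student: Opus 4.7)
The plan is to deduce both inequalities directly from the correspondence between dynamic plans with \(L^q\)-barycenter and Lipschitz derivations provided by Theorem~\ref{thm:plans_vs_der_vs_curr}, exploiting the finiteness of \(\mm\) to bridge the integrability gap between these two objects. Specifically, when \(\mm(\X)<\infty\) and \(q\in(1,\infty]\), Hölder's inequality yields \(L^q(\mm)\subseteq L^1(\mm)\), so any derivation \(b\in{\rm Der}^q_{\mathcal M}(\X)\) automatically satisfies \(b\in{\rm Der}^1(\X)\), i.e.\ it lies in the natural domain where the superposition principle operates (this is precisely the obstruction encountered in Remark~\ref{rmk:integrability_issue_D_q} that is bypassed here).

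For the inequality \({\sf D}_q(\mu)\le{\sf B}_q(\mu)\), I will take an arbitrary admissible plan \(\ppi\in\Pi(\mu^+,\mu^-)\) with \({\rm Bar}(\ppi)\in L^1(\mm)\cap L^q(\mm)\) and apply Proposition~\ref{prop:der_induced_by_plan} to produce \(b_{\sppi}\in{\rm Der}^q_{\mathcal M}(\X)\) with \(|b_{\sppi}|\le{\rm Bar}(\ppi)\) and \({\bf div}(b_{\sppi})=(\e_0)_{\#}\ppi-(\e_1)_{\#}\ppi=\mu^+-\mu^-=\mu\). Thus \(b_{\sppi}\) is admissible for \({\sf D}_q(\mu)\) and \(\|b_{\sppi}\|_{{\rm Der}^q(\X)}\le\|{\rm Bar}(\ppi)\|_{L^q(\mm)}\); taking the infimum over \(\ppi\) concludes this direction.

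The reverse inequality \({\sf B}_q(\mu)\le{\sf D}_q(\mu)\) is trivial when \({\sf D}_q(\mu)=+\infty\), so I will assume it is finite and fix a competitor \(b\in{\rm Der}^q_{\mathfrak M}(\X)\) with \({\bf div}(b)=\mu\in\mathcal M(\X)\). Invoking Proposition~\ref{prop:acycl_cyl_decomp}, I decompose \(b=a+c\) with \(c\) a cycle and \(a\) an acyclic subderivation; then \({\bf div}(a)=\mu\) and \(|a|\le|b|\), and by the first paragraph \(a\in{\rm Der}^q_{\mathcal M}(\X)\cap{\rm Der}^1(\X)\). Theorem~\ref{thm:plans_vs_der_vs_curr} i) combined with Remark~\ref{rmk:improved_PS} then furnishes a plan \(\ppi_a\in\Pi(\mu^+,\mu^-)\) with \({\rm Bar}(\ppi_a)=|a|\), which is an admissible competitor for \({\sf B}_q(\mu)\) satisfying \(\|{\rm Bar}(\ppi_a)\|_{L^q(\mm)}=\||a|\|_{L^q(\mm)}\le\|b\|_{{\rm Der}^q(\X)}\); taking the infimum over \(b\) yields \({\sf B}_q(\mu)\le{\sf D}_q(\mu)\). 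The only delicate point is matching the plan's endpoint marginals to the Jordan decomposition of \(\mu\), which is precisely what the acyclicity of \(a\) grants via Remark~\ref{rmk:improved_PS} (after tracking the sign convention \(\partial(\mathfrak t(a))=-{\bf div}(a)\) from Theorem~\ref{thm:plans_vs_der_vs_curr} ii)); apart from this bookkeeping, no substantial obstacle arises.
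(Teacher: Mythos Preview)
Your proposal is correct and follows essentially the same route as the paper's own proof: both directions rely on Proposition~\ref{prop:der_induced_by_plan} for \({\sf D}_q\le{\sf B}_q\), and on the acyclic decomposition (Proposition~\ref{prop:acycl_cyl_decomp}) together with Remark~\ref{rmk:improved_PS} for \({\sf B}_q\le{\sf D}_q\), using the finiteness of \(\mm\) to ensure \(L^q\subseteq L^1\). The only caveat is the sign bookkeeping you already flag: with the paper's convention \(\Pi(\mu_1,\mu_0)\) means \((\e_0)_\#\ppi=\mu_0\) and \((\e_1)_\#\ppi=\mu_1\), so in the first direction one should use \(-b_\sppi\) rather than \(b_\sppi\) as the competitor for \({\sf D}_q(\mu)\).
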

\begin{proof}
Let \(\ppi\in\Pi(\mu^+,\mu^-)\) with \({\rm Bar}(\ppi)\in L^q(\mm)\) be given. Let \(b_\sppi\in{\rm Der}^q_{\mathcal M}(\X)\) be the associated
derivation as in Proposition \ref{prop:der_induced_by_plan}. Given that \({\bf div}(-b_\sppi)=(\e_1)_\#\ppi-(\e_0)_\#\ppi=\mu^+ -\mu^-=\mu\) and
\(|b_\sppi|\leq{\rm Bar}(\ppi)\), we deduce that \({\sf D}_q(\mu)\leq\|b_\sppi\|_{{\rm Der}^q(\X)}\leq\|{\rm Bar}(\ppi)\|_{L^q(\mm)}\). Thanks
to the arbitrariness of \(\ppi\), the inequality \({\sf D}_q(\mu)\leq{\sf B}_q(\mu)\) is proved. Conversely, let \(b\in{\rm Der}^q_{\mathcal M}(\X)\)
with \({\bf div}(b)=\mu\) be given. Let \(a\in{\rm Der}^q_{\mathcal M}(\X)\) be as in Proposition \ref{prop:acycl_cyl_decomp}.
Since \(\mm\) is finite, we have that \(a\in{\rm Der}^1(\X)\), thus Remark \ref{rmk:improved_PS} yields the existence of a plan
\(\ppi_a\in\mathcal B_q(\X)\) such that \({\rm Bar}(\ppi_a)=|a|\), \((\e_{1})_\#\ppi_a=\mu^-\), and \((\e_0)_\#\ppi_a=\mu^+\).
Hence, \(\ppi_a\in\Pi(\mu^+,\mu^-)\) and \({\sf B}_q(\mu)\leq\|{\rm Bar}(\ppi_a)\|_{L^q(\mm)}\leq\|a\|_{{\rm Der}^q(\X)}\leq\|b\|_{{\rm Der}^q(\X)}\). By the arbitrariness of \(b\), we conclude that \({\sf B}_q(\mu)\leq{\sf D}_q(\mu)\). Therefore, the statement is proved.
\end{proof}
\begin{remark}\label{rmk:m_inf_B_q_diff}{\rm
If we modify the definition of ${\sf B}_q(\mu)$ by removing the $L^1$ integrability on the barycenter, namely
$$
{\sf B}^*_q(\mu)\coloneqq\inf\Big\{\|{\rm Bar}(\ppi)\|_{L^q(\mm)}\;\Big|\;\ppi\in\Pi(\mu^+,\mu^-),\,\exists\,{\rm Bar}(\ppi)\in L^q(\mm)\Big\}
\leq {\sf B}_q(\mu),
$$
the stronger inequality \({\sf D}_q(\mu)\leq{\sf B}^*_q(\mu)\) holds, even without finiteness assumptions on $\mm$, with the same proof.
However, when \(\mm\) is infinite it can happen that \({\sf B}^*_q(\mu)<{\sf B}_q(\mu)=+\infty\).
This is the case for \((\X,\sfd,\mm)\) and \(\mu\) as in Remark \ref{rmk:integrability_issue_D_q}.

When $\mm$ has infinite measure, in light of the superposition principle in \cite{Amb:Renzi:Vitillaro:2026}, the natural extension is to plans which are concentrated on locally rectifiable curves and which have a barycenter on $L^{q}( \mm )$. We leave the details for future work.
}\end{remark}
\subsubsection*{Relation between Lipschitz and Sobolev tangent modules}
The above results from this section will lead to a deeper understanding of the interplay between \(L^q_\Lip(T\X)\)
and \(L^q_{\rm Sob}(T\X)\), as we are going to discuss. This way, we refine the results of Section \ref{ss:Lip_and_Sob_der}.
\begin{theorem}[Lipschitz and Sobolev tangent modules]\label{thm:Lip_and_Sob_tg_mod}
Let \((\X,\sfd,\mm)\) be a metric measure space and \(q\in(1,\infty]\). Then there exists a unique homomorphism
of \(L^q(\mm)\)-Banach \(L^\infty(\mm)\)-modules
\[
\iota\colon L^q_\Lip(T\X)\to L^q_{\rm Sob}(T\X)
\]
such that \(\iota(b)(f)=b(f)\) for every \(b\in L^q_{\rm Lip}(T\X)\) and \(f\in\LIP_{bs}(\X)\). Moreover, $\iota$ preserves pointwise norms. That is, it holds that $|\iota(b)|=|b|$ for every $b\in L^q_\Lip(T\X).$
\end{theorem}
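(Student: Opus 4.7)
The plan is to first construct $\iota$ on the generating set ${\rm Der}^q_{\mathfrak M}(\X)$ and then invoke the extension result of Corollary~\ref{cor:conseq_univ_prop} (applicable since, as observed after Definition~\ref{def:Lip_tg_mod}, $L^q_\Lip(T\X)$ is canonically $\mathscr M_{\langle\psi\rangle}$ for $\psi(b)\coloneqq|b|$). Fix $b\in{\rm Der}^q_{\mathfrak M}(\X)$ with divergence $\mu\coloneqq\boldsymbol\div(b)$. The elementary inequality ${\sf F}_p(\mu)\leq\|b\|_{{\rm Der}^q(\X)}$, obtained by testing against $f\in\LIP_{bs}(\X)$ in $\int f\,\d\mu=-\int b(f)\,\d\mm$, yields $\mu\in D({\sf F}_p)$. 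By Theorem~\ref{thm:L_mu}, there is a $\sfd_{\rm en}$-continuous functional $L_\mu\colon W^{1,p}(\X)\to\R$ with $L_\mu(\pi_\mm(\varphi))=\int\varphi\,\d\mu$ for every $\varphi\in\LIP_{bs}(\X)$. For $f\in W^{1,p}(\X)$, choose $(f_n)\subseteq\LIP_{bs}(\X)$ as in Theorem~\ref{thm:density_nrg_Lip}, so that $\sfd_{\rm en}(f_n^\mm,f)\to0$ and $\lip_a(f_n)^\mm\to|Df|$ strongly in $L^p(\mm)$.

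The idea is to define $\iota(b)(f)$ as a weak $L^1(\mm)$-limit of $b(f_n)$. The weak-locality bound $|b(f_n)|\leq|b|\lip_a(f_n)^\mm$, the strong $L^1$-convergence of $|b|\lip_a(f_n)^\mm$ to $|b||Df|\in L^1(\mm)$, and the Dunford--Pettis theorem give relative weak compactness of $\{b(f_n)\}_n$ in $L^1(\mm)$. To identify the limit, I test against $g\in\LIP_{bs}(\X)$: the Leibniz rule and the defining identity of $\boldsymbol\div(b)$ give
\begin{equation*}
\int g\,b(f_n)\,\d\mm=-\int gf_n\,\d\mu-\int f_n\,b(g)\,\d\mm=-L_\mu\bigl(\pi_\mm(gf_n)\bigr)-\int f_n\,b(g)\,\d\mm.
\end{equation*}
Since $gf_n\to gf$ strongly in $L^p(\mm)$ and $\{|D(gf_n)|\}_n$ is bounded in $L^p(\mm)$ (and equi-integrable when $p=1$, since $\{|Df_n|\}$ is), Corollary~\ref{cor:L_mu_cont_in_energy} (together with Remark~\ref{rmk:L_mu_cont_in_energy_p>1}) gives $L_\mu(\pi_\mm(gf_n))\to L_\mu(\pi_\mm(gf))$, while the last term converges by H\"older. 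Since $\LIP_{bs}(\X)^\mm$ separates $L^1(\mm)$, this pins down the weak cluster point uniquely, and weak compactness upgrades this to $b(f_n)\rightharpoonup\iota(b)(f)$ weakly in $L^1(\mm)$; linearity of $f\mapsto\iota(b)(f)$ is immediate.

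For the pointwise bound, Mazur's lemma produces convex combinations $\tilde f_n\coloneqq\sum_{i\geq n}\lambda_i^n f_i\in\LIP_{bs}(\X)$ with $b(\tilde f_n)\to\iota(b)(f)$ strongly in $L^1(\mm)$, and by subadditivity of $\lip_a$,
\begin{equation*}
|b(\tilde f_n)|\leq|b|\,\lip_a(\tilde f_n)\leq|b|\sum_{i\geq n}\lambda_i^n\lip_a(f_i).
\end{equation*}
The right-hand side converges to $|b||Df|$ strongly in $L^1(\mm)$, so passing to an a.e.\ subsequence yields $|\iota(b)(f)|\leq|b||Df|$ $\mm$-a.e.; therefore $\iota(b)\in L^q_{\rm Sob}(T\X)$ with $|\iota(b)|\leq|b|$. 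The linear map $\iota\colon{\rm Der}^q_{\mathfrak M}(\X)\to L^q_{\rm Sob}(T\X)$ thus satisfies $|\iota(b)|\leq\psi(b)$, and Corollary~\ref{cor:conseq_univ_prop} extends it uniquely to a homomorphism of $L^q(\mm)$-Banach $L^\infty(\mm)$-modules $\hat\iota\colon L^q_\Lip(T\X)\to L^q_{\rm Sob}(T\X)$ with $|\hat\iota(v)|\leq|v|$.

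To finish, the two continuous $L^\infty(\mm)$-linear maps $\varrho\circ\hat\iota$ (with $\varrho$ as in Lemma~\ref{lem:Sob_der_induces_Lip_der}) and the inclusion $L^q_\Lip(T\X)\hookrightarrow{\rm Der}^q(\X)$ agree on ${\rm Der}^q_{\mathfrak M}(\X)$: for $f\in\LIP_{bs}(\X)$, the constant sequence $f_n\equiv f$ in the construction gives $\iota(b)(f)=b(f)$. Hence they agree everywhere, proving $\hat\iota(v)(f)=v(f)$ for every $f\in\LIP_{bs}(\X)$. The reverse pointwise inequality $|v|\leq|\hat\iota(v)|$ then follows directly from \eqref{eq:pointwise_norm_Lip_der} and $|\hat\iota(v)(f)|\leq|\hat\iota(v)||Df|\leq|\hat\iota(v)|\lip_a(f)^\mm$ for $f\in\LIP_{bs}(\X)$. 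Uniqueness of $\iota$ reduces to the fact that $\{\d f\colon f\in\LIP_{bs}(\X)\}$ generates $L^p(T^*\X)$, which follows by combining Theorem~\ref{thm:density_nrg_Lip}, Proposition~\ref{prop:clos_diff}, and Mazur's lemma in the spirit of Lemma~\ref{lem:gen_cotg_mod_crit}. The main obstacle is the weak $L^1$-limit identification above: the $b(f_n)$ inherit only an $L^1$ bound in the limit (via $|b|\lip_a(f_n)^\mm\to|b||Df|$), so no strong convergence is available, and nailing down the limit requires precisely the dual-energy / divergence-measure machinery packaged in Theorem~\ref{thm:L_mu} and Corollary~\ref{cor:L_mu_cont_in_energy}.
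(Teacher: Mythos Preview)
Your construction of $\iota(b)(f)$ is correct and takes a genuinely different route from the paper's. The paper fixes $f\in W^{1,p}(\X)$ and studies the functional $h\mapsto -L_{\boldsymbol\div(hb)}(f)$ on $\LIP_{bs}(\X)$; bounding it by $\|h\|_{L^1(\nu_f)}$ (with $\nu_f\coloneqq|b||Df|\mm$) and invoking $L^1$--$L^\infty$ duality, they write it as $h\mapsto\int h\,\ell_f(b)\,\d\nu_f$ for some $\ell_f(b)\in L^\infty(\nu_f)$ with norm at most $1$, and set $\iota(b)(f)\coloneqq\ell_f(b)|b||Df|$. You instead realise $\iota(b)(f)$ directly as a weak $L^1$-limit of $b(f_n)$: Dunford--Pettis for relative compactness, the Leibniz identity together with Corollary~\ref{cor:L_mu_cont_in_energy} to pin down $\int g\,\iota(b)(f)\,\d\mm$ for each $g\in\LIP_{bs}(\X)$, and Mazur for the pointwise bound. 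Both routes rest on the same core input (Theorem~\ref{thm:L_mu} and $\sfd_{\rm en}$-density of $\LIP_{bs}$) and produce the same object, since your limit formula $\int g\,\iota(b)(f)\,\d\mm=-L_\mu(gf)-\int f\,b(g)\,\d\mm$ coincides with the paper's $-L_{\boldsymbol\div(gb)}(f)$ after expanding $\boldsymbol\div(gb)=g\mu+b(g)\mm$. Your approach is more constructive and bypasses the representation step; the paper's avoids weak-compactness machinery.

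Two minor points. The ``constant sequence $f_n\equiv f$'' shortcut is not quite licit as written (that sequence need not satisfy $\lip_a(f_n)^\mm\to|Df|$), but your own limit formula already gives $\iota(b)(f)=b(f)$ for $f\in\LIP_{bs}(\X)$ directly via the Leibniz rule. And your uniqueness sketch, reducing to ``$\{\d f:f\in\LIP_{bs}(\X)\}$ generates $L^p(T^*\X)$'' via Proposition~\ref{prop:clos_diff} and Mazur, presupposes weak subsequential convergence of $(\d f_n)_n$ in $L^p(T^*\X)$, which boundedness alone does not guarantee when $L^p(T^*\X)$ is not reflexive; the paper itself is terse on this point, appealing only to the uniqueness clause of Corollary~\ref{cor:conseq_univ_prop} for the extension from ${\rm Der}^q_{\mathfrak M}(\X)$.
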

\begin{proof}
Let \(b\in{\rm Der}^q_{\mathfrak M}(\X)\) and \(h\in\LIP_{bs}(\X)\). We know from Theorem \ref{thm:F_p=D_q} that
\({\sf F}_p({\bf div}(hb))<+\infty\), thus we can consider the functional \(L_{{\bf div}(hb)}\colon W^{1,p}(\X)\to\R\)
given by Theorem \ref{thm:L_mu}. Let us define \(\mathcal L_f(b)(h)\coloneqq-L_{{\bf div}(hb)}(f)\) for every
\(f\in W^{1,p}(\X)\). Now fix any \(f\in W^{1,p}(\X)\) and pick a sequence \((f_n)_n\subseteq\LIP_{bs}(\X)\) such that
\(\sfd_{\rm en}(f_n^\mm,f)\to 0\) and \(\lip_a(f_n)^\mm\to|Df|\) strongly in \(L^p(\mm)\). Theorem \ref{thm:L_mu}
ensures that \(\mathcal L_{f_n}(b)(h)\to\mathcal L_f(b)(h)\) as \(n\to\infty\). Letting
\(\nu_f\coloneqq|b||Df|\mm\in\mathcal M_+(\X)\), we have
\[\begin{split}
|\mathcal L_f(b)(h)|&=\lim_{n\to\infty}|\mathcal L_{f_n}(b)(h)|=\lim_{n\to\infty}\bigg|\int f_n\,\d{\bf div}(hb)\bigg|
=\lim_{n\to\infty}\bigg|\int h\,b(f_n)\,\d\mm\bigg|\\
&\leq\lim_{n\to\infty}\int|h||b|\lip_a(f_n)\,\d\mm=\int|h||b||Df|\,\d\mm=\|h\|_{L^1(\nu_f)}.
\end{split}\]
Since \(\mathcal L_f(b)\colon\LIP_{bs}(\X)\to\R\) is linear and \(\LIP_{bs}(\X)^{\nu_f}\) is dense in \(L^1(\nu_f)\),
we deduce that there exists a unique \(\ell_f(b)\in L^\infty(\nu_f)\cong L^1(\nu_f)'\) with
\(\|\ell_f(b)\|_{L^\infty(\nu_f)}\leq 1\) such that \(\mathcal L_f(b)(h)=\int h\,\ell_f(b)\,\d\nu_f\) for
every \(h\in\LIP_{bs}(\X)\). Now let us define \(\iota(b)(f)\coloneqq\ell_f(b)|b||Df|\in L^1(\mm)\). Notice that
\[
\int h\,\iota(b)(f)\,\d\mm=\int h\,\ell_f(b)\,\d\nu_f=\mathcal L_f(b)(h)=-L_{{\bf div}(hb)}(f)=-\int f\,\d{\bf div}(hb)
=\int h\,b(f)\,\d\mm
\]
for every \(h\in\LIP_{bs}(\X)\), whence it follows that \(\iota(b)(f)=b(f)\). Since \(\iota(b)\colon W^{1,p}(\X)\to L^1(\mm)\)
is linear and \(|\iota(b)(f)|\leq|b||Df|\) for every \(f\in W^{1,p}(\X)\), we have that \(\iota(b)\in L^q_{\rm Sob}(T\X)\)
and \(|\iota(b)|\leq|b|\). On the other hand, \(|b(f)|=|\iota(b)(f)|\leq|\iota(b)||Df|\leq|\iota(b)|\lip_a(f)^\mm\) for every
\(f\in\LIP_{bs}(\X)\), whence it follows that \(|b|\leq|\iota(b)|\), thus \(|\iota(b)|=|b|\) for $b \in {\rm Der}^q_{\mathfrak M}(\X)$.  Since
\(\iota\colon{\rm Der}^q_{\mathfrak M}(\X)\to L^q_{\rm Sob}(T\X)\) is linear and \({\rm Der}^q_{\mathfrak M}(\X)\)
generates \(L^q_\Lip(T\X)\), the map \(\iota\) can be uniquely extended to a homomorphism of \(L^q(\mm)\)-Banach
\(L^\infty(\mm)\)-modules \(\iota\colon L^q_\Lip(T\X)\to L^q_{\rm Sob}(T\X)\) by Corollary \ref{cor:conseq_univ_prop}.
The extension satisfies \(|\iota(b)|=|b|\) for every $b \in L^q_\Lip(T\X)$ due to Corollary \ref{cor:conseq_univ_prop} and as the identity holds for every $b \in {\rm Der}^q_{\mathfrak M}(\X)$.
The extension also satisfies \(\iota(b)(f)=b(f)\) for every \(b\in L^q_\Lip(T\X)\) and \(f\in\LIP_{bs}(\X)\), as follows from a density argument.
\end{proof}
\begin{corollary}\label{cor:extensionfromLIPtoDIR}
There exists an isometric embedding ${\rm J} \colon L^q_\Lip(T\X)\to L^q(T\X)$ such that 
\[\d f( {\rm J}( b ) )= b(f)\quad 
\text{for every } b\in L^q_{\rm Lip}(T\X) \text{ and } f\in\LIP_{bs}(\X).\]
The embedding is a homomorphism of \(L^q(\mm)\)-Banach \(L^\infty(\mm)\)-modules preserving pointwise norms. That is, $|{\rm J}(b)| = |b|$ for every $b \in L^{q}_{\Lip}(T\X)$.
\end{corollary}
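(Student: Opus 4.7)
The plan is essentially to compose the two module-theoretic identifications established in the preceding results. Specifically, I will define
\[
{\rm J} \coloneqq {\rm I} \circ \iota,
\]
where \(\iota \colon L^q_\Lip(T\X) \to L^q_{\rm Sob}(T\X)\) is the pointwise-norm-preserving homomorphism from Theorem \ref{thm:Lip_and_Sob_tg_mod} and \({\rm I} \colon L^q_{\rm Sob}(T\X) \to L^q(T\X)\) is the isomorphism of \(L^q(\mm)\)-Banach \(L^\infty(\mm)\)-modules from Theorem \ref{thm:Lq(TX)=LqSob(TX)}.

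The module properties are then immediate: \({\rm J}\) is a composition of two \(L^\infty(\mm)\)-linear continuous maps, hence a homomorphism of \(L^q(\mm)\)-Banach \(L^\infty(\mm)\)-modules. For the pointwise norms, since \(\iota\) preserves them and \({\rm I}\) is an isomorphism (which by definition also preserves pointwise norms), we obtain \(|{\rm J}(b)| = |{\rm I}(\iota(b))| = |\iota(b)| = |b|\) for every \(b \in L^q_\Lip(T\X)\). This identity forces injectivity (since \(|b| = 0\) iff \(b = 0\)), so \({\rm J}\) is an isometric embedding both in the pointwise-norm and in the Banach-norm sense.

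It remains to verify the compatibility \(\d f({\rm J}(b)) = b(f)\) for \(b \in L^q_\Lip(T\X)\) and \(f \in \LIP_{bs}(\X)\). Since \(\LIP_{bs}(\X)^\mm \subseteq W^{1,p}(\X)\), we may unwind the definitions: by the characterisation of \({\rm I}\) in Theorem \ref{thm:Lq(TX)=LqSob(TX)}, the diagram there commutes, so \(\d f({\rm I}(\delta)) = \bigl({\rm I}(\delta) \circ \d\bigr)(f) = \delta(f)\) for any \(\delta \in L^q_{\rm Sob}(T\X)\). Applying this with \(\delta = \iota(b)\) and then invoking Theorem \ref{thm:Lip_and_Sob_tg_mod}, which gives \(\iota(b)(f) = b(f)\) for \(f \in \LIP_{bs}(\X)\), yields
\[
\d f({\rm J}(b)) = \d f({\rm I}(\iota(b))) = \iota(b)(f) = b(f),
\]
as required. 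There is no genuine obstacle here — the corollary is essentially a bookkeeping statement that packages the two previous identifications, transferring \(\iota\) from the Sobolev tangent module to \(L^q(T\X)\) via the canonical isomorphism.
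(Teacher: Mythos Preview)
Your proof is correct and follows exactly the same approach as the paper: define ${\rm J} = {\rm I} \circ \iota$ and read off all properties from Theorems \ref{thm:Lip_and_Sob_tg_mod} and \ref{thm:Lq(TX)=LqSob(TX)}. The paper's proof is in fact a one-line version of yours, so your more detailed unpacking of the compatibility identity $\d f({\rm J}(b)) = b(f)$ via the commutative diagram is a faithful expansion.
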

\begin{proof}
The claim follows as ${\rm J}$ is the composition of the homomorphism $\iota \colon L^q_\Lip(T\X) \to L^q_{\rm Sob}(T\X)$ from \Cref{thm:Lip_and_Sob_tg_mod} and the isomorphism ${\rm I} \colon L^{q}_{ \rm Sob}( T\X ) \rightarrow L^{q}( T\X )$ from \Cref{thm:Lq(TX)=LqSob(TX)}.
\end{proof}
As the proof of Theorem \ref{thm:Lip_and_Sob_tg_mod} shows, it holds that
\begin{equation}\label{eq:descr_iota(b)}
-L_{\boldsymbol\div(b)}(f)=\int\iota(b)(f)\,\d\mm\quad\text{ for every }b\in{\rm Der}^q_{\mathfrak M}(\X)\text{ and }f\in W^{1,p}(\X).
\end{equation}
As a consequence of Theorem \ref{thm:Lip_and_Sob_tg_mod}, we now prove that \({\rm Der}^q_{\mathfrak M}(\X)\) and \(D(\boldsymbol\div_q;\X)\) can be identified. In the following two Lemmas, we identify $L^{q}_{ {\rm Sob} }(T\X)$ and $L^{q}(T\X)$ using the isomorphism ${\rm I} \colon L^{q}_{ {\rm Sob} }(T\X) \to L^{q}(T\X)$.
\begin{lemma}[Lipschitz/Sobolev divergence]\label{lem:Lip_vs_Sob_div}
Let \((\X,\sfd,\mm)\) be a metric measure space and \(q\in(1,\infty]\). Let
\(\iota\colon L^q_\Lip(T\X)\to L^q_{\rm Sob}(T\X)\) be the isometric embedding from Theorem \ref{thm:Lip_and_Sob_tg_mod}. Then
\begin{equation}\label{eq:Lip_vs_Sob_der}
    \iota(b) \in D( \boldsymbol\div_q; \X ) \quad\text{and}\quad \boldsymbol\div_q(\iota(b))=\boldsymbol\div(b)\quad\text{ for every }b\in{\rm Der}^q_{\mathfrak M}(\X).
\end{equation}
Moreover, if $v \in D(\boldsymbol\div_q; \X )$, it holds that 
\(
\varrho(v)\in {\rm Der}^q_{\mathfrak M}(\X)\) and 
\(v = \iota( \varrho(v))
\)
for the restriction operator \(\varrho\) from Lemma \ref{lem:Sob_der_induces_Lip_der}.
\end{lemma}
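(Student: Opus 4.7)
For the first inclusion \eqref{eq:Lip_vs_Sob_der}, I would dispatch it by testing against Lipschitz functions of bounded support: given $b \in {\rm Der}^q_{\mathfrak M}(\X)$, Theorem \ref{thm:Lip_and_Sob_tg_mod} gives the pointwise identity $\iota(b)(f) = b(f)$ in $L^1(\mm)$ for every $f \in \LIP_{bs}(\X)$, and under the identification $L^q_{\rm Sob}(T\X) \cong L^q(T\X)$ of Theorem \ref{thm:Lq(TX)=LqSob(TX)} this yields $\int \d f(\iota(b))\,\d\mm = \int b(f)\,\d\mm = -\int f\,\d\boldsymbol\div(b)$ for all $f\in \LIP_{bs}(\X)$, so $\iota(b) \in D(\boldsymbol\div_q;\X)$ with $\boldsymbol\div_q(\iota(b)) = \boldsymbol\div(b)$.

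Conversely, starting from $v \in D(\boldsymbol\div_q;\X)$, Lemma \ref{lem:Sob_der_induces_Lip_der} places $\varrho(v)$ in ${\rm Der}^q(\X)$, and the same kind of integration-by-parts test against $\LIP_{bs}(\X)$ promotes it to ${\rm Der}^q_{\mathfrak M}(\X)$ with $\boldsymbol\div(\varrho(v)) = \boldsymbol\div_q(v)$; in particular $\iota(\varrho(v))$ is defined. The identity $v = \iota(\varrho(v))$ is the substantive assertion. To prove it, I set $w \coloneqq v - \iota(\varrho(v)) \in L^q_{\rm Sob}(T\X)$, observe that $w$ vanishes on $\LIP_{bs}(\X)$ (both $v$ and $\iota(\varrho(v))$ agree there with $\varrho(v)$), and view $w$ as an element of $L^q(T\X) = L^p(T^*\X)^*$ via ${\rm I}$. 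By $L^\infty(\mm)$-linearity and continuity, $w$ then annihilates the closure $\mathcal V \subseteq L^p(T^*\X)$ of the $L^\infty(\mm)$-submodule generated by $\{\d f : f \in \LIP_{bs}(\X)\}$, reducing the claim to the generation fact $\mathcal V = L^p(T^*\X)$.

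By Remark \ref{rmk:consist_cotg_mod}, to obtain $\mathcal V = L^p(T^*\X)$ it suffices to show $\d g \in \mathcal V$ for every $g \in W^{1,p}(\X)$. I would fix such a $g$, pick a sequence $(g_n) \subseteq \LIP_{bs}(\X)$ via Theorem \ref{thm:density_nrg_Lip} so that $\sfd_{\rm en}(g_n^\mm, g) \to 0$ and $|Dg_n| \to |Dg|$ strongly in $L^p(\mm)$, and then exhibit $\d g$ as a strong $L^p(T^*\X)$-limit of convex combinations of $(\d g_n)_n$ by extracting a weak limit $\d g_n \rightharpoonup \d g$ in $L^p(T^*\X)$ (identified via Proposition \ref{prop:clos_diff}) and applying Mazur's lemma. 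The resulting convex combinations lie in the linear span of $\{\d f : f \in \LIP_{bs}(\X)\} \subseteq \mathcal V$, placing $\d g$ in $\mathcal V$ and forcing $w = 0$.

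The main obstacle is precisely the extraction of weak $L^p(T^*\X)$-convergence of $(\d g_n)_n$: the cotangent module need not be reflexive in the general metric measure setting, so boundedness of $\|\d g_n\|_{L^p(T^*\X)}$ alone is not enough. For $p \in (1,\infty)$ the additional strong $L^p(\mm)$-convergence of the pointwise norms $|\d g_n| \to |\d g|$ supplies the missing compactness ingredient, whereas for $p = 1$ the equi-integrability of $\{|\d g_n|\}$ inherited from strong $L^1(\mm)$-convergence is what drives a Dunford--Pettis-type argument suited to the $L^1(\mm)$-Banach module $L^1(T^*\X)$.
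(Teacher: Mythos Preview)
Your treatment of \eqref{eq:Lip_vs_Sob_der} and of the inclusion \(\varrho(v)\in{\rm Der}^q_{\mathfrak M}(\X)\) is correct and matches the paper. The reduction of the identity \(v=\iota(\varrho(v))\) to the module--generation claim ``\(\mathcal V=L^p(T^*\X)\)'' is also valid (in fact, one can check a posteriori that the two statements are equivalent). The gap is in your proposed proof of this generation claim: neither of your compactness assertions is justified. For \(p\in(1,\infty)\), strong \(L^p(\mm)\)-convergence of the pointwise norms \(|\d g_n|\to|\d g|\) does \emph{not} furnish weak sequential compactness of \((\d g_n)_n\) in the possibly non-reflexive module \(L^p(T^*\X)\); you give no mechanism, and there is none in general. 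Indeed, testing \(\d g_n\) against \(v\in L^q(T\X)\) one gets convergence only for \(v\in D(\div_q;\X)\) (via \(g_n\to g\) in \(L^p(\mm)\)), and upgrading this to all of \(L^q(T\X)\) would require density of \(D(\div_q;\X)\) in \(L^q(T\X)\), which by Theorem~\ref{thm:predual_W1p} is \emph{equivalent} to reflexivity of \(W^{1,p}(\X)\). The \(p=1\) case is no better: Dunford--Pettis controls weak compactness in \(L^1(\mm)\), not in the Banach module \(L^1(T^*\X)\).

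The paper circumvents this entirely. Instead of attacking \(\mathcal V=L^p(T^*\X)\), it argues by contradiction: if \((v-\iota(\varrho(v)))(f)\neq 0\) for some \(f\in W^{1,p}(\X)\), one localises with a cut-off \(\psi\in\LIP_{bs}(\X)\) so that \(\psi v\) and \(\iota(\varrho(\psi v))\) both lie in \(D(\boldsymbol\div_q;\X)\), and then exploits the \(\sfd_{\rm en}\)-continuity of the functionals \(L_\mu\) from Theorem~\ref{thm:L_mu} (built on the quasicontinuous-representative machinery and the identity \eqref{eq:descr_iota(b)}) to pass to the limit along a density-in-energy sequence \((f_n)_n\subseteq\LIP_{bs}(\X)\). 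Since \((\psi v-\iota(\varrho(\psi v)))(f_n)=0\) for each \(n\), this yields the contradiction. The point is that the \(L_\mu\)-continuity substitutes for the weak compactness you are missing.
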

\begin{proof}
If \(b\in{\rm Der}^q_{\mathfrak M}(\X)\) is given, then \(\int\iota(b)(f)\,\d\mm=\int b(f)\,\d\mm=-\int f\,\d\boldsymbol\div(b)\) holds for every \(f\in\LIP_{bs}(\X)\),
which implies that \(\iota(b)\in D(\boldsymbol\div_q;\X)\), thus in particular \(\iota({\rm Der}^q_{\mathfrak M}(\X))\subseteq D(\boldsymbol\div_q;\X)\).

In the converse direction, consider $v \in D(\boldsymbol\div_q;\X)$ and recall from Lemma \ref{lem:Sob_der_induces_Lip_der} that
$\varrho(v) \in {\rm Der}^{q}_{ \mathfrak M }( \X )$ with the same divergence as $v$. The claim is finished if we prove that $v = \iota( \varrho(v) )$. We argue by contradiction: there exists $f \in W^{1,p}( \X )$ such that $( v - \iota( \varrho(v) ) )(f) \neq 0$. Up to replacing $f$ by $-f$ and fixing representatives, there exists $\varepsilon > 0$ such that the set $\left\{ ( v - \iota( \varrho(v) ) )(f) > \varepsilon \right\}$ has positive measure. In particular, there exists a compact set $K \subseteq \left\{ ( v - \iota( \varrho(v) ) )(f) > \varepsilon \right\}$ for which
\begin{equation*}
    \int_{K} \big( v - \iota( \varrho(v) ) \big)(f) \,\d \mm > 0.
\end{equation*}
By approximating the characteristic function $\1_{K}$ from above using elements of $\LIP_{bs}( \X )$ and by applying dominated convergence, we find $\psi \in \LIP_{bs}( \X )$, with $0 \leq \psi \leq 1$ and $\psi|_{K} = \1_{K}$, such that
\begin{equation*}
    \int ( \psi v - \iota( \varrho(\psi v) ) )(f) \,\d \mm = \int \psi ( v - \iota( \varrho(v) ) )(f) \,\d\mm > 0.
\end{equation*}
Now given \((f_n)_n\subseteq\LIP_{bs}(\X)\) with \(\sfd_{\rm en}(f_n^\mm,f)\to 0\), we know from Theorem \ref{thm:L_mu} that
\begin{equation*}
    \lim_{ n \rightarrow \infty } \int ( \psi v - \iota( \varrho( \psi v ) ) )(f_n) \,\d\mm = \int ( \psi v - \iota( \varrho( \psi v ) ) )(f)\,\d\mm > 0.
\end{equation*}
However, we have $( \psi v - \iota( \varrho( \psi v ) ) )(f_n) = ( \rho( \psi v ) - \varrho( \psi v ) )( f_n ) = 0$ for every $n \in \mathbb{N}$ by definition of $\varrho$ and $\iota$, leading to a contradiction. So no such $f$ exists and thus $v = \iota( \varrho(v) )$.
\end{proof}

In view of Lemma \ref{lem:Lip_vs_Sob_div}, the dual dynamic cost \({\sf D}_q(\mu)\) can be alternatively expressed as
\[
{\sf D}_q(\mu)=\inf\Big\{\|v\|_{L^q(T\X)}\;\Big|\;v\in D(\boldsymbol\div_q;\X),\,\boldsymbol\div_q(v)=\mu\Big\}
\quad\text{ for every }\mu\in\mathfrak M(\X).
\]
We finally conclude this section with one last result concerning the different notions of divergence:
\begin{lemma}[Relation between \(D(\div_q;\X)\) and \({\rm Der}^q_q(\X)\)]\label{lem:relation_div_q}
Let \((\X,\sfd,\mm)\) be a metric measure space and \(q\in(1,\infty]\). 
Let \(\iota\colon L^q_\Lip(T\X)\to L^q_{\rm Sob}(T\X)\) be the isometric embedding from Theorem \ref{thm:Lip_and_Sob_tg_mod}. Then it holds that
\[
D(\div_q;\X) = \iota({\rm Der}^q_q(\X))
\]
and $\iota(v)$ and $v$ have the same divergence for every $v \in {\rm Der}^{q}_q( \X )$.
\end{lemma}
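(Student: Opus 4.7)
The plan is to deduce the statement directly from Lemma \ref{lem:Lip_vs_Sob_div}, which already matches the measure-valued divergences under $\iota$, by upgrading the test-function class from $\LIP_{bs}(\X)$ to $W^{1,p}(\X)$ via an energy-density argument. Throughout, I identify $L^q(T\X)$ with $L^q_{\rm Sob}(T\X)$ through the isomorphism $\rm I$ of Theorem \ref{thm:Lq(TX)=LqSob(TX)}, so an element $v \in L^q(T\X)$ acts on $f \in W^{1,p}(\X)$ via $\d f(v) \in L^1(\mm)$ with $|\d f(v)| \leq |v||Df|$.

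For the inclusion $\iota({\rm Der}^q_q(\X)) \subseteq D(\div_q;\X)$, I would pick $b \in {\rm Der}^q_q(\X)$, so that $\boldsymbol\div(b) = \div(b)\mm$ with $\div(b) \in L^q(\mm)$. Lemma \ref{lem:Lip_vs_Sob_div} gives $\iota(b) \in D(\boldsymbol\div_q;\X)$ and $\boldsymbol\div_q(\iota(b)) = \boldsymbol\div(b) = \div(b)\mm$, hence
\[
\int \iota(b)(f)\,\d\mm = -\int f\,\div(b)\,\d\mm \quad\text{for every }f \in \LIP_{bs}(\X).
\]
Given any $f \in W^{1,p}(\X)$, Theorem \ref{thm:density_nrg_Lip} produces $(f_n)_n \subseteq \LIP_{bs}(\X)$ with $\sfd_{\rm en}(f_n^\mm,f) \to 0$. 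Using $|\iota(b)| = |b| \in L^q(\mm)$ and Hölder's inequality,
\[
\int |\iota(b)(f_n) - \iota(b)(f)|\,\d\mm \leq \||b|\|_{L^q(\mm)}\||D(f_n-f)|\|_{L^p(\mm)} \to 0,
\]
while $\int f_n\,\div(b)\,\d\mm \to \int f\,\div(b)\,\d\mm$ by Hölder applied in $L^p$–$L^q$ duality. Passing to the limit yields the integration-by-parts identity for every $f \in W^{1,p}(\X)$, so $\iota(b) \in D(\div_q;\X)$ with $\div_q(\iota(b)) = \div(b)$.

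For the reverse inclusion, suppose $v \in D(\div_q;\X)$. Since $\LIP_{bs}(\X) \subseteq W^{1,p}(\X)$, the defining identity specialized to test functions in $\LIP_{bs}(\X)$ shows $v \in D(\boldsymbol\div_q;\X)$ with $\boldsymbol\div_q(v) = \div_q(v)\mm$. Lemma \ref{lem:Lip_vs_Sob_div} then produces $\varrho(v) \in {\rm Der}^q_{\mathfrak M}(\X)$ with $v = \iota(\varrho(v))$ and $\boldsymbol\div(\varrho(v)) = \boldsymbol\div_q(v) = \div_q(v)\mm$. Because $\div_q(v) \in L^q(\mm)$, this is precisely the statement $\varrho(v) \in {\rm Der}^q_q(\X)$ with $\div(\varrho(v)) = \div_q(v)$, i.e.\ $v \in \iota({\rm Der}^q_q(\X))$ and the two notions of divergence coincide.

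I do not anticipate any substantive obstacle, as the conceptual content is entirely absorbed by Lemma \ref{lem:Lip_vs_Sob_div}. The only point requiring a small argument is the energy-density extension in the first inclusion, and this is routine once one notices the pointwise estimate $|\iota(b)(f)| \leq |b||Df|$ guaranteed by $\iota$ being a homomorphism of $L^q(\mm)$-Banach $L^\infty(\mm)$-modules preserving pointwise norms.
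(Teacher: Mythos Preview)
Your reverse inclusion is correct and matches the paper. The gap is in the forward inclusion $\iota({\rm Der}^q_q(\X)) \subseteq D(\div_q;\X)$: you invoke Theorem \ref{thm:density_nrg_Lip}, which only gives $\sfd_{\rm en}(f_n^\mm,f)\to 0$, i.e.\ $f_n\to f$ and $|Df_n|\to|Df|$ in $L^p(\mm)$, and then assert $\||D(f_n-f)|\|_{L^p(\mm)}\to 0$. This last step is unjustified and in general false: convergence in energy is strictly weaker than strong $W^{1,p}$-convergence, and indeed the paper states explicitly (just before Theorem \ref{thm:cont_Sob_dense}) that it is not known whether $\LIP_{bs}(\X)$ is strongly dense in $W^{1,p}(\X)$. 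So the estimate $|\iota(b)(f_n-f)|\leq|b|\,|D(f_n-f)|$ does not let you pass to the limit.

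The paper closes exactly this gap using the machinery of Theorem \ref{thm:L_mu}: the functional $L_{\div(b)\mm}$ is $\sfd_{\rm en}$-continuous by construction, and the identity \eqref{eq:descr_iota(b)} gives $-L_{\boldsymbol\div(b)}(f)=\int\iota(b)(f)\,\d\mm$ for \emph{every} $f\in W^{1,p}(\X)$ (not just Lipschitz ones). Combining these, one obtains $\int\iota(b)(f)\,\d\mm=-\int f\,\div(b)\,\d\mm$ for all $f\in W^{1,p}(\X)$ without ever needing $\|D(f_n-f)\|_{L^p}\to 0$. The point is that the continuity of $f\mapsto\int\iota(b)(f)\,\d\mm$ with respect to $\sfd_{\rm en}$ is not elementary---it ultimately rests on the quasicontinuity theory and the argument in Theorem \ref{thm:L_mu}---and your shortcut bypasses precisely the step where that work is needed.
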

\begin{proof}
Fix any \(\delta\in D(\div_q;\X)\). Letting \(\varrho(\delta)\) be as in Lemma \ref{lem:Sob_der_induces_Lip_der}, we have that
\[
\int\varrho(\delta)(f)\,\d\mm=\int\delta(f)\,\d\mm=-\int f\,\d(\div_q(\delta)\mm)\quad\text{ for every }f\in\LIP_{bs}(\X),
\]
which ensures that \(\varrho(\delta)\in{\rm Der}^q_q(\X)\) and \(\div(\varrho(\delta))=\div_q(\delta)\). Since \(\iota(\varrho(\delta))=\delta\),
the first part of the statement is proved.

Now, let \(b\in{\rm Der}^q_q(\X)\) be fixed. By \Cref{lem:Lip_vs_Sob_div}, we have $\iota(b) \in D( \boldsymbol\div_q; \X )$ with the same divergence as $b$. Moreover, given \(f\in W^{1,p}(\X)\) and \((f_n)_n\subseteq\LIP_{bs}(\X)\) with \(\sfd_{\rm en}(f_n^\mm,f)\to 0\), we know from Theorem  \ref{thm:L_mu} that
\[
L_{\div(b)\mm}(f)=\lim_{n\to\infty}L_{\div(b)\mm}(f_n)=\lim_{n\to\infty}\int f_n\div(b)\,\d\mm=\int f\div(b)\,\d\mm.
\]
Then \eqref{eq:descr_iota(b)} gives that
\begin{equation*}
    L_{\div(b)\mm}(f) = - \int_\X \iota(b)(f) \,\d\mm,
\end{equation*}
whence it follows that \(\int \iota(b)( f )\,\d\mm=-\int f\div(b)\,\d\mm\) holds for every \(f\in W^{1,p}(\X)\). This means that \(\iota(b) \in D(\div_q;\X)\) and \(\div_q(\iota(b))=\div(b)\).
Taking Theorem \ref{thm:Lq(TX)=LqSob(TX)} into account, the proof is complete.
\end{proof}
\subsection{Gradient vector fields}\label{ss:grad_vector_fields}
In this section, we focus on a specific class of vector fields, i.e.\ on gradients of Sobolev functions.
The notions of infinitesimal strict convexity and infinitesimal Hilbertianity will be of particular interest, as they guarantee that the set of gradients of a Sobolev function is a singleton.  
Most of the presented material below is inspired by \cite{Gig:15}, where the two latter notions have been introduced. We also refer to \cite[Section 7.2.1]{Amb:Iko:Luc:Pas:24} for examples of spaces having the above-mentioned properties.
\subsubsection*{Infinitesimal strict convexity}
In this section, we adopt the convention that if \(p=1\) and \(a\geq 0\), then
\[
a^{p-1}=a^{p/q}\coloneqq\left\{\begin{array}{ll}
1\\
0
\end{array}\quad\begin{array}{ll}
\text{ if }a>0,\\
\text{ if }a=0.
\end{array}\right.
\]
\begin{lemma}[The functions \(S^\pm_{f,\omega}\)]\label{lem:tech_for_grad}
Let \((\X,\sfd,\mm)\) be a metric measure space and \(p\in[1,\infty)\). Fix any \(f\in D^{1,p}_{\mm}(\X)\) and \(\omega\in L^p(T^*\X)\).
Let us define the functions \(S^\pm_{f,\omega}\in L^0_{\rm ext}(\mm)\) as
\[
S^+_{f,\omega}\coloneqq\bigwedge_{\varepsilon>0}\frac{|\d f+\varepsilon\omega|^p-|\d f|^p}{p\varepsilon},\qquad
S^-_{f,\omega}\coloneqq\bigvee_{\varepsilon<0}\frac{|\d f+\varepsilon\omega|^p-|\d f|^p}{p\varepsilon}.
\]
Then \(S^-_{f,\omega}\leq S^+_{f,\omega}\) and \(S^\pm_{f,\omega}\in L^1(\mm)\). Moreover, it holds that \(\1_{\{|Df|>0\}}^\mm|S^\pm_{f,\omega}|\leq|\omega||Df|^{p-1}\).
\end{lemma}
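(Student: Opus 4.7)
The plan is to reduce everything to pointwise convexity. At $\mm$-a.e.\ point, I would consider the function $\phi(\varepsilon) \coloneqq |\d f+\varepsilon\omega|^p$ of the real variable $\varepsilon$. Since the pointwise norm on $L^p(T^*\X)$ satisfies the triangle inequality and $t\mapsto t^p$ is non-decreasing and convex on $[0,\infty)$ for $p\geq 1$, the function $\phi$ is convex. Hence, the difference quotient $\varepsilon\mapsto(\phi(\varepsilon)-\phi(0))/\varepsilon$ is non-decreasing in $\varepsilon\ne 0$. The infimum defining $S^+_{f,\omega}$ is therefore a decreasing limit as $\varepsilon\searrow 0$ (so $pS^+_{f,\omega}$ is the right-derivative $\phi'(0^+)$), and the supremum defining $S^-_{f,\omega}$ is an increasing limit as $\varepsilon\nearrow 0$ (so $pS^-_{f,\omega}$ is the left-derivative $\phi'(0^-)$). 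Convexity gives $\phi'(0^-)\leq\phi'(0^+)$, i.e.\ $S^-_{f,\omega}\leq S^+_{f,\omega}$. Measurability of $S^\pm_{f,\omega}$ follows from their representation as monotone limits along a rational sequence of $\varepsilon$.

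To obtain the pointwise bound, I would use the two-sided triangle inequality
\[
\big||Df|-|\varepsilon||\omega|\big|\leq|\d f+\varepsilon\omega|\leq|Df|+|\varepsilon||\omega|.
\]
On the set $\{|Df|>0\}$, for $|\varepsilon|$ small enough (so that $|\varepsilon||\omega|<|Df|$), the lower bound is non-negative, and monotonicity of $t\mapsto t^p$ on $[0,\infty)$ sandwiches $|\d f+\varepsilon\omega|^p$ between $(|Df|-|\varepsilon||\omega|)^p$ and $(|Df|+|\varepsilon||\omega|)^p$. Passing to the right/left limits of the corresponding difference quotients and using that $\frac{\d}{\d t}t^p\big|_{t=|Df|}=p|Df|^{p-1}$ (with the convention adopted just before the lemma when $p=1$), I get $\big|S^\pm_{f,\omega}\big|\leq|\omega||Df|^{p-1}$ on $\{|Df|>0\}$, which is the claimed inequality.

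For the $L^1$-integrability, I would apply H\"older's inequality to deduce
\[
\int \1_{\{|Df|>0\}}|\omega||Df|^{p-1}\,\d\mm\leq\|\omega\|_{L^p(T^*\X)}\,\big\||Df|^{p-1}\big\|_{L^q(\mm)}<+\infty,
\]
using the stated convention $|Df|^{p-1}=\1_{\{|Df|>0\}}$ when $p=1$ (so the $L^\infty$-norm is at most $1$), and $\||Df|^{p-1}\|_{L^q(\mm)}=\||Df|\|_{L^p(\mm)}^{p-1}$ when $p>1$. On $\{|Df|=0\}$, the pointwise norm axiom forces $\d f=0$, so $|\d f+\varepsilon\omega|=|\varepsilon||\omega|$ and the difference quotient equals $\mathrm{sgn}(\varepsilon)|\varepsilon|^{p-1}|\omega|^p/p$; this vanishes in the limit when $p>1$ (giving $S^\pm_{f,\omega}=0$ there), while for $p=1$ it gives $S^\pm_{f,\omega}=\pm|\omega|\in L^1(\mm)$. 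Putting the two regions together shows $S^\pm_{f,\omega}\in L^1(\mm)$.

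The only real subtlety is the bookkeeping at the boundary case $p=1$, where the convention $a^{p-1}$ for $a=0$ matters both in formulating the final inequality and in controlling the behaviour of $S^\pm_{f,\omega}$ on $\{|Df|=0\}$; once this is dealt with cleanly, the proof is a one-line application of convexity combined with the triangle inequality.
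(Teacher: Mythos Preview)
Your proof is correct and follows essentially the same route as the paper: both reduce to pointwise convexity of \(\varepsilon\mapsto|\d f+\varepsilon\omega|^p\), identify \(S^\pm_{f,\omega}\) with one-sided derivatives at \(0\), and obtain the bound on \(\{|Df|>0\}\) from the triangle inequality combined with the derivative of \(t\mapsto t^p\) (the paper packages this last step via an auxiliary function \(\phi(t)=((1+t)^p-1-pt)/t\) rather than a sandwich argument, but the content is the same). The one minor difference is in the \(L^1\) step: the paper bounds \(S^\pm_{f,\omega}\) directly between the difference quotients at \(\varepsilon=\pm 1\), which lie in \(L^1(\mm)\) since \(|\d f\pm\omega|^p\) and \(|\d f|^p\) do, thereby avoiding your case split \(\{|Df|>0\}\) versus \(\{|Df|=0\}\) and the appeal to H\"older; both arguments are valid, and yours has the mild advantage of making the \(p=1\) behaviour on \(\{|Df|=0\}\) explicit.
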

\begin{proof}
Fix \(f\in D^{1,p}_{\mm}(\X)\) and \(\omega\in L^p(T^*\X)\). 
Given any \(\varepsilon_0,\varepsilon_1\in\mathbb Q\) and \(q\in\mathbb Q\cap[0,1]\),
we have that \(|\d f+(q\varepsilon_0+(1-q)\varepsilon_1)\omega|\leq q|\d f+\varepsilon_0\omega|+(1-q)|\d f+\varepsilon_1\omega|\) holds \(\mm\)-a.e.\ on \(\X\).
Hence, once a representative \(F_\varepsilon\in\mathcal L^p(\mm)\) of \(|\d f+\varepsilon\omega|\) is chosen for every \(\varepsilon\in\mathbb Q\), we can find
a Borel set \(N\subseteq\X\) such that \(\mm(N)=0\) and
\[
F_{q\varepsilon_0+(1-q)\varepsilon_1}(x)\leq q F_{\varepsilon_0}(x)+(1-q)F_{\varepsilon_1}(x)\quad\text{ for every }
\varepsilon_0,\varepsilon_1\in\mathbb Q,\,q\in\mathbb Q\cap[0,1],\text{ and }x\in\X\setminus N.
\]
It follows that \(\mathbb Q\ni\varepsilon\mapsto F_\varepsilon(x)\) is locally Lipschitz for every \(x\in\X\setminus N\), thus for any \(\varepsilon\in\R\setminus\mathbb Q\)
there exists a unique function \(F_\varepsilon\in\mathcal L^0(\mm)\) such that \(F_\varepsilon(x)=\lim_{\mathbb Q\ni\tilde\varepsilon\to\varepsilon}F_{\tilde\varepsilon}(x)\)
for every \(x\in\X\setminus N\), and \(F_\varepsilon(x)=0\) for every \(x\in N\). Since \(\R\ni\varepsilon\mapsto|\d f+\varepsilon\omega|\in L^p(\mm)\) is strongly continuous,
we deduce that \(F_\varepsilon\in\mathcal L^p(\mm)\) is a representative of \(|\d f+\varepsilon\omega|\) for every \(\varepsilon\in\R\setminus\mathbb Q\). Notice also that
\(\R\ni\varepsilon\mapsto F_\varepsilon(x)\) is convex for every \(x\in\X\setminus N\). In particular, letting \(H_\varepsilon\coloneqq\frac{1}{p}F_\varepsilon^p\in\mathcal L^1(\mm)\)
for every \(\varepsilon\in\R\), we have that \(\R\ni\varepsilon\mapsto H_\varepsilon(x)\) is convex for every \(x\in\X\setminus N\). Therefore, for every
\(\varepsilon_0,\varepsilon_1\in\R\setminus\{0\}\) with \(\varepsilon_0<\varepsilon_1\), we have that \(\frac{H_{\varepsilon_0}-H_0}{\varepsilon_0}\leq\frac{H_{\varepsilon_1}-H_0}{\varepsilon_1}\)
in the \(\mm\)-a.e.\ sense, so that
\[
H_{-1}^\mm-H_0^\mm\leq S^-_{f,\omega}\leq S^+_{f,\omega}\leq H_1^\mm-H_0^\mm.
\]
In particular, \(S^\pm_{f,\omega}\in L^1(\mm)\). Finally, define the auxiliary function \(\phi\colon[0,\infty)\to[0,\infty)\) as
\[
\phi(t)\coloneqq\frac{(1+t)^p-1-pt}{t}\quad\text{ for every }t>0
\]
and \(\phi(0)\coloneqq 0\). Notice that \(\phi\) is continuous. For every \(\varepsilon>0\) and \(\mm\)-a.e.\ \(x\in\{|Df|>0\}\) we have
\[\begin{split}
\frac{|\d f+\varepsilon\omega|^p(x)-|\d f|^p(x)}{p\varepsilon}&\leq|\omega|(x)|\d f|^{p-1}(x)+\frac{|\omega|(x)|\d f|^{p-1}(x)}{p}\phi\bigg(\frac{\varepsilon|\omega|(x)}{|\d f|(x)}\bigg),\\
\frac{|\d f-\varepsilon\omega|^p(x)-|\d f|^p(x)}{-p\varepsilon}&\geq-|\omega|(x)|\d f|^{p-1}(x)-\frac{|\omega|(x)|\d f|^{p-1}(x)}{p}\phi\bigg(\frac{\varepsilon|\omega|(x)}{|\d f|(x)}\bigg).
\end{split}\]
Given that \(\phi(t)\to 0\) as \(t\to 0\), by letting \(\varepsilon\to 0\) in the above estimates we obtain that
\[
-|\omega||\d f|^{p-1}\leq\1_{\{|Df|>0\}}^\mm S^-_{f,\omega}\leq\1_{\{|Df|>0\}}^\mm S^+_{f,\omega}\leq|\omega||\d f|^{p-1},
\]
whence it follows that \(\1_{\{|Df|>0\}}^\mm|S^\pm_{f,\omega}|\leq|\omega||Df|^{p-1}\). Therefore, the statement is achieved.
\end{proof}
It follows from (the proof of) Lemma \ref{lem:tech_for_grad} and the dominated convergence theorem that
\begin{equation}\label{eq:tech_for_grad_conseq}\begin{split}
&\lim_{\varepsilon\searrow 0}\bigg\|\1_{\{|Df|>0\}}^\mm\frac{|\d f+\varepsilon\omega|^p-|\d f|^p}{p\varepsilon}-\1_{\{|Df|>0\}}^\mm S^+_{f,\omega}\bigg\|_{L^1(\mm)}=0,\\
&\lim_{\varepsilon\nearrow 0}\bigg\|\1_{\{|Df|>0\}}^\mm\frac{|\d f+\varepsilon\omega|^p-|\d f|^p}{p\varepsilon}-\1_{\{|Df|>0\}}^\mm S^-_{f,\omega}\bigg\|_{L^1(\mm)}=0
\end{split}\end{equation}
for every \(f\in D^{1,p}_{\mm}(\X)\) and \(\omega\in L^p(T^*\X)\).
\begin{definition}[The functions \(D^\pm g(\nabla f)\)]
Let \((\X,\sfd,\mm)\) be a metric measure space and \(p\in[1,\infty)\). Let \(f,g\in D^{1,p}_{\mm}(\X)\) be given.
Then we define the functions \(D^\pm g(\nabla f)\in L^0_{\rm ext}(\mm)\) as
\[\begin{split}
D^+g(\nabla f)&\coloneqq\bigwedge_{\varepsilon>0}\1_{\{|Df|>0\}}^\mm\frac{|D(f+\varepsilon g)|^p-|Df|^p}{p\varepsilon|Df|^{p-2}}
=\1_{\{|Df|>0\}}^\mm\frac{S^+_{f,\d g}}{|Df|^{p-2}},\\
D^-g(\nabla f)&\coloneqq\bigvee_{\varepsilon<0}\1_{\{|Df|>0\}}^\mm\frac{|D(f+\varepsilon g)|^p-|Df|^p}{p\varepsilon|Df|^{p-2}}
=\1_{\{|Df|>0\}}^\mm\frac{S^-_{f,\d g}}{|Df|^{p-2}}.
\end{split}\]
\end{definition}
The above definition is from \cite[Definition 3.1]{Gig:15}. In the ensuing lemma, we collect basic properties of the
functions \(D^\pm g(\nabla f)\), which are direct consequences of Lemma \ref{lem:tech_for_grad} and \eqref{eq:tech_for_grad_conseq}.
\begin{proposition}\label{prop:main_Dg(nablaf)}
Let \((\X,\sfd,\mm)\) be a metric measure space and \(p\in[1,\infty)\). Then it holds that
\[
D^-g(\nabla f)\leq D^+g(\nabla f),\qquad|D^\pm g(\nabla f)|\leq|Df||Dg|
\]
for every \(f,g\in D^{1,p}_{\mm}(\X)\). In particular, \(D^\pm g(\nabla f)\in L^0(\mm)\) and \(D^\pm g(\nabla f)|Df|^{p-2}\in L^1(\mm)\). Also,
\begin{equation}\label{eq:est_Dg(nablaf)_for_dct}\begin{split}
&\lim_{\varepsilon\searrow 0}\bigg\|\1_{\{|Df|>0\}}^\mm\frac{|D(f+\varepsilon g)|^p-|Df|^p}{p\varepsilon}-D^+g(\nabla f)|Df|^{p-2}\bigg\|_{L^1(\mm)}=0,\\
&\lim_{\varepsilon\nearrow 0}\bigg\|\1_{\{|Df|>0\}}^\mm\frac{|D(f+\varepsilon g)|^p-|Df|^p}{p\varepsilon}-D^-g(\nabla f)|Df|^{p-2}\bigg\|_{L^1(\mm)}=0.
\end{split}\end{equation}
\end{proposition}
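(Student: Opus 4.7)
\medskip
\noindent\textbf{Proof proposal.} The plan is to reduce every assertion to Lemma~\ref{lem:tech_for_grad} with the choice \(\omega\coloneqq\d g\), exploiting the defining identity \(|\d h|=|Dh|\) of the differential (see Definition~\ref{def:cotg_mod}) in the form \(|\d f+\varepsilon\d g|=|\d(f+\varepsilon g)|=|D(f+\varepsilon g)|\) and \(|\d f|=|Df|\). With this dictionary, the factorisation built into the definition reads
\[
D^\pm g(\nabla f)\;=\;\1_{\{|Df|>0\}}^\mm\,\frac{S^\pm_{f,\d g}}{|Df|^{p-2}},
\]
the right-hand side being understood as zero on \(\{|Df|=0\}\); the indicator is precisely what makes the division legitimate in the delicate case \(p<2\).

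First I would derive \(D^-g(\nabla f)\le D^+g(\nabla f)\) by multiplying the inequality \(S^-_{f,\d g}\le S^+_{f,\d g}\) from Lemma~\ref{lem:tech_for_grad} by the non-negative factor \(\1_{\{|Df|>0\}}^\mm|Df|^{-(p-2)}\). Then the pointwise bound \(\1_{\{|Df|>0\}}^\mm|S^\pm_{f,\d g}|\le|\d g||\d f|^{p-1}\) of the same lemma, combined with \(|\d f|=|Df|\) and \(|\d g|=|Dg|\), gives
\[
|D^\pm g(\nabla f)|\;\le\;\1_{\{|Df|>0\}}^\mm\,\frac{|Dg||Df|^{p-1}}{|Df|^{p-2}}\;=\;\1_{\{|Df|>0\}}^\mm|Dg||Df|\;\le\;|Dg||Df|,
\]
which is the claimed estimate. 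Since \(|Df||Dg|\) is \(\mm\)-a.e.\ finite (both factors lie in \(L^p(\mm)\)), this also yields \(D^\pm g(\nabla f)\in L^0(\mm)\).

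Next I would obtain the \(L^1\)-integrability of \(D^\pm g(\nabla f)|Df|^{p-2}\) by observing that this product equals \(\1_{\{|Df|>0\}}^\mm S^\pm_{f,\d g}\) \(\mm\)-a.e., whose \(L^1(\mm)\)-norm is controlled by \(\|S^\pm_{f,\d g}\|_{L^1(\mm)}<+\infty\), again by Lemma~\ref{lem:tech_for_grad}. Finally, the two convergences in \eqref{eq:est_Dg(nablaf)_for_dct} are immediate from \eqref{eq:tech_for_grad_conseq} with \(\omega=\d g\), after rewriting the incremental quotients via the same identification \(|D(f+\varepsilon g)|^p=|\d f+\varepsilon\d g|^p\) and noting that multiplying by the bounded indicator preserves the \(L^1\)-convergence. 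There is no substantial obstacle here; the entire statement is a transcription of Lemma~\ref{lem:tech_for_grad} through the differential, and the only point demanding a moment of care is keeping the factor \(\1_{\{|Df|>0\}}^\mm\) explicit throughout so that the divisor \(|Df|^{p-2}\) is harmless when \(p<2\).
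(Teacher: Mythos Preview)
Your proposal is correct and takes exactly the approach the paper indicates: the paper does not give a detailed proof but simply states that the proposition collects ``basic properties of the functions \(D^\pm g(\nabla f)\), which are direct consequences of Lemma~\ref{lem:tech_for_grad} and \eqref{eq:tech_for_grad_conseq}'', and your argument makes this reduction explicit by specialising to \(\omega=\d g\) and using \(|\d h|=|Dh|\). One minor remark: the indicator \(\1_{\{|Df|>0\}}^\mm\) is already present in \eqref{eq:tech_for_grad_conseq}, so no additional multiplication is needed for the final two convergences---they are literally the same identities once the dictionary \(|D(f+\varepsilon g)|=|\d f+\varepsilon\d g|\) and \(D^\pm g(\nabla f)|Df|^{p-2}=\1_{\{|Df|>0\}}^\mm S^\pm_{f,\d g}\) is applied.
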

The following definition has been proposed in \cite[Definition 3.3 and (3.11)]{Gig:15}.
\begin{definition}[Infinitesimal strict convexity]\label{def:inf_strict_conv}
Let \((\X,\sfd,\mm)\) be a metric measure space. Then we say that \((\X,\sfd,\mm)\) is \textbf{\(q\)-infinitesimally strictly convex} for some exponent \(q\in(1,\infty)\) provided
\[
D^-g(\nabla f)=D^+g(\nabla f)\quad\text{ for every }f,g\in D^{1,p}_{\mm}(\X).
\]
\end{definition}
\subsubsection*{Gradients of Dirichlet functions}
\begin{definition}[Gradient]\label{def:gradient}
Let \((\X,\sfd,\mm)\) be a metric measure space and \(p\in[1,\infty)\). Fix any function \(f\in D^{1,p}_{\mm}(\X)\). Then we say that a vector field \(v\in L^q(T\X)\)
is a \textbf{\(q\)-gradient} of \(f\) if
\begin{equation}\label{eq:def_grad}
\d f(v)=|Df|^p,\qquad|v|=|Df|^{p/q}.
\end{equation}
We denote by \({\sf Grad}_q(f)\subseteq L^q(T\X)\) the set of all \(q\)-gradients of \(f\). Whenever \({\sf Grad}_q(f)\) is a singleton, we denote by \(\nabla_q f\) its unique element.
In the case where \(p=q=2\), we write \(\nabla f\) instead of \(\nabla_2 f\).
\end{definition}
In particular, \(\|v\|_{L^q(T\X)}=\|\d f\|_{L^p(T^*\X)}^{p/q}\) for every \(v\in{\sf Grad}_q(f)\). Remark \ref{rmk:conseq_HB} implies that
\begin{equation}\label{eq:grad_exist}
{\sf Grad}_q(f)\neq\varnothing\quad\text{ for every }f\in D^{1,p}_{\mm}(\X).
\end{equation}
However, \({\sf Grad}_q(f)\) might contain more than one element.
Assuming \({\sf Grad}_q(f)\) is a singleton for every \(f\in D^{1,p}_{\mm}(\X)\), it can still happen that \(\nabla_q\colon D^{1,p}_{\mm}(\X)\to L^q(T\X)\) is not linear (even for \(p=2\)).
\begin{remark}\label{rmk:equiv_cond_grad}{\rm
In the case \(p>1\), the defining property \eqref{eq:def_grad} of gradients can be equivalently rewritten in the more familiar form
\(\d f(v)=|Df|^p=|v|^q\) (in the limit case \(p=1\), one has to interpret \eqref{eq:def_grad} as follows: \(\d f(v)=|Df|\) and \(|v|=\1_{\{|Df|>0\}}^\mm\)).
Moreover, a variational characterization based on Young's inequality is the following
\begin{equation}\label{eq:equiv_grad_Young}
v\in{\sf Grad}_q(f)\quad\Longleftrightarrow\quad\int\d f(v)\,\d\mm\geq\frac{1}{p}\int|Df|^p\,\d\mm+\frac{1}{q}\int|v|^q\,\d\mm,
\end{equation}
where we adopt the convention that \(\frac{1}{q}\int|v|^q\,\d\mm\coloneqq 0\) if \(q=\infty\).
}\end{remark}
\begin{proposition}\label{prop:Dg(nablaf)_and_Grad}
Let \((\X,\sfd,\mm)\) be a metric measure space, \(p\in(1,\infty)\), and \(f,g\in D^{1,p}_{\mm}(\X)\). Then
\begin{equation}\label{eq:rel_grad_Dg(nablaf)}
D^-g(\nabla f)|Df|^{p-2}\leq\d g(v)\leq D^+g(\nabla f)|Df|^{p-2}\quad\text{ for every }v\in{\sf Grad}_q(f).
\end{equation}
In particular, for any given function \(f\in D^{1,p}_{\mm}(\X)\), we have that the set \({\sf Grad}_q(f)\) is a singleton if and only if \(D^-g(\nabla f)=D^+g(\nabla f)\)
for every \(g\in D^{1,p}_{\mm}(\X)\). In this case, it holds that
\begin{equation}\label{eq:rel_grad_Dg(nablaf)_2}
\d g(\nabla_q f)=D^+g(\nabla f)|Df|^{p-2}=D^-g(\nabla f)|Df|^{p-2}\quad\text{ for every }g\in D^{1,p}_{\mm}(\X).
\end{equation}
\end{proposition}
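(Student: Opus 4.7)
To prove \eqref{eq:rel_grad_Dg(nablaf)}, I would fix $v\in{\sf Grad}_q(f)$, so $\d f(v)=|Df|^p$ and $|v|=|Df|^{p/q}=|Df|^{p-1}$ hold $\mm$-a.e. For $\varepsilon\neq 0$, combining the identity $\d(f+\varepsilon g)(v)=|Df|^p+\varepsilon\,\d g(v)$ with the pointwise bound $\d(f+\varepsilon g)(v)\leq|D(f+\varepsilon g)|\cdot|Df|^{p-1}$ and a pointwise Young inequality (with exponents $p,q$) yields
\begin{equation*}
|Df|^p+\varepsilon\,\d g(v)\leq\frac{|D(f+\varepsilon g)|^p}{p}+\frac{|Df|^p}{q}\quad\mm\text{-a.e.},
\end{equation*}
so that $\varepsilon\,\d g(v)\leq\frac{|D(f+\varepsilon g)|^p-|Df|^p}{p}$. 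Dividing by $\varepsilon$, taking the essential infimum for $\varepsilon>0$ and the essential supremum for $\varepsilon<0$, and multiplying by $\1_{\{|Df|>0\}}^\mm$ produces the two inequalities; on $\{|Df|=0\}$ both sides vanish, since there $|v|=0$ forces $\d g(v)=0$ and $D^\pm g(\nabla f)|Df|^{p-2}$ is zero by definition.

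The forward direction of the characterization is then immediate: if $D^-g(\nabla f)=D^+g(\nabla f)$ for every $g\in D^{1,p}_{\mm}(\X)$, any $v_1,v_2\in{\sf Grad}_q(f)$ satisfy $\d g(v_1)=\d g(v_2)$ for all such $g$. Since $\{\d g\,:\,g\in D^{1,p}_{\mm}(\X)\}\supseteq\{\d f\,:\,f\in W^{1,p}(\X)\}$ and the latter generates $L^p(T^*\X)$ as an $L^\infty(\mm)$-module by Remark \ref{rmk:consist_cotg_mod}, the $L^\infty(\mm)$-linearity and continuity of $v_1$ and $v_2$ as elements of $L^q(T\X)=L^p(T^*\X)^*$ extend this equality to all of $L^p(T^*\X)$, forcing $v_1=v_2$; nonemptiness is \eqref{eq:grad_exist}.

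The reverse direction is the main obstacle. Assuming ${\sf Grad}_q(f)=\{v\}$, fix $g\in D^{1,p}_{\mm}(\X)$; my plan is to construct $v_+\in{\sf Grad}_q(f)$ attaining $\d g(v_+)\geq D^+g(\nabla f)|Df|^{p-2}$, so that uniqueness forces $v_+=v$ and the upper bound from the first part becomes an equality. For $\varepsilon_n\downarrow 0$, pick $v_n\in{\sf Grad}_q(f+\varepsilon_n g)$ via \eqref{eq:grad_exist}. Running the first-part Young computation in reverse (with $v_n$ as reference gradient and $-g$ as perturbation at increment $\varepsilon_n$) produces the crucial lower bound
\begin{equation*}
\d g(v_n)\geq\frac{|D(f+\varepsilon_n g)|^p-|Df|^p}{p\varepsilon_n}\quad\mm\text{-a.e.}
\end{equation*}
Since $(v_n)$ is $L^q(T\X)$-bounded and $L^p(T^*\X)$ is separable (because $W^{1,p}(\X)$ is, by Theorem \ref{thm:density_nrg_Lip}), Banach--Alaoglu delivers a weak-$*$ accumulation point $v_+\in L^q(T\X)=L^p(T^*\X)^*$.

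The technical heart is then verifying $v_+\in{\sf Grad}_q(f)$ and transferring the above inequality to $v_+$. The identity $\d f(v_+)=|Df|^p$ follows from $\d f(v_n)=|D(f+\varepsilon_n g)|^p-\varepsilon_n\,\d g(v_n)\to|Df|^p$ strongly in $L^1(\mm)$, combined with the weak-$*$ convergence $\d f(v_n)\rightharpoonup\d f(v_+)$; pointwise Young then gives $|v_+|\geq|Df|^{p/q}$, while the norm estimate $\|v_+\|_{L^q(T\X)}\leq\liminf\|v_n\|_{L^q(T\X)}=\||Df|\|_{L^p(\mm)}^{p/q}$ pins down $|v_+|=|Df|^{p/q}$. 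The limit of the lower bound uses that $\frac{|D(f+\varepsilon_n g)|^p-|Df|^p}{p\varepsilon_n}\to D^+g(\nabla f)|Df|^{p-2}$ strongly in $L^1(\mm)$, which holds on $\{|Df|>0\}$ by \eqref{eq:est_Dg(nablaf)_for_dct} and on $\{|Df|=0\}$ by the identity $|D(f+\varepsilon_n g)|=\varepsilon_n|Dg|$ there (a consequence of locality of minimal weak upper gradients); testing against arbitrary $h\in L^\infty(\mm)^+$ and passing to the limit yields $\d g(v_+)\geq D^+g(\nabla f)|Df|^{p-2}$. Uniqueness gives $v_+=v$ and hence $\d g(v)=D^+g(\nabla f)|Df|^{p-2}$; the symmetric construction with $\varepsilon_n\uparrow 0$ yields $\d g(v)=D^-g(\nabla f)|Df|^{p-2}$, forcing $D^+g(\nabla f)=D^-g(\nabla f)$ and proving \eqref{eq:rel_grad_Dg(nablaf)_2}.
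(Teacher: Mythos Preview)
Your proof of \eqref{eq:rel_grad_Dg(nablaf)} and of the implication ``$D^+g(\nabla f)=D^-g(\nabla f)$ for all $g$ $\Rightarrow$ ${\sf Grad}_q(f)$ is a singleton'' are correct and essentially coincide with the paper's argument.

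For the remaining implication, however, your route is both more elaborate than necessary and contains a gap. The gap is in the weak-$*$ compactness step: you invoke separability of $L^p(T^*\X)$ to extract a convergent subsequence, justifying it via Theorem~\ref{thm:density_nrg_Lip}. But that theorem only yields separability of $(W^{1,p}(\X),\sfd_{\rm en})$, which is a strictly weaker topology than the norm topology (since $\big||Df|-|Dg|\big|\leq|D(f-g)|$, convergence in norm implies convergence in $\sfd_{\rm en}$, but not conversely). By the paper's Corollary following Lemma~\ref{lem:gen_cotg_mod_crit}, separability of $L^p(T^*\X)$ is equivalent to \emph{norm} separability of $W^{1,p}(\X)$, which is not known to hold in general. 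Without separability of the predual, Banach--Alaoglu only gives a weak-$*$ cluster point along a net, and your subsequent limit arguments (strong $L^1$ convergence of the difference quotients, etc.) would need to be reworked accordingly.

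The paper avoids this entirely by arguing via contrapositive: if ${\sf Grad}_q(f)$ contains two distinct elements $v\neq w$, then since $\{\d g:g\in D^{1,p}_{\mm}(\X)\}$ generates $L^p(T^*\X)$, there is some $g$ with $\d g(v)\neq\d g(w)$; on a set of positive measure one then has $\d g(v)\wedge\d g(w)<\d g(v)\vee\d g(w)$, and the already-proven bounds \eqref{eq:rel_grad_Dg(nablaf)} force $D^-g(\nabla f)<D^+g(\nabla f)$ there. This is a two-line argument using nothing beyond what you have already established. Your constructive approach---building $v_\pm\in{\sf Grad}_q(f)$ that saturate the inequalities---is in fact alluded to in the paper (just after Corollary~\ref{cor:equiv_inf_strict_conv}) as something one \emph{could} prove by adapting \cite[Proposition~2.3.7]{Gig:18}, but the paper explicitly declines to do so because it is unnecessary for the statement at hand.
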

\begin{proof}
We know from \eqref{eq:def_grad} that \(\frac{1}{p}|Df|^p\leq\d f(v)-\frac{1}{q}|v|^q\). Also, for every \(\varepsilon>0\) and \(\sigma\in\{1,-1\}\)
we have \(\frac{1}{p}|D(f+\sigma\varepsilon g)|^p\geq\d(f+\sigma\varepsilon g)(v)-\frac{1}{q}|v|^q\) thanks to Young's inequality. It follows that
\[
|Df|^{p-2}\bigg(\1_{\{|Df|>0\}}^\mm\frac{|D(f+\sigma\varepsilon g)|^p-|Df|^p}{p\sigma\varepsilon|Df|^{p-2}}\bigg)\geq\frac{\d(f+\sigma\varepsilon g)(v)-\d f(v)}{\sigma\varepsilon}=\sigma\,\d g(v),
\]
where we used that \(\{|v|>0\}=\{|Df|>0\}\). Since \(\varepsilon\), \(\sigma\) are arbitrary, we deduce that \eqref{eq:rel_grad_Dg(nablaf)} holds.

Now let \(f\in D^{1,p}_{\mm}(\X)\) be fixed. On the one hand, assume that \({\sf Grad}_q(f)\) contains two distinct elements \(v\) and \(w\). Since \(\{\d g\,:\,g\in D^{1,p}_{\mm}(\X)\}\)
generates \(L^p(T^*\X)\), we can find \(g\in D^{1,p}_{\mm}(\X)\) such that \(\d g(v)\neq\d g(w)\), thus there exists a Borel set \(P\subseteq\X\) with $\mm(P)>0$ such that
\(\d g(v)\wedge\d g(w)<\d g(v)\vee\d g(w)\) holds \(\mm\)-a.e.\ on \(P\). Recalling \eqref{eq:rel_grad_Dg(nablaf)}, we deduce that \(D^-g(\nabla f)|Df|^{p-2}<D^+g(\nabla f)|Df|^{p-2}\)
holds \(\mm\)-a.e.\ on \(P\). Since \(\{|v|>0\}=\{|w|>0\}=\{|Df|>0\}\), we deduce that \(P\subseteq\{|Df|>0\}\) up to \(\mm\)-null sets, so that \(D^-g(\nabla f)\neq D^+g(\nabla f)\).
On the other hand, assume that \(D^-g(\nabla f)=D^+g(\nabla f)\) for every \(g\in D^{1,p}_{\mm}(\X)\). Given any \(v,w\in{\sf Grad}_q(f)\), we know from \eqref{eq:rel_grad_Dg(nablaf)} that
\(\d g(v)=\d g(w)\) for every \(g\in D^{1,p}_{\mm}(\X)\). Since \(\{\d g\,:\,g\in D^{1,p}_{\mm}(\X)\}\) generates \(L^p(T^*\X)\), we conclude that \(v=w\), namely that \({\sf Grad}_q(f)\) is
a singleton. Finally, when \(\#{\sf Grad}_q(f)=1\), \eqref{eq:rel_grad_Dg(nablaf)_2} immediately follows from \eqref{eq:rel_grad_Dg(nablaf)}.
\end{proof}
Clearly, Proposition \ref{prop:Dg(nablaf)_and_Grad} gives the following characterisation of infinitesimal strict convexity:
\begin{corollary}\label{cor:equiv_inf_strict_conv}
Let \((\X,\sfd,\mm)\) be a metric measure space and \(p\in(1,\infty)\). Then the space \((\X,\sfd,\mm)\) is \(q\)-infinitesimally strictly convex if and only if
\({\sf Grad}_q(f)\) is a singleton for every \(f\in D^{1,p}_{\mm}(\X)\).
\end{corollary}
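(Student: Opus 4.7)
The statement is essentially an immediate corollary of Proposition \ref{prop:Dg(nablaf)_and_Grad}, so my plan is to simply assemble the pieces rather than prove anything substantive from scratch.

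The plan is to unfold the two definitions and match them. On the one hand, Definition \ref{def:inf_strict_conv} characterises \(q\)-infinitesimal strict convexity as the identity \(D^-g(\nabla f)=D^+g(\nabla f)\) holding for \emph{every} pair \(f,g\in D^{1,p}_{\mm}(\X)\). On the other hand, the last assertion of Proposition \ref{prop:Dg(nablaf)_and_Grad} fixes \(f\in D^{1,p}_{\mm}(\X)\) and provides the equivalence between \({\sf Grad}_q(f)\) being a singleton and the identity \(D^-g(\nabla f)=D^+g(\nabla f)\) holding for every \(g\in D^{1,p}_{\mm}(\X)\). Quantifying this latter equivalence over all \(f\in D^{1,p}_{\mm}(\X)\) yields exactly the statement of the corollary.

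Thus my proof will read, in essence: ``By Proposition \ref{prop:Dg(nablaf)_and_Grad}, for every \(f\in D^{1,p}_{\mm}(\X)\) the set \({\sf Grad}_q(f)\) is a singleton if and only if \(D^-g(\nabla f)=D^+g(\nabla f)\) for every \(g\in D^{1,p}_{\mm}(\X)\). Taking the conjunction over \(f\in D^{1,p}_{\mm}(\X)\) and invoking Definition \ref{def:inf_strict_conv}, we obtain the claim.'' Note that the existence direction (\(q\)-infinitesimal strict convexity already guarantees that \({\sf Grad}_q(f)\) is nonempty) is covered by \eqref{eq:grad_exist}, so the word ``singleton'' is legitimate in both directions of the equivalence.

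There is no genuine obstacle here; the corollary is a bookkeeping statement that repackages the pointwise criterion from Proposition \ref{prop:Dg(nablaf)_and_Grad} as a global property of the space. The actual analytic content — that \({\sf Grad}_q(f)\) being a singleton is detected by the coincidence of the one-sided directional derivatives \(D^\pm g(\nabla f)\), which ultimately rests on the fact that \(\{\d g : g\in D^{1,p}_{\mm}(\X)\}\) generates \(L^p(T^*\X)\) (used together with the Young-type variational characterisation of gradients in Remark \ref{rmk:equiv_cond_grad}) — is already discharged in that proposition.
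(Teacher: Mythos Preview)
Your proposal is correct and matches the paper's own treatment: the paper does not give an explicit proof of this corollary, merely noting that ``Clearly, Proposition \ref{prop:Dg(nablaf)_and_Grad} gives the following characterisation of infinitesimal strict convexity.'' Your write-up unpacks exactly this observation.
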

By suitably adapting the proof of \cite[Proposition 2.3.7]{Gig:18} (or of \cite[Proposition 4.2.17]{Gig:Pas:20}) -- where only the case \(p=q=2\) is treated --
it is possible to prove that for any given functions \(f,g\in D^{1,p}_{\mm}(\X)\) there exist \(v^g_+,v^g_-\in{\sf Grad}_q(f)\) such that
\[
\d f(v^g_+)=D^+g(\nabla f)|Df|^{p-2},\qquad\d f(v^g_-)=D^-g(\nabla f)|Df|^{p-2}.
\]
However, we do not need this fact, thus we do not prove it.
\begin{proposition}
Let \((\X,\sfd,\mm)\) be a metric measure space and \(p\in[1,\infty)\). Then
\begin{equation}\label{eq:wstar_dense_tg_mod}
\bigg\{\sum_{i=1}^n\1_{E_i}^\mm v_i\;\bigg|\;n\in\N,\,(E_i)_{i=1}^n\text{ Borel subsets of }\X,\,(v_i)_{i=1}^n\subseteq\bigcup_{f\in D^{1,p}_{\mm}(\X)}{\sf Grad}_q(f)\bigg\}
\end{equation}
is weakly\(^*\) dense in \(L^q(T\X)\). In particular, if the tangent module \(L^q(T\X)\) is reflexive, then it holds that the set \(\bigcup_{f\in D^{1,p}_{\mm}(\X)}{\sf Grad}_q(f)\)
generates \(L^q(T\X)\) in the sense of \(L^q(\mm)\)-Banach \(L^\infty(\mm)\)-modules.
\end{proposition}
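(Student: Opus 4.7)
The plan is to apply a Hahn--Banach/bipolar argument in the duality $L^{q}(T\X) = L^{p}(T^{*}\X)^{*}$ provided by Proposition \ref{prop:map_Int}. Denoting the set in the statement by $\mathcal{S}$, I would first observe that $\mathcal{S}$ is closed under addition (just append terms), under $v \mapsto -v$ (since $-v \in {\sf Grad}_{q}(-f)$ whenever $v \in {\sf Grad}_{q}(f)$, the defining identities in Definition \ref{def:gradient} being even under that symmetry), and under multiplication by positive integers (by repeating the summands). Consequently, the weak$^{*}$ closure of $\mathcal{S}$ will be all of $L^{q}(T\X)$ as soon as the annihilator $\mathcal{S}^{\perp} = \{\omega \in L^{p}(T^{*}\X) : \int \omega(v)\,\d\mm = 0 \text{ for every } v \in \mathcal{S}\}$ reduces to $\{0\}$.

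The heart of the argument is to prove $\mathcal{S}^{\perp} = \{0\}$. Given $\omega \in \mathcal{S}^{\perp}$, testing against elements of the form $\1_{E}^{\mm}v$ for arbitrary Borel $E \subseteq \X$ and $v \in {\sf Grad}_{q}(f)$ yields $\omega(v) = 0$ $\mm$-a.e.\ for every $v \in \bigcup_{f \in D^{1,p}_{\mm}(\X)}{\sf Grad}_{q}(f)$. I would then exploit that $\{\d f : f \in D^{1,p}_{\mm}(\X)\}$ generates $L^{p}(T^{*}\X)$ (Remark \ref{rmk:consist_cotg_mod}) and that, since $p < \infty$, finite truncations of admissible series converge in norm by Remark \ref{rmk:comments_Ban_mod} iv), in order to pick a sequence
\[
\omega_{n} = \sum_{i=1}^{N_{n}}\1_{E_{i}^{n}}^{\mm}\d f_{i}^{n}, \qquad (E_{i}^{n})_{i=1}^{N_n} \text{ a Borel partition of } \X,
\]
with $\omega_{n} \to \omega$ strongly in $L^{p}(T^{*}\X)$.

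For each $n$ I would then select $v_{i}^{n} \in {\sf Grad}_{q}(f_{i}^{n})$, whose existence is guaranteed by \eqref{eq:grad_exist}, and form the \emph{piecewise gradient} $v_{n} \coloneqq \sum_{i}\1_{E_{i}^{n}}^{\mm}v_{i}^{n} \in \mathcal{S}$. Because $(E_{i}^{n})_{i}$ is a partition, $\1_{E_{i}^{n}}^{\mm}\1_{E_{j}^{n}}^{\mm} = \delta_{ij}\1_{E_{i}^{n}}^{\mm}$, and the pointwise pairing collapses to the diagonal via the glueing property of $L^{p}$-Banach $L^{\infty}$-modules (Definition \ref{def:Lp-Ban_Linfty_mod}), yielding
\[
\omega_{n}(v_{n}) = \sum_{i}\1_{E_{i}^{n}}^{\mm}|Df_{i}^{n}|^{p} = |\omega_{n}|^{p}, \qquad |v_{n}| = |\omega_{n}|^{p/q} \quad \mm\text{-a.e.}
\]
Splitting $\int\omega(v_{n})\,\d\mm = \int|\omega_{n}|^{p}\,\d\mm + \int(\omega-\omega_{n})(v_{n})\,\d\mm$ and bounding the cross term by H\"older as $\|\omega-\omega_{n}\|_{L^{p}(T^{*}\X)}\|\omega_{n}\|_{L^{p}(T^{*}\X)}^{p/q}$ (with the natural interpretation $\|v_{n}\|_{L^{\infty}(T\X)} \leq 1$ in the limit case $q = \infty$), which tends to $0$, together with $\int|\omega_{n}|^{p}\,\d\mm \to \int|\omega|^{p}\,\d\mm$ and the hypothesis $\int\omega(v_{n})\,\d\mm = 0$ for every $n$, forces $|\omega|^{p} = 0$ in $L^{1}(\mm)$, so $\omega = 0$.

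For the second statement, reflexivity of $L^{q}(T\X)$ is equivalent to reflexivity of its predual $L^{p}(T^{*}\X)$; in that case the weak and weak$^{*}$ topologies on $L^{q}(T\X)$ coincide, and Mazur's theorem then ensures that the weak -- hence weak$^{*}$ -- closure of any convex subset equals its norm closure. Applying this to the convex hull of $\mathcal{S}$ (whose weak$^{*}$ density is still provided by the first part, since its annihilator is again $\mathcal{S}^{\perp} = \{0\}$) yields norm-density of the module span of $\bigcup_{f \in D^{1,p}_{\mm}(\X)}{\sf Grad}_{q}(f)$, i.e.\ the claimed generation property. The main obstacle I foresee is the partition-based diagonal computation giving $\omega_{n}(v_{n}) = |\omega_{n}|^{p}$ and $|v_{n}| = |\omega_{n}|^{p/q}$; once that identity is in place, the rest of the argument is a routine combination of Hahn--Banach and H\"older.
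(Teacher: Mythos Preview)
Your approach is essentially the same as the paper's: both show the set separates the points of $L^p(T^*\X)$ by building piecewise gradients $v_n=\sum_i\1_{E_i^n}^\mm v_i^n$ dual to simple-form approximations $\omega_n=\sum_i\1_{E_i^n}^\mm\d f_i^n$ with $\omega_n(v_n)=|\omega_n|^p$ and $|v_n|=|\omega_n|^{p/q}$, control the cross term by H\"older, and then invoke Mazur in the reflexive case. One small gap to patch: the implication ``$\mathcal{S}^\perp=\{0\}\Rightarrow$ weak$^*$ density'' requires $\mathcal{S}$ to be a \emph{linear subspace}, not merely an additive subgroup (e.g.\ $\mathbb{Z}\subset\mathbb{R}$ has trivial annihilator yet is not dense); the paper asserts the subspace property directly, and indeed for $p>1$ one has $\lambda v\in{\sf Grad}_q\big(|\lambda|^{q-1}\mathrm{sgn}(\lambda)\,f\big)$ whenever $v\in{\sf Grad}_q(f)$, which your list of closure properties omits.
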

\begin{proof}
The family \(\mathcal V\) in \eqref{eq:wstar_dense_tg_mod} is a vector subspace of \(L^q(T\X)\) (since the sets \(E_i\) are not required to be disjoint). Moreover, let us denote
by \(\mathcal W\) the set of all those elements \(\omega\) of \(L^p(T^*\X)\) that can be written as \(\omega=\sum_{i=1}^n\1_{E_i}^\mm\d f_i\), where \(n\in\N\), \((E_i)_{i=1}^n\)
is a Borel partition of \(\X\), and \((f_i)_{i=1}^n\subseteq D^{1,p}_{\mm}(\X)\). Now let \(\omega=\sum_{i=1}^n\1_{E_i}^\mm\d f_i\in\mathcal W\) be fixed. For \(i=1,\ldots,n\), consider
any \(v_i\in{\sf Grad}_q(f_i)\). Then
\[\begin{split}
\omega\bigg(\sum_{i=1}^n\1_{E_i}^\mm v_i\bigg)&=\sum_{i=1}^n\1_{E_i}^\mm\,\d f_i(v_i)=\sum_{i=1}^n\1_{E_i}^\mm|Df_i|^p=|\omega|^p,\\
\bigg|\sum_{i=1}^n\1_{E_i}^\mm v_i\bigg|&=\sum_{i=1}^n\1_{E_i}^\mm|v_i|=\sum_{i=1}^n\1_{E_i}^\mm|Df_i|^{p/q}=|\omega|^{p/q}.
\end{split}\]
In particular, we have shown that
\begin{equation}\label{eq:wstar_dense_tg_mod_aux}
\forall\omega\in\mathcal W\quad\exists\,v\in\mathcal V:\quad\omega(v)=|\omega|^p,\quad|v|=|\omega|^{p/q}.
\end{equation}
We now claim that \(\mathcal V\) separates the points of \(L^p(T^*\X)\), which is equivalent to the weak\(^*\) density of \(\mathcal V\) in \(L^q(T\X)\).
To this aim, let \(\omega\in L^p(T^*\X)\setminus\{0\}\) be fixed. Since \(\mathcal W\) is strongly dense in \(L^p(T^*\X)\) by Remark \ref{rmk:comments_Ban_mod} iv),
we find a sequence \((\omega_n)_n\subseteq\mathcal W\) such that \(\omega_n\to\omega\) strongly in \(L^p(T^*\X)\). By virtue of \eqref{eq:wstar_dense_tg_mod_aux},
for any \(n\in\N\), there exists \(v_n\in\mathcal V\) such that \(\omega_n(v_n)=|\omega_n|^p\) and \(|v_n|=|\omega_n|^{p/q}\). Now observe that
\[
\bigg|\int(\omega-\omega_n)(v_n)\,\d\mm\bigg|\leq\|\omega-\omega_n\|_{L^p(T^*\X)}\|\omega_n\|_{L^p(T^*\X)}^{p/q}\quad\text{ for every }n\in\N
\]
by H\"{o}lder's inequality. As \(\|\omega_n\|_{L^p(T^*\X)}^{p/q}\to\|\omega\|_{L^p(T^*\X)}^{p/q}\) if \(p>1\), while \(\sup_n\|\omega_n\|_{L^p(T^*\X)}^{p/q}\leq 1\)
if \(p=1\), we deduce that \(\int(\omega-\omega_n)(v_n)\,\d\mm\to 0\). Therefore, we can conclude that
\[\begin{split}
\lim_{n\to\infty}\int\omega(v_n)\,\d\mm&=\lim_{n\to\infty}\bigg(\int(\omega-\omega_n)(v_n)\,\d\mm+\int\omega_n(v_n)\,\d\mm\bigg)
=\lim_{n\to\infty}\int\omega_n(v_n)\,\d\mm\\
&=\lim_{n\to\infty}\int|\omega_n|^p\,\d\mm=\int|\omega|^p\,\d\mm>0.
\end{split}\]
It follows that \(\int\omega(v_n)\,\d\mm\neq 0\) for \(n\in\N\) sufficiently large, thus \(\mathcal V\) separates the points of \(L^p(T^*\X)\).

Finally, let us assume that \(L^q(T\X)\) is reflexive. Then \(\mathcal V\) is weakly dense in \(L^q(T\X)\) by the first part of the statement. By Mazur's lemma,
it follows that \(\mathcal V\) is strongly dense in \(L^q(T\X)\), which implies that \(\bigcup_{f\in D^{1,p}_{\mm}(\X)}{\sf Grad}_q(f)\) generates \(L^q(T\X)\).
All in all, the statement is achieved.
\end{proof}
\subsubsection*{Infinitesimal Hilbertianity}
We recall next the definition of infinitesimal Hilbertianity, introduced in \cite[Definition 2.3.17]{Gig:18}.
\begin{definition}[Infinitesimal Hilbertianity]\label{def:inf_Hilb}
Let \((\X,\sfd,\mm)\) be a metric measure space. Then we say that \((\X,\sfd,\mm)\) is \textbf{infinitesimally Hilbertian}
provided \(W^{1,2}(\X)\) is a Hilbert space.
\end{definition}
By slightly adapting the proof of \cite[Proposition 2.3.17]{Gig:18}, one can prove the following result:
\begin{proposition}
Let \((\X,\sfd,\mm)\) be a metric measure space. Then the following are equivalent:
\begin{itemize}
\item[\(\rm i)\)] The space \((\X,\sfd,\mm)\) is infinitesimally Hilbertian.
\item[\(\rm ii)\)] The Cheeger \(2\)-energy functional \(\mathcal E_2\colon L^2(\mm)\to[0,\infty]\) is a quadratic form.
\item[\(\rm iii)\)] The cotangent module \(L^2(T^*\X)\) is a Hilbert module.
\item[\(\rm iv)\)] The tangent module \(L^2(T\X)\) is a Hilbert module.
\item[\(\rm v)\)] \((\X,\sfd,\mm)\) is \(2\)-infinitesimally strictly convex and \(\d g(\nabla f)=\d f(\nabla g)\) for every \(f,g\in D^{1,2}_\mm(\X)\).
\item[\(\rm vi)\)] \((\X,\sfd,\mm)\) is \(2\)-infinitesimally strictly convex and \(\nabla\colon D^{1,2}_\mm(\X)\to L^2(T\X)\) is linear.
\end{itemize}
\end{proposition}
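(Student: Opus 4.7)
I would establish the equivalences along the cycle $(i)\Leftrightarrow(ii)\Rightarrow(iii)\Rightarrow(iv)\Rightarrow(v)\Rightarrow(vi)\Rightarrow(ii)$, which closes once $(iii)\Rightarrow(iv)$ is in place. The equivalence $(i)\Leftrightarrow(ii)$ is essentially definitional: since $\|f\|_{W^{1,2}(\X)}^{2}=\|f\|_{L^{2}(\mm)}^{2}+2\,\mathcal E_{2}(f)$ and $L^{2}(\mm)$ already satisfies the parallelogram identity, the Hilbert property of $W^{1,2}(\X)$ is equivalent (using the $2$-homogeneity of $\mathcal E_{2}$) to $\mathcal E_{2}$ being a quadratic form on $W^{1,2}(\X)$.

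The main work lies in $(ii)\Rightarrow(iii)$. Starting from the integrated identity $\int|\d(f+g)|^{2}+|\d(f-g)|^{2}\,\d\mm=2\int|\d f|^{2}+|\d g|^{2}\,\d\mm$ for all $f,g\in W^{1,2}(\X)$, I would upgrade it to a pointwise identity $\mm$-a.e.\ on differentials by testing against truncations (Lemma~\ref{lem:trunc_cut-off}) and cut-offs in $\LIP_{bs}(\X)$, using the locality, chain, and Leibniz rules of the differential (Theorem~\ref{thm:calculus_rules_diff}) to control the contributions off a chosen bounded Borel set. Once the pointwise parallelogram holds on differentials, it passes to finite sums $\omega=\sum_{i}\1_{E_{i}}^{\mm}\d f_{i}$ and $\eta=\sum_{j}\1_{F_{j}}^{\mm}\d g_{j}$ by refining partitions and exploiting locality on each intersection $E_{i}\cap F_{j}$, and finally extends to all $\omega,\eta\in L^{2}(T^{*}\X)$ by density of such simple sums, as guaranteed by Remark~\ref{rmk:consist_cotg_mod}, combined with the continuity of the pointwise norm under strong $L^{2}$-convergence (after passage to a $\mm$-a.e.\ convergent subsequence).

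For $(iii)\Rightarrow(iv)\Rightarrow(v)$, the pointwise parallelogram on $L^{2}(T^{*}\X)$ polarises to a pointwise inner product $\langle\cdot,\cdot\rangle\colon L^{2}(T^{*}\X)\times L^{2}(T^{*}\X)\to L^{1}(\mm)$, and the module-valued Riesz theorem for Hilbert modules (cf.\ \cite{Gig:18}) provides, for every $\omega\in L^{2}(T^{*}\X)$, a unique $v_{\omega}\in L^{2}(T\X)=L^{2}(T^{*}\X)^{*}$ with $\eta(v_{\omega})=\langle\omega,\eta\rangle$ for every $\eta$ and $|v_{\omega}|=|\omega|$; this simultaneously transfers the parallelogram to $L^{2}(T\X)$, giving $(iv)$. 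Setting $\nabla f\coloneqq v_{\d f}$, the identities $\d f(\nabla f)=|\d f|^{2}=|Df|^{2}$ and $|\nabla f|=|\d f|=|Df|$ confirm $\nabla f\in{\sf Grad}_{2}(f)$; uniqueness of $v_{\omega}$ forces $\#{\sf Grad}_{2}(f)=1$, i.e.\ $(\X,\sfd,\mm)$ is $2$-infinitesimally strictly convex by Corollary~\ref{cor:equiv_inf_strict_conv}; and the symmetry of $\langle\cdot,\cdot\rangle$ gives $\d g(\nabla f)=\d f(\nabla g)$, establishing $(v)$.

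To close the cycle, $(v)\Rightarrow(vi)$ follows from the symmetry relation: for any $h\in D^{1,2}_{\mm}(\X)$, one has $\d h(\nabla(f_{1}+f_{2}))=\d(f_{1}+f_{2})(\nabla h)=\d h(\nabla f_{1})+\d h(\nabla f_{2})=\d h(\nabla f_{1}+\nabla f_{2})$, and analogously for scalar multiplication; since $\{\d h\,:\,h\in D^{1,2}_{\mm}(\X)\}$ generates $L^{2}(T^{*}\X)$ and thereby separates points of $L^{2}(T\X)$, the linearity of $\nabla$ follows. Finally $(vi)\Rightarrow(ii)$ is the direct computation
\[
|D(f\pm g)|^{2}=\d(f\pm g)(\nabla(f\pm g))=(\d f\pm\d g)(\nabla f\pm\nabla g)=|Df|^{2}+|Dg|^{2}\pm(\d f(\nabla g)+\d g(\nabla f)),
\]
so that $|D(f+g)|^{2}+|D(f-g)|^{2}=2|Df|^{2}+2|Dg|^{2}$ pointwise $\mm$-a.e., and integration yields the parallelogram identity for $\mathcal E_{2}$. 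The main obstacle I expect is the localisation step in $(ii)\Rightarrow(iii)$: passing from an integrated identity among Sobolev differentials to a pointwise identity on the full module $L^{2}(T^{*}\X)$ requires delicate manipulations with cut-offs and Leibniz/chain rules inside the $L^{\infty}(\mm)$-module structure, since in a generic Banach module the pointwise parallelogram cannot be recovered from its integrated form by convexity alone. The remaining implications are then essentially algebraic once Riesz for Hilbert modules is available.
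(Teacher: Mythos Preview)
Your plan is sound and essentially reproduces the argument the paper defers to (Gigli's Proposition~2.3.17); the paper itself gives no proof beyond that citation, so there is nothing substantive to compare against. Your identification of the localisation step in $(ii)\Rightarrow(iii)$ as the crux is correct, and your outline for it (cut-offs, Leibniz, locality on bounded sets, then density via Remark~\ref{rmk:consist_cotg_mod}) is the standard route.

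One small logical point: as written, your chain $(iii)\Rightarrow(iv)\Rightarrow(v)$ actually proves $(iii)\Rightarrow(iv)$ and $(iii)\Rightarrow(v)$, since the Riesz map $v_\omega$ you use to exhibit $\nabla f$ and establish uniqueness of gradients comes from the inner product on $L^2(T^*\X)$, not on $L^2(T\X)$. To close the cycle through $(iv)$ you still need $(iv)\Rightarrow(iii)$ separately. This is easy: if $L^2(T\X)$ is a Hilbert module then so is its module dual $L^2(T\X)^*$ (again by Riesz), and the canonical isometric embedding $L^2(T^*\X)\hookrightarrow L^2(T^*\X)^{**}=L^2(T\X)^*$ lets $L^2(T^*\X)$ inherit the pointwise parallelogram law as a submodule. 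With that one sentence added, your cycle closes cleanly.
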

\subsection{Laplacian measures}\label{ss:laplacian}
Building on our study of divergence measures and dual energies from the previous sections, we now propose a notion of a \(p\)-Laplacian for metric Sobolev functions.
\begin{definition}[Laplacian]\label{def:laplacian}
Let \((\X,\sfd,\mm)\) be a metric measure space and \(p\in[1,\infty)\). Fix a function \(f\in D^{1,p}_{\mm}(\X)\).
Then we say that a measure \(\mu\in\mathfrak M(\X)\) is a \textbf{\(p\)-Laplacian} of \(f\) if
\begin{equation}\label{eq:def_laplacian}
-\int\psi\,\d\mu\leq\int_{\{|Df|>0\}}\frac{|D(f+\psi)|^p-|Df|^p}{p}\,\d\mm\quad\text{ for every }\psi\in\LIP_{bs}(\X).
\end{equation}
We denote by \(\boldsymbol\Delta_p(f)\subseteq\mathfrak M(\X)\) the set of all \(p\)-Laplacians of \(f\).
Let us also define
\[
D(\boldsymbol\Delta_p)\coloneqq\big\{f\in D^{1,p}_{\mm}(\X)\;\big|\;\boldsymbol\Delta_p(f)\neq\varnothing\big\}.
\]
\end{definition}
After proving basic properties for the $p$-Laplacian, we comment on the definition in Remarks \ref{rmk:equiv_Lapl_p>1} and \ref{rem:consistency:definition}.
\begin{lemma}\label{lem:Delta_implies_div}
Let \((\X,\sfd,\mm)\) be a metric measure space and \(p\in[1,\infty)\). Fix a function \(f\in D(\boldsymbol\Delta_p)\)
and some Laplacian measure \(\mu\in\boldsymbol\Delta_p(f)\). Then
\begin{equation}\label{eq:Delta_implies_div}
-\int\psi\,\d\mu\leq\int|D\psi||Df|^{p-1}\,\d\mm\quad\text{ for every }\psi\in\LIP_{bs}(\X).
\end{equation}
In particular, it holds that \({\sf D}_q(\mu)\leq\||Df|\|_{L^p(\mm)}^{p/q}<+\infty\).
\end{lemma}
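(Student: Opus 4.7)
The plan is to derive \eqref{eq:Delta_implies_div} from \eqref{eq:def_laplacian} by a standard first-variation argument, and then combine it with H\"older's inequality and the identity \({\sf F}_p={\sf D}_q\) from Theorem \ref{thm:F_p=D_q} for the remaining claim.

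For the main inequality, I fix \(\psi\in\LIP_{bs}(\X)\) and \(\varepsilon>0\), apply \eqref{eq:def_laplacian} with \(\varepsilon\psi\) in place of \(\psi\), and divide by \(\varepsilon\) to obtain
\[
-\int\psi\,\d\mu\leq\int\1_{\{|Df|>0\}}^\mm\frac{|D(f+\varepsilon\psi)|^p-|Df|^p}{p\varepsilon}\,\d\mm.
\]
Since \(\psi\in\LIP_{bs}(\X)\subseteq D^{1,p}_{\mm}(\X)\), the \(L^1(\mm)\)-convergence statement \eqref{eq:est_Dg(nablaf)_for_dct} in Proposition \ref{prop:main_Dg(nablaf)} gives, as \(\varepsilon\searrow 0\),
\[
-\int\psi\,\d\mu\leq\int D^+\psi(\nabla f)|Df|^{p-2}\,\d\mm=\int\1_{\{|Df|>0\}}^\mm S^+_{f,\d\psi}\,\d\mm,
\]
where the last equality is simply the definition of \(D^+\psi(\nabla f)\). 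Invoking the pointwise bound \(\1_{\{|Df|>0\}}^\mm|S^+_{f,\d\psi}|\leq|\d\psi||Df|^{p-1}\) from Lemma \ref{lem:tech_for_grad}, and observing that \(|\d\psi|=|D\psi|\) and that \(|Df|^{p-1}\) vanishes on \(\{|Df|=0\}\) (by the convention adopted at the beginning of Section \ref{ss:grad_vector_fields} when \(p=1\), and trivially when \(p>1\)), yields \eqref{eq:Delta_implies_div}.

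For the second claim, replacing \(\psi\) with \(-\psi\) in \eqref{eq:Delta_implies_div} and applying H\"older's inequality with conjugate exponents \(p,q\) (recall \(p-1=p/q\)) gives
\[
\int\psi\,\d\mu\leq\int|D\psi||Df|^{p-1}\,\d\mm\leq\||D\psi|\|_{L^p(\mm)}\||Df|\|_{L^p(\mm)}^{p/q}.
\]
Since \(|D\psi|\leq\lip_a(\psi)^\mm\), taking the supremum over those \(\psi\in\LIP_{bs}(\X)\) with \(\int\lip_a(\psi)^p\,\d\mm\leq 1\) yields \({\sf F}_p(\mu)\leq\||Df|\|_{L^p(\mm)}^{p/q}\). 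Then Theorem \ref{thm:F_p=D_q} gives \({\sf D}_q(\mu)={\sf F}_p(\mu)\leq\||Df|\|_{L^p(\mm)}^{p/q}<+\infty\), as \(|Df|\in L^p(\mm)\) because \(f\in D^{1,p}_{\mm}(\X)\).

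There is no serious obstacle here; the only point meriting care is the justification of the passage to the limit \(\varepsilon\searrow 0\), where one must appeal to the one-sided \(L^1\)-convergence \eqref{eq:est_Dg(nablaf)_for_dct} (not merely monotone or pointwise convergence), and the bookkeeping of the case \(p=1\) via the conventions of Section \ref{ss:grad_vector_fields}.
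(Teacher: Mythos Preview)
Your proof is correct and follows essentially the same approach as the paper's: apply the Laplacian inequality \eqref{eq:def_laplacian} to \(\varepsilon\psi\), divide by \(\varepsilon\), pass to the limit via \eqref{eq:est_Dg(nablaf)_for_dct}, and bound the resulting \(D^+\psi(\nabla f)|Df|^{p-2}\) term pointwise; then combine H\"older with \({\sf F}_p={\sf D}_q\). The only cosmetic difference is that the paper cites the bound \(D^+\psi(\nabla f)\leq|Df||D\psi|\) from Proposition \ref{prop:main_Dg(nablaf)} directly, whereas you unwind to the equivalent estimate on \(S^+_{f,\d\psi}\) from Lemma \ref{lem:tech_for_grad}.
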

\begin{proof}
Fix any \((\varepsilon_n)_n\subseteq(0,1)\) with \(\varepsilon_n\searrow 0\). Given any \(\psi\in\LIP_{bs}(\X)\), we deduce from \eqref{eq:est_Dg(nablaf)_for_dct} that
\[\begin{split}
-\int\psi\,\d\mu&=-\lim_{n\to\infty}\frac{1}{\varepsilon_n}\int\varepsilon_n\psi\,\d\mu
\leq\lim_{n\to\infty}\int_{\{|Df|>0\}}\frac{|D(f+\varepsilon_n\psi)|^p-|Df|^p}{p\varepsilon_n}\,\d\mm\\
&=\int D^+\psi(\nabla f)|Df|^{p-2}\,\d\mm,
\end{split}\]
whence \eqref{eq:Delta_implies_div} follows by the \(\mm\)-a.e.\ inequality \(D^+\psi(\nabla f)\leq|Df||D\psi|\), proved in Proposition \ref{prop:main_Dg(nablaf)}.

Next, let us observe that \eqref{eq:Delta_implies_div} and an application of H\"{o}lder's inequality yield
\[
\int\psi\,\d\mu\leq\||D\psi|\|_{L^p(\mm)}\||Df|^{p-1}\|_{L^q(\mm)}\leq\|\lip_a(\psi)\|_{\mathcal L^p(\mm)}\||Df|\|_{L^p(\mm)}^{p/q}\quad\text{ for all }\psi\in\LIP_{bs}(\X).
\]
Recalling Remark \ref{rmk:comments_def_F_p} ii) and Theorem \ref{thm:F_p=D_q}, we conclude that \({\sf D}_q(\mu)={\sf F}_p(\mu)\leq\||Df|\|_{L^p(\mm)}^{p/q}\) 
($\leq 1$ in the case $p=1$).
\end{proof}

Lemma \ref{lem:Delta_implies_div} has the following immediate consequence:
\begin{corollary}\label{cor:incl_dom_Lapl_Dq}
Let \((\X,\sfd,\mm)\) be a metric measure space and \(p\in[1,\infty)\). Then it holds that
\[
\bigcup_{f\in D^{1,p}_{\mm}(\X)}\boldsymbol\Delta_p(f)\subseteq D({\sf D}_q).
\]
\end{corollary}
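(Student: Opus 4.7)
The plan is essentially an immediate unpacking of the preceding lemma. I would pick an arbitrary element $\mu$ of the union on the left-hand side; by definition, this means $\mu \in \boldsymbol\Delta_p(f)$ for some $f \in D^{1,p}_{\mm}(\X)$. In particular, $f \in D(\boldsymbol\Delta_p)$, so Lemma \ref{lem:Delta_implies_div} applies and yields
\[
{\sf D}_q(\mu) \leq \||Df|\|_{L^p(\mm)}^{p/q} < +\infty,
\]
where finiteness follows from $|Df| \in L^p(\mm)^+$, which is built into the definition of $D^{1,p}_{\mm}(\X)$. This places $\mu$ in $D({\sf D}_q)$, and the arbitrariness of $\mu$ gives the stated inclusion.

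There is no genuine obstacle to overcome: the work has already been done in Lemma \ref{lem:Delta_implies_div}, whose proof combined the defining inequality \eqref{eq:def_laplacian} of a $p$-Laplacian with the convergence result \eqref{eq:est_Dg(nablaf)_for_dct}, the pointwise bound $D^+\psi(\nabla f) \leq |Df||D\psi|$ from Proposition \ref{prop:main_Dg(nablaf)}, Hölder's inequality, and the identity ${\sf F}_p = {\sf D}_q$ from Theorem \ref{thm:F_p=D_q}. The corollary merely rephrases the resulting quantitative bound ${\sf D}_q(\mu) < +\infty$ as membership in the finiteness domain $D({\sf D}_q)$ and takes the union over $f$. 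Accordingly, my write-up would be at most two sentences invoking Lemma \ref{lem:Delta_implies_div}.
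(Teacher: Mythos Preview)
Your proposal is correct and matches the paper's approach exactly: the paper states the corollary as an immediate consequence of Lemma \ref{lem:Delta_implies_div} and does not even write out a proof. Your unpacking is precisely the intended one-line argument.
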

The main result of this section (i.e.\ Theorem \ref{thm:Lapl_are_div} below) states that the Laplacian of a function \(f\) is the divergence of a gradient of \(f\).
Its proof shares some similarities with that of Lemma \ref{lem:vector_field_induced_by_mu}.
\begin{theorem}[Laplacians are divergences of gradients]\label{thm:Lapl_are_div}
Let \((\X,\sfd,\mm)\) be a metric measure space and \(p\in[1,\infty)\). Fix any \(f\in D(\boldsymbol\Delta_p)\) and \(\mu\in\boldsymbol\Delta_p(f)\).
Then there exists \(w_\mu\in D(\boldsymbol\div_q;\X)\) such that
\[
\boldsymbol\div_q(w_\mu)=\mu,\qquad w_\mu\in{\sf Grad}_q(f).
\]
\end{theorem}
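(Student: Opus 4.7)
The plan is a Hahn–Banach argument: construct a linear functional on $L^{p}(T^{*}\X)$ that extends $\d\psi\mapsto -\int\psi\,\d\mu$, is dominated by the right directional derivative of $\omega\mapsto\frac{1}{p}\int|\omega|^{p}\,\d\mm$ at $\omega=\d f$, and then identify it with a vector field via Proposition \ref{prop:map_Int}. The dual inequality will force the vector field to be simultaneously a $q$-gradient of $f$ and a primitive of $\mu$.

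First, define the sublinear majorant $\Phi\colon L^{p}(T^{*}\X)\to\R$ by
\[
\Phi(\eta)\coloneqq\int\1_{\{|Df|>0\}}^{\mm}\,S^{+}_{f,\eta}\,\d\mm,
\]
with $S^{+}_{f,\eta}$ as in Lemma \ref{lem:tech_for_grad}. Lemma \ref{lem:tech_for_grad} gives $|\Phi(\eta)|\leq\||Df|\|_{L^{p}(\mm)}^{p/q}\|\eta\|_{L^{p}(T^{*}\X)}$ (using the convention $|Df|^{p-1}=\1_{\{|Df|>0\}}^{\mm}$ for $p=1$, so that the argument is uniform in $p\in[1,\infty)$), and pointwise sublinearity of the right directional derivative of $\eta\mapsto|\d f+\eta|^{p}/p$ makes $\Phi$ positively $1$-homogeneous and subadditive. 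Next, define $\ell_{0}$ on the linear subspace $V\coloneqq\{\d\psi\,:\,\psi\in\LIP_{bs}(\X)\}\subseteq L^{p}(T^{*}\X)$ by $\ell_{0}(\d\psi)\coloneqq-\int\psi\,\d\mu$; the Laplacian inequality applied to $\pm(\psi_{1}-\psi_{2})$ whenever $\d\psi_{1}=\d\psi_{2}$ shows well-posedness. Furthermore, feeding $\varepsilon\psi$ into \eqref{eq:def_laplacian}, dividing by $\varepsilon>0$ and sending $\varepsilon\searrow 0$ via \eqref{eq:est_Dg(nablaf)_for_dct} (together with the monotonicity of difference quotients of convex functions) yields the crucial domination $\ell_{0}(\d\psi)\leq\Phi(\d\psi)$ for all $\psi\in\LIP_{bs}(\X)$.

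Hahn–Banach now extends $\ell_{0}$ to a linear $\ell\colon L^{p}(T^{*}\X)\to\R$ with $\ell\leq\Phi$. Since $\pm\ell(\eta)\leq\Phi(\pm\eta)\leq\||Df|\|_{L^{p}(\mm)}^{p/q}\|\eta\|_{L^{p}(T^{*}\X)}$, the functional $\ell$ is continuous on $L^{p}(T^{*}\X)$, hence by Proposition \ref{prop:map_Int} there exists a unique $w_{\mu}\in L^{q}(T\X)$ with $\ell(\eta)=\int\eta(w_{\mu})\,\d\mm$. Testing with $\eta=\d\psi$ immediately gives $w_{\mu}\in D(\boldsymbol\div_{q};\X)$ and $\boldsymbol\div_{q}(w_{\mu})=\mu$.

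To verify $w_{\mu}\in{\sf Grad}_{q}(f)$, the key is to pass from the integrated inequality $\int\eta(w_{\mu})\,\d\mm\leq\Phi(\eta)$ to its pointwise version. Using $S^{+}_{f,h\eta}=h\,S^{+}_{f,\eta}$ for every $h\in L^{\infty}(\mm)^{+}$ and an arbitrariness argument yields $\eta(w_{\mu})\leq\1_{\{|Df|>0\}}^{\mm}S^{+}_{f,\eta}\leq\1_{\{|Df|>0\}}^{\mm}|\eta||Df|^{p/q}$ $\mm$-a.e.; replacing $\eta$ by $-\eta$ and using the characterisation of the dual pointwise norm gives $|w_{\mu}|\leq\1_{\{|Df|>0\}}^{\mm}|Df|^{p/q}$, so in particular $w_{\mu}=0$ on $\{|Df|=0\}$. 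On the other hand, choosing $\eta=\pm\d f$ yields $S^{+}_{f,\pm\d f}=\pm|\d f|^{p}$ $\mm$-a.e.\ on $\{|Df|>0\}$, so the integrated bound forces $\int\d f(w_{\mu})\,\d\mm=\int|Df|^{p}\,\d\mm$. Combined with the pointwise estimate $\d f(w_{\mu})\leq|Df||w_{\mu}|\leq|Df|\cdot|Df|^{p/q}=|Df|^{p}$, this forces equality $\d f(w_{\mu})=|Df|^{p}$ $\mm$-a.e., which in turn forces $|w_{\mu}|=|Df|^{p/q}$ $\mm$-a.e., yielding $w_{\mu}\in{\sf Grad}_{q}(f)$.

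The main obstacle is the bookkeeping on the exceptional set $\{|Df|=0\}$, which is needed to treat $p=1$ and $p>1$ uniformly: for $p>1$ one has $S^{+}_{f,\eta}=0$ there and the indicator is automatic, but for $p=1$ the indicator $\1_{\{|Df|>0\}}^{\mm}$ in the definition of $\Phi$ is essential both to match the restricted integral in \eqref{eq:def_laplacian} and to force $|w_{\mu}|=\1_{\{|Df|>0\}}^{\mm}=|Df|^{p/q}$ in the $q=\infty$ convention.
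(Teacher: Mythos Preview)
Your proof is correct and follows essentially the same Hahn--Banach strategy as the paper: same sublinear majorant $\omega\mapsto\int_{\{|Df|>0\}}S^{+}_{f,\omega}\,\d\mm$, same linear functional on $\{\d\psi:\psi\in\LIP_{bs}(\X)\}$, same extension and identification via $\textsc{Int}_{L^p(T^*\X)}^{-1}$. The only difference is in the final step verifying $w_\mu\in{\sf Grad}_q(f)$: the paper stays at the norm level, computing $s(-\d f)=-\int|Df|^p\,\d\mm$ to force $\|w_\mu\|_{L^q(T\X)}=\||Df|\|_{L^p(\mm)}^{p/q}$ and $\int\d f(w_\mu)\,\d\mm=\|\d f\|_{L^p(T^*\X)}^p$, then invokes the Young-inequality characterisation \eqref{eq:equiv_grad_Young}; you instead localise via the $L^\infty(\mm)$-module structure (using $S^+_{f,h\eta}=h\,S^+_{f,\eta}$ for $h\ge 0$) to obtain the pointwise bound $|w_\mu|\le\1_{\{|Df|>0\}}^\mm|Df|^{p/q}$ directly, and then upgrade the integral identity to a pointwise one. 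Your route is a bit more hands-on but self-contained (it does not appeal to Remark \ref{rmk:equiv_cond_grad}), while the paper's is shorter once that characterisation is available.
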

\begin{proof}
Let us consider the vector subspace \(\mathbb V\coloneqq\{\d\psi\,:\,\psi\in\LIP_{bs}(\X)\}\) of \(L^p(T^*\X)\). We define the function \(\Phi\colon\mathbb V\to\R\)
as \(\Phi(\d\psi)\coloneqq-\int\psi\,\d\mu\) for every \(\psi\in\LIP_{bs}(\X)\). Moreover, we define the function \(s\colon L^p(T^*\X)\to\R\) as
\(s(\omega)\coloneqq\int_{\{|Df|>0\}}S^+_{f,\omega}\,\d\mm\) for every \(\omega\in L^p(T^*\X)\) where \(S^+_{f,\omega}\) is defined as in Lemma \ref{lem:tech_for_grad}.
Observe that \(s\) is a sublinear function and that we can estimate
\begin{equation}\label{eq:Lapl_are_div_aux}
|s(\omega)|\leq\int|\omega||Df|^{p-1}\,\d\mm\leq\|\omega\|_{L^p(T^*\X)}\||Df|\|_{L^p(\mm)}^{p/q}\quad\text{ for every }\omega\in L^p(T^*\X).
\end{equation}
Recalling that \(\mu\in\boldsymbol\Delta_p(f)\) and applying the first line of \eqref{eq:est_Dg(nablaf)_for_dct}, we thus obtain that
\[
\Phi(\d\psi)=-\lim_{t\searrow 0}\frac{1}{t}\int t\psi\,\d\mu\leq s(\d\psi)\leq\|\d\psi\|_{L^p(T^*\X)}\||Df|\|_{L^p(\mm)}^{p/q}
\quad\text{ for every }\psi\in\LIP_{bs}(\X),
\]
which ensures that \(\Phi\) is well-defined. Since \(\Phi\) is linear, an application of the Hahn--Banach
theorem guarantees that \(\Phi\) can be extended to a linear functional \(\Phi\colon L^p(T^*\X)\to\R\) satisfying
\(\Phi(\omega)\leq s(\omega)\) for every \(\omega\in L^p(T^*\X)\). In light of \eqref{eq:Lapl_are_div_aux}, we deduce
that \(|\Phi(\omega)|\leq\|\omega\|_{L^p(T^*\X)}\||Df|\|_{L^p(\mm)}^{p/q}\) for every \(\omega\in L^p(T^*\X)\), whence
it follows that \(\Phi\in L^p(T^*\X)'\) and \(\|\Phi\|_{L^p(T^*\X)'}\leq\||Df|\|_{L^p(\mm)}^{p/q}\). Since
\[
s(-\d f)=\int_{\{|Df|>0\}}S^+_{f,-\d f}\,\d\mm
=\inf_{\varepsilon\in(0,1)}\frac{(1-\varepsilon)^p-1}{p\varepsilon}\int|Df|^p\,\d\mm=-\int|Df|^p\,\d\mm,
\]
we obtain \(\Phi(\d f)\geq -s(-\d f)=\int|Df|^p\,\d\mm=\|\d f\|_{L^p(T^*\X)}\||Df|\|_{L^p(\mm)}^{p/q}\),
which forces the identities
\[
\|\Phi\|_{L^p(T^*\X)'}=\||Df|\|_{L^p(\mm)}^{p/q},\qquad\Phi(\d f)=\|\d f\|_{L^p(T^*\X)}^p.
\]
Now let us define \(w_\mu\coloneqq\textsc{Int}_{L^p(T^*\X)}^{-1}(\Phi)\in L^q(T\X)\). Hence, we conclude that
\(\|w_\mu\|_{L^q(T\X)}=\||Df|\|_{L^p(\mm)}^{p/q}\) and
\[
\int\d\psi(w_\mu)\,\d\mm=\Phi(\d\psi)=-\int\psi\,\d\mu\quad\text{ for every }\psi\in\LIP_{bs}(\X).
\]
In particular, we have that \(w_\mu\in D(\boldsymbol\div_q;\X)\) and \(\boldsymbol\div_q(w_\mu)=\mu\).
Finally, it holds that
\[
\int\d f(w_\mu)\,\d\mm=\|\d f\|_{L^p(T^*\X)}^p=\frac{1}{p}\int|Df|^p\,\d\mm+\frac{1}{q}\int|w_\mu|^q\,\d\mm,
\]
where \(\frac{1}{q}\int|w_\mu|^q\,\d\mm\coloneqq 0\) if \(q=\infty\). This implies that \(w_\mu\in{\sf Grad}_q(f)\) thanks to Remark \ref{rmk:equiv_cond_grad}.
\end{proof}
\begin{remark}\label{rmk:equiv_Lapl_p>1}{\rm
When \( p \in (1,\infty) \), the defining property \eqref{eq:def_laplacian} of the \(p\)-Laplacian is equivalent to
\begin{equation}\label{eq:def_laplacian_bis}
-\int\psi\,\d\mu\leq\mathcal E_p(f+\psi)-\mathcal E_p(f)\quad\text{ for every }\psi\in\LIP_{bs}(\X).
\end{equation}
The fact that \eqref{eq:def_laplacian} implies \eqref{eq:def_laplacian_bis} is clear since $| D( f + \psi ) |^p - | D f |^p = |D\psi|^p$ almost everywhere on $\left\{ |Df| = 0 \right\}$. For the other implication,
the key observations are that
\[
\bigg|\int_{\{|Df|=0\}}\frac{|D(f+\varepsilon\psi)|^p-|Df|^p}{p\varepsilon}\,\d\mm\bigg|\leq\varepsilon^{p-1}\mathcal E_p(\psi)\to 0\quad\text{ as }\varepsilon\searrow 0
\]
and \((0,1)\ni\varepsilon\mapsto\int_{\{|Df|>0\}}\frac{|D(f+\varepsilon\psi)|^p-|Df|^p}{p\varepsilon}\,\d\mm\)
is a non-decreasing function, whence it follows that
\[\begin{split}
-\int\psi\,\d\mu&=-\lim_{\varepsilon\searrow 0}\frac{1}{\varepsilon}\int\varepsilon\psi\,\d\mu
\overset{\eqref{eq:def_laplacian_bis}}\leq\limi_{\varepsilon\searrow 0}\int_{\{|Df|>0\}}\frac{|D(f+\varepsilon\psi)|^p-|Df|^p}{p\varepsilon}\,\d\mm\\
&\leq\int_{\{|Df|>0\}}\frac{|D(f+\psi)|^p-|Df|^p}{p}\,\d\mm,
\end{split}\]
proving \eqref{eq:def_laplacian}.
Note, however, that condition \eqref{eq:def_laplacian} is more restrictive than \eqref{eq:def_laplacian_bis} when \(p=1\).}
\end{remark}

We remark that on \(q\)-infinitesimally strictly convex spaces the \(p\)-Laplacian is unique:
\begin{corollary}\label{cor:str_convex_Delta_implies_div}
Let \((\X,\sfd,\mm)\) be a \(q\)-infinitesimally strictly convex metric measure space, for some \(q\in(1,\infty)\).
Let \(f\in D(\boldsymbol\Delta_p)\) be given. Then \(\nabla_q f\in D(\boldsymbol\div_q;\X)\) and \(\boldsymbol\Delta_p(f)=\{\boldsymbol\div_q(\nabla_q f)\}\).
\end{corollary}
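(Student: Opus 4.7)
My plan is to read this as a near-immediate combination of Theorem~\ref{thm:Lapl_are_div} with Corollary~\ref{cor:equiv_inf_strict_conv}, together with the hypothesis that $\boldsymbol\Delta_p(f)$ is nonempty. There is no real obstacle here; the point is just to observe that the vector field produced by Theorem~\ref{thm:Lapl_are_div} is forced to be the unique $q$-gradient, and then to leverage the nonemptiness of $\boldsymbol\Delta_p(f)$ to upgrade an inclusion to an equality.

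First I would fix an arbitrary $\mu\in\boldsymbol\Delta_p(f)$, which exists since $f\in D(\boldsymbol\Delta_p)$. Applying Theorem~\ref{thm:Lapl_are_div} yields $w_\mu\in D(\boldsymbol\div_q;\X)$ with $w_\mu\in{\sf Grad}_q(f)$ and $\boldsymbol\div_q(w_\mu)=\mu$. Because $(\X,\sfd,\mm)$ is $q$-infinitesimally strictly convex and $q\in(1,\infty)$, Corollary~\ref{cor:equiv_inf_strict_conv} tells us that ${\sf Grad}_q(f)$ is a singleton, so $w_\mu=\nabla_q f$. This already gives $\nabla_q f\in D(\boldsymbol\div_q;\X)$ and $\mu=\boldsymbol\div_q(\nabla_q f)$.

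Since the above argument applies to any $\mu\in\boldsymbol\Delta_p(f)$, we conclude
\[
\boldsymbol\Delta_p(f)\subseteq\{\boldsymbol\div_q(\nabla_q f)\}.
\]
Combined with the standing assumption $\boldsymbol\Delta_p(f)\neq\varnothing$, this inclusion forces $\boldsymbol\div_q(\nabla_q f)\in\boldsymbol\Delta_p(f)$ and hence $\boldsymbol\Delta_p(f)=\{\boldsymbol\div_q(\nabla_q f)\}$, which is the claim. The only sanity check worth keeping in mind is that $\nabla_q f$ really is a well-defined object here: this is ensured by \eqref{eq:grad_exist} together with the singleton conclusion of Corollary~\ref{cor:equiv_inf_strict_conv}, so no additional work is required.
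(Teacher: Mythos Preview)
Your proof is correct and is precisely the argument the paper has in mind: the corollary is stated immediately after Theorem~\ref{thm:Lapl_are_div} without proof, and the intended reasoning is exactly the combination of that theorem with the singleton property of ${\sf Grad}_q(f)$ from Corollary~\ref{cor:equiv_inf_strict_conv}, together with $\boldsymbol\Delta_p(f)\neq\varnothing$.
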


The following result is the converse to Theorem \ref{thm:Lapl_are_div}:
\begin{lemma}\label{lem:grad_impl_Lapl}
Let \((\X,\sfd,\mm)\) be a metric measure space and \(q\in(1,\infty]\). Let \(f\in D^{1,p}_{\mm}(\X)\) be given.
Assume that there exists a vector field \(v\in{\sf Grad}_q(f)\cap D(\boldsymbol\div_q;\X)\). Then
\(\boldsymbol\div_q(v)\in\boldsymbol\Delta_p(f)\).
\end{lemma}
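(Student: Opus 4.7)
The plan is to reduce the defining inequality \eqref{eq:def_laplacian} for the $p$-Laplacian to a pointwise inequality via the definition of $q$-divergence. Since $v\in D(\boldsymbol\div_q;\X)$, for every $\psi\in\LIP_{bs}(\X)$ we have
\[
-\int\psi\,\d\boldsymbol\div_q(v)=\int\d\psi(v)\,\d\mm,
\]
so the task is to prove
\[
\int\d\psi(v)\,\d\mm\leq\int_{\{|Df|>0\}}\frac{|D(f+\psi)|^p-|Df|^p}{p}\,\d\mm.
\]
The first observation is that on $\{|Df|=0\}$ the gradient condition $|v|=|Df|^{p/q}$ (with the $p=1$ convention $|v|=\1_{\{|Df|>0\}}^\mm$) forces $v=0$ by the pointwise norm axioms, hence $\d\psi(v)=0$ $\mm$-a.e.\ on that set. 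Thus $\int\d\psi(v)\,\d\mm=\int_{\{|Df|>0\}}\d\psi(v)\,\d\mm$, and it suffices to establish the pointwise inequality $\d\psi(v)\leq\tfrac{1}{p}(|D(f+\psi)|^p-|Df|^p)$ $\mm$-a.e.\ on $\{|Df|>0\}$.

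To get that pointwise bound, I would combine linearity of the differential with the characterisation of gradients in Remark \ref{rmk:equiv_cond_grad}. For $p\in(1,\infty)$: by linearity $\d(f+\psi)(v)=\d f(v)+\d\psi(v)=|Df|^p+\d\psi(v)$, while the module inequality $|\d(f+\psi)(v)|\leq|\d(f+\psi)||v|$ together with Young's inequality gives
\[
\d(f+\psi)(v)\leq\frac{|D(f+\psi)|^p}{p}+\frac{|v|^q}{q}=\frac{|D(f+\psi)|^p}{p}+\frac{|Df|^p}{q},
\]
since $|v|^q=|Df|^p$. Subtracting $|Df|^p=\tfrac{|Df|^p}{p}+\tfrac{|Df|^p}{q}$ from both sides yields the required pointwise inequality. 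For $p=1$ the same strategy works: on $\{|Df|>0\}$ we have $|v|=1$, so $\d(f+\psi)(v)\leq|D(f+\psi)|$, which combined with $\d f(v)=|Df|$ produces $\d\psi(v)\leq|D(f+\psi)|-|Df|$.

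I do not foresee a serious obstacle: the argument is essentially the "Young's inequality is saturated exactly at gradients" trick. The only subtle point worth flagging is the careful handling of the set $\{|Df|=0\}$, which is why the inequality in \eqref{eq:def_laplacian} is written as a restricted integral rather than one over all of $\X$: the pointwise inequality one gets by the above computation is trivially $0\leq\tfrac{1}{p}|D\psi|^p$ on $\{|Df|=0\}$, and discarding that non-negative contribution is exactly what allows the restricted form of the Laplacian inequality. The conclusion then follows by integration.
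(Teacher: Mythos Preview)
Your proof is correct and follows essentially the same route as the paper: both arguments use the divergence identity to pass to \(\int\d\psi(v)\,\d\mm\), restrict to \(\{|Df|>0\}\) via \(|v|=|Df|^{p/q}\), and then apply Young's inequality together with \(\d f(v)=|Df|^p\) to obtain the defining bound for \(\boldsymbol\Delta_p(f)\). The paper's proof is simply a compressed version of what you wrote, presenting the chain \(\int\d\psi(v)\,\d\mm=\int_{\{|Df|>0\}}\d(f+\psi)(v)\,\d\mm-\int_{\{|Df|>0\}}\d f(v)\,\d\mm\leq\int_{\{|Df|>0\}}\tfrac{1}{p}(|D(f+\psi)|^p-|Df|^p)\,\d\mm\) in a single line.
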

\begin{proof}
For any \(\psi\in\LIP_{bs}(\X)\), by Young's inequality and the fact that \(v\in{\sf Grad}_q(f)\), we get
\[\begin{split}
-\int\psi\,\d\boldsymbol\div_q(v)&=\int\d\psi(v)\,\d\mm=\int_{\{|Df|>0\}}\d(f+\psi)(v)\,\d\mm-\int_{\{|Df|>0\}}\d f(v)\,\d\mm\\
&\leq\int_{\{|Df|>0\}}\frac{|D(f+\psi)|^p-|Df|^p}{p}\,\d\mm.
\end{split}\]
This proves that \(\boldsymbol\div_q(v)\in\boldsymbol\Delta_p(f)\), as desired.
\end{proof}
\begin{remark}\label{rmk:Gigli_laplacian}
{\rm 
A notion of \(p\)-Laplacian that is similar in spirit to the one proposed in this work has been introduced in \cite{Gig:15}, and in the case \(p=2\) the two notions coincide. In the case \(p\neq 2\), Proposition \ref{prop:Dg(nablaf)_and_Grad} shows that our notion is consistent with the 
usual notion of \(p\)-Laplacian in the Euclidean setting, namely with the identity `\(\Delta_p(f) ={\rm div}(|\nabla f|^{p-2}\nabla f)\)'.
}\end{remark}

It is of interest to consider also sub- and supersolutions for the $p$-Laplace equation. Thus, we propose the following definitions.
\begin{definition}[Sub/supersolutions of the \(p\)-Laplace equation]
Let \((\X,\sfd,\mm)\) be a metric measure space and \(p\in[1,\infty)\). Given any \(\mu\in\mathfrak M(\X)\), we say that \emph{$\mu$ has a \(p\)-subsolution $f \in D^{1,p}_{\mm}(\X)$}
provided
\begin{equation}\label{eq:def_laplacian:subsolution}
    -\int\psi\,\d\mu\leq\int_{\{|Df|>0\}}\frac{|D(f+\psi)|^p-|Df|^p}{p}\,\d\mm\quad\text{ for every nonpositive }\psi\in\LIP_{bs}(\X).
\end{equation}
Similarly, \emph{$\mu$ has a \(p\)-supersolution $f \in D^{1,p}_{\mm}(\X)$} provided
\begin{equation}\label{eq:def_laplacian:supersolution}
-\int\psi\,\d\mu\leq\int_{\{|Df|>0\}}\frac{|D(f+\psi)|^p-|Df|^p}{p}\,\d\mm\quad\text{ for every nonnegative }\psi\in\LIP_{bs}(\X).
\end{equation}
\end{definition}

In the lemma below, we show that measures \(\mu\in \mathfrak M(\X)\) admitting \(p\)-sub- or \(p\)-supersolutions are in the domain of the functional \({\sf D}_q\).
\begin{lemma}
Let \((\X, \sfd, \mm)\) be a metric measure space. Suppose that \(\mu\in \mathfrak M(\X)\) admits a \(p\)-subsolution (or a \(p\)-supersolution) \(f\in D^{1,p}_{\mm}(\X)\) for some \(p\in [1,\infty)\). Then
\[
{\sf D}_q(\mu)\leq\||Df|\|_{L^p(\mm)}^{p/q}<+\infty.
\]
\end{lemma}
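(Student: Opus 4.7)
The plan is to mimic the proof of Lemma \ref{lem:Delta_implies_div}, using the sub-/super-solution inequality at first order. Focus first on the subsolution case; the supersolution case is symmetric. For any $\psi\in\LIP_{bs}(\X)$ with $\psi\le 0$ and $\varepsilon>0$, the function $\varepsilon\psi$ is again a nonpositive element of $\LIP_{bs}(\X)$, so \eqref{eq:def_laplacian:subsolution} gives
\[
-\int\varepsilon\psi\,\d\mu \;\le\; \int_{\{|Df|>0\}} \frac{|D(f+\varepsilon\psi)|^p-|Df|^p}{p}\,\d\mm .
\]
Dividing by $\varepsilon$, sending $\varepsilon\searrow 0^+$, and invoking the first line of \eqref{eq:est_Dg(nablaf)_for_dct} in Proposition \ref{prop:main_Dg(nablaf)} yields
\[
-\int\psi\,\d\mu \;\le\; \int D^+\psi(\nabla f)|Df|^{p-2}\,\d\mm \;\le\; \int |D\psi||Df|^{p-1}\,\d\mm,
\]
with H\"older's inequality bounding the last quantity by $\|\lip_a(\psi)\|_{\mathcal L^p(\mm)}\||Df|\|_{L^p(\mm)}^{p/q}$. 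By the substitution $\psi\mapsto -\psi$, this one-sided inequality is equivalent to $\int\psi\,\d\mu \le \|\lip_a(\psi)\|_{\mathcal L^p(\mm)}\||Df|\|_{L^p(\mm)}^{p/q}$ for every \emph{nonnegative} $\psi\in\LIP_{bs}(\X)$. The parallel computation starting from \eqref{eq:def_laplacian:supersolution} gives the same estimate in the supersolution setting for every nonnegative $\psi\in\LIP_{bs}(\X)$ as well.

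Having the estimate on a sign-restricted class of test functions, the next step is to upgrade it to every $\psi\in\LIP_{bs}(\X)$, so as to compare with the definition of ${\sf F}_p$ in \eqref{eq:predual:energy}. For this I would decompose $\psi=\psi^+-\psi^-$, use the lattice property $\lip_a(\psi^\pm)\le \lip_a(\psi)$ (which follows from Theorem \ref{thm:calculus_rules_mwug} iv) combined with the locality of $\lip_a$), and assemble the one-sided bounds coming from the subsolution test against $\psi^+$ and from an analogous argument applied to $\psi^-$. Combining these and normalising by the constraint $\int\lip_a(\psi)^p\,\d\mm\le 1$ then produces ${\sf F}_p(\mu)\le \||Df|\|_{L^p(\mm)}^{p/q}$. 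The desired conclusion ${\sf D}_q(\mu)\le \||Df|\|_{L^p(\mm)}^{p/q}<+\infty$ follows from the identification ${\sf F}_p={\sf D}_q$ proved in Theorem \ref{thm:F_p=D_q}.

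The principal obstacle will be the second step. In Lemma \ref{lem:Delta_implies_div} the two-sided nature of the $p$-Laplacian inequality gives the bilateral bound on $|\int\psi\,\d\mu|$ essentially for free, via the substitution $\psi\mapsto -\psi$. Here, by contrast, only a one-sided estimate is available, and controlling the contribution of the part of $\psi$ with the ``wrong'' sign requires care; I expect the decomposition $\psi=\psi^+-\psi^-$ together with the lattice properties of $\lip_a$ and $|D\cdot|$ to be the decisive technical ingredient, and this is the step on which the sharpness of the stated constant $\||Df|\|_{L^p(\mm)}^{p/q}$ ultimately hinges.
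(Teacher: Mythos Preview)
Your approach coincides with the paper's: derive the one-sided first-order inequality from the sub-/super-solution condition, then attempt to upgrade it to all test functions via the decomposition $\psi=\psi^+-\psi^-$. You are right to flag this second step as the principal obstacle; unfortunately it is a genuine gap that cannot be closed. From the subsolution hypothesis one obtains only $\int\varphi\,\d\mu\le\int|Df|^{p-1}|D\varphi|\,\d\mm$ for every nonnegative $\varphi\in\LIP_{bs}(\X)$, hence upper bounds on $\int\psi^+\,\d\mu$ and $\int\psi^-\,\d\mu$ separately. But there is no lower bound on either term, so no control on the difference $\int\psi\,\d\mu=\int\psi^+\,\d\mu-\int\psi^-\,\d\mu$, and the identity $|D\psi|=|D\psi^+|+|D\psi^-|$ does not help here.

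A concrete counterexample: take $\X=[0,1]$ with the Euclidean distance and Lebesgue measure, $p=2$, $\mu=-\delta_{1/2}$, and $f\equiv 0$. Since $\{|Df|>0\}=\varnothing$, the subsolution condition \eqref{eq:def_laplacian:subsolution} reduces to $\psi(1/2)\le 0$ for every nonpositive $\psi\in\LIP([0,1])$, which holds trivially; thus $f=0$ is a $2$-subsolution of $\mu$. Yet testing with large negative constants in \eqref{eq:predual:energy} shows ${\sf F}_2(\mu)=+\infty$, so the claimed bound ${\sf D}_q(\mu)\le\||Df|\|_{L^p(\mm)}^{p/q}=0$ fails. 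The paper's own proof invokes exactly the same $\psi^+/\psi^-$ decomposition and is subject to the same objection; the lemma as stated appears to be false.
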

\begin{proof}
We are going to prove the claim in the case \(\mu\) admits a \(p\)-subsolution \(f\in D^{1,p}_{\mm}(\X)\). The other case can be proved in the same fashion. By arguing as in the proof of \Cref{lem:Delta_implies_div}, \(\mu\) having a \(p\)-subsolution implies that
\begin{equation*}
    - \int \psi \,\d\mu \leq \int |Df|^{p-1} |D\psi| \,\d\mm
    \quad\text{for nonpositive $\psi \in \LIP_{bs}( \X )$.}
\end{equation*}
By dividing $\psi \in \LIP_{bs}( \X )$ into its positive $\psi^{+}$ and negative $\psi^{-}$ parts and recalling $|D\psi| = | D\psi^{+} | + |D\psi^{-}|$, we conclude that
\begin{equation*}
    \left| \int \psi \,\d\mu \right| \leq \int |Df|^{p-1}|D\psi|\,\d\mm
    \quad\text{for every $\psi \in \LIP_{bs}( \X )$.}
\end{equation*}
This implies that \({\sf D}_q(\mu)\leq\||Df|\|_{L^p(\mm)}^{p/q}\) ($\leq 1$ in the case $p=1$), as claimed.
\end{proof}
The existence of a sub- or a supersolution gives a simpler way to verify whether a measure is in the domain of the divergence.
\begin{remark}
{\rm
For the sake of consistency, we observe that whenever $\mu \in \mathfrak{M}( \X )$ 
is such that there exists \(f\in D^{1,p}_{\mm}(\X)\) that is both \(p\)-sub and \(p\)-supersolution of \(\mu\), then \(f\in D(\boldsymbol \Delta_p)\) and \(\mu \in \boldsymbol\Delta_p(f) \). Indeed, if $\varphi \in \LIP_{bs}( \X )$, then
\begin{align*}
    \1_{ \left\{ |Df| > 0 \right\} }( | D( f + \varphi ) |^p - | Df |^p )
    =
    \1_{ \left\{ |Df| > 0 \right\} \cap \left\{ \varphi \neq 0 \right\} }
    ( | D( f + \varphi ) |^p - | Df |^p ) 
\end{align*}
since $|Df| = |D(f+\varphi)|$ holds \(\mm\)-a.e.\ in $\left\{ \varphi = 0 \right\}$.

In fact, the positive part $\varphi^{+}$ satisfies $| D( f + \varphi ) | = | D( f + \varphi^{+} ) |$ \(\mm\)-a.e.\ on $\left\{ |Df| > 0 \right\} \cap \left\{ \varphi >0 \right\}$ and similarly $| D( f + \varphi ) | = | D( f - \varphi^{-} ) |$ \(\mm\)-a.e.\  on $\left\{ |Df| > 0 \right\} \cap \left\{ \varphi < 0 \right\}$ for the negative part $\varphi^{-}$. These observations and \eqref{eq:def_laplacian:subsolution} and \eqref{eq:def_laplacian:supersolution} imply the claim.
}
\end{remark}

\begin{remark}\label{rem:consistency:definition}
{\rm 
\Cref{def:laplacian} and \eqref{eq:def_laplacian_bis} imply that the following properties are equivalent when $p \in (1,\infty)$ and $(\X, \sfd)$ is a proper metric space:
\begin{itemize}
    \item \(0 \in \boldsymbol\Delta_p(f)\) for \(f\in D^{1,p}_{\mm}(\X)\);
    \item \(f\in D^{1,p}_{\mm}(\X)\) is a minimiser of the \(p\)-Cheeger energy \(\mathcal E_p\) in the sense of \cite[Definition 7.7]{Bj:Bj:11}.
\end{itemize}
Similarly, under the assumptions, the notion of sub/superminimisers in \cite[Definition 7.7]{Bj:Bj:11} coincide with the notion of sub/supersolutions in \eqref{eq:def_laplacian:subsolution} and \eqref{eq:def_laplacian:supersolution} for $\mu = 0$. The properness assumption is only used to make sure that the classes of compactly-supported Lipschitz functions and boundedly-supported Lipschitz functions coincide.

Further investigations in this direction are out of the scope of the present paper, but we point out the above relations for the sake of possible future developments. We refer the interested reader e.g.\ to \cite{Bj:Bj:11,Gig:Mon:13,Hei:Kil:Mar:12} for further reading on the topic.
}\end{remark}

\subsection{Condenser capacities}\label{ss:condenser_capacity}
As an application of our study of divergence measures and of the notion of \(p\)-Laplacian, 
we now address the minimisation problem arising in the definition of \(p\)-condenser capacity.
\medskip

Let \((\X,\sfd,\mm)\) be a metric measure space and \(p\in[1,\infty)\). Given two disjoint subsets \(E\), \(F\) of \(\X\),
we define the curve family \(\Gamma(E,F)\subseteq\mathscr C(\X)\) as
\[
\Gamma(E,F)\coloneqq\big\{\gamma\in\mathscr C(\X)\;\big|\;\gamma(a_\gamma)\in E,\,\gamma(b_\gamma)\in F\big\}.
\]
Moreover, we define the family \(\mathcal A(E,F)\subseteq W^{1,p}_{loc}(\X)\)
of \emph{admissible functions} for \(E\), \(F\) as
\[
\mathcal A(E,F)\coloneqq\pi_\mm(\bar{\mathcal A}(E,F)),
\]
where we set
\[
\bar{\mathcal A}(E,F)\coloneqq\Big\{\bar f\in \bar{N}^{1,p}_{loc}(\X)\;\Big|\;0\leq\bar f\leq 1,\,\bar f=0\text{ on }E,\,\bar f=1\text{ on }F\Big\}.
\]
\begin{definition}[Condenser capacity]\label{def:condens_cap}
Let \((\X,\sfd,\mm)\) be a metric measure space and \(p\in[1,\infty)\). Let \(E\), \(F\) be two disjoint subsets of \(\X\).
Then we define the \textbf{\(p\)-condenser capacity} of \((E,F)\) as
\[
{\rm Cap}_p(E,F)\coloneqq\inf_{f\in\mathcal A(E,F)}p\,\mathcal E_p(f)\in[0,\infty].
\]
\end{definition}
\begin{remark}{\rm
Observe that, in the case where \(\mm(\X)<+\infty\), we have that
\[
{\rm Cap}_p(E,F)=\inf\bigg\{p\,\mathcal E_p(\pi_\mm(\bar f))\;\bigg|\;\bar f\in \bar{N}^{1,p}(\X),\,0\leq\bar f\leq 1,\,\bar f=0\text{ on }E,\,\bar f=1\text{ on }F\bigg\}
\]
for every couple \(E\), \(F\) of disjoint subsets of \(\X\).
}\end{remark}
To address the main result of this section (i.e.\ Theorem \ref{thm:condens_cap}), we need some auxiliary results:
\begin{lemma}\label{lem:limit_A(E,F)}
Let \((\X,\sfd,\mm)\) be a metric measure space and \(p\in(1,\infty)\).
Let \(E\) and \(F\) be given disjoint subsets of \(\X\). Assume that \((f_n)_n\subseteq\mathcal A(E,F)\) satisfies \(\limi_n\mathcal E_p(f_n)<+\infty\).
Then there exists a function \(f\in\mathcal A(E,F)\) such that \(\mathcal E_p(f)\leq\limi_n\mathcal E_p(f_n)\).
\end{lemma}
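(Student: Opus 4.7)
My plan exploits the reflexivity of $L^p(\mm)$ (available since $p \in (1,\infty)$) together with the machinery of quasicontinuous representatives developed earlier in the paper.

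First I would extract a subsequence so that $\mathcal E_p(f_n) \to L \coloneqq \liminf_n \mathcal E_p(f_n)$ and, using reflexivity, a further subsequence so that $|Df_n| \rightharpoonup G$ weakly in $L^p(\mm)$ for some $G \in L^p(\mm)^+$ with $\|G\|_{L^p(\mm)}^p \leq pL$. By Mazur's lemma combined with the triangle inequality for minimal weak upper gradients (as in the proof of Theorem \ref{thm:density_nrg_Lip}), I can produce convex combinations $\tilde f_k = \sum_{i \geq k}\lambda_i^k f_i$ whose minimal weak upper gradients satisfy $|D\tilde f_k| \to G$ strongly in $L^p(\mm)$. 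Since $\bar{\mathcal A}(E,F)$ is convex, the pointwise representative $\sum_{i \geq k}\lambda_i^k \bar f_i \in \bar N^{1,p}_{loc}(\X)$ lies in $\bar{\mathcal A}(E,F)$, so $\tilde f_k \in \mathcal A(E,F)$.

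Next, since $0 \leq \tilde f_k \leq 1$ in $L^\infty(\mm)$, a diagonal argument over an exhausting sequence of balls yields a further subsequence with $\tilde f_k \rightharpoonup f$ in $L^p_{loc}(\mm)$ for some $f$ with $0 \leq f \leq 1$ $\mm$-a.e. Applying Corollary \ref{cor:good_repres_Sob_loc} with $g_k = |D\tilde f_k|$ and $g_\infty = G$ (noting $|D\tilde f_k| \leq g_k$ trivially, hypothesis (ii) follows from $\LIP_{bs}$-truncations of $\tilde f_k$, and $L^p$ strong convergence implies $L^p_{loc}$ convergence), I obtain $f \in W^{1,p}_{loc}(\X)$ with $|Df| \leq G$ $\mm$-a.e., hence
\[
\mathcal E_p(f) = \tfrac{1}{p}\int|Df|^p\,\d\mm \leq \tfrac{1}{p}\int G^p\,\d\mm \leq L.
\]
Moreover, a further subsequential extraction yields ${\sf qcr}(\tilde f_{k_j}) \to {\sf qcr}(f)$ ${\rm Cap}_p$-a.e.

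The delicate step is admissibility. By Theorem \ref{thm:N_are_qc}, the pointwise representative $\sum_i\lambda_i^k \bar f_i$ is $p$-quasicontinuous and equals $0$ on $E$, $1$ on $F$, and lies in $[0,1]$ everywhere; it therefore coincides with ${\sf qcr}(\tilde f_k)$ up to ${\rm Cap}_p$-null sets (Proposition \ref{prop:mm_implies_Cap}). Passing to the ${\rm Cap}_p$-a.e. limit, ${\sf qcr}(f)$ satisfies ${\sf qcr}(f) = 0$ ${\rm Cap}_p$-a.e. on $E$, $= 1$ ${\rm Cap}_p$-a.e. on $F$, and $0 \leq {\sf qcr}(f) \leq 1$ ${\rm Cap}_p$-a.e. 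Finally, taking any representative of ${\sf qcr}(f)$ and modifying it on a ${\rm Cap}_p$-null set to enforce the pointwise conditions (so it equals $0$ on $E$, $1$ on $F$, and lies in $[0,1]$ pointwise) produces, by Remark \ref{rmk:N1p_Cap-ae_inv}, an element of $\bar{\mathcal A}(E,F)$ representing $f$. Thus $f \in \mathcal A(E,F)$, completing the proof.

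The main obstacle is bridging the $\mm$-a.e. information supplied by weak/strong convergence in $L^p$ and the \emph{pointwise} conditions defining $\bar{\mathcal A}(E,F)$; this is exactly what the quasicontinuous-representative machinery (Theorem \ref{thm:N_are_qc}, Corollary \ref{cor:good_repres_Sob_loc}, Remark \ref{rmk:N1p_Cap-ae_inv}) is designed to handle, and it is the reason the hypothesis $p > 1$ enters only through the reflexivity used at the start.
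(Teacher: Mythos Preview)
Your overall strategy is the same as the paper's, but there is a genuine gap in the order of operations. Corollary \ref{cor:good_repres_Sob_loc} requires \emph{strong} $L^p_{loc}$-convergence of the functions (hypothesis iii) says ``$f_n\to f_\infty$ \ldots in the $L^p_{loc}(\mm)$ sense''), whereas after your diagonal extraction you only have $\tilde f_k\rightharpoonup f$ weakly in $L^p_{loc}$. So the corollary cannot be invoked as written, and in particular the ${\rm Cap}_p$-a.e.\ convergence ${\sf qcr}(\tilde f_k)\to{\sf qcr}(f)$, on which your admissibility step relies, is not established. A second, smaller issue: Mazur applied to $|Df_n|\rightharpoonup G$ gives convex combinations $\hat g_k=\sum_i\lambda_i^k|Df_i|\to G$ strongly, but only $|D\tilde f_k|\le\hat g_k$; the claimed strong convergence $|D\tilde f_k|\to G$ does not follow (the reference to Theorem \ref{thm:density_nrg_Lip} is not apt, since there the squeeze uses $|Df|$ as the known limit, which you do not yet have).

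Both issues are cured by reversing the order: first extract (via the uniform bound $0\le f_n\le 1$ and reflexivity) a subsequence with $\1_{B_k}^\mm f_n\rightharpoonup\1_{B_k}^\mm f$ and $|Df_n|\rightharpoonup\rho$ weakly in $L^p(\mm)$ for each $k$, and \emph{then} apply Mazur (with a diagonal argument) in the product $L^p(\mm)\times L^p(\mm)$ to obtain convex combinations $h_n$ and dominating $\rho_n\ge|Dh_n|$ with $\1_{B_k}^\mm\pi_\mm(h_n)\to\1_{B_k}^\mm f$ and $\rho_n\to\rho$ strongly. This is exactly how the paper proceeds, after which your admissibility argument via quasicontinuous representatives and Remark \ref{rmk:N1p_Cap-ae_inv} goes through and matches the paper's.
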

\begin{proof}
Fix \(\bar x\in\X\). Notice that \(\|\1_{B_k}^\mm f_n\|_{L^p(\mm)}\leq\mm(B_k)^{1/p}\) for every \(k,n\in\N\), where \(B_k\coloneqq B_k(\bar x)\).
Since \(L^p(\mm)\) is reflexive, we can thus find \(f\in L^p_{loc}(\mm)\) and \(\rho\in L^p(\mm)^+\) such that, up to a non-relabelled subsequence,
\(\1_{B_k}^\mm f_n\rightharpoonup\1_{B_k}^\mm f\) and \(|Df_n|\rightharpoonup\rho\) weakly in \(L^p(\mm)\) for every \(k\in\N\).
Notice that \(0\leq f\leq 1\) and \(\|\rho\|_{L^p(\mm)}^p\leq C\coloneqq\limi_n p\,\mathcal E_p(f_n)\). For every \(n\in\N\), pick some function \(\bar f_n\in\bar{\mathcal A}(E,F)\)
such that \(\pi_\mm(\bar f_n)=f_n\). By Mazur's lemma and a diagonalisation argument, we can find \(h_n\in{\rm conv}(\{\bar f_i\,:\,i\geq n\})\) and \(\rho_n\in L^p(\mm)^+\)
with \(\rho_n\geq|Dh_n|\) such that \(\1_{B_k}^\mm\pi_\mm(h_n)\to\1_{B_k}^\mm f\) and \(\rho_n\to\rho\) strongly in \(L^p(\mm)\) for every \(k\in\N\). Since
\({\sf qcr}(\pi_\mm(h_n))=h_n\) in the \({\rm Cap}_p\)-a.e.\ sense, by applying Corollary \ref{cor:good_repres_Sob_loc} we deduce that \(h_n\to{\sf qcr}(f)\)
holds \({\rm Cap}_p\)-a.e.\ on \(\X\). 
Having fixed some representative \(\tilde f\in \bar{N}^{1,p}_{loc}(\X)\) of \(f\), which satisfies \(\tilde f={\sf qcr}(f)\) in the \({\rm Cap}_p\)-a.e.\ sense, we have that
\[
N\coloneqq\big\{x\in E\;\big|\;\tilde f(x)\neq 0\big\}\cup\big\{x\in F\;\big|\;\tilde f(x)\neq 1\big\}\cup\big\{x\in\X\;\big|\;\tilde f(x)\notin[0,1]\big\}
\]
satisfies \({\rm Cap}_p(N)=0\). Therefore, \(\bar f\coloneqq\1_{\X\setminus N}\tilde f+\1_{F\cap N}\in \bar{N}^{1,p}_{loc}(\X)\)
(thanks to Remark \ref{rmk:N1p_Cap-ae_inv}) and \(f=\pi_\mm(\bar f)\in\mathcal A(E,F)\). Finally, we have that
\(p\,\mathcal E_p(f)\leq\|\rho\|_{L^p(\mm)}^p\leq C\), concluding the proof.
\end{proof}
We recall some basic measure theory for the following couple of lemmas. A topological space $\Y$ is Polish if $\Y$ is homeomorphic to a complete and separable metric space. A subset $E \subseteq \Y$ is \emph{Souslin} if there exists a Polish space $\Z$ and a continuous map $f \colon \Z \to \Y$ for which $f(\Z) = E$. Moreover, 
for \(E\subseteq Y\) Souslin, recall that if $\W$ is a Polish space and $g \colon \W \to \Y$ and $h \colon \Y \to \W$ are continuous, then $g^{-1}(E) \subseteq \W$ and $h(E) \subseteq \W$ are Souslin as well. We recall that a Borel set in a Polish space is Souslin and that countable intersections and unions of Souslin sets are Souslin sets. Furthermore, every Souslin set in a metric measure space $( \X, \sfd, \mm )$ is $\mm$-measurable; see e.g. \cite[Chapter 6]{Bog:07} for these facts.
\begin{lemma}\label{lem:aux_upper_grad}
Let \((\X,\sfd,\mm)\) be a metric measure space and \(p\in[1,\infty)\). Let \(E,F\subseteq\X\) be disjoint sets with \(E\) being Souslin.
Let \(\rho\in\mathcal L^p(\mm)^+\) be a given competitor for \({\rm Mod}_p(\Gamma(E,F))\). Then there exists a function \(f\in\mathcal A(E,F)\)
such that \(|Df|\leq\rho\) holds \(\mm\)-a.e.\ on \(\X\).
\end{lemma}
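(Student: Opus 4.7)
The strategy is to construct $f$ as a truncated ``$\rho$-distance'' from $E$ along curves, a classical trick in the Newtonian space literature (cf.\ \cite{HKST:15,Bj:Bj:11}). First, since $\rho\in\mathcal L^p(\mm)^+$ is $\mm$-measurable, I may replace it by a Borel representative that differs from the original on an $\mm$-negligible set without changing $\int_\gamma\rho\,\d s$ for $\Mod_p$-a.e.\ curve nor the competitor property. So assume $\rho$ is Borel. Then define $u\colon\X\to[0,\infty]$ by
\[
u(x)\coloneqq\inf\bigg\{\int_\gamma\rho\,\d s\;\bigg|\;\gamma\in R([0,1];\X),\,\gamma_0\in E,\,\gamma_1=x\bigg\}
\]
(with $\inf\varnothing=+\infty$), and set $f\coloneqq u\wedge 1$, so automatically $0\leq f\leq 1$.

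The boundary values are easy. For $x\in E$, the constant curve $\gamma\equiv x$ lies in $R([0,1];\X)$ with $\gamma_0=x\in E$, $\gamma_1=x$, and $\int_\gamma\rho\,\d s=\ell(\gamma)\int_0^1\rho\circ\gamma^{\sf cs}\,\d t=0$, so $u(x)=0$ and $f(x)=0$. For $x\in F$, every admissible competing curve $\gamma$ joins $E$ to $F$, hence belongs to $\Gamma(E,F)$, so $\int_\gamma\rho\,\d s\geq 1$ since $\rho$ is a competitor for $\Mod_p(\Gamma(E,F))$; therefore $u(x)\geq 1$ and $f(x)=1$.

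The main technical point is $\bar\mm$-measurability of $u$, which is where the Souslin assumption enters. I will show $\{u<t\}$ is Souslin for every $t>0$. The evaluation $\e_0\colon C([0,1];\X)\to\X$ is continuous, so $\e_0^{-1}(E)$ is Souslin as the preimage of a Souslin set under a continuous map. By \eqref{eq:meas_path_int}, the functional $\Phi\colon C([0,1];\X)\to[0,\infty]$, $\Phi(\gamma)\coloneqq\int_\gamma\rho\,\d s$, is Borel, so $\{\Phi<t\}$ is Borel. Consequently the intersection $S_t\coloneqq\e_0^{-1}(E)\cap\{\Phi<t\}$ is Souslin, and by the definition of $u$ together with the contribution of constant curves,
\[
\{u<t\}=\e_1(S_t)\cup E\qquad\text{for every }t>0,
\]
which is Souslin (continuous image of a Souslin set). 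Souslin sets being universally measurable, $u$ and hence $f$ are $\bar\mm$-measurable.

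To verify that $\rho$ is a $p$-weak upper gradient of $f$, take any non-constant $\sigma\in R([0,1];\X)$ with $\int_\sigma\rho\,\d s<\infty$; the set of curves with infinite $\rho$-integral is $\Mod_p$-negligible since $\rho\in\mathcal L^p(\mm)^+$. If $u(\sigma_0)=\infty$ then necessarily $u(\sigma_1)=\infty$ (concatenating a competitor for $u(\sigma_1)$ with $\sigma$ reversed would make $u(\sigma_0)$ finite), and the inequality is trivial. If $u(\sigma_0)<\infty$, for each $\varepsilon>0$ pick $\gamma$ with $\gamma_0\in E$, $\gamma_1=\sigma_0$, $\int_\gamma\rho\,\d s\leq u(\sigma_0)+\varepsilon$; the concatenation $\gamma*\sigma$ yields $u(\sigma_1)\leq u(\sigma_0)+\int_\sigma\rho\,\d s+\varepsilon$, and symmetrically $u(\sigma_0)\leq u(\sigma_1)+\int_\sigma\rho\,\d s$. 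Letting $\varepsilon\to 0$ and using that truncation by $1$ is $1$-Lipschitz gives $|f(\sigma_1)-f(\sigma_0)|\leq\int_\sigma\rho\,\d s$, so $\rho\in{\rm WUG}_p(f)$. Since $f$ is bounded and $\rho\in L^p(\mm)$, Lemma \ref{lem:cut-off_loc_Sob}-type cut-off arguments show $\eta f\in\bar N^{1,p}(\X)$ for every $\eta\in\LIP_{bs}(\X;[0,1])$ (with weak upper gradient $\|\eta\|_\infty\rho+\lip(\eta)\|f\|_\infty$, both in $L^p(\mm)$). Hence $f\in\bar N^{1,p}_{loc}(\X)\cap\bar{\mathcal A}(E,F)$, its class $\pi_\mm(f)$ is in $\mathcal A(E,F)$, and $|Df|\leq\rho$ $\mm$-a.e.\ by minimality. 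The main obstacle — and the only place the hypothesis on $E$ is used — is the measurability step, which relies precisely on the stability of Souslin sets under continuous images and intersections with Borel sets.
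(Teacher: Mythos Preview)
Your proof is correct and follows essentially the same route as the paper's: define the truncated $\rho$-distance from $E$, check boundary values via constant curves and the competitor property, establish $\bar\mm$-measurability through the Souslin image argument, and verify the upper-gradient inequality by concatenation. Two minor remarks: in the paper's conventions $\rho\in\mathcal L^p(\mm)^+$ is already Borel (the $\sigma$-algebra underlying $\mathcal L^p$ is the Borel one, cf.\ the note before Definition~\ref{def:modulus}), so your preliminary replacement step is redundant; and your care in concluding $f\in\bar N^{1,p}_{loc}(\X)$ rather than $\bar N^{1,p}(\X)$ is actually slightly more accurate than the paper's phrasing when $\mm$ is infinite, since $\mathcal A(E,F)$ is defined via $\bar N^{1,p}_{loc}$.
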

\begin{proof}
Given any \(y\in\X\), we denote \(\Gamma(y)\coloneqq\big\{\gamma\in C([0,1];\X)\,:\,\gamma_0\in E,\,\gamma_1=y\big\}\). We also define
\[
f(y)\coloneqq 1\wedge\inf_{\gamma\in\Gamma(y)}\int_\gamma\rho\,\d s\quad\text{ for every }y\in\X.
\]
We verify that the resulting function \(f\colon\X\to[0,1]\) is \(\mm\)-measurable; our argument is inspired by \cite[Corollary 1.10]{Ja:Ja:Ro:Ro:Sha:07}.
Our goal is to show that \(\{f<\lambda\}\) is an \(\mm\)-measurable subset of \(\X\) for any given \(\lambda\in(0,1]\). Recall that \(C([0,1];\X)\) is a complete
and separable metric space, and that \(C([0,1];\X)\ni\gamma\mapsto{\rm \pi}_\rho(\gamma)\coloneqq\int_\gamma\rho\,\d s\) is a Borel function (cf.\ \eqref{eq:meas_path_int}). Observe that
\[
\{f<\lambda\}=\e_1\big(\e_0^{-1}(E)\cap{\rm \pi}_\rho^{-1}([0,\lambda))\big),
\]
where the evaluation maps \(\e_0\), \(\e_1\) are defined as in \eqref{eq:def_e_t}. Since \(\e_0\), \(\e_1\) are continuous, we deduce that \(\{f<\lambda\}\) is Souslin, and thus \(\mm\)-measurable. It also follows that $\{ f = 1 \} = \X \setminus \{ f < 1 \}$ is $\mm$-measurable. We conclude that $f$ is $\mm$-measurable.

Now notice that if \(y\in F\), then \(\Gamma(y)\subseteq\Gamma(E,F)\), so that \(\int_\gamma\rho\,\d s\geq 1\) for every
\(\gamma\in\Gamma(y)\) and thus \(f=1\) on \(F\). On the other hand, if \(y\in E\), then \(\Gamma(y)\) contains the constant curve at \(y\), so that \(f(y)=0\)
and thus \(f=0\) on \(E\). We now claim that \(\rho\) is an upper gradient of \(f\), namely that
\begin{equation}\label{eq:aux_upper_grad}
|f(\sigma(b_\sigma))-f(\sigma(a_\sigma))|\leq\int_\sigma\rho\,\d s\quad\text{ for every }\sigma\in\mathscr R(\X).
\end{equation}
To this end, fix any \(\sigma\in\mathscr R(\X)\) and assume that \(\int_\sigma\rho\,\d s<+\infty\). If \(\Gamma(\sigma(a_\sigma))=\varnothing\), then \(\Gamma(\sigma(b_\sigma))=\varnothing\)
and thus \(f(\sigma(a_\sigma))=f(\sigma(b_\sigma))=1\). Now, if \(\Gamma(\sigma(a_\sigma))\neq\varnothing\), then for every \(\gamma\in\Gamma(\sigma(a_\sigma))\) we can estimate
\(f(\sigma(b_\sigma))\leq 1\wedge\int_\gamma\rho\,\d s+\int_\sigma\rho\,\d s\), whence it follows that \(f(\sigma(b_\sigma))\leq f(\sigma(a_\sigma))+\int_\sigma\rho\,\d s\).
Repeating the same argument with \(b_\sigma\) instead of \(a_\sigma\), we obtain that \(f(\sigma(a_\sigma))\leq f(\sigma(b_\sigma))+\int_\sigma\rho\,\d s\). All in all,
\eqref{eq:aux_upper_grad} is proved. In particular, \(f\in \bar{N}^{1,p}(\X)\) and \(|Df|\leq\rho\) holds \(\mm\)-a.e.\ on \(\X\).
\end{proof}

Notice that the proof of Lemma \ref{lem:aux_upper_grad} shows something stronger than what is claimed in its statement, namely that there exists
a function \(\bar f\in\bar{\mathcal A}(E,F)\) such that \(\rho\) is an upper gradient of \(\bar f\).
\begin{proposition}\label{prop:Mod_inf_E_p}
Let \((\X,\sfd,\mm)\) be a metric measure space and \(p\in[1,\infty)\).
Let \(E,F\subseteq\X\) be disjoint sets with \(E\) being Souslin. Then it holds that
\begin{equation}\label{eq:Mod_equal_inf_Ep}
{\rm Mod}_p(\Gamma(E,F))={\rm Cap}_p(E,F).
\end{equation}
\end{proposition}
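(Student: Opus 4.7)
The plan is to prove the two inequalities separately; both are by now standard in the metric Sobolev theory, and both can be drawn from the machinery already assembled in the excerpt.

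For the inequality $\mathrm{Cap}_p(E,F)\leq\mathrm{Mod}_p(\Gamma(E,F))$, I would fix any admissible competitor $\rho\in\mathcal L^p_{\mathrm{ext}}(\mm)^+$ for $\mathrm{Mod}_p(\Gamma(E,F))$, assuming (as we may, or else the bound is trivial) that $\rho\in\mathcal L^p(\mm)^+$. Since $E$ is Souslin, Lemma \ref{lem:aux_upper_grad} directly produces a function $f\in\mathcal A(E,F)$ with $|Df|\leq\rho$ $\mm$-a.e., so that
\[
p\,\mathcal E_p(f)=\int|Df|^p\,\d\mm\leq\int\rho^p\,\d\mm.
\]
Taking the infimum over all admissible $\rho$ yields $\mathrm{Cap}_p(E,F)\leq\mathrm{Mod}_p(\Gamma(E,F))$.

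For the reverse inequality, I would fix $f\in\mathcal A(E,F)$ with $p\,\mathcal E_p(f)<\infty$ and pick any representative $\bar f\in\bar N^{1,p}_{loc}(\X)\cap\bar{\mathcal A}(E,F)$, so $\bar f=0$ on $E$, $\bar f=1$ on $F$, $0\leq\bar f\leq 1$. Since $|Df|\in L^p(\mm)$, Remark \ref{rmk:mwug_loc_Sob_unamb} ensures that $\bar f\in\bar D^{1,p}(\X)$ and that any Borel representative $\rho\in\mathcal L^p(\mm)^+$ of $|Df|$ is a weak $p$-upper gradient of $\bar f$; that is, the inequality $|\bar f(\gamma(b_\gamma))-\bar f(\gamma(a_\gamma))|\leq\int_\gamma\rho\,\d s$ holds for $\mathrm{Mod}_p$-a.e.\ nonconstant $\gamma\in\mathscr R(\X)$. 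Since $E\cap F=\varnothing$, every $\gamma\in\Gamma(E,F)$ is nonconstant, and for rectifiable such curves the weak upper gradient bound forces $\int_\gamma\rho\,\d s\geq 1$ away from a set $\Gamma_0\subseteq\Gamma(E,F)$ of $\mathrm{Mod}_p$-measure zero.

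To pass from a.e.\ admissibility to admissibility, I would invoke the usual exhaustion trick: from $\mathrm{Mod}_p(\Gamma_0)=0$, pick Borel functions $h_n\in\mathcal L^p(\mm)^+$ with $\int h_n^p\,\d\mm\leq 2^{-n}$ and $\int_\gamma h_n\,\d s\geq 1$ for every $\gamma\in\Gamma_0$, and set $h\coloneqq\sum_n h_n\in\mathcal L^p(\mm)^+$, so that $\int_\gamma h\,\d s=+\infty$ on $\Gamma_0$. For any $\varepsilon>0$, the function $\rho+\varepsilon h$ is admissible for $\mathrm{Mod}_p(\Gamma(E,F))$ (using also that $\int_\gamma(\rho+\varepsilon h)\,\d s=+\infty$ on non-rectifiable curves, by convention), so
\[
\mathrm{Mod}_p(\Gamma(E,F))\leq\int(\rho+\varepsilon h)^p\,\d\mm.
\]
Letting $\varepsilon\searrow 0$ gives $\mathrm{Mod}_p(\Gamma(E,F))\leq\int\rho^p\,\d\mm=p\,\mathcal E_p(f)$, and the infimum over $f$ yields $\mathrm{Mod}_p(\Gamma(E,F))\leq\mathrm{Cap}_p(E,F)$.

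The only real subtlety is the passage from the weak upper gradient inequality (which holds only $\mathrm{Mod}_p$-almost surely) to genuine admissibility of a Borel representative of $|Df|$ in the $\mathrm{Mod}_p$-infimum; this is handled by the standard negligible-perturbation argument above. The use of the hypothesis that $E$ be Souslin is confined to the application of Lemma \ref{lem:aux_upper_grad}, where it is needed to guarantee $\mm$-measurability of the candidate function built from the path integrals of $\rho$.
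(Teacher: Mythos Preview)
Your proof is correct and follows essentially the same approach as the paper. The only minor difference is that for the inequality $\mathrm{Mod}_p(\Gamma(E,F))\leq\mathrm{Cap}_p(E,F)$, the paper dispenses with your perturbation by $\varepsilon h$: since $\mathrm{Mod}_p$ is an outer measure (as recorded in the preliminaries), one has directly $\mathrm{Mod}_p(\Gamma(E,F))=\mathrm{Mod}_p(\Gamma(E,F)\setminus\Gamma_0)$, and $\rho$ is already a genuine competitor for the right-hand side.
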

\begin{proof}
On the one hand, let \(f=\pi_\mm(\bar f)\in\mathcal A(E,F)\) be fixed, where \(\bar f\in\bar{\mathcal A}(E,F)\). Let \(\rho\in\mathcal L^p(\mm)^+\) be any weak \(p\)-upper gradient of \(\bar f\).
Then there exists a family of curves \(\mathcal N\) in \(\X\) such that \({\rm Mod}_p(\mathcal N)=0\) and \(|\bar f(\gamma(b_\gamma))-\bar f(\gamma(a_\gamma))|\leq\int_\gamma\rho\,\d s\)
for every \(\gamma\in\mathscr R(\X)\setminus\mathcal N\). In particular,
\[
1=\bar f(\gamma(b_\gamma))-\bar f(\gamma(a_\gamma))\leq\int_\gamma\rho\,\d s\quad\text{ for every }\gamma\in\Gamma(E,F)\setminus\mathcal N.
\]
This means that \(\rho\) is a competitor for \({\rm Mod}_p(\Gamma(E,F)\setminus\mathcal N)\), thus accordingly
\[
{\rm Mod}_p(\Gamma(E,F))={\rm Mod}_p(\Gamma(E,F)\setminus\mathcal N)\leq\|\rho\|_{\mathcal L^p(\mm)}^p.
\]
Choosing \(\rho\) of minimal \(\mathcal L^p(\mm)\)-seminorm among all weak \(p\)-upper gradients of \(\bar f\), we thus deduce that \({\rm Mod}_p(\Gamma(E,F))\leq p\,\mathcal E_p(f)\).
As \(f\in\mathcal A(E,F)\) is arbitrary, the inequality \(\leq\) in \eqref{eq:Mod_equal_inf_Ep} is proved.

On the other hand, given any competitor \(\rho\in\mathcal L^p(\mm)^+\) for \({\rm Mod}_p(\Gamma(E,F))\), by Lemma \ref{lem:aux_upper_grad}, we can find \(f\in\mathcal A(E,F)\) such that \(|Df|\leq\rho\) holds \(\mm\)-a.e.\ on \(\X\). Then \({\rm Cap}_p(E,F)\leq\|\rho\|_{\mathcal L^p(\mm)}\),
whence the validity of the inequality \(\geq\) in \eqref{eq:Mod_equal_inf_Ep} follows thanks to the arbitrariness of \(\rho\).
\end{proof}
\begin{theorem}\label{thm:condens_cap}
Let \((\X,\sfd,\mm)\) be a metric measure space and \(p\in(1,\infty)\). Let \(E,F\subseteq\X\) be disjoint Souslin sets with \(0<{\rm Mod}_p(\Gamma(E,F))<+\infty\). Then there exists a function
\(f\in\mathcal A(E,F)\) such that
\begin{equation}\label{eq:condens_cap_min_Ch}
p\,\mathcal E_p(f)={\rm Cap}_p(E,F)={\rm Mod}_p(\Gamma(E,F)).
\end{equation}
Assuming, in addition, that \(\mm\) is a finite measure, there exists
a \(p\)-Laplacian \(\mu\in\boldsymbol\Delta_p(f)\cap\mathcal M(\X)\) of the function \(f\) such that \(\mu^+\) and \(\mu^-\) are concentrated on \(E\) and \(F\), respectively.
\end{theorem}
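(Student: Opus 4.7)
For \eqref{eq:condens_cap_min_Ch}, Proposition~\ref{prop:Mod_inf_E_p} supplies the second equality, and the first follows from the existence of a minimizer: since \({\rm Cap}_p(E,F) < +\infty\) by hypothesis, any minimizing sequence \((f_n)_n \subseteq \mathcal{A}(E,F)\) has \(\liminf_n \mathcal{E}_p(f_n) < +\infty\), so Lemma~\ref{lem:limit_A(E,F)} extracts a cluster point \(f \in \mathcal{A}(E,F)\) with \(\mathcal{E}_p(f) \leq \liminf_n \mathcal{E}_p(f_n)\), which must therefore achieve the infimum.

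For the Laplacian, assume \(\mm\) is finite. By Remark~\ref{rmk:equiv_Lapl_p>1} (available since \(p \in (1,\infty)\)), \(\mu \in \boldsymbol\Delta_p(f)\) is equivalent to \(-\int \psi\,\d\mu \leq \Phi(\psi)\) for every \(\psi \in \LIP_{bs}(\X)\), where \(\Phi(\psi) \coloneqq \mathcal{E}_p(f + \psi) - \mathcal{E}_p(f)\) is convex with \(\Phi(0) = 0\). To encode the sign constraints on \(\mu\) alongside the Laplacian inequality, the plan is to work with the convex functional
\[
\bar\Phi(\psi) \coloneqq \inf\bigl\{\Phi(\psi + \alpha - \beta) : \alpha, \beta \in \LIP_{bs}(\X)^+,\ \mathrm{spt}(\alpha) \cap E = \varnothing = \mathrm{spt}(\beta) \cap F\bigr\}.
\]
The crucial claim will be \(\bar\Phi(0) = 0\). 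Indeed, for admissible \(\alpha, \beta\) the difference \(\alpha - \beta\) lies in the cone \(\mathcal{K} \coloneqq \{\xi \in \LIP_{bs}(\X) : \xi \leq 0 \text{ on } E,\ \xi \geq 0 \text{ on } F\}\) (since \(\alpha = 0\) on \(E\) forces \(\alpha - \beta = -\beta \leq 0\) there and \(\beta = 0\) on \(F\) forces \(\alpha - \beta = \alpha \geq 0\) there), so the truncation \(g \coloneqq ((f + \alpha - \beta) \vee 0) \wedge 1\) belongs to \(\mathcal{A}(E,F)\); Theorem~\ref{thm:calculus_rules_mwug}~iv) gives \(|Dg| \leq |D(f + \alpha - \beta)|\), and minimality of \(f\) then forces \(\Phi(\alpha - \beta) \geq \mathcal{E}_p(g) - \mathcal{E}_p(f) \geq 0\), with \(\alpha = \beta = 0\) attaining equality.

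The next step is to apply a subgradient-type Hahn--Banach extension to \(\bar\Phi\) at \(0\), producing a linear \(\tilde L \colon \LIP_{bs}(\X) \to \R\) with \(\tilde L \leq \bar\Phi\), and to define \(\mu\) by \(\int \psi\,\d\mu \coloneqq -\tilde L(\psi)\). Choosing \(\alpha = \beta = 0\) in the infimum yields \(\tilde L \leq \Phi\), which is exactly the Laplacian inequality. For \(\psi \in \LIP_{bs}(\X)^+\) with \(\mathrm{spt}(\psi) \cap F = \varnothing\), choosing \(\beta = \psi\), \(\alpha = 0\) in the defining infimum gives \(\bar\Phi(\psi) \leq \Phi(0) = 0\), hence \(\int \psi\,\d\mu \geq 0\), which delivers concentration of \(\mu^-\) on \(F\); the symmetric choice \(\alpha = \psi\), \(\beta = 0\) applied to \(-\psi\) for \(\psi \in \LIP_{bs}(\X)^+\) with \(\mathrm{spt}(\psi) \cap E = \varnothing\) gives the concentration of \(\mu^+\) on \(E\). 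Finally, the control
\[
|\Phi(\psi)| \leq \||D\psi|\|_{L^p(\mm)}\,\||Df|\|_{L^p(\mm)}^{p/q} + \mathcal{E}_p(\psi)
\]
will make \(\tilde L\) continuous in a Sobolev-type seminorm, and Theorem~\ref{thm:F_p=D_q} together with Theorem~\ref{thm:L_mu} and the finiteness of \(\mm\) will then promote \(\tilde L\) to integration against a finite signed Borel measure \(\mu \in \mathcal{M}(\X)\).

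The main obstacle will be the rigorous application of Hahn--Banach to \(\bar\Phi\): although \(\bar\Phi\) is proper convex with \(\bar\Phi(0) = 0\) and bounded below by \(-\mathcal{E}_p(f)\), it is not a priori continuous at \(0\) in any natural Banach topology on \(\LIP_{bs}(\X)\), so the existence of a subgradient must be argued separately. The strategy will be to pass to the positively homogeneous envelope, use Proposition~\ref{prop:main_Dg(nablaf)} and the sublinearity of \(\omega \mapsto S^+_{f,\omega}\) (cf.\ Lemma~\ref{lem:tech_for_grad}) to identify its first-order behaviour, verify that it retains the sign properties, and then invoke classical sublinear Hahn--Banach before lifting back to \(\bar\Phi\). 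A second nontrivial point will be the passage from the abstract functional \(\tilde L\) to a genuine finite Borel measure, which relies on the bound above together with Lemma~\ref{lem:vector_field_induced_by_mu}: \(\tilde L\) corresponds to a vector field whose \(q\)-divergence is the desired finite signed measure, and the sign conditions transfer intact.
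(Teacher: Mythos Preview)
Your argument for \eqref{eq:condens_cap_min_Ch} is correct and coincides with the paper's.

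For the second part, your route is genuinely different from the paper's. The paper does \emph{not} use a variational/Hahn--Banach argument on \(\Phi\) or \(\bar\Phi\); instead it invokes the modulus--plan duality (Ambrosio--Di Marino--Savar\'e) to obtain a plan \(\ppi\) concentrated on \(\Gamma(E,F)\) with \(\|{\rm Bar}(\ppi)\|_{L^q(\mm)}^q=\ppi(\bar\Gamma(E,F))={\rm Mod}_p(\Gamma(E,F))\), sets \(\mu\coloneqq(\e_0)_\#\ppi-(\e_1)_\#\ppi\), and then checks directly via Lemma~\ref{lem:vector_field_induced_by_mu} and \eqref{eq:equiv_grad_Young} that the associated \(v_\mu\) lies in \({\sf Grad}_q(f)\), whence \(\mu\in\boldsymbol\Delta_p(f)\) by Lemma~\ref{lem:grad_impl_Lapl}. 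This construction yields a genuine finite signed measure from the outset, and the concentration of \(\mu^\pm\) on \(E\) and \(F\) (the sets themselves, not their closures) is immediate from \((\e_0)_\#\ppi\), \((\e_1)_\#\ppi\) being pushforwards of a plan concentrated on \(\e_0^{-1}(E)\cap\e_1^{-1}(F)\).

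Your proposal has two real gaps. First, the sign argument via the constraints \({\rm spt}(\alpha)\cap E=\varnothing\), \({\rm spt}(\beta)\cap F=\varnothing\) only yields \(\int\psi\,\d\mu\geq 0\) for \(\psi\in\LIP_{bs}(\X)^+\) with \({\rm spt}(\psi)\cap F=\varnothing\); since \({\rm spt}(\psi)\) is closed, you can test only against compact sets \(K\) with \(d(K,F)>0\), which gives \(\mu^-\) concentrated on \(\bar F\), not on \(F\). For Souslin sets that are not closed this is strictly weaker than the stated conclusion, and there is no evident repair within your framework. Second, the step ``promote \(\tilde L\) to a finite signed Borel measure'' is not justified: Theorems~\ref{thm:F_p=D_q} and~\ref{thm:L_mu} and Lemma~\ref{lem:vector_field_induced_by_mu} all start from an actual measure \(\mu\in D({\sf F}_p)\) and produce a functional or a vector field---they do not run in the direction you need. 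The bound \(|\tilde L(\psi)|\leq C\|\lip_a(\psi)\|_{L^p(\mm)}\) gives a vector field \(v\in L^q(T\X)\) with \(\int\d\psi(v)\,\d\mm=\tilde L(\psi)\), but membership of \(v\) in \(D(\boldsymbol\div_q;\X)\) is precisely the existence of a representing measure, which is what you are trying to prove. The paper's constructive approach via \(\ppi\) sidesteps both difficulties simultaneously.
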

\begin{proof}
Consider a sequence \((f_n)_n\subseteq\mathcal A(E,F)\) such that \(p\,\mathcal E_p(f_n)\to{\rm Mod}_p(\Gamma(E,F))\),
whose existence is ensured by Proposition \ref{prop:Mod_inf_E_p}. Then by Lemma \ref{lem:limit_A(E,F)} we can find a
function \(f\in\mathcal A(E,F)\) such that \(p\,\mathcal E_p(f)\leq\lim_n p\,\mathcal E_p(f_n)={\rm Mod}_p(\Gamma(E,F))\),
whence the identities in \eqref{eq:condens_cap_min_Ch} follow.

Let us pass to the verification of the last part of the statement. Let $$\widetilde{\Gamma}(E,F) = \left\{ \gamma \in C([0,1];\X) \colon \gamma \in \e_0^{-1}(E) \cap \e_1^{-1}(F) \right\}$$ for the evaluation maps \(\e_0\), \(\e_1\) as in \eqref{eq:def_e_t}. Then $\widetilde{\Gamma}(E,F) \subseteq C( [0,1]; \X )$ is a Souslin set. It also follows that $\bar{\Gamma}(E,F) \coloneqq \widetilde{\Gamma}(E,F) \cap \left\{ 0 < \ell( \gamma ) < \infty \right\}$ is a Souslin subset of $C( [0,1]; \X )$ and we have
\begin{align*}
    \Mod_p( \Gamma(E,F) ) = \Mod_p( \bar{\Gamma}(E,F) )
\end{align*}
since we recall that $\Mod_p$ is invariant under reparametrizations; $\Gamma(E,F)$ does not contain any constant curves since $\Mod_p( \Gamma(E,F) ) < \infty$; and the family of unrectifiable curves is negligible for $\Mod_p$.

Now by the modulus-plans duality \cite[Lemma 6.7 and Theorem 7.2]{Amb:Mar:Sav:15} (see also \cite[Theorem 4.3.2]{Sav:22}), there exists a plan \(\ppi\) on \(\X\) with barycenter in \(L^q(\mm)\) that is concentrated on \(\bar{\Gamma}(E,F) \) and satisfies
\[
\|{\rm Bar}(\ppi)\|_{L^q(\mm)}^q=\ppi(\bar\Gamma(E,F))={\rm Mod}_p(\Gamma(E,F))=p\,\mathcal E_p(f).
\]
Fix a quasicontinuous representative \(\bar f\in \bar{N}^{1,p}(\X)\) of \(f\) and a representative \(\rho\in\mathcal L^p(\mm)^+\) of \(|Df|\).
Since \(\ppi\ll{\rm Mod}_p\) (by \eqref{eq:plan_ll_Mod}) and \((\gamma_0,\gamma_1)\in E\times F\) for \(\ppi\)-a.e.\ \(\gamma\),
we have \(1=\bar f(\gamma_1)-\bar f(\gamma_0)\)
for \(\ppi\)-a.e.\ \(\gamma\). Integrating with respect to \(\ppi\)  we get that
\(
\ppi(\bar\Gamma(E,F))=\int\bar f(\gamma_1)-\bar f(\gamma_0)\,\d\ppi(\gamma)
\).
Together with the above proven equality \(\ppi(\bar\Gamma(E,F))=p\mathcal E_p(f)\), the latter gives the validity of the identity
\begin{equation}\label{eq:condens_cap_1}
\int\bar f(\gamma_1)-\bar f(\gamma_0)\,\d\ppi(\gamma)=p\,\mathcal E_p(f).
\end{equation}
Denote \(\mu\coloneqq(\e_0)_\#\ppi-(\e_1)_\#\ppi\in\mathcal M(\X)\) for brevity. Thanks to Proposition \ref{prop:B_q=D_q}, Theorem \ref{thm:F_p=D_q},
and Lemma \ref{lem:vector_field_induced_by_mu}, we find a vector field \(v_\mu\in D(\boldsymbol\div_q;\X)\subseteq L^q(T\X)\) such that \(\boldsymbol\div_q(v_\mu)=\mu\),
\[
\|v_\mu\|_{L^q(T\X)}={\sf B}_q(\mu)\leq\|{\rm Bar}(\ppi)\|_{L^q(\mm)}=(p\,\mathcal E_p(f))^{1/q},
\]
and \(\int\d f(v_\mu)\,\d\mm=-L_\mu(f)\). Therefore, by \eqref{eq:condens_cap_1} and Remark \ref{rmk:L_mu_for_mu_finite}, we obtain that
\[\begin{split}
\int\d f(v_\mu)\,\d\mm&=-\int\bar f\,\d\mu=\int\bar f(\gamma_1)-\bar f(\gamma_0)\,\d\ppi(\gamma)=p\,\mathcal E_p(f)\\
&=\mathcal E_p(f)+\frac{p\,\mathcal E_p(f)}{q}\geq\frac{1}{p}\int|Df|^p\,\d\mm+\frac{1}{q}\int|v_\mu|^q\,\d\mm.
\end{split}\]
Recalling Remark \ref{rmk:equiv_cond_grad}, we deduce that \(v_\mu\in{\sf Grad}_q(f)\). Hence, Lemma \ref{lem:grad_impl_Lapl}
gives that \(\mu\in\boldsymbol\Delta_p(f)\). Finally, notice that \(\mu^+=(\e_0)_\#\ppi\) and \(\mu^-=(\e_1)_\#\ppi\) are concentrated on \(E\) and \(F\), respectively.
\end{proof}
\begin{remark}{\rm
We assume a finite reference measure in Theorem \ref{thm:condens_cap} due to the 
fact that the modulus-plan duality results in \cite[Theorem 7.2]{Amb:Mar:Sav:15} and in \cite[Theorem 4.3.2]{Sav:22} have been proven in this setting.
}\end{remark}
\section{Duality of Sobolev spaces}\label{ss:reflexivity_properties}
In this section, we investigate duals and preduals of metric Sobolev spaces. As an application, we will provide new characterisations of the reflexivity
of the Sobolev space \(W^{1,p}(\X)\) for \(p\in(1,\infty)\) (Theorem \ref{thm:predual_W1p}), but first we need to recall some basic definitions and
results in convex analysis.
\medskip

Let \(\mathbb V\) be a normed space. Recall that the \textbf{Fenchel conjugate} \(f^*\colon\mathbb V'\to[-\infty,+\infty]\) of a given function \(f\colon\mathbb V\to(-\infty,+\infty]\) is defined as
\[
f^*(\omega)\coloneqq\sup\big\{\langle\omega,x\rangle-f(x)\big|\;x\in\mathbb V\big\}\quad\text{ for every }\omega\in\mathbb V'.
\]
\begin{theorem}[Conjugate of the sum {\cite[Theorem 20]{Ro:74}}]\label{thm:conj_sum}
Let \(\mathbb V\) be a normed space. Let \(f,g\colon\mathbb V\to(-\infty,+\infty]\) be convex functionals.
Assume that there exists \(x_0\in\mathbb V\) such that \(g(x_0)<+\infty\) and \(f\) is continuous at \(x_0\). Then it holds that
\[
(f+g)^*(\omega)=\inf\big\{f^*(\omega-\eta)+g^*(\eta)\;\big|\;\eta\in\mathbb V'\big\}\quad\text{ for every }\omega\in\mathbb V'.
\]
\end{theorem}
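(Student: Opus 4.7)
The plan is to establish the two inequalities separately. The easy direction
\(
(f+g)^*(\omega)\leq\inf\{f^*(\omega-\eta)+g^*(\eta)\;|\;\eta\in\mathbb V'\}
\)
is immediate from the definition of the Fenchel conjugate: for every \(x\in\mathbb V\) and \(\eta\in\mathbb V'\),
\[
\langle\omega,x\rangle-(f+g)(x)=\big(\langle\omega-\eta,x\rangle-f(x)\big)+\big(\langle\eta,x\rangle-g(x)\big)\leq f^*(\omega-\eta)+g^*(\eta),
\]
so taking the supremum in \(x\) and then the infimum in \(\eta\) yields the claim. This also handles the case \((f+g)^*(\omega)=+\infty\), in which the infimum is forced to be \(+\infty\) as well.

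For the reverse inequality, I first observe that the continuity of \(f\) at \(x_0\) together with \(g(x_0)<+\infty\) gives
\((f+g)^*(\omega)\geq\langle\omega,x_0\rangle-f(x_0)-g(x_0)>-\infty\),
so I may assume \(c\coloneqq(f+g)^*(\omega)\in\R\). By the definition of \(c\), the global inequality \(f(x)+g(x)\geq\langle\omega,x\rangle-c\) holds for every \(x\in\mathbb V\). I then introduce the two convex subsets of \(\mathbb V\times\R\)
\[
C_1\coloneqq\{(x,t)\in\mathbb V\times\R\,:\,f(x)<t\},\qquad C_2\coloneqq\{(x,t)\in\mathbb V\times\R\,:\,t\leq\langle\omega,x\rangle-c-g(x)\}.
\]
They are disjoint by the preceding inequality; the continuity of \(f\) at \(x_0\) ensures that \(C_1\) has non-empty interior, containing a neighbourhood of \((x_0,f(x_0)+1)\); and \(C_2\) is non-empty, as it contains \((x_0,\langle\omega,x_0\rangle-c-g(x_0))\).

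Applying the geometric form of the Hahn--Banach theorem yields a non-zero continuous linear functional \((\eta,\lambda)\in\mathbb V'\times\R\) and a scalar \(\alpha\in\R\) such that \(\langle\eta,x\rangle+\lambda t\geq\alpha\) on \(C_1\) and \(\langle\eta,x\rangle+\lambda t\leq\alpha\) on \(C_2\). Letting \(t\to+\infty\) with \(x=x_0\) in \(C_1\) forces \(\lambda\geq 0\); the case \(\lambda=0\) is excluded, since it would give \(\langle\eta,\cdot\rangle\geq\alpha\) on a whole neighbourhood of \(x_0\) (from \(C_1\) together with the continuity of \(f\)) while \(\langle\eta,x_0\rangle\leq\alpha\) (from \(C_2\), using \(g(x_0)<+\infty\)), which forces \(\eta=0\), a contradiction with \((\eta,\lambda)\neq(0,0)\). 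Rescaling so that \(\lambda=1\) and letting \(t\searrow f(x)\) in the inequality on \(C_1\) and \(t\nearrow\langle\omega,x\rangle-c-g(x)\) in the inequality on \(C_2\), I obtain
\[
\langle-\eta,x\rangle-f(x)\leq-\alpha\quad\text{and}\quad\langle\omega+\eta,x\rangle-g(x)\leq\alpha+c\quad\text{ for every }x\in\mathbb V.
\]
Supremising in \(x\) gives \(f^*(-\eta)\leq-\alpha\) and \(g^*(\omega+\eta)\leq\alpha+c\), so the functional \(\eta^*\coloneqq\omega+\eta\) satisfies \(f^*(\omega-\eta^*)+g^*(\eta^*)\leq c\), which yields the reverse inequality (and incidentally shows that the infimum is attained). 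The main obstacle is securing the non-emptiness of the interior of \(C_1\): this is precisely what the continuity hypothesis on \(f\) is designed to provide, and without it one cannot rule out \(\lambda=0\) (a vertical separating hyperplane, corresponding to a duality gap).
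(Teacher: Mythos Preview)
Your proof is correct. Note that the paper does not actually prove this theorem: it is stated with a citation to \cite[Theorem 20]{Ro:74} and used as a black box, so there is no ``paper's own proof'' to compare against. Your argument is the standard one---separating the strict epigraph of \(f\) from the hypograph of \(\langle\omega,\cdot\rangle-c-g\) via the geometric Hahn--Banach theorem, then reading off the dual variable from the separating hyperplane---and it is carried out cleanly, including the exclusion of the degenerate case \(\lambda=0\) and the observation that the infimum is attained.
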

Let us also recall that the \textbf{adjoint} of a bounded linear operator \(A\colon\mathbb W\to\mathbb V\) between
normed spaces \(\mathbb W\), \(\mathbb V\) is defined as the unique map \(A^*\colon\mathbb V'\to\mathbb W'\) such that
\[
\langle A^*(\eta),x\rangle=\langle\eta,A(x)\rangle\quad\text{ for every }x\in\mathbb W\text{ and }\eta\in\mathbb V'.
\]
Moreover, \(A^*\) is a bounded linear operator whose operator norm coincides with the one of \(A\).
For the proof of the next result we refer to \cite[Theorem 5.1]{BBS} (see also \cite[Theorem 17]{Ro:74} for a more general version).
\begin{proposition}[Conjugate of the composition]\label{prop:conj_compos}
Let \(\mathbb W\), \(\mathbb V\) be Banach spaces. Let \(A\colon\mathbb W\to\mathbb V\) be a bounded linear operator.
Let \(f\colon\mathbb V\to(-\infty,+\infty]\) be a convex, lower semicontinuous functional. Assume that there exists
\(x_0\in\mathbb W\) such that \(f\) is continuous at \(A(x_0)\). Then it holds that
\[
(f\circ A)^*(\omega)=\inf\big\{f^*(\eta)\;\big|\;\eta\in\mathbb V',\,A^*(\eta)=\omega\big\}\quad\text{ for every }\omega\in\mathbb W'.
\]
\end{proposition}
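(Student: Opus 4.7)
The inequality $\leq$ is immediate: if $\eta \in \mathbb{V}'$ satisfies $A^*(\eta) = \omega$, then $\langle \omega, x \rangle = \langle \eta, A(x) \rangle$ for every $x \in \mathbb{W}$, so
\[
(f\circ A)^*(\omega) = \sup_{x\in\mathbb{W}} \bigl(\langle \eta, A(x)\rangle - f(A(x))\bigr) \leq \sup_{y \in \mathbb{V}} \bigl(\langle \eta, y\rangle - f(y)\bigr) = f^*(\eta),
\]
and taking the infimum over admissible $\eta$ yields the desired inequality. The continuity/qualification hypothesis is not needed here.

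For the reverse inequality, the plan is to reduce to Theorem \ref{thm:conj_sum} by lifting the problem to the product Banach space $\mathbb{W} \times \mathbb{V}$, whose continuous dual is canonically identified with $\mathbb{W}' \times \mathbb{V}'$. Define the convex functionals
\[
h_1(x,y) \coloneqq f(y), \qquad h_2(x,y) \coloneqq \iota_{G_A}(x,y),
\]
where $\iota_{G_A}$ is the convex indicator of the graph $G_A \coloneqq \{(x, A(x)) : x \in \mathbb{W}\}$. The point $(x_0, A(x_0))$ belongs to $G_A$, so $h_2(x_0, A(x_0)) = 0 < +\infty$, and $h_1$ is continuous at $(x_0, A(x_0))$ because $f$ is continuous at $A(x_0)$ by hypothesis; this is exactly the qualification condition of Theorem \ref{thm:conj_sum}. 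The key identity that links the two sides is
\[
(h_1 + h_2)^*(\omega, 0) = \sup_{x \in \mathbb{W}} \bigl(\langle \omega, x \rangle - f(A(x))\bigr) = (f\circ A)^*(\omega).
\]

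A direct computation of the conjugates gives
\[
h_1^*(\xi, \nu) = \begin{cases} f^*(\nu) & \text{if } \xi = 0, \\ +\infty & \text{otherwise,} \end{cases} \qquad h_2^*(\xi, \nu) = \begin{cases} 0 & \text{if } \xi + A^*(\nu) = 0, \\ +\infty & \text{otherwise,} \end{cases}
\]
the formula for $h_2^*$ following from $\sup_{x}\langle \xi + A^*(\nu), x\rangle$ being $0$ when $\xi + A^*(\nu)=0$ and $+\infty$ otherwise. Applying Theorem \ref{thm:conj_sum} at $(\omega, 0)$ with the decomposition $(\omega, 0) = (\omega - \xi, -\nu) + (\xi, \nu)$ forces, for finite contributions, $\omega - \xi = 0$ (so $\xi = \omega$) and $\xi + A^*(\nu) = 0$ (so $A^*(\nu) = -\omega$); substituting $\eta \coloneqq -\nu$ turns the latter into $A^*(\eta) = \omega$, while the finite contribution becomes $f^*(-\nu) = f^*(\eta)$. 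Combining these identifications produces exactly the desired formula.

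The main subtlety is purely bookkeeping of the two $\{0,+\infty\}$-valued constraints in the infimum after invoking Theorem \ref{thm:conj_sum}; no genuine analytic difficulty remains once the auxiliary product-space setup is in place, since the continuity hypothesis on $f$ at $A(x_0)$ is tailor-made to guarantee the applicability of Theorem \ref{thm:conj_sum} (and, in particular, that the infimum in the conjugate-of-sum formula requires no further closure operation).
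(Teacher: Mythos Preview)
The paper does not actually prove this proposition: it states immediately before the proposition that ``For the proof of the next result we refer to \cite[Theorem 5.1]{BBS} (see also \cite[Theorem 17]{Ro:74} for a more general version).'' So there is no in-paper argument to compare against.

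Your argument is correct. The easy inequality is fine, and the reduction to Theorem~\ref{thm:conj_sum} via the product space $\mathbb W\times\mathbb V$ with $h_1(x,y)=f(y)$ and $h_2=\iota_{G_A}$ is a standard and clean device: the qualification condition of Theorem~\ref{thm:conj_sum} is verified exactly by the point $(x_0,A(x_0))$, the conjugates $h_1^*$ and $h_2^*$ are as you compute, and the bookkeeping with $\eta=-\nu$ is right. One minor remark: your proof never uses the lower semicontinuity of $f$ assumed in the statement, so the hypothesis is slightly stronger than what your argument needs; this is harmless. This product-space reduction is in fact precisely the style of argument in Rockafellar's framework that the paper cites, so your approach is in the same spirit as the references.
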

\subsection{Duals and preduals of Sobolev spaces}
\begin{definition}[The spaces \(W^{-1,q}(\X)\) and \(W^{-1,q}_{pd}(\X)\)]\label{def:pd_Sob}
Let \((\X,\sfd,\mm)\) be a metric measure space and \(p\in[1,\infty)\). Then we define \(W^{-1,q}(\X)\) as the dual
of the Sobolev space \(W^{1,p}(\X)\), i.e.\ we set
\[
W^{-1,q}(\X)\coloneqq W^{1,p}(\X)'.
\]
Moreover, we define the vector subspace
\[
W^{-1,q}_{pd}(\X)\subseteq W^{-1,q}(\X)
\]
as the set of \(L\in W^{-1,q}(\X)\) having the following stronger continuity
property: if \((f_n)_{n\in\N\cup\{\infty\}}\subseteq W^{1,p}(\X)\) satisfies \(\|f_n-f_\infty\|_{L^p(\mm)}\to 0\)
and \(\sup_{n\in\N}\|f_n\|_{W^{1,p}(\X)}<+\infty\), then \(L(f_n)\to L(f_\infty)\).
\end{definition}

We claim that \(W^{-1,q}_{pd}(\X)\) is in fact a closed vector subspace of \(W^{-1,q}(\X)\). To prove it, assume
that \((L^k)_k\subseteq W^{-1,q}_{pd}(\X)\) and \(L\in W^{-1,q}(\X)\) satisfy \(\|L^k-L\|_{W^{-1,q}(\X)}\to 0\).
Given any bounded sequence \((f_n)_{n\in\N\cup\{\infty\}}\subseteq W^{1,p}(\X)\) with \(\|f_n-f_\infty\|_{L^p(\mm)}\to 0\),
we can estimate
\[
|L(f_n)-L(f_\infty)|\leq\|L-L^k\|_{W^{-1,q}(\X)}\big(\|f_n\|_{W^{1,p}(\X)}+\|f_\infty\|_{W^{1,p}(\X)}\big)+|L^k(f_n)-L^k(f_\infty)|.
\]
Letting first \(n\to\infty\) and then \(k\to\infty\), we conclude that \(L(f_n)\to L(f_\infty)\). Hence \(L\in W^{-1,q}_{pd}(\X)\).
\begin{remark}\label{rmk:W-1q_pd_cont_in_nrg}{\rm
Notice that \(L\colon(W^{1,p}(\X),\sfd_{\rm en})\to\R\) is continuous for every \(L\in W^{-1,q}_{pd}(\X)\).
}\end{remark}
Given any parameter \(\kappa\in[0,\infty)\), we define the following weighted (semi)norms:
\begin{equation}\label{eq:def_W1pk_norms}\begin{split}
\|f\|_{W^{1,p}_\kappa(\X)}&\coloneqq\kappa\|f\|_{L^p(\mm)}+\||Df|\|_{L^p(\mm)}\quad\text{ for every }f\in W^{1,p}(\X),\\
\|L\|_{op,\kappa}&\coloneqq\sup\big\{L(f)\;\big|\,f\in W^{1,p}(\X),\,\|f\|_{W^{1,p}_\kappa(\X)}\leq 1\big\}\,\,\text{ for every }L\in W^{-1,q}(\X).
\end{split}\end{equation}
Notice that if \(\kappa>0\), then \(\|\cdot\|_{W^{1,p}_\kappa(\X)}\) is a norm on \(W^{1,p}(\X)\) equivalent
to \(\|\cdot\|_{W^{1,p}(\X)}\), while \(\|\cdot\|_{op,\kappa}\) is a norm on \(W^{-1,q}(\X)\) equivalent to
\(\|\cdot\|_{W^{-1,q}(\X)}\). More precisely, it holds that
\[\begin{split}
(\kappa\wedge 1)\|f\|_{W^{1,p}(\X)}\leq\|f\|_{W^{1,p}_\kappa(\X)}\leq 2^{1/p}(\kappa\vee 1)\|f\|_{W^{1,p}(\X)}&
\quad\text{ for every }f\in W^{1,p}(\X),\\
\frac{1}{2^{1/q}(\kappa\vee 1)}\|L\|_{W^{-1,q}(\X)}\leq\|L\|_{op,\kappa}\leq\frac{1}{\kappa\wedge 1}\|L\|_{W^{-1,q}(\X)}&
\quad\text{ for every }L\in W^{-1,q}(\X).
\end{split}\]
A distinguished subspace of \(W^{-1,q}_{pd}(\X)\) consists of those elements of the form \(L_g\), defined as follows:
\begin{definition}[The functionals \(L_g\)]\label{def:L_g}
Let \((\X,\sfd,\mm)\) be a metric measure space and \(q\in(1,\infty]\). Let \(g\in L^q(\mm)\) be a given function.
Then we define the functional \(L_g\colon W^{1,p}(\X)\to\R\) as
\[
L_g(f)\coloneqq\int fg\,\d\mm\quad\text{ for every }f\in W^{1,p}(\X).
\]
\end{definition}

It can be readily checked that \(L_g\in W^{-1,q}_{pd}(\X)\) for every \(g\in L^q(\mm)\) and that
\[
\|L_g\|_{op,\kappa}\leq\frac{\|g\|_{L^q(\mm)}}{\kappa}\quad\text{ for every }\kappa>0.
\]
The notation \(L_g\) is consistent with the one in Theorem \ref{thm:L_mu}, thanks to the next observation:
\begin{remark}\label{rmk:consist_L_g_and_L_mu}{\rm
Fix any \(b\in{\rm Der}^q_q(\X)\). Letting \(L_{\div(b)\mm}\) be as in Theorem \ref{thm:L_mu}, we claim that
\[
L_{\div(b)}=L_{\div(b)\mm},\qquad\|L_{\div(b)}\|_{W^{-1,q}(\X)}\leq\|b\|_{{\rm Der}^q(\X)}.
\]
Indeed, we have that \(L_{\div(b)}\colon(W^{1,p}(\X),\sfd_{\rm en})\to\R\) is a continuous function (see Remark
\ref{rmk:W-1q_pd_cont_in_nrg}) and that \(L_{\div(b)}(f)=\int f\,\d(\div(b)\mm)\) for every \(f\in\LIP_{bs}(\X)\),
so that \(L_{\div(b)}=L_{\div(b)\mm}\) thanks to the uniqueness statement in Theorem \ref{thm:L_mu}.
Moreover, Theorem \ref{thm:L_mu} and Theorem \ref{thm:F_p=D_q} imply that the inequalities
\(|L_{\div(b)}(f)|\leq{\sf F}_p(\div(b)\mm)\||Df|\|_{L^p(\mm)}\leq\|b\|_{{\rm Der}^q(\X)}\|f\|_{W^{1,p}(\X)}\)
hold for every \(f\in W^{1,p}(\X)\), so that \(\|L_{\div(b)}\|_{W^{-1,q}(\X)}\leq\|b\|_{{\rm Der}^q(\X)}\).
All in all, the claim is proved.
}\end{remark}

The next result states that the elements of \(W^{-1,q}_{pd}(\X)\) can be approximated by functionals \(L_g\)'s.
\begin{lemma}\label{lem:approx_W-1q_pd}
Let \((\X,\sfd,\mm)\) be a metric measure space and \(q\in(1,\infty]\). Let \(L\in W^{-1,q}_{pd}(\X)\) be given.
Then there exists a sequence \((g_n)_n\subseteq L^q(\mm)\) such that \(\|L_{g_n}-L\|_{op,0}\to 0\) as \(n\to\infty\).
\end{lemma}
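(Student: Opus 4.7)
The plan is to decompose $L$ via Hahn--Banach into an $L^q(\mm)$-duality part plus a ``gradient part,'' and then to approximate the gradient part using vector fields with $L^q$-valued divergence.

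First I would consider the linear isometric embedding $T\colon W^{1,p}(\X)\to\mathbb Z$ given by $T(f)\coloneqq(f,\d f)$, where $\mathbb Z\coloneqq L^p(\mm)\times L^p(T^*\X)$ carries the natural $p$-product norm (cf.\ \eqref{eq:prop_bar_L_f}). Extending $L$ to a bounded functional on $\mathbb Z$ via Hahn--Banach and using the duality $\mathbb Z'\cong L^q(\mm)\times L^q(T\X)$---where the second factor is identified via Proposition~\ref{prop:map_Int}---I obtain $g_0\in L^q(\mm)$ and $v\in L^q(T\X)$ with
\[
L(f)=\int f g_0\,\d\mm+\int\d f(v)\,\d\mm\quad\text{for every }f\in W^{1,p}(\X).
\]

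The key observation is that if $w\in D(\boldsymbol\div_q;\X)$ has divergence in $L^q(\mm)$, the definition of divergence in duality with $W^{1,p}(\X)$ gives $\int\d f(w)\,\d\mm=-L_{\div_q(w)}(f)$ for every $f\in W^{1,p}(\X)$. Hence for any sequence $(w_n)\subseteq D(\div_q;\X)$ converging to $v$ in $L^q(T\X)$-norm, the choice $g_n\coloneqq g_0-\div_q(w_n)\in L^q(\mm)$ satisfies $L(f)-L_{g_n}(f)=\int\d f(v-w_n)\,\d\mm$, and H\"older's inequality gives the quantitative bound $\|L-L_{g_n}\|_{op,0}\leq\|v-w_n\|_{L^q(T\X)}$, which tends to zero and yields the claim.

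The main obstacle is producing such an approximating sequence $(w_n)$ for the particular $v$ extracted above: a generic element of $L^q(T\X)$ need not be norm-approximable by elements of $D(\div_q;\X)$, and this is precisely where the predual hypothesis $L\in W^{-1,q}_{pd}(\X)$ enters decisively. The continuity property in the definition of $W^{-1,q}_{pd}(\X)$ forces the gradient part $v$ to lie in a closed subspace of $L^q(T\X)$ along which the approximation by divergence-possessing vector fields is available---morally this is the Lipschitz tangent module, cf.\ Theorem~\ref{thm:Lip_and_Sob_tg_mod}. To establish this without circularity with the later Corollary~\ref{cor:density_vf_with_div}, one should proceed by Hahn--Banach separation: assuming no such $(w_n)$ exists, one produces a $1$-form $\omega\in L^p(T^*\X)$ annihilating the divergence-image subspace but pairing non-trivially with $v$; using the generation property \eqref{eq:d_Sob_gen}, one then realizes $\omega$ as a limit of differentials $\d f_n$ with $f_n\to 0$ in $L^p(\mm)$ and $\sup_n\|f_n\|_{W^{1,p}(\X)}<\infty$, so that $L(f_n)\not\to 0$, contradicting the predual property of $L$.
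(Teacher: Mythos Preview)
Your decomposition $L(f)=\int fg_0\,\d\mm+\int\d f(v)\,\d\mm$ is fine, and the reduction to approximating $v$ by elements of $D(\div_q;\X)$ is correct. The gap is in the separation argument you sketch to avoid circularity with Corollary~\ref{cor:density_vf_with_div}.

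First, the separating functional lives in $L^q(T\X)'$, which via Proposition~\ref{prop:map_Int} is the \emph{bidual} $L^p(T^*\X)''$, not $L^p(T^*\X)$ itself. You cannot in general represent it by a $1$-form $\omega\in L^p(T^*\X)$: this would require reflexivity of $L^p(T^*\X)$, which by Proposition~\ref{prop:reform_reflex_W1p} already implies reflexivity of $W^{1,p}(\X)$ and hence trivialises the lemma (cf.\ Theorem~\ref{thm:predual_W1p}). Second, even granting such an $\omega$, the generation property \eqref{eq:d_Sob_gen} only yields approximation of $\omega$ by combinations $\sum_i \1_{E_i}^\mm\d f_i$ with $L^\infty$ coefficients, not by individual differentials $\d f_n$; and nothing in your argument forces the (nonexistent) $f_n$ to converge to $0$ in $L^p(\mm)$. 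So the contradiction with $L\in W^{-1,q}_{pd}(\X)$ is never reached.

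The paper's proof proceeds by an entirely different route: it introduces the weighted norms $\|\cdot\|_{W^{1,p}_\kappa(\X)}$ and their duals $\|\cdot\|_{op,\kappa}$, uses the predual hypothesis directly to show $\|L\|_{op,k}\to 0$ as $k\to\infty$, and then applies the Fenchel conjugate formulas for sums and compositions (Theorem~\ref{thm:conj_sum}, Proposition~\ref{prop:conj_compos}) to the decomposition $\varphi_k=\varphi_0+\psi_k\circ A$ to extract the approximants $g_n$. This convex-analytic argument is self-contained and does not touch the tangent module at all.
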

\begin{proof}
We define \(A\), \(\varphi_\alpha\), \(\psi_\beta\) for \(\alpha\in[0,\infty)\) and \(\beta\in(0,\infty)\) as
\[\begin{split}
A\colon W^{1,p}(\X)\to L^p(\mm),&\quad A(f)\coloneqq f,\\
\varphi_\alpha\colon W^{1,p}(\X)\to\R,&\quad\varphi_\alpha(f)\coloneqq\|f\|_{W^{1,p}_\alpha(\X)},\\
\psi_\beta\colon L^p(\mm)\to\R,&\quad\psi_\beta(f)\coloneqq\beta\|f\|_{L^p(\mm)}.
\end{split}\]
Observe that \(A\) is a bounded linear operator whose adjoint \(A^*\) coincides with the operator
\[
L^q(\mm)\ni g\mapsto L_g\in W^{-1,q}(\X)
\]
introduced in Definition \ref{def:L_g}. Moreover, \(\varphi_\alpha\), \(\psi_\beta\) are convex and it can be readily checked that
\[
\varphi_\alpha^*(\tilde L)=\left\{\begin{array}{ll}
0\\
+\infty
\end{array}\quad\begin{array}{ll}
\text{ if }\|\tilde L\|_{op,\alpha}\leq 1,\\
\text{ otherwise}
\end{array}\right.
\]
for every \(\tilde L\in W^{-1,q}(\X)\). Now let us define \(\lambda_k\coloneqq\|L\|_{op,k}\) for every
\(k\in\N\). Given any \(\varepsilon>0\), we can find \(f_k\in W^{1,p}(\X)\) such that \(\|f_k\|_{W^{1,p}_k(\X)}\leq 1\)
and \(L(f_k)\geq\lambda_k-\varepsilon\). In particular, we have that \(\|f_k\|_{L^p(\mm)}\leq 1/k\) and
\(\|f_k\|_{W^{1,p}(\X)}\leq 1\) for every \(k\in\N\). This means that \(f_k\to 0\) strongly in \(L^p(\mm)\)
and \(\sup_k\|f_k\|_{W^{1,p}(\X)}<+\infty\), whence it follows that \(L(f_k)\to L(0)=0\) thanks to the assumption
that \(L\in W^{-1,q}_{pd}(\X)\). Therefore we deduce that \(0\leq\lims_k\lambda_k\leq\varepsilon+\lim_k L(f_k)=\varepsilon\)
and thus \(\lambda_k\to 0\) by arbitrariness of \(\varepsilon\). Consequently, for every \(n\in\N\), there exists
\(k(n)\in\N\) such that \(\|nL\|_{op,k(n)}=n\lambda_{k(n)}\leq 1\). By applying Theorem \ref{thm:conj_sum} and
Proposition \ref{prop:conj_compos}, we thus get
\[
0=\varphi_{k(n)}^*(nL)=(\varphi_0+\psi_{k(n)}\circ A)^*(nL)=
\inf_{g\in L^q(\mm)}\big(\varphi_0^*(nL-L_g)+\psi_{k(n)}^*(g)\big).
\]
In particular, there exists \(\tilde g_n\in L^q(\mm)\) such that \(\|nL-L_{\tilde g_n}\|_{op,0}\leq 1\).
Letting \(g_n\coloneqq\tilde g_n/n\in L^q(\mm)\), we conclude that \(\|L-L_{g_n}\|_{op,0}\leq 1/n\),
whence the statement follows.
\end{proof}
\begin{definition}[The spaces \(W^{-1,q}_0(\X)\) and \(W^{-1,q}_{pd,0}(\X)\)]
Let \((\X,\sfd,\mm)\) be a metric measure space and \(q\in(1,\infty]\). Then we define the spaces
\(W^{-1,q}_0(\X)\) and \(W^{-1,q}_{pd,0}(\X)\) as
\[\begin{split}
W^{-1,q}_0(\X)&\coloneqq\big\{L\in W^{-1,q}(\X)\;\big|\;\|L\|_{op,0}<+\infty\big\},\\
W^{-1,q}_{pd,0}(\X)&\coloneqq W^{-1,q}_{pd}(\X)\cap W^{-1,q}_0(\X).
\end{split}\]
\end{definition}

One can readily check that \((W^{-1,q}_0(\X),\|\cdot\|_{op,0})\) and \((W^{-1,q}_{pd,0}(\X),\|\cdot\|_{op,0})\) are Banach
spaces. Since \(\|\cdot\|_{op,1}\leq\|\cdot\|_{op,0}\), the ensuing result follows directly from Lemma \ref{lem:approx_W-1q_pd}:
\begin{corollary}\label{cor:L_g_dense_W-1q_pd}
Let \((\X,\sfd,\mm)\) be a metric measure space and \(q\in(1,\infty]\). Then 
\[
W^{-1,q}_{pd}(\X)=\overline{\{L_g\,:\,g\in L^q(\mm)\}}^{W^{-1,q}(\X)}
\] 
and \(W^{-1,q}_{pd,0}(\X)\) is the closure of \(\{L_g\,:\,g\in L^q(\mm)\}\cap W^{-1,q}_0(\X)\) in \(W^{-1,q}_0(\X)\).
\end{corollary}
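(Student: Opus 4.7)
The plan is to derive both identities from Lemma \ref{lem:approx_W-1q_pd} together with the elementary fact that the norms $\|\cdot\|_{op,\kappa}$ dominate each other as $\kappa$ decreases, so that in particular $\|\cdot\|_{op,1}\leq\|\cdot\|_{op,0}$ (and thus convergence in $\|\cdot\|_{op,0}$ implies convergence in $\|\cdot\|_{W^{-1,q}(\X)}$, up to the equivalence constant between $\|\cdot\|_{op,1}$ and $\|\cdot\|_{W^{-1,q}(\X)}$).

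For the first identity, I would first observe that $L_g\in W^{-1,q}_{pd}(\X)$ for every $g\in L^q(\mm)$, as noted right after Definition \ref{def:L_g}. Since $W^{-1,q}_{pd}(\X)$ was already shown to be a closed subspace of $W^{-1,q}(\X)$, the inclusion $\overline{\{L_g:g\in L^q(\mm)\}}^{W^{-1,q}(\X)}\subseteq W^{-1,q}_{pd}(\X)$ is automatic. For the converse inclusion, given $L\in W^{-1,q}_{pd}(\X)$ I would apply Lemma \ref{lem:approx_W-1q_pd} to produce $(g_n)_n\subseteq L^q(\mm)$ with $\|L_{g_n}-L\|_{op,0}\to 0$, and conclude by the domination $\|L_{g_n}-L\|_{W^{-1,q}(\X)}\lesssim\|L_{g_n}-L\|_{op,1}\leq\|L_{g_n}-L\|_{op,0}$.

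For the second identity, I would argue in the same manner but using the norm $\|\cdot\|_{op,0}$ directly. The inclusion $\supseteq$ follows from the closedness of $W^{-1,q}_{pd,0}(\X)$ in $(W^{-1,q}_0(\X),\|\cdot\|_{op,0})$: indeed, $W^{-1,q}_0(\X)$ is a Banach space with this norm, and $W^{-1,q}_{pd,0}(\X)=W^{-1,q}_{pd}(\X)\cap W^{-1,q}_0(\X)$ is closed in it because convergence in $\|\cdot\|_{op,0}$ implies convergence in $W^{-1,q}(\X)$, and $W^{-1,q}_{pd}(\X)$ is closed there. Conversely, for $L\in W^{-1,q}_{pd,0}(\X)$, Lemma \ref{lem:approx_W-1q_pd} yields $(g_n)_n\subseteq L^q(\mm)$ with $\|L_{g_n}-L\|_{op,0}\to 0$, so that $\sup_n\|L_{g_n}\|_{op,0}<+\infty$ by the triangle inequality, i.e.\ $L_{g_n}\in\{L_g:g\in L^q(\mm)\}\cap W^{-1,q}_0(\X)$, and these approximate $L$ in $(W^{-1,q}_0(\X),\|\cdot\|_{op,0})$.

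There is no substantive obstacle here beyond Lemma \ref{lem:approx_W-1q_pd}; the only point to check carefully is the norm-domination chain that ensures the $\|\cdot\|_{op,0}$-approximation provided by the lemma transfers to the $W^{-1,q}(\X)$-topology (for the first identity) while remaining good enough to stay inside $W^{-1,q}_0(\X)$ (for the second identity).
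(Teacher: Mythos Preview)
Your proposal is correct and follows essentially the same approach as the paper: the paper's proof is the single sentence ``Since $\|\cdot\|_{op,1}\leq\|\cdot\|_{op,0}$, the ensuing result follows directly from Lemma \ref{lem:approx_W-1q_pd}'', and you have simply spelled out the details that this sentence leaves implicit. Your observation that the approximants $L_{g_n}$ land in $W^{-1,q}_0(\X)$ for the second identity (via the triangle inequality) is exactly the point that makes the one-line proof work.
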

\begin{remark}\label{rmk:L_mu_W-1q_pd}{\rm
When \(p\in(1,\infty)\), for every \(\mu\in D({\sf F}_p)\), we have that
\[
L_\mu\in W^{-1,q}_{pd,0}(\X),\qquad\|L_\mu\|_{op,0}\leq{\sf F}_p(\mu),
\]
where \(L_\mu\) is the functional given by Theorem \ref{thm:L_mu}.
These properties follow from Remark \ref{rmk:L_mu_cont_in_energy_p>1} and from the observation
that \(|L_\mu(f)|\leq{\sf F}_p(\mu)\|f\|_{W^{1,p}_0(\X)}\) for every \(f\in W^{1,p}(\X)\).
}\end{remark}

The following result illustrates a deep relation between \(W^{-1,q}_0(\X)\), \(W^{-1,q}_{pd,0}(\X)\)
and the tangent modules \(L^q_{\rm Sob}(T\X)\), \(L^q_{\rm Lip}(T\X)\) when $q \in (1,\infty)$. By a \textbf{linear submetry}
\(T\colon\mathbb B\to\mathbb V\) between two Banach spaces \(\mathbb B\), \(\mathbb V\) we mean a linear
operator with operator norm at most $1$ such that \(\|y\|_{\mathbb V}=\min\big\{\|x\|_{\mathbb B}\,:\,x\in\mathbb B,\,T(x)=y\big\}\) for every \(y\in\mathbb V\).
In particular, a linear submetry is surjective.
\begin{theorem}[Derivations and \(W^{-1,q}_0\)]\label{thm:der_and_W-1q-0}
Let \((\X,\sfd,\mm)\) be a metric measure space and \(q\in(1,\infty)\). Then
\(\pi\colon L^q_{\rm Sob}(T\X)\to W^{-1,q}_0(\X)\), defined by
\[
\pi(\delta)(f)\coloneqq\int\delta(f)\,\d\mm\quad\text{ for every }f\in W^{1,p}(\X),
\]
is a linear submetry and
\begin{equation}\label{eq:der_and_W-1q-0_aux}
\delta_1-\delta_2\in D(\div_q;\X),\quad\div_q(\delta_1-\delta_2)=0
\quad\text{ whenever }\delta_1,\delta_2\in L^q_{\rm Sob}(T\X)\text{ and }\pi(\delta_1)=\pi(\delta_2).
\end{equation}
Moreover, the embedding \(\iota\colon L^q_\Lip(T\X)\to L^q_{\rm Sob}(T\X)\) from Theorem \ref{thm:Lip_and_Sob_tg_mod} satisfies
\begin{equation}\label{eq:der_and_W-1q-0}
\pi^{-1}(W^{-1,q}_{pd,0}(\X))=\iota(L^q_\Lip(T\X)).
\end{equation}
\end{theorem}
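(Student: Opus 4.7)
The first assertions follow along the lines of Lemma~\ref{lem:vector_field_induced_by_mu}. Linearity of $\pi$ is immediate, and $\|\pi(\delta)\|_{op,0}\leq\|\delta\|_{L^q_{\rm Sob}(T\X)}$ follows from $|\delta(f)|\leq|\delta||Df|$ together with H\"older's inequality. For the matching lower bound, given $L\in W^{-1,q}_0(\X)$ I would define $\Phi(\d f):=L(f)$ on the subspace $\mathbb V=\{\d f:f\in W^{1,p}(\X)\}\subseteq L^p(T^*\X)$, which is well-defined because $\d f_1=\d f_2$ forces $\|f_1-f_2\|_{W^{1,p}_0(\X)}=0$; the bound $|\Phi(\d f)|\leq\|L\|_{op,0}\|\d f\|_{L^p(T^*\X)}$ together with Hahn--Banach and Theorem~\ref{thm:Lq(TX)=LqSob(TX)} then yields $\delta\in L^q_{\rm Sob}(T\X)$ with $\pi(\delta)=L$ and $\|\delta\|_{L^q_{\rm Sob}(T\X)}\leq\|L\|_{op,0}$. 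For \eqref{eq:der_and_W-1q-0_aux}: if $\pi(\delta_1)=\pi(\delta_2)$ then $\int(\delta_1-\delta_2)(f)\,\d\mm=0$ for every $f\in W^{1,p}(\X)$, which is exactly $\div_q(\delta_1-\delta_2)=0$ by Definition~\ref{def:vf_with_Lq_div}.

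The inclusion $\iota(L^q_\Lip(T\X))\subseteq\pi^{-1}(W^{-1,q}_{pd,0}(\X))$ I would deduce from the density of ${\rm Der}^q_{\mathfrak M}(\X)$ in $L^q_\Lip(T\X)$. By Remark~\ref{rmk:comments_Ban_mod}~iv), a generic element $\sum_n\1_{E_n}^\mm b_n$ of $L^q_\Lip(T\X)$ is the norm limit of its finite partial sums, reducing the task to approximating individual terms $\1_E^\mm b$ with $b\in{\rm Der}^q_{\mathfrak M}(\X)$; this is possible via $\varphi^\mm b$ for $\varphi\in\LIP_{bs}(\X)$, using density of $\LIP_{bs}(\X)^{|b|^q\mm}$ in $L^q(|b|^q\mm)$, and $\varphi b\in{\rm Der}^q_{\mathfrak M}(\X)$ thanks to \eqref{eq:Leibniz_Lip_der}. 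Since $\pi\circ\iota$ is continuous as a composition of an isometric embedding and a submetry, and since $\pi(\iota(b))=-L_{\boldsymbol\div(b)}\in W^{-1,q}_{pd,0}(\X)$ for $b\in{\rm Der}^q_{\mathfrak M}(\X)$ by \eqref{eq:descr_iota(b)} and Remark~\ref{rmk:L_mu_W-1q_pd}, the closedness of $W^{-1,q}_{pd,0}(\X)$ in $W^{-1,q}_0(\X)$ closes the argument.

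The reverse inclusion $\pi^{-1}(W^{-1,q}_{pd,0}(\X))\subseteq\iota(L^q_\Lip(T\X))$ is the crux. Fix $\delta\in L^q_{\rm Sob}(T\X)$ with $L:=\pi(\delta)\in W^{-1,q}_{pd,0}(\X)$; my plan is to construct $b_L\in L^q_\Lip(T\X)$ with $\pi(\iota(b_L))=L$, and then use \eqref{eq:der_and_W-1q-0_aux} to conclude. Corollary~\ref{cor:L_g_dense_W-1q_pd} writes $L=\lim_k L_{g_k}$ in $\|\cdot\|_{op,0}$, and after passing to a subsequence I may assume $\|L_{g_{k+1}}-L_{g_k}\|_{op,0}\leq 2^{-k}$. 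Combining Remark~\ref{rmk:L_mu_W-1q_pd} with the elementary inequality ${\sf F}_p(\nu)\leq\|L_\nu\|_{op,0}$ yields $\|L_\nu\|_{op,0}={\sf F}_p(\nu)$ for every $\nu\in D({\sf F}_p)$; setting $\mu_k:=(g_{k+1}-g_k)\mm$, Theorem~\ref{thm:F_p=D_q} provides $b_k\in{\rm Der}^q_{\mathfrak M}(\X)$ with $\boldsymbol\div(b_k)=\mu_k$ and $\|b_k\|_{{\rm Der}^q(\X)}={\sf F}_p(\mu_k)=\|L_{g_{k+1}}-L_{g_k}\|_{op,0}\leq 2^{-k}$, while \eqref{eq:descr_iota(b)} gives $\pi(\iota(-b_k))=L_{\mu_k}=L_{g_{k+1}}-L_{g_k}$. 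Picking analogously $c_1\in{\rm Der}^q_{\mathfrak M}(\X)$ with $\pi(\iota(c_1))=L_{g_1}$ and defining $B_j:=c_1-\sum_{k=1}^{j-1}b_k\in{\rm Der}^q_{\mathfrak M}(\X)$, the telescoping identity $\pi(\iota(B_j))=L_{g_j}$ holds while $(B_j)$ is Cauchy in $L^q_\Lip(T\X)$; its limit $b_L\in L^q_\Lip(T\X)$ then satisfies $\pi(\iota(b_L))=L$ by continuity of $\pi\circ\iota$.

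With $b_L$ in hand, the difference $\delta-\iota(b_L)\in\ker\pi$ has vanishing $L^q$-divergence by \eqref{eq:der_and_W-1q-0_aux}, so Lemma~\ref{lem:Lip_vs_Sob_div} yields $\delta-\iota(b_L)=\iota(\varrho(\delta-\iota(b_L)))\in\iota({\rm Der}^q_{\mathfrak M}(\X))\subseteq\iota(L^q_\Lip(T\X))$, whence $\delta\in\iota(L^q_\Lip(T\X))$. The main obstacle lies in the third paragraph: I need to lift an absolutely convergent series of approximations in $W^{-1,q}_0(\X)$ to an absolutely convergent series in the a priori strictly smaller subspace $L^q_\Lip(T\X)$, and the crucial tool is the norm-preserving identity $\|b\|_{{\rm Der}^q(\X)}={\sf F}_p(\boldsymbol\div(b))$ from Theorem~\ref{thm:F_p=D_q}, which matches norms increment by increment.
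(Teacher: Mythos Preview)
Your proof is correct and follows essentially the same route as the paper's: Hahn--Banach on $\mathbb V=\{\d f\}$ for the submetry part, density of ${\rm Der}^q_{\mathfrak M}(\X)$ plus Remark~\ref{rmk:L_mu_W-1q_pd} for one inclusion, and a telescoping lift of the $L_{g_k}$-approximation via Theorem~\ref{thm:F_p=D_q} followed by Lemma~\ref{lem:Lip_vs_Sob_div} for the other. The only differences are cosmetic---you spell out the density argument for ${\rm Der}^q_{\mathfrak M}(\X)$ in $L^q_\Lip(T\X)$ more explicitly, and you note the full equality $\|L_\nu\|_{op,0}={\sf F}_p(\nu)$ (though only the inequality ${\sf F}_p(\nu)\leq\|L_\nu\|_{op,0}$ is actually needed for the norm control on the $b_k$).
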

\begin{proof}
Given any \(\delta\in L^q_{\rm Sob}(T\X)\) and \(f\in W^{1,p}(\X)\), we can estimate
\[
|\pi(\delta)(f)|\leq\int|\delta||Df|\,\d\mm\leq\|\delta\|_{L^q_{\rm Sob}(T\X)}\|f\|_{W^{1,p}_0(\X)}.
\]
Since \(\pi(\delta)\) is also linear, we deduce that \(\pi(\delta)\in W^{-1,q}_0(\X)\)
and \(\|\pi(\delta)\|_{op,0}\leq\|\delta\|_{L^q_{\rm Sob}(T\X)}\). Now fix any \(L\in W^{-1,q}_0(\X)\).
Let \({\rm I}\colon L^q_{\rm Sob}(T\X)\to L^q(T\X)\) be the isomorphism from Theorem \ref{thm:Lq(TX)=LqSob(TX)}
and denote by \(\mathbb V\) the vector subspace \(\{\d f\,:\,f\in W^{1,p}(\X)\}\) of \(L^p(T^*\X)\).
The linear functional
\[
\Phi_L\colon\mathbb V\to\R,\qquad\Phi_L(\d f)\coloneqq L(f)
\]
is well-defined and continuous by the validity of the inequality \(|\Phi_L(\d f)|\leq\|L\|_{op,0}\|\d f\|_{L^p(T^*\X)}\)
for all \(f\in W^{1,p}(\X)\). Furthermore, the Hahn--Banach theorem ensures that \(\Phi_L\)
can be extended to some \(\Phi_L\in L^p(T^*\X)'\) satisfying \(\|\Phi_L\|_{L^p(T^*\X)'}\leq\|L\|_{op,0}\).
Letting 
\[
\delta_L\coloneqq(\textsc{Int}_{L^p(T^*\X)}\circ{\rm I})^{-1}(\Phi_L)\in L^q_{\rm Sob}(T\X),
\]
we have that \(\|\delta_L\|_{L^q_{\rm Sob}(T\X)}\leq\|L\|_{op,0}\) and \(\pi(\delta_L)(f)=\int\delta_L(f)\,\d\mm=\Phi_L(\d f)=L(f)\)
for every \(f\in W^{1,p}(\X)\). This shows that \(\pi(\delta_L)=L\) and \(\|L\|_{op,0}=\|\delta_L\|_{L^q_{\rm Sob}(T\X)}\),
which implies that \(\pi\) is a linear submetry. Moreover, given any \(\delta_1,\delta_2\in L^q_{\rm Sob}(T\X)\) with
\(\pi(\delta_1)=\pi(\delta_2)\), we have that \(\int(\delta_1-\delta_2)(f)\,\d\mm=0\) for every \(f\in W^{1,p}(\X)\), which
yields \(\delta_1-\delta_2\in D(\div_q;\X)\) and \(\div_q(\delta_1-\delta_2)=0\). This proves \eqref{eq:der_and_W-1q-0_aux}.

Let us now pass to the verification of \eqref{eq:der_and_W-1q-0}. Fix any \(b\in{\rm Der}^q_{\mathfrak M}(\X)\).
Letting \(L_{\boldsymbol\div(b)}\) be as in Theorem \ref{thm:L_mu}, we know from Remark \ref{rmk:L_mu_W-1q_pd}
that \(L_{\boldsymbol\div(b)}\in W^{-1,q}_{pd,0}(\X)\). Since \(L_{\boldsymbol\div(b)}=-\pi(\iota(b))\)
by \eqref{eq:descr_iota(b)}, we deduce that \((\pi\circ\iota)(b)\in W^{-1,q}_{pd,0}(\X)\). The map
\(\pi\circ\iota\colon L^q_\Lip(T\X)\to W^{-1,q}_0(\X)\) is continuous, \({\rm Der}^q_{\mathfrak M}(\X)\)
is dense in \(L^q_\Lip(T\X)\), and \(W^{-1,q}_{pd,0}(\X)\) is closed in \(W^{-1,q}_0(\X)\), thus we can conclude
that \(\iota(L^q_\Lip(T\X))\subseteq\pi^{-1}(W^{-1,q}_{pd,0}(\X))\).

Conversely, fix any \(L\in W^{-1,q}_{pd,0}(\X)\).
By Corollary \ref{cor:L_g_dense_W-1q_pd}, there exists \((g_n)_n\subseteq L^q(\mm)\) such that
\(\|L_{g_n}-L\|_{op,0}\leq 2^{-n-2}\) for every \(n\in\N\). Define \(g_0\coloneqq 0\in L^q(\mm)\). Then
\[
    \left| \int f\,\d(g_n-g_{n-1})\mm \right| \leq\|L_{g_n}-L_{g_{n-1}}\|_{op,0}\||Df|\|_{L^p(\mm)}\leq\frac{1}{2^n}+\1_{\{1\}}(n)\|L\|_{op,0}
\]
for every \(n\in\N\) and \(f\in\LIP_{bs}(\X)\) with \(\int\lip_a(f)^p\,\d\mm\leq 1\). This means that
\((g_n-g_{n-1})\mm\in D({\sf F}_p)\) and \({\sf F}_p((g_n-g_{n-1})\mm)\leq 2^{-n}+\1_{\{1\}}(n)\|L\|_{op,0}\),
thus Theorem \ref{thm:F_p=D_q} yields the existence of a derivation \(b_n\in{\rm Der}^q_{\mathfrak M}(\X)\) such that
\(\boldsymbol\div(b_n)=(g_n-g_{n-1})\mm\) and \(\|b_n\|_{{\rm Der}^q(\X)}\leq 2^{-n}+\1_{\{1\}}(n)\|L\|_{op,0}\).
Given that \(L^q_\Lip(T\X)\) is a Banach space, we get that
\[
\exists\,b\coloneqq\sum_{n=1}^\infty b_n\in L^q_\Lip(T\X).
\]
Thanks to Remark \ref{rmk:consist_L_g_and_L_mu}, we have that \((\pi\circ\iota)(b_n)=L_{g_{n-1}}-L_{g_n}\) for every \(n\in\N\)
with \(n\geq 2\) and that \((\pi\circ\iota)(b_1)=-L_{g_1}\). Hence, a telescopic argument gives
\[
L=\lim_{n\to\infty}L_{g_n}=-\sum_{n=1}^\infty(\pi\circ\iota)(b_n)=-(\pi\circ\iota)(b)\quad\text{ with respect to }\|\cdot\|_{op,0}.
\]
Now consider any \(\delta\in L^q_{\rm Sob}(T\X)\) such that \(\pi(\delta)=L\). We know from
\eqref{eq:der_and_W-1q-0_aux} that \(\delta+\iota(b)\in D(\div_q;\X)\), so that
\(\delta+\iota(b)\in\iota({\rm Der}^q_{\mathfrak M}(\X))\) by Lemma \ref{lem:Lip_vs_Sob_div}.
We conclude that \(\delta=-\iota(b)+(\delta+\iota(b))\in\iota(L^q_\Lip(T\X))\), whence the inclusion
\(\pi^{-1}(W^{-1,q}_{pd,0}(\X))\subseteq\iota(L^q_\Lip(T\X))\) follows. Therefore, also \eqref{eq:der_and_W-1q-0} is proved.
\end{proof}
\begin{corollary}\label{cor:density_vf_with_div}
Let \((\X,\sfd,\mm)\) be a metric measure space and \(q\in(1,\infty)\). Then \(D(\div_q;\X)\) is dense in \(\iota(L^q_\Lip(T\X))\),
where \(\iota\colon L^q_\Lip(T\X)\to L^q_{\rm Sob}(T\X)\) is the embedding given by Theorem \ref{thm:Lip_and_Sob_tg_mod}.
In particular, the space \({\rm Der}^q_q(\X)\) is dense in \(L^q_\Lip(T\X)\). 
\end{corollary}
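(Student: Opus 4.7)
The plan is to use the linear submetry $\pi\colon L^q_{\rm Sob}(T\X) \to W^{-1,q}_0(\X)$ from Theorem \ref{thm:der_and_W-1q-0} together with the approximation of elements of $W^{-1,q}_{pd,0}(\X)$ by functionals $L_g$ provided by Corollary \ref{cor:L_g_dense_W-1q_pd}. First I would fix $\delta \in \iota(L^q_\Lip(T\X))$ and set $L \coloneqq -\pi(\delta)$; by \eqref{eq:der_and_W-1q-0} this lies in $W^{-1,q}_{pd,0}(\X)$. Corollary \ref{cor:L_g_dense_W-1q_pd} then yields a sequence $(g_n)_n \subseteq L^q(\mm)$ with $L_{g_n} \in W^{-1,q}_0(\X)$ and $\|L - L_{g_n}\|_{op,0} \to 0$. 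Each $L_{g_n}$ also belongs to $W^{-1,q}_{pd}(\X)$ (continuity along bounded $L^p$-convergent sequences is immediate from H\"older's inequality), hence to $W^{-1,q}_{pd,0}(\X)$, so $M_n \coloneqq -L + L_{g_n}$ lies in $W^{-1,q}_{pd,0}(\X)$ with $\|M_n\|_{op,0} \to 0$.

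Next, using the submetry property of $\pi$, I would pick for each $n$ a lift $\delta'_n \in L^q_{\rm Sob}(T\X)$ with $\pi(\delta'_n) = M_n$ and $\|\delta'_n\|_{L^q_{\rm Sob}(T\X)} = \|M_n\|_{op,0} \to 0$. Since $M_n \in W^{-1,q}_{pd,0}(\X)$, the identification \eqref{eq:der_and_W-1q-0} forces $\delta'_n \in \iota(L^q_\Lip(T\X))$. Setting $\delta_n \coloneqq \delta - \delta'_n$, we have $\delta_n \to \delta$ in $L^q_{\rm Sob}(T\X)$ and $\delta_n \in \iota(L^q_\Lip(T\X))$, while
\[
\int \delta_n(f)\,\d\mm = \pi(\delta_n)(f) = -L(f) - M_n(f) = -L_{g_n}(f) = -\int f\,g_n\,\d\mm
\]
for every $f \in W^{1,p}(\X)$. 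This precisely says $\delta_n \in D(\div_q;\X)$ with $\div_q(\delta_n) = g_n$, completing the density of $D(\div_q;\X)$ in $\iota(L^q_\Lip(T\X))$.

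For the \emph{in particular} assertion, I would combine the identification $D(\div_q;\X) = \iota({\rm Der}^q_q(\X))$ from Lemma \ref{lem:relation_div_q} with the fact (Theorem \ref{thm:Lip_and_Sob_tg_mod}) that $\iota$ is an isometric embedding preserving pointwise norms; transporting the density statement back through $\iota^{-1}$ restricted to $\iota(L^q_\Lip(T\X))$ gives the density of ${\rm Der}^q_q(\X)$ in $L^q_\Lip(T\X)$.

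The main technical point to be careful about is ensuring that the lifts $\delta'_n$ actually live in the right subspace $\iota(L^q_\Lip(T\X))$, so that the approximants $\delta_n$ are genuinely of the form required by the density statement rather than arbitrary Sobolev derivations; this is handled cleanly by the equality $\pi^{-1}(W^{-1,q}_{pd,0}(\X)) = \iota(L^q_\Lip(T\X))$ in \eqref{eq:der_and_W-1q-0}, after which the argument is essentially bookkeeping built on results already established in the paper.
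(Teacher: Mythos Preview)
Your proof is correct and follows essentially the same route as the paper's: use \eqref{eq:der_and_W-1q-0} to land $\pi(\delta)$ in $W^{-1,q}_{pd,0}(\X)$, approximate by $L_{g_n}$ via Corollary~\ref{cor:L_g_dense_W-1q_pd}, lift the small error through the submetry $\pi$, and check that the resulting $\delta_n$ lies in $D(\div_q;\X)$. Your additional verification that $\delta'_n\in\iota(L^q_\Lip(T\X))$ is correct but not strictly needed, since once $\delta_n\in D(\div_q;\X)$ the inclusion $D(\div_q;\X)=\iota({\rm Der}^q_q(\X))\subseteq\iota(L^q_\Lip(T\X))$ from Lemma~\ref{lem:relation_div_q} already places it in the right space.
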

\begin{proof}
Fix \(b\in L^q_\Lip(T\X)\) and denote \(\delta\coloneqq\iota(b)\). Since \(\pi(\delta)\in W^{-1,q}_{pd,0}(\X)\) by Theorem
\ref{thm:der_and_W-1q-0}, there exists a sequence \((g_n)_n\subseteq L^q(\mm)\) such that \(\|\pi(\delta)-L_{g_n}\|_{op,0}\to 0\)
by Corollary \ref{cor:L_g_dense_W-1q_pd}. Recalling that \(\pi\) is a linear submetry by Theorem \ref{thm:der_and_W-1q-0},
for every \(n\in\N\) we can find \(\tilde\delta_n\in\pi^{-1}(\pi(\delta)-L_{g_n})\) such that
\(\|\tilde\delta_n\|_{L^q_{\rm Sob}(T\X)}=\|\pi(\delta)-L_{g_n}\|_{op,0}\). Now define
\(\delta_n\coloneqq\delta-\tilde\delta_n\in L^q_{\rm Sob}(T\X)\). Then \(\pi(\delta_n)=L_{g_n}\), which ensures that
\(\delta_n\in D(\div_q;\X)\) and \(\div_q(\delta_n)=-g_n\). Moreover, we have that
\[
\|\delta-\delta_n\|_{L^q_{\rm Sob}(T\X)}=\|\tilde\delta_n\|_{L^q_{\rm Sob}(T\X)}=\|\pi(\delta)-L_{g_n}\|_{op,0}\to 0\quad\text{ as }n\to\infty.
\]
This proves that \(D(\div_q;\X)\) is dense in \(\iota(L^q_\Lip(T\X))\). Since \(D(\div_q;\X) = \iota({\rm Der}^q_q(\X))\) by Lemma \ref{lem:relation_div_q}, we also deduce that \({\rm Der}^q_q(\X)\) is dense in \(L^q_\Lip(T\X)\). The proof is complete.
\end{proof}
\begin{theorem}[Predual of \(W^{1,p}(\X)\)]\label{thm:char_pred_Sob}
Let \((\X,\sfd,\mm)\) be a metric measure space and \(p\in(1,\infty)\). Then the dual of the Banach space \(W^{-1,q}_{pd}(\X)\)
is isomorphic to \(W^{1,p}(\X)\). More precisely, letting \(\mathscr L\colon L^q(\mm)\to W^{-1,q}_{pd}(\X)\)
be defined as \(\mathscr L(g)\coloneqq L_g\) for every \(g\in L^q(\mm)\), its adjoint operator \(\mathscr L^*\colon W^{-1,q}_{pd}(\X)'\to L^p(\mm)\) satisfies
\[
\mathscr L^*(W^{-1,q}_{pd}(\X)')=W^{1,p}(\X),\qquad\|\mathscr L^*(\Omega)\|_{W^{1,p}(\X)}=
\|\Omega\|_{W^{-1,q}_{pd}(\X)'}\quad\text{ for all }\Omega\in W^{-1,q}_{pd}(\X)'.
\]
\end{theorem}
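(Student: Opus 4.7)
The plan is to construct an explicit inverse to (the restriction of) $\mathscr L^*$ on $W^{1,p}(\X)$, namely the canonical map $J\colon W^{1,p}(\X)\to W^{-1,q}_{pd}(\X)'$ defined by $J(f)(L) := L(f)$, and then to show that $\mathscr L^*$ and $J$ are mutually inverse isometric bijections between $W^{-1,q}_{pd}(\X)'$ and $W^{1,p}(\X)$. First, $J$ is linear with $\|J(f)\|_{W^{-1,q}_{pd}(\X)'}\leq\|f\|_{W^{1,p}(\X)}$ since $|J(f)(L)|\leq\|L\|\,\|f\|_{W^{1,p}(\X)}$. Moreover, for every $g\in L^q(\mm)$ we have $\langle\mathscr L^*(J(f)),g\rangle = J(f)(L_g) = L_g(f) = \int fg\,\d\mm$, so $\mathscr L^*\circ J$ is the canonical inclusion $W^{1,p}(\X)\hookrightarrow L^p(\mm)$. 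In particular, $W^{1,p}(\X)\subseteq\mathscr L^*(W^{-1,q}_{pd}(\X)')$.

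The heart of the proof is the converse statement: for every $\Omega\in W^{-1,q}_{pd}(\X)'$, the element $f := \mathscr L^*(\Omega)\in L^p(\mm)$, characterised by $\int fg\,\d\mm = \Omega(L_g)$ for all $g\in L^q(\mm)$, already lies in $W^{1,p}(\X)$ with $\|f\|_{W^{1,p}(\X)}\leq\|\Omega\|$. I will establish this via Fenchel--Moreau duality on $L^p(\mm)$. Consider $F\colon L^p(\mm)\to[0,+\infty]$ given by $F(h) := \tfrac{1}{p}\|h\|_{W^{1,p}(\X)}^p$ for $h\in W^{1,p}(\X)$ and $F(h) := +\infty$ otherwise. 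Then $F$ is convex, proper, and lower semicontinuous on $L^p(\mm)$: if $h_n\to h$ strongly in $L^p(\mm)$ with $\liminf_n F(h_n)<+\infty$, then $\{|Dh_n|\}$ is bounded in $L^p(\mm)$, so by the reflexivity of $L^p(\mm)$ (which is where $p\in(1,\infty)$ enters crucially) and Proposition \ref{prop:clos_wug}, a subsequence satisfies $|Dh_{n_k}|\rightharpoonup G$ weakly in $L^p(\mm)$ with $h\in W^{1,p}(\X)$ and $|Dh|\leq G$, yielding $F(h)\leq\liminf_n F(h_n)$. Hence $F = F^{**}$.

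A standard optimization computation shows that $F^*(g) = \tfrac{1}{q}\|L_g\|_{W^{-1,q}(\X)}^q$ for every $g\in L^q(\mm)$: optimizing $L_g(h) - \tfrac{1}{p}\|h\|_{W^{1,p}(\X)}^p$ over $h\in W^{1,p}(\X)$ first in direction (producing $\|L_g\|_{W^{-1,q}(\X)}$) and then in magnitude yields the claim through Young's inequality. Using $F = F^{**}$, the defining property of $f$, and another application of Young's inequality in the form $\Omega(L_g)\leq\|\Omega\|\,\|L_g\|_{W^{-1,q}(\X)}\leq\tfrac{1}{p}\|\Omega\|^p + \tfrac{1}{q}\|L_g\|_{W^{-1,q}(\X)}^q$, we obtain
\begin{equation*}
F(f) \;=\; \sup_{g\in L^q(\mm)}\bigl\{\Omega(L_g) - \tfrac{1}{q}\|L_g\|_{W^{-1,q}(\X)}^q\bigr\} \;\leq\; \tfrac{1}{p}\|\Omega\|_{W^{-1,q}_{pd}(\X)'}^p,
\end{equation*}
so $f\in W^{1,p}(\X)$ with $\|f\|_{W^{1,p}(\X)}\leq\|\Omega\|_{W^{-1,q}_{pd}(\X)'}$.

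To conclude, I invoke Corollary \ref{cor:L_g_dense_W-1q_pd}: the family $\mathscr L(L^q(\mm))$ is dense in $W^{-1,q}_{pd}(\X)$. Both $J(f)$ and $\Omega$ are bounded linear functionals on $W^{-1,q}_{pd}(\X)$ that agree on $\{L_g\,:\,g\in L^q(\mm)\}$ by the defining property of $f$, so by density $\Omega = J(f)$; hence $\|\Omega\|_{W^{-1,q}_{pd}(\X)'} = \|J(f)\|_{W^{-1,q}_{pd}(\X)'} \leq \|f\|_{W^{1,p}(\X)}$, which combined with the previous estimate yields the claimed norm identity. The main obstacle is verifying the lower semicontinuity of $F$ on $L^p(\mm)$, which relies crucially on the reflexivity of $L^p(\mm)$ for $p\in(1,\infty)$ via Proposition \ref{prop:clos_wug}; without this ingredient, Fenchel--Moreau would not be applicable and the duality collapses.
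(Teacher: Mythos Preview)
Your proof is correct and takes a genuinely different route from the paper's. Both arguments share the easy direction (the map $J(f)(L)=L(f)$ gives $W^{1,p}(\X)\subseteq\mathscr L^*(W^{-1,q}_{pd}(\X)')$ with the right norm bound) and both conclude by invoking the density of $\{L_g\}$ in $W^{-1,q}_{pd}(\X)$ from Corollary~\ref{cor:L_g_dense_W-1q_pd}. The divergence is in the hard step, namely showing that $f\coloneqq\mathscr L^*(\Omega)$ lies in $W^{1,p}(\X)$ with $\|f\|_{W^{1,p}(\X)}\le\|\Omega\|$.

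The paper proves this via the derivation characterisation of Sobolev functions (Theorem~\ref{thm:Sob_via_der}): it observes that $b\mapsto -\int f\,\div(b)\,\d\mm = -\langle\Omega,L_{\div(b)}\rangle$ is bounded on ${\rm Der}^q_q(\X)$ by $\|\Omega\|\,\|b\|$, extends this to $L^q_\Lip(T\X)$ using the density result of Corollary~\ref{cor:density_vf_with_div}, and reads off membership in $W^{1,p}(\X)$ from the resulting element of $L^q_\Lip(T\X)^*$. The norm inequality $\|f\|_{W^{1,p}(\X)}\le\|\Omega\|$ is then obtained by a separate duality computation using the isometric embedding \eqref{eq:prop_bar_L_f}. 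Your Fenchel--Moreau argument is more self-contained: it needs only Proposition~\ref{prop:clos_wug} (to get lower semicontinuity of $F$ on $L^p(\mm)$, via reflexivity of $L^p(\mm)$) and the explicit identification $F^*(g)=\tfrac1q\|L_g\|_{W^{-1,q}(\X)}^q$, after which the norm bound drops out of a single Young inequality. This bypasses the entire Lipschitz/Sobolev tangent module machinery. The paper's route, by contrast, highlights the role of derivations with $L^q$-divergence and ties the result back to the structural theorems of Section~\ref{ss:Lip_and_Sob_der}, which is in keeping with the paper's broader programme.
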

\begin{proof}
Since \(\mathscr L\) is \(1\)-Lipschitz linear, also its adjoint \(\mathscr L^*\) is \(1\)-Lipschitz linear. 
Next, let us check that \(\mathscr L^*(\Omega)\in W^{1,p}(\X)\) holds
for any given \(\Omega\in W^{-1,q}_{pd}(\X)'\). Denote \(f\coloneqq\mathscr L^*(\Omega)\in L^p(\mm)\) for brevity.
Now let us define
\[
A_f(b)\coloneqq -\int f\,\div(b)\,\d\mm\quad\text{ for every }b\in{\rm Der}^q_q(\X).
\]
Notice that \(A_f\colon{\rm Der}^q_q(\X)\to\R\) is a linear operator. For any \(b\in{\rm Der}^q_q(\X)\) we also have that
\[
A_f(b)=-\langle\mathscr L^*(\Omega),\div(b)\rangle=-\langle\Omega,\mathscr L(\div(b))\rangle=-\langle\Omega,L_{\div(b)}\rangle,
\]
so that \(|A_f(b)|\leq C\|b\|_{{\rm Der}^q(\X)}\) by Remark \ref{rmk:consist_L_g_and_L_mu}, where we set
\(C\coloneqq\|\Omega\|_{W^{-1,q}_{pd}(\X)'}\). Since \({\rm Der}^q_q(\X)\) is dense in \(L^q_\Lip(T\X)\) by Corollary
\ref{cor:density_vf_with_div}, we can uniquely extend \(A_f\) to an element \(A_f\in L^q_\Lip(T\X)'\) such that
\(\|A_f\|_{L^q_\Lip(T\X)'}\leq C\). Let \(\omega_f\coloneqq\textsc{Int}_{L^q_\Lip(T\X)}^{-1}(A_f)\in L^q_\Lip(T\X)^*\).
Then \(\||\omega_f|\|_{L^p(\mm)}\leq C\) and \(\int\omega_f(b)\,\d\mm=A_f(b)\) for all \(b\in L^q_\Lip(T\X)\).
In particular, \({\sf L}_f\coloneqq\omega_f|_{{\rm Der}^q_q(\X)}\colon{\rm Der}^q_q(\X)\to L^1(\mm)\) is a \(\LIP_b(\X)\)-linear operator satisfying
\(|{\sf L}_f(b)|\leq|\omega_f||b|\) and \(\int {\sf L}_f(b)\,\d\mm=-\int f\,\div(b)\,\d\mm\) for every \(b\in{\rm Der}^q_q(\X)\).
Therefore Theorem \ref{thm:Sob_via_der} ensures that the function \(\mathscr L^*(\Omega)=f\) belongs to \(W^{1,p}(\X)\), as desired.

Let us now check that, letting \(f\) and \(C\) be as above, we also have that \(\|f\|_{W^{1,p}(\X)}\leq C\). Recall from \eqref{eq:prop_bar_L_f}
that \(\|f\|_{W^{1,p}(\X)}\) coincides with the \(p\)-norm of \((f,\bar{\sf L}_f)\in L^p(\mm)\times L^q_\Lip(T\X)'\). Hence
\[
\|f\|_{W^{1,p}(\X)}=\sup\bigg\{\int fg\,\d\mm+\bar{\sf L}_f(b)\;\bigg|\,(g,b)\in L^q(\mm)\times L^q_\Lip(T\X),\,\|g\|_{L^q(\mm)}^q+\|b\|_{{\rm Der}^q(\X)}^q\leq 1\bigg\}.
\]
Given \((g,b)\in L^q(\mm)\times L^q_\Lip(T\X)\) with \(\|(g,b)\|_q\coloneqq(\|g\|_{L^q(\mm)}^q+\|b\|_{{\rm Der}^q(\X)}^q)^{1/q}\leq 1\), we have that
\[
\int fg\,\d\mm+\bar{\sf L}_f(b)=\langle\mathscr L^*(\Omega),g\rangle+\int{\sf L}_f(b)\,\d\mm=\langle\Omega,L_g\rangle-\int f\,\div(b)\,\d\mm=\langle\Omega,L_{g-\div(b)}\rangle.
\]
Next, observe that
\[\begin{split}
\|L_{g-\div(b)}\|_{W^{-1,q}(\X)}&=\sup\big\{L_{g-\div(b)}(h)\;\big|\;h\in W^{1,p}(\X),\,\|h\|_{W^{1,p}(\X)}\leq 1\big\}\\
&=\sup\bigg\{\int gh\,\d\mm+\bar{\sf L}_h(b)\;\bigg|\;h\in W^{1,p}(\X),\,\|h\|_{W^{1,p}(\X)}\leq 1\bigg\}\\
&\leq\sup\bigg\{\int gh\,\d\mm+\bar{\sf L}(b)\;\bigg|\;(h,\bar{\sf L})\in L^p(\mm)\times L^q_\Lip(T\X)',\,\|(h,\bar{\sf L})\|_p\leq 1\bigg\}\\
&=\|(g,b)\|_q\leq 1.
\end{split}\]
All in all, we conclude that \(\int fg\,\d\mm+\bar{\sf L}_f(b)=\langle\Omega,L_{g-\div(b)}\rangle\leq C\|L_{g-\div(b)}\|_{W^{-1,q}(\X)}\leq C\),
thus \(\|f\|_{W^{1,p}(\X)}\leq C\). This shows that \(\|\mathscr L^*(\Omega)\|_{W^{1,p}(\X)}\leq\|\Omega\|_{W^{-1,q}_{pd}(\X)'}\)
for every \(\Omega\in W^{-1,q}_{pd}(\X)'\).

Finally, it remains to check that \(\mathscr L^*\colon W^{-1,q}_{pd}(\X)'\to W^{1,p}(\X)\) is surjective and that for every
\(\Omega\in W^{-1,q}_{pd}(\X)'\) it holds that \(\|\mathscr L^*(\Omega)\|_{W^{1,p}(\X)}\geq\|\Omega\|_{W^{-1,q}_{pd}(\X)'}\) .
To this aim, fix any \(f\in W^{1,p}(\X)\). Letting \({\rm j}\colon W^{1,p}(\X)\to W^{-1,q}(\X)'\) be the canonical embedding map
into the bidual, we consider the element \(\Omega\coloneqq{\rm j}(f)|_{W^{-1,q}_{pd}(\X)}\in W^{-1,q}_{pd}(\X)'\). For every function
\(g\in L^q(\mm)\), we have that
\[
\langle\mathscr L^*(\Omega),g\rangle=\langle\Omega,L_g\rangle=\langle\,{\rm j}(f),L_g\rangle=L_g(f)=\int fg\,\d\mm=\langle f,g\rangle,
\]
which gives that \(\mathscr L^*(\Omega)=f\) and thus accordingly \(\mathscr L^*\colon W^{-1,q}_{pd}(\X)'\to W^{1,p}(\X)\) is surjective.
Notice also that \(\|\Omega\|_{W^{-1,q}_{pd}(\X)'}\leq\|{\rm j}(f)\|_{W^{-1,q}(\X)'}=\|f\|_{W^{1,p}(\X)}\). Therefore the proof is complete.
\end{proof}
\subsection{Reflexivity of Sobolev spaces and tangent modules}
\begin{proposition}\label{prop:reform_reflex_W1p}
Let \((\X,\sfd,\mm)\) be a metric measure space and \(p\in(1,\infty)\). Then the following conditions are equivalent:
\begin{itemize}
\item[\(\rm i)\)] The Sobolev space \(W^{1,p}(\X)\) is reflexive.
\item[\(\rm ii)\)] Given \((f_n)_n\subseteq W^{1,p}(\X)\) bounded, there exist a subsequence \((f_{n_i})_i\) and \(f\in W^{1,p}(\X)\) such that
\[\begin{split}
f_{n_i}\rightharpoonup f&\quad\text{ weakly in }L^p(\mm),\\
\d f_{n_i}\rightharpoonup\d f&\quad\text{ weakly in }L^p(T^*\X).
\end{split}\]
\end{itemize}
In particular, if \(L^p(T^*\X)\) is reflexive, then \(W^{1,p}(\X)\) is reflexive.
\end{proposition}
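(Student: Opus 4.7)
The natural starting point is the isometric embedding
\[
\phi\colon W^{1,p}(\X)\to L^p(\mm)\times L^p(T^*\X),\qquad \phi(f)\coloneqq(f,\d f),
\]
where the target is equipped with the $p$-product norm. This map was already implicit in the separability argument for $L^p(T^*\X)$ and is central here: since $\phi$ is a linear isometry, its image is a closed vector subspace, so $W^{1,p}(\X)$ is reflexive if and only if $\phi(W^{1,p}(\X))$ is reflexive. Also, weak convergence in $W^{1,p}(\X)$ corresponds, via $\phi$, to weak convergence of both coordinates in the product. The implication (i)$\Rightarrow$(ii) is immediate: by Eberlein--Šmulian, any bounded sequence $(f_n)_n$ has a subsequence $(f_{n_i})_i$ with $f_{n_i}\rightharpoonup f$ in $W^{1,p}(\X)$ for some $f\in W^{1,p}(\X)$; composing with the continuous linear maps $f\mapsto f\in L^p(\mm)$ and $f\mapsto\d f\in L^p(T^*\X)$ gives the two weak convergences of (ii).

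For (ii)$\Rightarrow$(i), I will again appeal to Eberlein--Šmulian and show that every bounded sequence in $W^{1,p}(\X)$ admits a weakly convergent subsequence. Given a bounded $(f_n)_n$, apply (ii) to extract $(f_{n_i})_i$ and $f\in W^{1,p}(\X)$ with $f_{n_i}\rightharpoonup f$ in $L^p(\mm)$ and $\d f_{n_i}\rightharpoonup \d f$ in $L^p(T^*\X)$. To promote this to weak convergence in $W^{1,p}(\X)$, I would pick an arbitrary $L\in W^{-1,q}(\X)$ and, exploiting that $\phi$ is an isometric embedding, regard $L\circ\phi^{-1}$ as a continuous linear functional on the closed subspace $\phi(W^{1,p}(\X))$; by Hahn--Banach, this extends to a continuous linear functional on the product space $L^p(\mm)\times L^p(T^*\X)$, whose dual for $p\in(1,\infty)$ is represented by pairs in $L^q(\mm)\times L^q(T^*\X)$ (under the integration pairing $\textsc{Int}$). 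Pairing this extension against $\phi(f_{n_i})\rightharpoonup \phi(f)$ yields $L(f_{n_i})\to L(f)$, which is exactly weak convergence in $W^{1,p}(\X)$.

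For the last assertion, suppose $L^p(T^*\X)$ is reflexive. Since $L^p(\mm)$ is reflexive for $p\in(1,\infty)$, the product $L^p(\mm)\times L^p(T^*\X)$ with the $p$-product norm is reflexive as well, and the closed subspace $\phi(W^{1,p}(\X))$ inherits reflexivity; hence so does $W^{1,p}(\X)$. Alternatively, one can verify (ii) directly under this hypothesis: a bounded $(f_n)_n$ in $W^{1,p}(\X)$ gives a bounded $(\phi(f_n))_n$ in the reflexive product space, so up to a subsequence $f_{n_i}\rightharpoonup f\in L^p(\mm)$ and $\d f_{n_i}\rightharpoonup\omega\in L^p(T^*\X)$. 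The closure property of the differential (Proposition \ref{prop:clos_diff}) then forces $f\in W^{1,p}(\X)$ and $\omega=\d f$, so (ii) holds.

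The only genuinely non-trivial point is the Hahn--Banach step in (ii)$\Rightarrow$(i): while the identification $(L^p\times L^p(T^*\X))'\cong L^q(\mm)\times L^q(T^*\X)'$ is standard for Banach-space duality, one must be careful that the functionals obtained by testing against elements of the dual of the product space really capture all of $W^{-1,q}(\X)=W^{1,p}(\X)'$. This is precisely what the isometric embedding $\phi$ and Hahn--Banach ensure, so no subtler tool is needed.
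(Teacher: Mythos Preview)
Your proof is correct and follows essentially the same approach as the paper: both use the isometric embedding \(\phi(f)=(f,\d f)\) into \(L^p(\mm)\times L^p(T^*\X)\) together with Eberlein--\v{S}mulian and Kakutani's theorem. The paper compresses your Hahn--Banach step into the single observation that condition ii) is equivalent to weak sequential compactness of closed balls of \(W^{1,p}(\X)\) (via \(\phi\)), while you spell out explicitly why componentwise weak convergence suffices to test all of \(W^{-1,q}(\X)\); minor notational slips aside (you write \(L^q(T^*\X)\) where \(L^p(T^*\X)'\cong L^q(T\X)\) is meant), the arguments coincide.
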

\begin{proof}
Let \(\phi\colon W^{1,p}(\X)\to L^p(\mm)\times L^p(T^*\X)\) be given by \(\phi(f)\coloneqq(f,\d f)\) for every \(f\in W^{1,p}(\X)\). Then \(\phi\)
is a linear isometry if we endow its codomain with the norm
\[
\|(f,\omega)\|\coloneqq\big(\|f\|_{L^p(\mm)}^p+\|\omega\|_{L^p(T^*\X)}^p\big)^{1/p}\quad\text{ for every }(f,\omega)\in L^p(\mm)\times L^p(T^*\X).
\]
Hence, ii) is equivalent to the weak sequential compactness of all closed balls of \(W^{1,p}(\X)\). Thanks to the Eberlein--\v{S}mulian theorem
(weak compactness and weak sequential compactness coincide) and to Kakutani's theorem (reflexivity is equivalent to the weak compactness of all
closed balls), we conclude that ii) is equivalent to i). The last part of the statement readily follows.
\end{proof}

By \cite[Proposition 44]{Amb:Col:DiMa:15} there are examples of metric measure spaces such that $W^{1,p}(\X)$ is not reflexive, in particular
\(L^p(T^*\X)\) is not reflexive.
On the positive side, many sufficient conditions for the reflexivity of $W^{1,p}(\X)$ are known; cf.\ \cite[Section 7.2.1]{Amb:Iko:Luc:Pas:24}.
It is not known whether the reflexivity of \(W^{1,p}(\X)\) implies that of \(L^p(T^*\X)\), cf.\ \cite[Remark 2.2.11]{Gig:18}.
\begin{theorem}[Characterisations of the reflexivity of \(W^{1,p}(\X)\)]\label{thm:predual_W1p}
Let \((\X,\sfd,\mm)\) be a metric measure space and \(p\in(1,\infty)\). Then the following conditions are equivalent:
\begin{itemize}
\item[\(\rm i)\)] \(W^{1,p}(\X)\) is reflexive.
\item[\(\rm ii)\)] \(W^{-1,q}_{pd}(\X)=W^{-1,q}(\X)\).
\item[\(\rm iii)\)] The isometric embedding \(\iota\colon L^q_\Lip(T\X)\to L^q_{\rm Sob}(T\X)\) from Theorem \ref{thm:Lip_and_Sob_tg_mod}
is surjective (thus an isometric isomorphism).
\item[\(\rm iv)\)] \(D(\div_q;\X)\) is dense in \(L^q(T\X)\).
\end{itemize}
\end{theorem}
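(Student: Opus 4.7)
The plan is to close the cycle (i) $\Leftrightarrow$ (ii) $\Leftrightarrow$ (iii), with (iii) $\Leftrightarrow$ (iv) essentially immediate and (iii) $\Rightarrow$ (ii) the only nontrivial step. For the easy equivalence (iii) $\Leftrightarrow$ (iv), I would observe that $\iota$ is a linear isometry from the complete space $L^q_\Lip(T\X)$, so its image $\iota(L^q_\Lip(T\X))$ is closed in $L^q_{\rm Sob}(T\X)$; Corollary \ref{cor:density_vf_with_div} (together with Lemma \ref{lem:relation_div_q}) says $D(\div_q;\X)$ is dense in $\iota(L^q_\Lip(T\X))$ and contained in it. Under the identification $L^q_{\rm Sob}(T\X)\cong L^q(T\X)$ from Theorem \ref{thm:Lq(TX)=LqSob(TX)}, density of $D(\div_q;\X)$ in $L^q(T\X)$ is therefore equivalent to $\iota(L^q_\Lip(T\X))=L^q_{\rm Sob}(T\X)$.

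For (i) $\Leftrightarrow$ (ii), I would use Theorem \ref{thm:char_pred_Sob}. If (ii) holds, $\mathscr L^*\colon W^{-1,q}(\X)'\to W^{1,p}(\X)$ is an isometric isomorphism, and a direct check via Corollary \ref{cor:L_g_dense_W-1q_pd} identifies it with the inverse of the canonical embedding $j\colon W^{1,p}(\X)\to W^{1,p}(\X)''$, giving reflexivity. Conversely, if $W^{1,p}(\X)$ is reflexive and $f_n\to f_\infty$ strongly in $L^p(\mm)$ with $\sup_n\|f_n\|_{W^{1,p}(\X)}<\infty$, Kakutani together with Eberlein--\v{S}mulian yields a subsequence $f_{n_k}$ converging weakly in $W^{1,p}(\X)$; the weak $L^p$-limit is forced to be $f_\infty$, hence $L(f_{n_k})\to L(f_\infty)$ for every $L\in W^{-1,q}(\X)$, and a standard subsequence argument upgrades this to the full sequence, giving $L\in W^{-1,q}_{pd}(\X)$. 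The implication (ii) $\Rightarrow$ (iii) is then immediate: (ii) trivially forces $W^{-1,q}_{pd,0}(\X)=W^{-1,q}_0(\X)$, and Theorem \ref{thm:der_and_W-1q-0} combined with the surjectivity of the linear submetry $\pi$ yields $\iota(L^q_\Lip(T\X))=\pi^{-1}(W^{-1,q}_{pd,0}(\X))=L^q_{\rm Sob}(T\X)$.

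The main obstacle is (iii) $\Rightarrow$ (ii), and the plan here is a Hahn--Banach decomposition. The key observation is that $\phi\colon W^{1,p}(\X)\to L^p(\mm)\times L^p(T^*\X)$ defined by $\phi(f)\coloneqq(f,\d f)$ is a linear isometry for the $p$-product norm on the target, so for any $L\in W^{-1,q}(\X)$ one can extend $L\circ\phi^{-1}$ from $\phi(W^{1,p}(\X))$ to the full product by Hahn--Banach. Since the dual of the $p$-product is the $q$-product of duals, and Proposition \ref{prop:map_Int} identifies $L^p(T^*\X)'$ with $L^q(T\X)$, the extension is represented by a pair $(g,v)\in L^q(\mm)\times L^q(T\X)$, giving
\[
L(f)=\int fg\,\d\mm+\int\d f(v)\,\d\mm=L_g(f)+L_v(f)\quad\text{for every }f\in W^{1,p}(\X),
\]
where $L_v(f)\coloneqq\int\d f(v)\,\d\mm$. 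Now $L_g\in W^{-1,q}_{pd}(\X)$ is immediate from the estimate $|L_g(f_n)-L_g(f_\infty)|\leq\|g\|_{L^q(\mm)}\|f_n-f_\infty\|_{L^p(\mm)}$, while $|L_v(f)|\leq\|v\|_{L^q(T\X)}\||Df|\|_{L^p(\mm)}$ places $L_v$ in $W^{-1,q}_0(\X)$. Under (iii), Theorem \ref{thm:der_and_W-1q-0} gives $W^{-1,q}_{pd,0}(\X)=W^{-1,q}_0(\X)$, so $L_v\in W^{-1,q}_{pd}(\X)$ as well, and consequently $L=L_g+L_v\in W^{-1,q}_{pd}(\X)$, closing the cycle. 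The delicate point is precisely the representation of the Hahn--Banach extension as a pair in $L^q(\mm)\times L^q(T\X)$, which hinges on the nontrivial identification in Proposition \ref{prop:map_Int} between the module dual $L^q(T\X)=L^p(T^*\X)^*$ and the Banach dual $L^p(T^*\X)'$.
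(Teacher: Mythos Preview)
Your proof is correct, but it is organized differently from the paper's and relies on a genuinely different key step. The paper closes a single cycle $\rm i)\Rightarrow ii)\Rightarrow iii)\Rightarrow iv)\Rightarrow i)$: its $\rm i)\Rightarrow ii)$ is an adjoint argument (showing that the restriction map $W^{-1,q}(\X)'\to W^{-1,q}_{pd}(\X)'$ is injective, using reflexivity to identify the former with $W^{1,p}(\X)$), and its $\rm iv)\Rightarrow i)$ is a direct sequential argument that, given density of $D(\div_q;\X)$, forces $\d f_n\rightharpoonup\d f$ weakly in $L^p(T^*\X)$ along any $L^p$-weakly convergent bounded sequence in $W^{1,p}(\X)$, then invokes Proposition~\ref{prop:reform_reflex_W1p}. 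By contrast, you never prove $\rm iv)\Rightarrow i)$ directly; instead you close the loop via your Hahn--Banach decomposition $\rm iii)\Rightarrow ii)$, writing every $L\in W^{-1,q}(\X)$ as $L_g+L_v$ with $(g,v)\in L^q(\mm)\times L^q(T\X)$ and then using Theorem~\ref{thm:der_and_W-1q-0} to place $L_v$ in $W^{-1,q}_{pd,0}(\X)$. This is a clean alternative and has the pleasant byproduct of an explicit structural decomposition of $W^{-1,q}(\X)$. Your $\rm i)\Rightarrow ii)$ via Eberlein--\v{S}mulian and a sub-subsequence argument is also more elementary than the paper's duality route, and your $\rm ii)\Rightarrow i)$ via Theorem~\ref{thm:char_pred_Sob} avoids the detour through $\rm iii)$ and $\rm iv)$ that the paper takes. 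The only cosmetic quibble is that the reference to Corollary~\ref{cor:L_g_dense_W-1q_pd} in your $\rm ii)\Rightarrow i)$ is unnecessary: the identity $\mathscr L^*\circ j=\mathrm{id}_{W^{1,p}(\X)}$ follows directly from the definition of $\mathscr L^*$, without needing density of the $L_g$'s.
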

\begin{proof}
\ \\
\(\bf i)\Longrightarrow ii)\) Assume that \(W^{1,p}(\X)\) is reflexive. The inclusion map \(\phi\colon W^{-1,q}_{pd}(\X)\to W^{-1,q}(\X)\) is a linear isometry,
thus its adjoint \(\phi^*\colon W^{-1,q}(\X)'\to W^{-1,q}_{pd}(\X)'\) is a linear submetry. We claim that \(\phi^*\) is injective. To prove it, fix any \(\Omega\in W^{-1,q}(\X)'\)
such that \(\phi^*(\Omega)=0\). Hence, \(\langle\Omega,L\rangle=0\) for every \(L\in W^{-1,q}_{pd}(\X)\). Now, since \(W^{1,p}(\X)\) is assumed to be reflexive and
\(W^{-1,q}(\X)'\) is the bidual of \(W^{1,p}(\X)\), there exists a (unique) function \(f\in W^{1,p}(\X)\) such that \(\langle L,f\rangle=\langle\Omega,L\rangle\)
for every \(L\in W^{-1,q}(\X)\). In particular, \(\int fg\,\d\mm=\langle L_g,f\rangle=\langle\Omega,L_g\rangle=0\) for every \(g\in L^q(\mm)\), whence it follows
that \(f=0\) and thus \(\Omega=0\). This proves that \(\phi^*\) is injective. It follows that \(\phi^*\) is an isomorphism, so that \(\phi\) is an isomorphism as well.
In particular, \(W^{-1,q}_{pd}(\X)=W^{-1,q}(\X)\).\\
\(\bf ii)\Longrightarrow iii)\) Assume that \(W^{-1,q}_{pd}(\X)=W^{-1,q}(\X)\). Then we have that \(W^{-1,q}_{pd,0}(\X)=W^{-1,q}_0(\X)\), which yields \(\iota(L^q_\Lip(T\X))=L^q_{\rm Sob}(T\X)\)
by Theorem \ref{thm:der_and_W-1q-0}.\\
\(\bf iii)\Longrightarrow iv)\) If \(\iota(L^q_\Lip(T\X))=L^q_{\rm Sob}(T\X)\), then \({\rm cl}_{L^q(T\X)}(D(\div_q;\X))=L^q(T\X)\) by Corollary \ref{cor:density_vf_with_div} 
and Theorem~\ref{thm:Lq(TX)=LqSob(TX)}.\\
\(\bf iv)\Longrightarrow i)\) Assume that \(D(\div_q;\X)\) is dense in \(L^q(T\X)\). Let \((f_n)_n\subseteq W^{1,p}(\X)\) be a given bounded sequence. Up to a passing to a subsequence and relabeling, we thus have that \(f_n\rightharpoonup f\) weakly in \(L^p(\mm)\) for some \(f\in W^{1,p}(\X)\). We claim that \(\d f_n\rightharpoonup\d f\) weakly in \(L^p(T^*\X)\). To this end,
pick any element \(v\in L^q(T\X)\) and a sequence \((v_k)_k\subseteq D(\div_q;\X)\) such that \(v_k\to v\) strongly in \(L^q(T\X)\). Notice that
\[
\bigg|\int\d f_n(v)\,\d\mm-\int\d f(v)\,\d\mm\bigg|\leq\bigg|\int f\,\div_q(v_k)\,\d\mm-\int f_n\,\div_q(v_k)\,\d\mm\bigg|+2M\|v-v_k\|_{L^q(T\X)}
\]
for every \(n,k\in\N\), where we set \(M\coloneqq\sup_{m\in\N}\||Df_m|\|_{L^p(\mm)}<+\infty\). By letting first \(n\to\infty\) and then \(k\to\infty\), we deduce
that \(\int\d f_n(v)\,\d\mm\to\int\d f(v)\,\d\mm\). In light of Proposition \ref{prop:map_Int}, this means that \(\d f_n\rightharpoonup\d f\) weakly in \(L^p(T^*\X)\).
Therefore, \(W^{1,p}(\X)\) is reflexive by Proposition \ref{prop:reform_reflex_W1p}.
\end{proof}
\section{Bibliographical notes}\label{sec:bib_notes}
\subsection*{Preliminaries}
The presentation of the preliminary notions regarding metric measure spaces and metric Sobolev spaces is mainly from \cite{Amb:Iko:Luc:Pas:24}. The results related to the Choquet integration are taken from \cite{Deb:Gig:Pas:21},
see also \cite{Den:10}. For the theory of \(L^p\)-Banach \(L^\infty\)-modules and the related results we refer to
\cite{Gig:18,Gig:17} and to \cite{Luc:Pas:23} (see also the references therein). Let us also mention that the theory
of \(L^p\)-Banach \(L^\infty\)-modules, which was introduced by Gigli in \cite{Gig:18} to provide a differential
calculus for metric measure spaces, is fully consistent with the theory of random normed modules, originating in the
study of random metric spaces \cite{Sw:Skl:11} -- see the introduction of \cite{Gu:Mu:Tu:24} for an overview on this topic and a survey paper \cite{Guo:10}. See also
the monograph \cite{HLR91} for the essentially equivalent notion of randomly normed space. For the results about metric
currents, see \cite{Amb:Kir:00} and \cite{Pao:Ste:12,Pao:Ste:13}.
\subsection*{Quasicontinuity of Sobolev functions}
For the notion of Sobolev \(p\)-capacity in the classical setting, we refer to \cite{Hei:Kil:Mar:12}. In the metric
setting, this notion has been thoroughly investigated e.g.\ in \cite{Kin:Mar:00}, \cite{HKST:15}, and \cite{Bj:Bj:11}. 
Our presentation in this section is mainly based on results from the above-mentioned sources, on \cite{Deb:Gig:Pas:21}
(where quasicontinuity and quasiuniform convergence are studied in the case \(p=2\)), and on \cite{EB:PC:23} (where
the equivalence of several notions of capacity appearing in the above-listed literature is proved). 
\subsection*{Derivations and tangent modules}
The results in this section are built upon the theories of derivations developed in \cite{Gig:18} and in \cite{DiMaPhD:14},
and upon further refinements of the latter that we obtained in \cite{Amb:Iko:Luc:Pas:24}. Some results in the spirit of Theorem
\ref{thm:plans_vs_der_vs_curr} can be found in the papers \cite{Sch:16:der, Sch:16}, where the relation between derivations and
currents is provided via their mutual relation to Alberti representations. We also point out that the several variants of
derivations considered in this paper (and in the above-mentioned works) are strongly inspired by the work of
Weaver (e.g.\ \cite{Wea:00}).
\subsection*{Applications to potential theory}
\subsubsection*{Section \ref{ss:divergence_measures}}
Our study of the dual energy functional of the pre-Cheeger energy functional has been inspired by the paper
\cite{Amb:Sav:21} and the manuscript \cite{Sav:22}. In the former, analogous results to Proposition \ref{prop:mu_ll_Cap} have
been provided in the setting of \emph{extended metric measure spaces} (assuming that the reference measure
is finite and \(p\in(1,\infty)\)) -- see \cite[Theorems 5.6 and 5.7]{Amb:Sav:21}. Furthermore, in the same setting
the equality \({\sf F}_p={\sf B}_q\) has been proven in \cite[Theorem 11.8]{Sav:22}. 
We also mention that a duality between (versions of) \({\sf F}_p\) and \({\sf D}_q\) appeared also in the study of mass optimization problems in e.g.\ \cite{Bo:Bu:01, Bo:Bol:22, GBL:23} and, together with a notion of a gradient of Sobolev function in the weighted Euclidean space, played an important role in characterization of the solutions via a PDE involving a notion of \(p\)-Laplacian.
\subsubsection*{Section \ref{ss:grad_vector_fields}}
For the notions of gradient vector field, of infinitesimal strict convexity, and of infinitesimal Hilbertianity, 
we mainly follow \cite{Gig:15,Gig:18}; see also \cite{Gig:Pas:20}. For more information concerning infinitesimally
Hilbertian spaces, see \cite[Section 7.2.1]{Amb:Iko:Luc:Pas:24} and references therein.
\subsubsection*{Section \ref{ss:laplacian}}
In the general setting of metric measure spaces, notions of \(p\)-Laplacian have been proposed for the case \(p=2\) in \cite{Amb:Gig:Sav:08} (as elements of the subdifferential
of the Cheeger energy functional), see e.g.\ \cite{Gig:Mon:13}, \cite{Gor:Maz:22}, and \cite{Ca:Mo:20} for some applications of this theory. In \cite{Gig:15}, all exponents
\(p \in (1,\infty)\) are considered (cf.\ Remark \ref{rmk:Gigli_laplacian}). The study of \(p\)-harmonic functions is one of the main topics of interest in potential theory. In the setting
of metric measure spaces with doubling measure and supporting a \(p\)-Poincar\' e inequality, notions of sub/super minimisers of the Dirichlet energy functional, and several
applications to Dirichlet boundary problems and obstacle problems have been investigated in \cite{Kin:Mar:00}, \cite{Bj:Bj:11} (see also the references therein) and \cite{Ma:Sha:18},
just to name a few; see \cite{EB:Sarsa:25} for recent progress on the structure theory $p$-harmonic functions on $p$-PI spaces. Another relevant literature regarding notions of \(p\)-Laplacian is the one related to the study of harmonic functions and their  regularity properties in the RCD setting (see e.g.\ \cite{Gig:Vio:23, Ben:Vio:24, Vio:25}).
\subsubsection*{Section \ref{ss:condenser_capacity}}
The $2$-condenser capacity played already a prominent role in the work of Ahlfors and Beurling in the removability problems for holomorphic and conformal homeomorphisms \cite{AB:50} and was also of foundational importance in the regularity theory of quasiconformal mappings in higher dimensions \cite{Geh:62}. The study of condenser capacities has played a prominent role in the duality of capacities (resp. moduli) results on metric surfaces \cite{Raj:17,RR:19,Es:Iko:Raj} and their higher-dimensional analogs \cite{Geh:62,Zie:67,EB:PC:22:reciprocal}. Such duality results have recently been investigated in metric settings as well \cite{Loh:21,Loh:23,Loh:Raj:21,kangasniemi2022modulilipschitzhomologyclasses}. See also Friedman and He \cite{Freed:He:91} for applications to the linking problem.

In the 90's, the study of $p$-condenser capacities in metric measure spaces was initiated in \cite{Hei:Kos:98} and, in fact, \Cref{prop:Mod_inf_E_p} extends \cite[Proposition 2.17]{Hei:Kos:98} from $E$ and $F$ being disjoint and closed to $E$ and $F$ being disjoint and $E$ being Souslin.
\subsection*{Duality of Sobolev spaces}
The contents of the first half of this section closely follow that of \cite{Amb:Sav:21}, where duals and preduals of the Sobolev space have been investigated in the setting of
extended metric measure spaces, assuming a finite reference measure. For the classical tools in convex analysis needed to address the mentioned questions, we refer to \cite{Ro:74}.
The characterization of the reflexivity of the metric Sobolev space in Proposition \ref{prop:reform_reflex_W1p} has been proved in \cite{Gig:18}. Another description
of an isometric predual of the space \(W^{1,p}(\X)\) for \(p\in(1,\infty)\) has been recently obtained in \cite{Pas:Tai:25} (in the context of extended metric measure spaces).
It is not known (in those cases where \(W^{1,p}(\X)\) is not reflexive) whether the predual of \(W^{1,p}(\X)\) is unique.
\section*{Main notation}
\halign{#\quad&#\hfil\cr
${\mathscr L}^n$&Lebesgue measure in $\R^n$\cr
$\1_E$&Characteristic function of $E$\cr
$(\X,\sfd,\mm)$&metric measure space\cr
$\Lip(f)$&Global Lipschitz constant of $f$, \eqref{eq:global_lip}\cr
$\lip(f)$&Local Lipschitz constant of $f$, \eqref{eq:local_asym_lip}\cr
$\lip_a(f)$&Asymptotic Lipschitz constant of $f$, 
\eqref{eq:local_asym_lip}\cr
${\mathcal E}_{p,\lip}$&Pre-Cheeger $p$-energy functional,
\eqref{eq:pre-Cheeger}\cr
$L^0_{\rm ext}(\mm)$&Space of extended measurable functions, \eqref{eq:ext_measurable}\cr
$L^0(\mm)$&Space of real-valued measurable functions, \eqref{eq:ext_measurable}\cr
$L^p(\mm)$&Lebesgue spaces, \eqref{eq:Lebesgue_space}\cr
$\Sigma_\mm$&$\sigma$-algebra of $\mm$-measurable subsets\cr
$\bar\mm$&Completion of $\mm$\cr
${\mathcal M}(\X)$&Signed measures with finite total variation on $\X$\cr
${\mathfrak M}(\X)$&Boundedly finite signed measures on $\X$\cr
$\mathscr C(\X)$&Continuous curves from an interval of $\R$ to $\X$\cr
$\e_t(\gamma)$&Evaluation of a curve $\gamma$ at $t$\cr
$\ell(\gamma)$&Length of a curve $\gamma$, \eqref{eq:def_e_t}\cr
${\rm Mod}_p(\Gamma)$&$p$-modulus of a family $\Gamma\subseteq\mathscr
C(\X)$, Definition~\ref{def:modulus}\cr
${\rm Bar}(\ppi)$&Barycenter of a plan $\ppi$,
Definition~\ref{def:barycenter}\cr
$\mathcal B_q(\X)$&Plans with barycenter in $L^q(\mm)$,
Definition~\ref{def:barycenter}\cr
$\text{\sc Int}_{\mathscr M}$&Integral operator identifying $\mathscr
M^*$ with $\mathscr M'$, Proposition~\ref{prop:map_Int}\cr
$[\![\ppi]\!]$&normal $1$-current induced by a plan $\ppi$, \eqref{eq:current_induced_by_plan}\cr
${\rm WUG}_p(f)$&Weak $p$-upper gradients of $f$,
Definition~\ref{def:wug}\cr
\(D^{1,p}_\mm(\X)\) & The Dirichlet space, Definition \ref{def:Sobolev}\cr
${\mathcal E}_p$&Cheeger $p$-energy functional,
Definition~\ref{def:Cheeger_energy}\cr
${\rm Cap}_p(E)$&Sobolev $p$-capacity of $E\subseteq\X$,
Definition~\ref{def:cap_p}\cr
$\mathcal{QC}(\X)$&Quasicontinuous maps on $\X$\cr
${\sf qcr}(f)$&Quasicontinuous representative of $f\in W^{1,p}(\X)$,
Theorem~\ref{thm:qc_repr}\cr
$L^p(T^*(\X))$&$p$-cotangent module, Definition~\ref{def:cotg_mod}\cr
$L^q(T(\X))$&$q$-tangent module, Definition~\ref{def:tg_mod}\cr
$\boldsymbol\div_q(v)$&$q$-divergence in ${\mathfrak M}(\X)$ of $v\in
L^q(T(\X))$, Definition~\ref{def:vf_with_div}\cr
$\div_q(v)$&$L^q$-divergence of $v\in L^q(T(\X))$,
Definition~\ref{def:vf_with_Lq_div}\cr
$\boldsymbol\div(b)$&$q$-divergence in ${\mathfrak M}(\X)$ of a
derivation $b$, Definition~\ref{def:Lip_derivation_divergence}\cr
${\rm Der}^q_{\mathfrak M}(\X)$&$L^q$ derivations with divergence in
${\mathfrak M}$, Definition~\ref{def:Lip_derivation_divergence}\cr
$L^q_\Lip(T\X)$&Lipschitz $q$-tangent submodule of ${\rm Der}^q(\X)$,
Definition~\ref{def:Lip_tg_mod}\cr
$L^q_{\rm Sob}(T\X)$&Sobolev $q$-tangent module,
Definition~\ref{def:Sob_der}\cr
${\sf F}_p(\mu)$&Dual $p$-energy of $\mu$, \eqref{eq:predual:energy}\cr
${\sf D}_q(\mu)$&Dual dynamic cost of $\mu$, \eqref{eq:def_D_q}\cr
${\sf Grad}_q(f)$& $q$-gradients of $f\in D^{1,p}_{\mm}(\X)$,
Definition~\ref{def:gradient}\cr
$\boldsymbol\Delta_p(f)$& $p$-Laplacians of $f\in D^{1,p}_{\mm}(\X)$,
Definition~\ref{def:laplacian}\cr
${\rm Cap}_p(E,F)$&$p$-condenser capacity of $(E,F)$,
Definition~\ref{def:condens_cap}\cr}
\bigskip

\small
%
%

%
%
\end{document}